\theoremstyle{definition} 
\newtheorem*{definition}{Definition} 
\newtheorem*{remark}{Remark}
\theoremstyle{plain}  
\newtheorem{theorem}{Theorem}[section]
\newtheorem{proposition}[theorem]{Proposition}
\newtheorem{corollary}[theorem]{Corollary}
\newtheorem{lemma}[theorem]{Lemma}
\DeclareMathOperator{\Stab}{Stab}
\DeclareMathOperator{\Fix}{Fix}
\DeclareMathOperator{\Isom}{Isom}
\DeclareMathOperator{\diam}{diam}
\DeclareMathOperator{\id}{id}
\DeclareMathOperator{\almostall}{\mbox{\rm{a.e.}}}
\DeclareMathOperator{\R}{\mathbb R}
\DeclareMathOperator{\N}{\mathbb N}
\DeclareMathOperator{\Z}{\mathbb Z}
\DeclareMathOperator{\H2}{\mathbb H}
\DeclareMathOperator{\S1}{\mathbb S}
\begin{document}

\title[Normalizer, divergence type, and Patterson measure]
{Normalizer, divergence type, and Patterson measure \\ for discrete groups of the Gromov hyperbolic space}

\author{Katsuhiko Matsuzaki}
\address{Department of Mathematics, School of Education, Waseda University,\endgraf
Shinjuku, Tokyo 169-8050, Japan}
\email{matsuzak@waseda.jp}

\author{Yasuhiro Yabuki}
\address{Tokyo Metropolitan College of Industrial Technology,\endgraf
Arakawa, Tokyo 116-0003, Japan}
\email{yabuki@metro-cit.ac.jp}

\author{Johannes Jaerisch}
\address{Graduate School of Mathematics, Nagoya University,\endgraf
Furocho, Chikusaku, Nagoya 464-8602, Japan}
\email{jaerisch@math.nagoya-u.ac.jp}

\makeatletter
\@namedef{subjclassname@2010}{%
\textup{2010} Mathematics Subject Classification}
\makeatother

\subjclass[2010]{Primary 20F65, 30F40, 20F67; Secondary 53C23,
37F35, 20E08}
\keywords{Gromov hyperbolic space, discrete group, Poincar\'e series, divergence type, conical limit set, 
Patterson measure, quasiconformal measure, shadow lemma, ergodic action, proper conjugation, normal subgroup}
\thanks{This work was supported by JSPS KAKENHI 25287021.}

\begin{abstract}
For a non-elementary discrete isometry group $G$ of divergence type acting on a proper geodesic
$\delta$-hyperbolic space, we prove that its Patterson measure is quasi-invariant under the normalizer of $G$.
As applications of this result, we have: (1) under a minor assumption, such a discrete group $G$ admits no proper conjugation,
that is, if the conjugate of $G$ is contained in $G$, then it  coincides with $G$; (2) the critical exponent of 
any non-elementary normal subgroup of $G$ is strictly greater than half of that for $G$.
\end{abstract}

\maketitle

\section{Introduction}
The Patterson--Sullivan theory of Kleinian groups studies the dynamics and geometry of discrete isometry groups of 
the hyperbolic space $\H2^{n+1}$ or $(n+1)$-dimensional
complete hyperbolic manifolds using invariant conformal measures on the boundary $\S1^n$ at infinity
(see \cite{S1,S2}). Recently, they have often been generalized to 
simply connected Riemannian manifolds with variable negative curvature bounded above or,
more generally, to ${\rm CAT}(-1)$-spaces (see \cite{R1}). 
Following the great success of this theory, it was extended to discrete groups acting on other metric spaces of hyperbolic nature and their boundary at infinity. 
The Gromov hyperbolic space is a typical object to which the theory
of the classical hyperbolic space is generalized; in fact, the Patterson--Sullivan theory was developed 
for discrete isometry groups on a proper geodesic
$\delta$-hyperbolic space by Coornaert \cite{Co}.

Among other important results on Kleinian groups in this field, the investigation of normal subgroups of
a Kleinian group (equivalently, normal covers of a hyperbolic manifold) has considerably progressed. 
For instance, the characterization of the amenability of the covering of a convex compact manifold
in terms of certain geometric invariants has been proved.
This was originally due to Brooks, and a recent account oriented toward the Patterson--Sullivan theory can be found in \cite{RT}.
The Patterson measure is the characteristic invariant conformal measure of a Kleinian group
and its invariance under the normalizer
was shown by the present authors \cite{MY1} in the following form. The survey article \cite{P} also explained
a consequence of this theorem and a relation to the problem of the critical exponents of normal subgroups. 

\begin{theorem}\label{kleinian}
Let $\Gamma$ be a non-elementary Kleinian group acting on the hyperbolic space $\H2^{n+1}$ whose
Poincar\'e series diverges at the critical exponent. Then, the Patterson measure for $\Gamma$
is invariant under the normalizer $N(\Gamma)$ of $\Gamma$.
\end{theorem}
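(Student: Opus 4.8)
The plan is to push the Patterson measure of $\Gamma$ forward by an element of the normalizer, recognize the result as again a conformal density of the same dimension for $\Gamma$, and then invoke the uniqueness of such densities supplied by the divergence hypothesis. Realize the Patterson measure as a $\Gamma$-invariant $\delta$-conformal density $\{\mu_x\}_{x\in\H2^{n+1}}$ on $\S1^n$, supported on the limit set, where $\delta=\delta(\Gamma)$ is the critical exponent; write $\beta_\xi(x,y)$ for the Busemann cocycle. Since $\Gamma\subset N(\Gamma)$ it is enough to fix one $g\in N(\Gamma)$ and show $g_*\mu_x=\mu_{gx}$ for all $x$.

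First I would check that $\nu_x:=g_*\mu_{g^{-1}x}$ is again a $\Gamma$-invariant $\delta$-conformal density. Conformality is automatic because $g$ is an isometry and so preserves the Busemann cocycle:
\[
\frac{d\nu_x}{d\nu_y}(\xi)=\frac{d\mu_{g^{-1}x}}{d\mu_{g^{-1}y}}(g^{-1}\xi)=e^{-\delta\beta_{g^{-1}\xi}(g^{-1}x,g^{-1}y)}=e^{-\delta\beta_\xi(x,y)}.
\]
For $\Gamma$-equivariance, write $\gamma g=g\gamma'$ with $\gamma'=g^{-1}\gamma g\in\Gamma$ (here the normalizer condition is used) and compute $\gamma_*\nu_x=(\gamma g)_*\mu_{g^{-1}x}=g_*\mu_{\gamma'g^{-1}x}=g_*\mu_{g^{-1}\gamma x}=\nu_{\gamma x}$; equivalently, one re-indexes Patterson's limiting construction of $\mu$ by $\gamma\mapsto g\gamma g^{-1}$. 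Now divergence type enters: the $\Gamma$-action on $(\S1^n,\mu_o)$ is conservative and ergodic (Hopf--Tsuji--Sullivan), so the $\delta$-conformal density of $\Gamma$ is unique up to a positive scalar; hence $\nu_x=c_g\mu_x$ for some $c_g>0$, and the cocycle identity makes $g\mapsto c_g$ a homomorphism $N(\Gamma)\to\R_{>0}$ that is trivial on $\Gamma$. In particular $g_*\mu_o=c_g\mu_{go}$, and since $d\mu_{go}/d\mu_o=e^{-\delta\beta_\cdot(go,o)}$ is bounded between two positive constants on the compact sphere, $g_*\mu_o\sim\mu_o$; thus the measure class of the Patterson measure is already $N(\Gamma)$-invariant, and only the normalization $c_g=1$ remains.

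To pin down $c_g=1$ I would split on the type of $g$. If $g$ is elliptic or parabolic, then $d(o,g^no)=o(n)$ (bounded, resp.\ $\asymp\log n$); comparing total masses in the iterate $g^n_*\mu_o=c_g^{\,n}\mu_{g^no}$ gives $c_g^{-n}=\mu_{g^no}(\S1^n)/\mu_o(\S1^n)\in[\,e^{-\delta d(o,g^no)},e^{\delta d(o,g^no)}\,]$, and letting $n\to\infty$ forces $c_g=1$. If $g$ is loxodromic, then either $g^N\in\Gamma$ for some $N\ge1$ --- whence $c_g^{\,N}=c_{g^N}=1$, so $c_g=1$ --- or $\Gamma^*:=\langle\Gamma,g\rangle$ has $\Gamma^*/\Gamma\cong\Z$. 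In this last case $\Gamma^*$ is discrete: replacing $\H2^{n+1}$ by the smallest totally geodesic subspace whose ideal boundary contains $\Lambda(\Gamma)$ (on which $\Gamma$ and the relevant action of $g$ still act, with $\mu$ and $\delta$ unchanged), the centralizer of the non-elementary group $\Gamma$ becomes trivial, so a one-parameter subgroup in the closure of $\Gamma^*$ --- which would have to centralize $\Gamma$ --- cannot exist. Since $\Gamma\trianglelefteq\Gamma^*$ with $\Gamma^*/\Gamma$ cyclic, hence amenable, the Brooks-type theorem on amenable normal extensions gives $\delta(\Gamma^*)=\delta(\Gamma)=\delta$; and as the Poincar\'e series of $\Gamma^*$ dominates the divergent one of $\Gamma$, the group $\Gamma^*$ is of divergence type at its critical exponent $\delta$. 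Hence $\Gamma^*$ has a unique $\delta$-conformal density, which is genuinely $\Gamma^*$-invariant; restricted to $\Gamma$ it is a $\delta$-conformal $\Gamma$-density, so by the uniqueness above it is a scalar multiple of $\{\mu_x\}$. Therefore $\{\mu_x\}$ is $\Gamma^*$-invariant, in particular $g$-invariant, i.e.\ $c_g=1$; as $g\in N(\Gamma)$ was arbitrary, the Patterson measure is $N(\Gamma)$-invariant.

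The loxodromic case is where I expect the real work to lie, and within it the two facts about $\Gamma^*=\langle\Gamma,g\rangle$: that it is discrete (needing the reduction to the span of $\Lambda(\Gamma)$ and triviality of the centralizer there) and, above all, that an amenable normal extension does not raise the critical exponent. A more hands-on substitute for the latter would estimate $\mu_{g^no}(\S1^n)$ directly via the shadow lemma, decomposing $\S1^n$ by proximity to the fixed points $g^\pm$ of $g$; this goes through once $\mu_o$ is known to give mass $\lesssim r^\delta$ to the $r$-ball about $g^+$, which requires some control of the position of $g^+$ in the limit set (for instance, $g^+$ is never a parabolic fixed point of $\Gamma$, since otherwise $g\in N(\Gamma)$ would conjugate a lattice in the relevant horospherical group to a properly scaled lattice). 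The remaining ingredients --- conformality of the pushforward, uniqueness from divergence type, the homomorphism property of $g\mapsto c_g$, and the mass estimate in the non-loxodromic case --- are all routine.
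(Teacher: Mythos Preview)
This theorem is quoted from \cite{MY1} and not reproved here, but the paper's Section~\ref{6} argument for the Gromov-hyperbolic generalization (Theorem~\ref{normalizer}) specializes to it, so I compare against that.

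Your reduction to a scalar $c_g$ via uniqueness of the $\delta$-conformal density is correct and matches the paper. The divergence is entirely in how $c_g=1$ is obtained. The paper does \emph{not} split on the isometry type of $g$. It works with the \emph{canonical} Patterson family $m_{z,x}$ (the weak-$*$ limit of the normalized Poincar\'e-series Dirac sums) and uses the elementary symmetry $P_\Gamma^s(g(z),g(x))=P_\Gamma^s(z,x)$ for $g\in N(\Gamma)$ together with $P_\Gamma^s(z,x)=P_\Gamma^s(x,z)$ (Proposition~\ref{property}). This yields $\Vert m_{g(o),o}\Vert\cdot\Vert m_{g(o),g(o)}\Vert=1$ directly (Lemma~\ref{totalmass}); in the Kleinian case exact conformality and exact uniqueness collapse $m_{g(o),o}=m_{g(o),g(o)}$, so $\Vert m_{g(o)}\Vert^2=1$, i.e.\ $c_g=1$, uniformly in $g$, with no case analysis and no appeal to amenability.

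Your loxodromic case, by contrast, is circular. The ``Brooks-type theorem'' you invoke---that $\Gamma\triangleleft\Gamma^*$ with amenable quotient forces $\delta(\Gamma^*)=\delta(\Gamma)$ when only $\Gamma$ is assumed of divergence type---is not available as an independent input: the classical Brooks theorem needs $\Gamma^*$ convex cocompact, and the known extensions need $\Gamma^*$ (not $\Gamma$) of divergence type or geometrically controlled. In the form you need, the statement is exactly Theorem~\ref{normal} of this paper, and it is proved there \emph{as a consequence} of the normalizer-invariance result (via Theorem~\ref{normalizer}). Your proposed shadow-lemma alternative also has a real obstacle: the bound $\mu_o(B(g^+,r))\lesssim r^\delta$ is not automatic at an arbitrary boundary point; the shadow lemma only controls $\mu$ on shadows centered at orbit points of $\Gamma$, and since $g\notin\Gamma$ there is no reason for $g^+$ to be conical for $\Gamma$, so ruling out parabolic fixed points does not suffice. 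The elliptic and parabolic cases of your argument are fine, but the loxodromic case is where the content lies, and the paper's Poincar\'e-series symmetry trick is precisely what replaces it.
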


In this study, we generalize this theorem to a discrete isometry group $\Gamma$ of a proper geodesic
$\delta$-hyperbolic space $(X,d)$. As a counterpart to the conformal invariant measure,
the quasiconformal measure of quasi-invariance was introduced in \cite{Co}. Roughly speaking, a concept on the usual geometry is defined on the Gromov hyperbolic space with controllable ambiguity; thus,
the notion of invariance of a conformal measure must be appropriately weakened. We say that
an $s$-dimensional quasiconformal measure $\mu$ on the boundary $\partial X$ is $\Gamma$-quasi-invariant if
there is a constant $D \geq 1$ independent of $\gamma \in \Gamma$ such that the Radon--Nikodym derivative of
the pull-back $\gamma^*\mu$ to $\mu$ is comparable with the $s$-dimensional magnification rate of $\gamma$
with multiplicative error factor $D$.

On the contrary,  the critical exponent of a discrete isometry group
$\Gamma \subset \Isom(X,d)$ is determined in exactly the same way 
as the exponential growth rate of the orbit, and in both classical and modern cases, a conformal measure or
a quasiconformal measure of the dimension at the critical exponent reflects the geometry of $\Gamma$.
This is what we defined as the Patterson measure. Moreover,
the divergence of the Poincar\'e series at the critical exponent
is a distinguished property for $\Gamma$ and ensures uniqueness of the Patterson measure in a certain sense.
If $\Gamma$ satisfies this property, then $\Gamma$ is said to be of divergence type.

The main theorem of this study is the following. It will be proved in Section \ref{6}. 

\begin{theorem}\label{i-main}
Let $\Gamma \subset \Isom(X,d)$ be a non-elementary discrete group of divergence type acting on a proper geodesic
$\delta$-hyperbolic space $(X,d)$. 
Then, the Patterson measure for $\Gamma$ is quasi-invariant under the normalizer $N(\Gamma)$ of $\Gamma$.
\end{theorem}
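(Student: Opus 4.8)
The plan is to realise the normalizer $N(\Gamma)$ itself as a discrete isometry group of divergence type whose Patterson measure is comparable, with a \emph{universal} multiplicative constant, to that of $\Gamma$, and then to deduce the asserted quasi-invariance directly from the $N(\Gamma)$-quasiconformality of the latter. Throughout write $s:=\delta_\Gamma$ for the critical exponent, and $a\asymp b$ for "$a/b$ is bounded above and below by positive constants". First I would record the group theory: since $\Gamma$ is non-elementary, $N:=N(\Gamma)$ is a discrete subgroup of $\Isom(X,d)$, it is non-elementary (it contains $\Gamma$), and $\Gamma$ is a normal subgroup of $N$ by definition of the normalizer. I would then argue that $\delta_N=s$ and that $N$ is again of divergence type. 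The inequality $\delta_N\ge s$ is immediate from $\Gamma\subset N$, while $\delta_N\le s$ is exactly the statement that a non-elementary normal subgroup of divergence type does not lose critical exponent inside a group containing it (a fact available for discrete groups on $\delta$-hyperbolic spaces). Granting $\delta_N=s$, divergence of the Poincar\'e series of $N$ at $s$ is inherited from that of $\Gamma$ because $\Gamma\subset N$; hence $N$ is of divergence type.

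The core step is the comparison of Patterson measures. Let $\mu$ be the Patterson measure of $\Gamma$ and $\mu_N$ that of $N$, both normalized to probability measures on $\partial X$. Each is an $s$-dimensional quasiconformal measure, and by the Sullivan--Coornaert shadow lemma the quasiconformality constant of the Patterson measure of \emph{any} non-elementary discrete group is bounded by a universal constant $D_0$ depending only on the hyperbolicity constant and on $s$. Restricting the $N$-quasiconformality of $\mu_N$ to the subgroup $\Gamma\subset N$, we see that $\mu_N$ is an $s$-dimensional $\Gamma$-quasiconformal measure with constant $\le D_0$; since $s=\delta_\Gamma$ and $\Gamma$ is of divergence type, the uniqueness of the Patterson measure then applies. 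Concretely, the Radon--Nikodym derivative $\varphi:=d\mu_N/d\mu$ of two such measures transforms under $\Gamma$ with bounded multiplicative error, hence $\varphi\circ\gamma\asymp\varphi$ (with constant $D_0^2$) for all $\gamma\in\Gamma$; ergodicity of $\Gamma$ on its conical limit set, which has full $\mu$-measure because $\Gamma$ is of divergence type, forces $\varphi$ to be essentially constant up to the factor $D_0^2$, and comparing total masses ($\int\varphi\,d\mu=1$) pins that constant to $\asymp 1$. Thus $\mu_N$ and $\mu$ are mutually absolutely continuous with $\varphi\asymp 1$, with a bound $D_1=D_1(D_0)$ that is universal, in particular independent of $N$ beyond the ambient geometry.

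Finally I would conclude. Since $\mu_N$ is $N$-quasiconformal, for every $g\in N$ the derivative $d(g^*\mu_N)/d\mu_N$ is comparable, with constant $D_0$, to the $s$-dimensional magnification rate of $g$. From $\mu_N=\varphi\cdot\mu$ one gets $g^*\mu_N=(\varphi\circ g)\cdot g^*\mu$, whence
$$\frac{d(g^*\mu)}{d\mu}=(\varphi\circ g)^{-1}\,\varphi\,\frac{d(g^*\mu_N)}{d\mu_N}.$$
Using $\varphi\asymp 1$, $\varphi\circ g\asymp 1$ and $d\mu_N/d\mu=\varphi\asymp 1$, all with the universal constant $D_1$, the right-hand side is comparable to the $s$-dimensional magnification rate of $g$ with multiplicative error at most $D:=D_0D_1^{2}$, which does not depend on $g$. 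As $g\in N(\Gamma)$ was arbitrary, $\mu$ is quasi-invariant under $N(\Gamma)$.

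I expect the main obstacle to be the equality $\delta_{N(\Gamma)}=\delta_\Gamma$ used in the first paragraph --- that a divergence-type normal subgroup keeps its critical exponent inside its normalizer, where the index may be infinite --- together with the discreteness of $N(\Gamma)$ for non-elementary $\Gamma$. A secondary point demanding care is the \emph{uniformity} in $\mu_N\asymp\mu$: one must verify that both the quasiconformality constants and the constant extracted from the ergodicity argument depend only on the ambient hyperbolic geometry, so that the final error $D$ is genuinely independent of $g$. (A naive attempt to compare $g^*\mu$ with $\mu$ directly does not give this: $g^*\mu$ is $\Gamma$-quasiconformal only with a constant that degrades like $e^{2s\,d(o,go)}$, and uniqueness then yields only a $g$-dependent comparison; routing through $N(\Gamma)$, in which $g$ is an honest group element carrying the correct magnification cocycle, is precisely what preserves uniformity.)
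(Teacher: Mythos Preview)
Your strategy has two gaps, both of which you flagged and both of which are fatal in this generality.

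First, $N(\Gamma)$ need not be discrete. Take $X=\mathbb{H}^2\times S^1$ with the Riemannian product metric: this is proper, geodesic, and Gromov hyperbolic (being quasi-isometric to $\mathbb{H}^2$ since the $S^1$ factor is bounded), with $\partial X=\partial\mathbb{H}^2$. If $\Gamma$ is a non-elementary divergence-type Fuchsian group acting on the first factor only, then $\Gamma$ is discrete and non-elementary in $\Isom(X,d)$, but the full rotation group of the $S^1$ factor commutes with $\Gamma$ and hence lies in $N(\Gamma)$. Thus $N(\Gamma)$ does not act properly discontinuously, has no Poincar\'e series and no Patterson measure, and your $\mu_N$ does not exist. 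The rigidity you are implicitly using --- that an isometry fixing more than two boundary points is the identity --- holds in $\mathbb{H}^n$ but not in a general $\delta$-hyperbolic space.

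Second, even when $N(\Gamma)$ happens to be discrete, the inequality $\delta_{N}\le\delta_\Gamma$ is not available as input: in this paper it is Theorem~\ref{normal}, and its proof \emph{uses} the quasi-invariance under $N(\Gamma)$ that you are trying to establish (one shows the Patterson measure of $\Gamma$ is $N(\Gamma)$-quasi-invariant, hence a quasiconformal measure of dimension $e_a(\Gamma)$ for any overgroup $G$, whence $e_a(G)\le e_a(\Gamma)$ by the minimality of the critical exponent among dimensions of such measures). Invoking it here is circular, and no independent proof is supplied.

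The paper sidesteps both problems by never treating $N(\Gamma)$ as a group with its own Patterson measure. For each $g\in N(\Gamma)$ individually, the pulled-back family $\{g^*\mu_{g(z)}\}$ is again a Patterson family for $\Gamma$ with quasiconformal constant depending only on $\delta,a,e_a(\Gamma)$ (Proposition~\ref{pullback}); quasi-uniqueness (Theorem~\ref{uniqueness}) then compares it to $\{\mu_z\}$ up to the ratio of total masses. The uniformity in $g$ comes from the elementary identity $P_\Gamma^s(g(o),g(o))=P_\Gamma^s(o,o)$ for $g\in N(\Gamma)$ (Proposition~\ref{property}), which for the \emph{canonical} Patterson construction forces $\Vert m_{g(o)}\Vert\asymp 1$ with a universal bound (Lemma~\ref{totalmass}). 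This is precisely the mechanism that supplies the uniform constant you correctly identified as the crux, without requiring $N(\Gamma)$ to be discrete.
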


We present two applications of this theorem in Sections \ref{7} and \ref{8}.
For Kleinian groups, we used Theorem \ref{kleinian} for a problem of proper conjugation in \cite{MY1} and
for a new proof of the theorem by Roblin \cite{R2} concerning the lower bound of the critical exponents of normal subgroups in \cite{J}.
Our applications correspond to these results.

\begin{theorem}\label{i-proper}
Let $G \subset \Isom(X,d)$ be a non-elementary discrete isometry group of divergence type
that acts on $X$ uniformly properly discontinuously.
If $\alpha G \alpha^{-1} \subset G$ for $\alpha \in \Isom(X,d)$, then $\alpha G \alpha^{-1} = G$.
\end{theorem}

When $G$ is quasi-convex cocompact, the same conclusion was proved in \cite{MY2}. Theorem \ref{i-proper} is an extension of
this case. For technical reasons, we assume a certain uniformity property of the properly discontinuous action.
As we mentioned in our previous paper, we can apply this theorem to the problem of proper conjugation 
for a subgroup $G$ of any hyperbolic group. 

\begin{theorem}\label{i-lower}
Let $G \subset \Isom(X,d)$ be a discrete group of divergence type and let
$\Gamma \subset G$ be a non-elementary normal subgroup.
Then, the critical exponent of $\Gamma$ is strictly greater than half of that for $G$.
\end{theorem}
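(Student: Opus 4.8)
The plan is to follow the strategy of \cite{J}: deduce the theorem by contradiction from the normalizer-quasi-invariance of the Patterson measure (Theorem~\ref{i-main}). Throughout, $\delta_G$ and $\delta_\Gamma$ denote the critical exponents of $G$ and of $\Gamma$ (the hyperbolicity constant of $X$ will not reappear). Since $\Gamma$ is non-elementary and $\Gamma\subset G$, the group $G$ is non-elementary, so $0<\delta_G<\infty$; and since $\Gamma$ is normal, $\Lambda(\Gamma)$ is non-empty, closed and $G$-invariant, hence contains and therefore equals $\Lambda(G)=:\Lambda$. Because $G$ is of divergence type, fix its Patterson measure $\mu$: by the shadow lemma and the Hopf--Tsuji--Sullivan dichotomy of the earlier sections, $\mu$ is a non-atomic $\delta_G$-dimensional $G$-quasi-invariant quasiconformal measure that gives full mass to the conical limit set $\Lambda_c(G)$, for which $(\partial X,\mu)$ is a conservative ergodic $G$-space, and the $\mu$-measure of the shadow of a fixed-radius ball around $go$ is $\asymp e^{-\delta_G d(o,go)}$.

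First I would record the inequality $\delta_\Gamma\ge\delta_G/2$ by a symmetrization argument in the spirit of Falk--Stratmann. Choosing coset representatives for $Q:=G/\Gamma$ and using normality in the form $g^{-1}\Gamma g=\Gamma$, one recasts the divergent $\delta_G$-Poincar\'e series of $G$ through a transfer-type operator $A$ on $\ell^2(Q)$ whose return values $\langle A^n\delta_{\bar e},\delta_{\bar e}\rangle$ encode $\Gamma$-orbital sums and which, thanks to normality, is self-adjoint; divergence at $\delta_G$ then becomes a recurrence property of $A$, and the Cauchy--Schwarz inequality $\langle A^{2n}\delta_{\bar e},\delta_{\bar e}\rangle\ge\langle A^n\delta_{\bar e},\delta_{\bar e}\rangle^2$ transfers it to divergence of a $(\delta_G/2)$-weighted sum over $\Gamma$, that is, $\delta_\Gamma\ge\delta_G/2$. (Equivalently, symmetrizing $\mu$ along $\Gamma$-orbits produces a measure on $\Lambda_c(\Gamma)$ of dimension at least $\delta_G/2$.) From now on I assume, for contradiction, that $\delta_\Gamma=\delta_G/2$.

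Next, Theorem~\ref{i-main} is used to force $\Gamma$ to be of convergence type. If $\Gamma$ were of divergence type, its Patterson measure $\mu_\Gamma$ would, by Theorem~\ref{i-main} together with $G\subset N(\Gamma)$, be a non-atomic $G$-quasi-invariant quasiconformal measure of dimension $\delta_\Gamma$ charging $\Lambda_c(\Gamma)\subseteq\Lambda_c(G)$; but a bounded-multiplicity covering of $\Lambda_c(G)$ by shadows of orbit balls, combined with the shadow lemma, shows that any such measure has dimension at least $\delta_G$ --- the classical fact that the critical exponent is the smallest dimension of a quasiconformal density charging the conical limit set --- contradicting $\delta_\Gamma=\delta_G/2<\delta_G$. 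Hence $\Gamma$ is of convergence type, and I may work with its $\delta_\Gamma$-dimensional Patterson density $\nu$ (from Patterson's construction with a slowly varying weight): a non-atomic $\Gamma$-quasi-invariant quasiconformal density on $\Lambda$ whose $G$-translates, depending up to bounded ratio only on the coset in $Q$, form a genuinely infinite $Q$-orbit.

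Finally, I must derive a contradiction from $2\delta_\Gamma=\delta_G$ with $\Gamma$ of convergence type, and I expect this to be the main obstacle. The equality forces every Cauchy--Schwarz and covering estimate of the second paragraph to be asymptotically sharp; morally, the $\Gamma$-orbit then spreads over the cosets of $\Gamma$ with no asymptotic cancellation, so that the product density $\nu\otimes\nu$, of dimension $2\delta_\Gamma=\delta_G$ on $\partial X\times\partial X$ off the diagonal, is forced to be, up to bounded ratio, $G$-quasi-invariant, hence comparable to the Bowen--Margulis measure of the divergence-type group $G$ by the uniqueness of the $\delta_G$-dimensional quasiconformal density of $G$. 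This is incompatible with $\Gamma$ being of convergence type and non-elementary --- then $\Gamma$ does not act ergodically on $(\partial X,\mu)$, the $Q$-orbit of $\nu$ is infinite, and $\Gamma$ contains two loxodromic elements with four distinct fixed points producing a genuinely two-parameter family of directions in $\Lambda$ --- whereas the comparison just obtained would make the $\Gamma$-dynamics essentially coincide with the conservative ergodic $G$-dynamics. Carrying out this rigidity argument with only multiplicative error, in the coarse Gromov-hyperbolic setting, is the hard part; a possible shortcut worth pursuing is to bypass the convergence/divergence dichotomy altogether by running everything through the operator $A$ on $\ell^2(Q)$ and invoking Theorem~\ref{i-main} to obtain uniform multiplicative control of the Radon--Nikodym cocycle of $G$ on $(\partial X,\mu)$ along the cosets of $\Gamma$.
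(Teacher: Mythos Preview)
Your step~2 is correct and is precisely the content of the paper's Theorem~\ref{normal}: if $\Gamma$ were of divergence type, then by Theorem~\ref{i-main} its Patterson measure would be a $G$-quasi-invariant quasiconformal measure of dimension $\delta_\Gamma$, and since no such measure can have dimension below $\delta_G$ (Theorem~\ref{existence}), one gets $\delta_\Gamma \geq \delta_G$. Under the standing hypothesis $\delta_\Gamma = \delta_G/2 < \delta_G$ this is a contradiction, so you correctly conclude that $\Gamma$ must be of convergence type.

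The genuine gap is your final paragraph. Having forced $\Gamma$ to be of convergence type, you attempt a rigidity argument with $\nu\otimes\nu$ and the Bowen--Margulis measure, and you yourself flag this as incomplete (``the hard part''). The paper bypasses this entirely by strengthening your step~1. Rather than a Cauchy--Schwarz/transfer-operator argument that only yields the bare inequality $\delta_\Gamma\ge\delta_G/2$, the paper proves a direct Poincar\'e series comparison
\[
\sum_{g\in G} a^{-s\,d(o,g(o))} \;\leq\; C(s,h)\sum_{\gamma\in\Gamma} a^{-(s/2)\,d(o,\gamma(o))}
\]
valid for every $s>0$, obtained from the finite-to-one conjugation map $\iota_h\colon\langle h\rangle\backslash G\to\Gamma$, $[g]\mapsto g^{-1}hg$ (Lemmas~\ref{finite-to-one} and~\ref{essential}) together with the elementary bound $d(o,g^{-1}hg(o))\leq 2d(o,g(o))+d(o,h(o))$. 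Now set $s=\delta_G$: the left side diverges because $G$ is of divergence type, hence so does the right side, which says that $\Gamma$ \emph{is} of divergence type at exponent $\delta_G/2=\delta_\Gamma$. This contradicts your step~2 immediately, and the proof ends (the residual case $\delta_G=\delta_\Gamma=0$ being ruled out by Proposition~\ref{positive}).

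In short, your step~2 is exactly the right tool but deployed in the wrong direction: instead of concluding ``$\Gamma$ is of convergence type'' and then struggling for a further contradiction, the paper upgrades step~1 to force ``$\Gamma$ is of divergence type'' and lets step~2 finish at once. The $\nu\otimes\nu$ rigidity you sketch is not needed.
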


The arguments in \cite{J} for Kleinian groups require
only basic geometry on the hyperbolic space $\H2^{n+1}$ except those for showing the strict inequality.
The basic geometric properties can be adjusted 
to discrete isometry groups of the Gromov hyperbolic space. 
To obtain the strict inequality, one should assume that $G$ is of divergence type and 
apply Theorem \ref{i-main}.

The fundamental fact for proving Theorem \ref{i-main} is that the Patterson measure for a discrete group $\Gamma$
has a certain uniqueness property if $\Gamma$ is of divergence type. We call this quasi-uniqueness and it
is formulated such that
if $\mu$ and $\mu'$ are two Patterson measures for $\Gamma$, then they are mutually absolutely continuous, and
the Radon--Nikodym derivative $d\mu'/d\mu$ is bounded from above and away from zero almost everywhere on $\partial X$.
This follows from the ergodicity of the action of $\Gamma$ on $\partial X$ with respect to the Patterson measure,
as was shown in \cite{Co}. 

However, we have to obtain more explicit bounds in terms of the quasi-invariance constants $D$
and $D'$ for $\mu$ and $\mu'$ as well as their total masses, which is presented in Section \ref{5}.
This is due to the fact that it is not sufficient for Theorem \ref{i-main} that a measure $\mu'$ given by the pull-back of $\mu$ 
under $g \in N(\Gamma)$
is also a Patterson measure for $\Gamma$. 

For the quasi-invariance
under $N(\Gamma)$, we must establish the uniformity of the bounds of $d\mu'/d\mu$
independent of $g \in N(\Gamma)$.
Moreover, to estimate the total mass of $\mu'$, which is the total mass of the quasiconformal measure $\mu$
with the reference point changed by $g \in N(\Gamma)$, we take $\mu$ as the Patterson measure obtained by
the canonical construction from the Poincar\'e series of $\Gamma$. The advantage of this construction is that
the invariance of the Poincar\'e series under the normalizer $N(\Gamma)$ is reduced to $\mu$; hence, we see that
the total mass of $\mu'$ is comparable with that of $\mu$. This is an idea for the proof of Theorem \ref{i-main}
that is presented in Section \ref{6}.

In the next three sections (2--4), we carry out preliminary work toward the main theorem.
The fact that a discrete group $\Gamma$ of divergence type acts on $\partial X$ ergodically with respect to the Patterson measure $\mu$ 
is a consequence of the condition that $\mu$ has positive measure on the conical limit set $\Lambda_c(\Gamma) \subset \partial X$.
These are well-known arguments for Kleinian groups. 
In fact, this fact originates in the Hopf--Tsuji problem for Fuchsian groups and the Lebesgue measure. 
Sullivan \cite{S1, S2} generalized this to
Kleinian groups and their Patterson measures by considering the geodesic flow on the hyperbolic manifold $\H2^{n+1}/\Gamma$.
Later, Tukia \cite{T1} presented an elementary proof without the argument of the geodesic flow.
One can expect that his proof is applicable to discrete isometry groups of the Gromov hyperbolic space if 
necessary changes are made. We do this in Section \ref{3}, where Tukia's original arguments will also be clarified.

There are several methods for showing
the ergodicity of a Kleinian group $\Gamma$ with respect to the Patterson measure $\mu$
when $\mu(\Lambda_c(\Gamma))>0$. An intuitive explanation is to rely on the density point theorem 
(see \cite[Theorem 4.4.4]{N}).
Namely, if we replace the reference point of $\mu$ with orbit points tending to the density point of $\Lambda_c(\Gamma)$
conically, then
the measure of $\Lambda_c(\Gamma)$ increases and hence must be of full measure by the invariance under $\Gamma$.
There are various versions of the density point theorem originating from Lebesgue's theorem. In Section \ref{4}, we verify
that the version from Federer \cite{F} is suitable for finite Borel measures on the boundary $\partial X$ of the Gromov hyperbolic space
and a family of shadows as covering subsets.

\medskip
\noindent
{\it Acknowledgments.} Theorems \ref{i-main} and \ref{i-proper} were studied by the first two authors
and announced in the conference
``Rigidity School'' held at University of Tokyo on March 19, 2012. 
Theorem \ref{i-lower} began as a different subject by the third author but was recently incorporated in the present study.

\section{Preliminaries}

In this section, we summarize several properties of Gromov hyperbolic spaces, their discrete isometry groups,
and quasi-invariant quasiconformal measures that are necessary in our arguments of this study.
We present them here by dividing the entire section into subsections.

\subsection{Gromov hyperbolic space and its boundary}\label{2.1}
A geodesic metric space $(X,d)$ is called {\it $\delta$-hyperbolic} for $\delta \geq 0$ if,
for every geodesic triangle $(\alpha, \beta, \gamma)$ in $X$, any edge, say $\alpha$, is
contained in the closed $\delta$-neighborhood of the union $\beta \cup \gamma$ of the other edges.
We call a $\delta$-hyperbolic space $(X,d)$ for some $\delta \geq 0$ a {\it Gromov hyperbolic space}.
Henceforth, we further assume that a $\delta$-hyperbolic space $(X,d)$ is {\it proper} and
has a fixed base point $o \in X$. Concerning the fundamental properties of the Gromov hyperbolic space
mentioned in this subsection, the reader may refer to the lecture note by Coornaert, Delzant, and Papadopoulos \cite{CDP}. 

We consider geodesic rays $\sigma:[0,\infty) \to X$ with
arc length parameter starting from the base point $o$.
Two such geodesic rays $\sigma_1$ and $\sigma_2$ are regarded as asymptotically equivalent if there is some constant $K$ such that
$d(\sigma_1(t),\sigma_2(t)) \leq K$ for all $t \geq 0$. Then, the space of all geodesic rays
based at $o$ modulo the asymptotic equivalence defines a boundary $\partial X$ of $X$, which yields the compactification 
$\overline X=X \cup \partial X$ under the compact-open topology on the space of geodesic rays.
We see that $\overline X$ is a compact Hausdorff space satisfying the second countability axiom. 
Every isometric automorphism of $X$ extends to a self-homeomorphism of $\overline X$.

The characterization of $\delta$-hyperbolicity by triangles also provides the following properties
of geodesics possibly of infinite length (see Ohshika \cite[Proposition 2.61]{O}).

\begin{proposition}\label{geodesic}
For a $\delta$-hyperbolic space $(X,d)$, there is a constant $\kappa(\delta) \geq 0$ depending only on $\delta$
that satisfies the following properties:
\begin{enumerate}
\item
for an ideal geodesic triangle $(\alpha, \beta, \gamma)$ in $X$, some of whose vertices are on $\partial X$,
any edge $\alpha$ is contained in the closed $\kappa(\delta)$-neighborhood of the union $\beta \cup \gamma$ of the other edges;
\item
for any geodesics $\alpha$ and $\beta$ with the same endpoints possibly on $\partial X$, one is contained 
in the closed $\kappa(\delta)$-neighborhood of the other.
\end{enumerate}
\end{proposition}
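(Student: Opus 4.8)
The plan is to reduce both assertions to the ordinary thinness of \emph{finite} geodesic polygons by truncating the ideal edges at points converging to the ideal vertices, and then to check that the auxiliary edges created by the truncation are irrelevant because they escape to infinity. I would first record two elementary facts. (a) By induction on $n$, triangulating an $n$-gon by diagonals from one vertex, every side of a finite geodesic $n$-gon lies in the closed $(n-2)\delta$-neighborhood of the union of the remaining sides; in particular a geodesic quadrilateral is $2\delta$-thin and a geodesic hexagon is $4\delta$-thin. (b) For any points $a,b,p$ and any geodesic $[a,b]$ one has $d(p,[a,b]) \ge (a|b)_p := \tfrac12\bigl(d(p,a)+d(p,b)-d(a,b)\bigr)$, directly from the triangle inequality (no hyperbolicity needed). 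Combined with the standard behaviour of the Gromov product on the boundary — if $a_n \to \zeta$ and $b_n \to \zeta$ in $\overline X$ for one and the same $\zeta \in \partial X$, then $(a_n|b_n)_p \to \infty$ — fact (b) gives: a sequence of geodesic segments whose two endpoints both converge to a single boundary point eventually leaves every bounded set.

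For assertion (1) I would treat the case in which all three vertices $v_1,v_2,v_3$ lie on $\partial X$ (the cases with one or two finite vertices being strictly simpler). Fix $p$ on the edge $\alpha=(v_2 v_3)$, say $p=\alpha(0)$ in an arc-length parametrization $\alpha:\R\to X$ with $\alpha(-\infty)=v_2$, $\alpha(+\infty)=v_3$. On $\alpha$ choose points $y_n\to v_2$ and $z_n\to v_3$, on $\beta=(v_1 v_3)$ points $u_n\to v_1$ and $u'_n\to v_3$, and on $\gamma=(v_1 v_2)$ points $w_n\to v_1$ and $w'_n\to v_2$. For $n$ large $p$ lies on the subarc $[y_n,z_n]\subset\alpha$, and $y_n,z_n,u'_n,u_n,w_n,w'_n$ are the vertices of a finite geodesic hexagon two of whose sides are subarcs of $\beta$ and $\gamma$ while the remaining three are the connecting segments $[z_n,u'_n]$, $[u_n,w_n]$, $[w'_n,y_n]$ located near $v_3$, $v_1$, $v_2$ respectively. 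By fact (a), $p$ is within $4\delta$ of the union of the five sides other than $[y_n,z_n]$; by fact (b), the distances from $p$ to the three connecting segments tend to $\infty$ (their endpoint pairs converge to a single boundary point). Hence for $n$ large $p$ is within $4\delta$ of $\beta\cup\gamma$, so $\kappa(\delta):=4\delta$ works.

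Assertion (2) is the same argument with a quadrilateral: if $\alpha,\beta$ share endpoints $\xi,\eta$, truncate $\alpha$ at $\alpha(\pm T)$, pick points of $\beta$ near $\xi$ and near $\eta$, form the geodesic quadrilateral, use $2\delta$-thinness, and let $T\to\infty$, noting that the two connecting sides escape to infinity; when $\xi$ or $\eta$ is a finite point no truncation is needed there, the polygon degenerates, and the constant only improves. So the same $\kappa(\delta)=4\delta$ serves for both parts. The only genuinely nontrivial ingredient is the boundary behaviour of the Gromov product used in fact (b); with the description of $\partial X$ by asymptote classes of geodesic rays this is the statement that two rays representing the same boundary point track one another on longer and longer initial segments, a standard consequence of thin triangles, so I do not expect a serious obstacle here. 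An essentially equivalent alternative is to take limits of the finite triangles $(v_1^{(n)}v_2^{(n)},\, v_2^{(n)}v_3^{(n)},\, v_3^{(n)}v_1^{(n)})$ with $v_i^{(n)}\to v_i$, extracting a limiting geodesic triangle by Arzel\`a--Ascoli in the proper space $X$; this route, however, needs (2) to compare the limit edges with the given ones, so (2) would in any case have to be established first.
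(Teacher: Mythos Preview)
The paper does not actually prove this proposition; it merely cites Ohshika \cite[Proposition 2.61]{O} and records the constant $\kappa(\delta)$ for later use. Your proposal therefore supplies what the paper omits, and the argument is correct and standard: truncate the ideal edges to obtain a finite geodesic hexagon (resp.\ quadrilateral), use the $(n-2)\delta$-thinness of finite $n$-gons obtained by triangulating from one vertex, and discard the three (resp.\ two) connecting segments via the elementary bound $d(p,[a,b]) \ge (a\mid b)_p$ together with the fact that the Gromov product diverges when both endpoints tend to a single boundary point. The explicit constant $\kappa(\delta)=4\delta$ you extract is a small bonus over the bare citation. The alternative route you mention at the end---taking Arzel\`a--Ascoli limits of finite triangles---would indeed require part (2) first to identify the limiting edges with the given ones, so your choice to run the truncation argument directly is the cleaner option.
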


Hereafter, the constant $\kappa(\delta)$ will always refer to that in the above proposition. 

Another metric can be defined on the compactification $\overline X=X \cup \partial X$.
We choose a so-called visual parameter $a \in (1,a_0(\delta))$ where $a_0(\delta)$ is some constant
depending only on $\delta$. We fix this $a$ and do not move it throughout this paper.
Then, there is a {\it visual metric} $d_a$ on $\overline X$ with respect to the base point $o$ 
that satisfies the following properties. 
\begin{enumerate}
\item
The topology on $\overline X$ induced by the visual metric $d_a$ coincides with
the topology of the compactification of $(X,d)$.
\item
There exists a constant $\lambda=\lambda(\delta,a) \geq 1$ such that
for any geodesic line $(\xi,\eta)$ connecting any $\xi, \eta \in \partial X$, we have
$$
\lambda^{-1}\, a^{-d(o, (\xi,\eta))} \leq d_a(\xi,\eta) \leq \lambda a^{-d(o, (\xi,\eta))}.
$$
\end{enumerate}
This is an analog of the Euclidean metric on $\mathbb B^{n+1} \cup \S1^n$ for the unit ball model $\mathbb B^{n+1}=\{ x \in \R^{n+1} \mid |x|<1\}$ of 
the hyperbolic space $\H2^{n+1}$ and its boundary $\S1^n=\{|x|=1\}$.

\begin{remark}
We often use the following notation.
For $A>0$, $B>0$, and $c \geq 1$, the notation $A \asymp_c B$ implies that $c^{-1}A\leq B\leq cA$.
Thus, the above (2) can be rewritten as $d_a(\xi,\eta) \asymp_{\lambda} a^{-d(o, (\xi,\eta))}$.
\end{remark}

\subsection{Horospherical structure}
For a $\delta$-hyperbolic space $(X,d)$ with base point $o \in X$, 
we can define an analog of a horosphere of $(\mathbb B^{n+1}, d_H)$
as a level set of the Busemann function.
For a given point $\xi \in \partial X$, let $\sigma:[0,\infty) \to X$ be a geodesic ray
such that $\sigma(0)=o$ and $\sigma(\infty)=\lim_{t \to \infty}\sigma(t)=\xi$.
Then, the {\it Busemann function} at $\xi$ is defined as
$$
h_\xi(z)=\lim_{t \to \infty}(d(z,\sigma(t))-d(o,\sigma(t))).
$$
This depends on the choice of the geodesic ray $\sigma$, but the difference is uniformly bounded by
some constant depending only on $\delta$. 

We define the {\it Poisson kernel} by
$k(z,\xi)=a^{-h_\xi(z)}$ for $z \in X$ and $\xi \in \partial X$,
because it plays a similar role to the Poisson kernel $(1-|z|^2)/|z-\xi|^2$ of $\mathbb B^{n+1}$.
Then, the analog of the conformal derivative of an isometric automorphism $\gamma \in \Isom(X,d)$
at $\xi \in \partial X$ is given by
$$
j_\gamma(\xi)=a^{-h_\xi(\gamma^{-1}(o))}=k(\gamma^{-1}(o),\xi).
$$
We remark that $k(z,\xi)$ is defined by the choice of a family of geodesic rays $\sigma$ and
not necessarily a measurable function of $\xi \in \partial X$ in this definition; however, 
this is not a problem in our arguments.

\begin{proposition}\label{poisson}
For any $z \in X$, $\xi \in \partial X$, and $\gamma \in \Isom(X,d)$, the Poisson kernel satisfies
$$
a^{-2\kappa(\delta)} \frac{k(z,\xi)}{j_\gamma(\xi)} \leq k(\gamma(z),\gamma(\xi)) 
\leq a^{2\kappa(\delta)} \frac{k(z,\xi)}{j_\gamma(\xi)}.
$$
\end{proposition}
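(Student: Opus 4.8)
The plan is to reduce the statement to the known quasi-cocycle behaviour of the Busemann function under isometries. By definition, $k(\gamma(z),\gamma(\xi)) = a^{-h_{\gamma(\xi)}(\gamma(z))}$ and $k(z,\xi)/j_\gamma(\xi) = a^{-h_\xi(z)}\cdot a^{h_\xi(\gamma^{-1}(o))} = a^{-(h_\xi(z)-h_\xi(\gamma^{-1}(o)))}$. So, after taking $-\log_a$, the inequality we must prove is equivalent to
\[
\bigl| h_{\gamma(\xi)}(\gamma(z)) - \bigl( h_\xi(z) - h_\xi(\gamma^{-1}(o)) \bigr) \bigr| \leq 2\kappa(\delta).
\]
First I would record the basic equivariance identity for Busemann functions. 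Fix a geodesic ray $\sigma$ from $o$ to $\xi$; then $\gamma\circ\sigma$ is a geodesic ray from $\gamma(o)$ to $\gamma(\xi)$. Writing $h^{\gamma\sigma}_{\gamma(\xi),\gamma(o)}$ for the Busemann function computed with basepoint $\gamma(o)$ along $\gamma\sigma$, the isometry $\gamma$ gives the exact identity $h^{\gamma\sigma}_{\gamma(\xi),\gamma(o)}(\gamma(z)) = h^{\sigma}_{\xi,o}(z)$, since $d(\gamma(z),\gamma\sigma(t)) - d(\gamma(o),\gamma\sigma(t)) = d(z,\sigma(t)) - d(o,\sigma(t))$ for every $t$.

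The remaining work is bookkeeping of two changes: (i) changing the basepoint of a Busemann function from $\gamma(o)$ to $o$, which by the cocycle relation $h_{\eta,p}(z) = h_{\eta,q}(z) - h_{\eta,q}(p)$ (valid up to the universal ambiguity constant, since these are the honest limits) introduces the term $-h_{\xi,o}(\gamma^{-1}(o))$ after pulling back by $\gamma$; and (ii) changing the defining geodesic ray, which by the discussion preceding the proposition costs at most a constant depending only on $\delta$. The key geometric input here is Proposition~\ref{geodesic}(2): two geodesics (or rays) sharing their endpoints lie in each other's $\kappa(\delta)$-neighbourhood, so the difference of two Busemann functions at the same point defined by two such rays is bounded by $2\kappa(\delta)$ — one $\kappa(\delta)$ coming from the comparison at the point $z$ and one from the comparison at the basepoint. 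Assembling (i) and (ii) on top of the exact identity above yields the displayed bound with total error $2\kappa(\delta)$, and exponentiating gives the claimed two-sided estimate.

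I expect the only real obstacle to be cosmetic rather than substantive: tracking the various additive constants so that the final multiplicative factor is exactly $a^{2\kappa(\delta)}$ rather than a larger constant. Concretely, one must be careful that the basepoint-change identity and the ray-change estimate do not each contribute their own $\kappa(\delta)$ independently but can be combined — this works because once we use the \emph{same} pair of comparison geodesics to estimate the Busemann values at both $z$ and at $\gamma^{-1}(o)$, the nearest-point projections give the bound $\kappa(\delta)$ at each of the two points, for a total of $2\kappa(\delta)$. All other steps are exact isometry-invariance identities and the standard definition of $j_\gamma$, so no further hyperbolicity is needed beyond Proposition~\ref{geodesic}(2).
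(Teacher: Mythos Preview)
Your proposal is correct and follows essentially the same route as the paper's proof: both reduce the inequality to the Busemann identity $|h_{\gamma(\xi)}(\gamma(z)) - h_\xi(z) + h_\xi(\gamma^{-1}(o))| \leq 2\kappa(\delta)$, obtain an exact equality by isometry equivariance together with the telescoping (cocycle) split, and then absorb the only error by comparing two geodesic rays to the same boundary point via Proposition~\ref{geodesic}, picking up one $\kappa(\delta)$ at each of the two evaluation points. The paper pulls $\sigma'$ back by $\gamma^{-1}$ and compares with $\sigma$, while you push $\sigma$ forward by $\gamma$ and compare with $\sigma'$; these are the same argument viewed from opposite sides of $\gamma$ (note that the cocycle step is in fact exact when a single ray is used throughout, so your parenthetical ``up to the universal ambiguity constant'' is not needed there).
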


\begin{proof}
Let $\sigma$ be a geodesic ray from $o$ to $\xi$ and let $\sigma'$ be a geodesic ray from $o$ to $\gamma(\xi)$. Then
\begin{align*}
h_{\gamma(\xi)}(\gamma(z))&=\lim_{t \to \infty}(d(\gamma(z),\sigma'(t))-d(o,\sigma'(t)))\\
&=\lim_{t \to \infty}(d(z,\gamma^{-1} \circ \sigma'(t))-d(\gamma^{-1}(o),\gamma^{-1} \circ \sigma'(t))).
\end{align*}
Here, $\gamma^{-1} \circ \sigma'$ is a geodesic ray from $\gamma^{-1}(o)$ to $\xi$. Because $\sigma$ has the same endpoint $\xi$
as $\gamma^{-1} \circ \sigma'$,
Proposition \ref{geodesic} implies that we can replace $\gamma^{-1} \circ \sigma'$ with $\sigma$ in the formula of 
$h_{\gamma(\xi)}(\gamma(z))$ with additive error $2\kappa(\delta)$. On the contrary, by
\begin{align*}
h_{\xi}(z)&=\lim_{t \to \infty}(d(z,\sigma(t))-d(o,\sigma(t)))\\
&=\lim_{t \to \infty}(d(z,\sigma(t))-d(\gamma^{-1}(o),\sigma(t)))+\lim_{t \to \infty}(d(\gamma^{-1}(o),\sigma(t))-d(o,\sigma(t))),
\end{align*}
we see that
$$
|h_{\gamma(\xi)}(\gamma(z))-h_{\xi}(z)+h_{\xi}(\gamma^{-1}(o))| \leq 2\kappa(\delta),
$$
which implies the required estimate.
\end{proof}

Moreover, $h_\xi(z)$ is approximated by the difference of the distances to $z$ and $o$ from a point $x$
sufficiently close to $\xi$. This can be found in Coornaert \cite[Lemme 2.2]{Co}.

\begin{proposition}\label{approx}
For every $\xi \in \partial X$ and every $z \in X$, there is a neighborhood $U_\xi \subset \overline X$
of $\xi$ such that
$$
|h_\xi(z)-(d(z,x)-d(o,x))| \leq c(\delta)
$$
for every $x \in U_\xi \cap X$, where $c(\delta) \geq 0$ is a constant depending only on $\delta$.
\end{proposition}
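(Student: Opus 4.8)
The plan is to fix a geodesic ray $\sigma\colon[0,\infty)\to X$ with $\sigma(0)=o$ and $\sigma(\infty)=\xi$ defining $h_\xi$ (any such choice works, since the Busemann function changes by at most an additive $O(\delta)$), to choose a large truncation parameter $T$ depending on $z$ and $\xi$ so that $d(z,\sigma(T))-d(o,\sigma(T))$ already approximates $h_\xi(z)$, and then to show that when $x$ is close enough to $\xi$ both geodesics $[o,x]$ and $[z,x]$ pass within a constant of $\sigma(T)$; comparing $d(z,x)-d(o,x)$ with $d(z,\sigma(T))-d(o,\sigma(T))$ through the point $\sigma(T)$ then yields the claim.

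First I would record that $\rho(t):=d(z,\sigma(t))-d(o,\sigma(t))=d(z,\sigma(t))-t$ is non-increasing in $t$, by the triangle inequality, and bounded below, so it decreases to $h_\xi(z)$; hence one may fix $T$ with $h_\xi(z)\le\rho(T)\le h_\xi(z)+\delta$ and, after enlarging $T$, also $T>d(o,z)+\kappa(\delta)$. For this $T$, applying Proposition \ref{geodesic}(1) to the ideal triangle with vertices $o$, $z$, $\xi$, and using that $\sigma(T)$ has distance at least $T-d(o,z)>\kappa(\delta)$ from the compact edge $[o,z]$, one sees that $\sigma(T)$ lies within $\kappa(\delta)$ of the ray $[z,\xi)$.

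Next I would choose $U_\xi$. It is standard (see \cite{CDP}) that $d(o,[x,\xi))$ and $d(z,[x,\xi))$ tend to infinity as $x\to\xi$ in $\overline X$, since the distance from a base point to the ray $[x,\xi)$ differs from the Gromov product of $x$ and $\xi$ at that base point by at most a constant depending only on $\delta$, and that product blows up by the estimate for the visual metric. So I would pick $U_\xi$ so small that every $x\in U_\xi\cap X$ satisfies $d(o,[x,\xi))>T+\kappa(\delta)$ and $d(z,[x,\xi))>T+d(o,z)+2\kappa(\delta)$; in particular $d(o,x)\ge T$, so $y:=[o,x](T)$ is defined. For such $x$, Proposition \ref{geodesic}(1) applied to the ideal triangle $o,x,\xi$ forces the length-$T$ initial segment of $[o,x]$ to stay within $2\kappa(\delta)$ of $\sigma$, whence $d(y,\sigma(T))\le 2\kappa(\delta)$; and the same proposition for the ideal triangle $z,x,\xi$, together with the fact that the point of $[z,\xi)$ near $\sigma(T)$ is too close to $z$ to come within $\kappa(\delta)$ of $[x,\xi)$, shows that this point, hence $y$, lies within $4\kappa(\delta)$ of $[z,x]$.

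Finally I would assemble the estimate. Since $y\in[o,x]$ we have $d(o,x)=T+d(y,x)$; since $y$ lies within $4\kappa(\delta)$ of $[z,x]$, comparison with the nearest point of $[z,x]$ to $y$ gives $d(z,x)=d(z,y)+d(y,x)+O(\kappa(\delta))$. Subtracting, $d(z,x)-d(o,x)=d(z,y)-T+O(\kappa(\delta))$, and since $d(z,y)=d(z,\sigma(T))+O(\kappa(\delta))$ and $d(z,\sigma(T))-T=\rho(T)\in[h_\xi(z),h_\xi(z)+\delta]$, we get $|h_\xi(z)-(d(z,x)-d(o,x))|\le\delta+10\kappa(\delta)$, which can serve as $c(\delta)$. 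I expect the only real difficulty to be bookkeeping: checking that $U_\xi$, which may depend on $z$ and $\xi$, can be chosen so that all the fellow-traveling and ``too far to interfere'' inequalities hold simultaneously, and tracking the additive errors so that the final constant depends on $\delta$ alone; the geometric input is entirely Proposition \ref{geodesic}.
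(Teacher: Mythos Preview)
Your proof is correct. The paper itself does not prove this proposition: it simply attributes the statement to Coornaert \cite[Lemme 2.2]{Co} and moves on, so there is no argument in the paper to compare yours against. What you have written is a complete, self-contained proof using only the thin-triangle condition of Proposition~\ref{geodesic}, and every step checks out: the monotonicity of $\rho(t)=d(z,\sigma(t))-t$ lets you freeze a good truncation point $\sigma(T)$; the choice of $U_\xi$ via the divergence of the Gromov product $(x\mid\xi)$ forces both $[o,x]$ and $[z,x]$ to fellow-travel past $\sigma(T)$; and the final additive bookkeeping, with $c(\delta)=\delta+10\kappa(\delta)$, is accurate. The only cosmetic point is that your $8\kappa(\delta)$ error in $d(z,x)=d(z,y)+d(y,x)+O(\kappa(\delta))$ could be stated explicitly rather than hidden in the $O(\cdot)$, but this does not affect the conclusion.
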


Hereafter, the constant $c(\delta)$ will always refer to that in the above proposition. 

\subsection{The Poincar\'e series and the limit set}
For a Gromov hyperbolic space $(X,d)$, we denote the group of all isometric automorphisms of $(X,d)$ by $\Isom(X,d)$.
We say that a subgroup $\Gamma \subset \Isom(X,d)$ is {\it discrete} if it acts on $X$ properly discontinuously.

\begin{definition}
For a discrete subgroup $\Gamma \subset \Isom(X,d)$,
the {\it Poincar\'e series} $P^s_\Gamma(z,x)$ of dimension (or exponent) $s \geq 0$ 
with respect to the visual parameter $a$ is given by
$$
P^s_\Gamma(z,x)=\sum_{\gamma \in \Gamma} a^{-sd(z,\gamma(x))}.
$$
We call $z \in X$ the reference point and $x \in X$ the orbit point.
The convergence or divergence of $P^s_\Gamma(z,x)$ is independent of the choice of $z$ and $x$.
The {\it critical exponent} of $\Gamma$ is defined by
$$
e_a(\Gamma)=\inf \,\{s \geq 0 \mid P^s_\Gamma(z,x)<\infty\}.
$$
\end{definition}

We remark that in contrast with the case of Kleinian groups, the critical exponent is possibly infinite.
In this study, we are only interested in the case where it is finite.
The divergence of the Poincar\'e series at the finite critical exponent is a remarkable property.

\begin{definition}
If the critical exponent $e_a(\Gamma)$ of a discrete group $\Gamma \subset \Isom(X,d)$
is finite and 
the Poincar\'e series $P_\Gamma^s(z,x)$ of dimension $s=e_a(\Gamma)$ diverges,
then $\Gamma$ is said to be of {\it divergence type}.
\end{definition}

For example, every quasiconvex cocompact group $\Gamma$ with $e_a(\Gamma)<\infty$
is of divergence type (see Coornaert \cite[Corollaire 7.3]{Co}).

Here, we present certain properties of the Poincar\'e series that are used later. See \cite[Proposition 2.1]{MY1}
in the case of Kleinian groups. There is no difference in the present case.

\begin{proposition}\label{property}
We have the following properties: 
\begin{enumerate}
\item[$(1)$] $P_{\Gamma}^s(z,x)=P_{\Gamma}^s(x,z)$; 
\item[$(2)$]
$P_{\Gamma}^s(g(z),g(x))=P_{\Gamma}^s(x,z)$ for every $g \in \Isom(X,d)$ with $g \Gamma g^{-1}=\Gamma$.
\end{enumerate}
\end{proposition}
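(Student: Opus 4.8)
The plan is to combine the fact that every element of $\Gamma$ is an isometry of $(X,d)$ with a reindexing of the defining sum, using that a series of nonnegative terms may be rearranged freely (so both sides are simultaneously finite or infinite and equal).

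For part $(1)$, I would start from the definition $P^s_\Gamma(z,x)=\sum_{\gamma \in \Gamma} a^{-sd(z,\gamma(x))}$ and apply $\gamma^{-1}$, which is an isometry, to each summand: $d(z,\gamma(x))=d(\gamma^{-1}(z),x)=d(x,\gamma^{-1}(z))$. Since $\gamma \mapsto \gamma^{-1}$ is a bijection of $\Gamma$ onto itself, substituting $\gamma^{-1}$ for $\gamma$ gives
$$
P^s_\Gamma(z,x)=\sum_{\gamma \in \Gamma} a^{-sd(x,\gamma^{-1}(z))}=\sum_{\gamma \in \Gamma} a^{-sd(x,\gamma(z))}=P^s_\Gamma(x,z).
$$

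For part $(2)$, fix $g \in \Isom(X,d)$ with $g\Gamma g^{-1}=\Gamma$, equivalently $g^{-1}\Gamma g=\Gamma$. Applying the isometry $g^{-1}$ to each summand yields $d(g(z),\gamma g(x))=d(z,g^{-1}\gamma g(x))$. Because $\gamma \mapsto g^{-1}\gamma g$ is a bijection of $\Gamma$, reindexing gives $P^s_\Gamma(g(z),g(x))=\sum_{\gamma\in\Gamma} a^{-sd(z,\gamma(x))}=P^s_\Gamma(z,x)$, and an appeal to part $(1)$ identifies this with $P^s_\Gamma(x,z)$. There is no real obstacle here; the only points requiring a word of care are the two reindexing bijections ($\gamma\mapsto\gamma^{-1}$ and $\gamma\mapsto g^{-1}\gamma g$) and the remark that summation over nonnegative terms is unconditional, which I would state once at the outset.
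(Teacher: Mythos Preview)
Your proof is correct and follows essentially the same approach as the paper: both use $d(z,\gamma(x))=d(x,\gamma^{-1}(z))$ for (1) and the reindexing $\tilde\gamma=g^{-1}\gamma g$ (equivalently $\gamma g=g\tilde\gamma$) for (2). Your version is slightly more explicit about the bijections and the unconditional summation, but the argument is the same.
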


\begin{proof}
Property (1) follows from the equality $d(z,\gamma(x))=d(x,\gamma^{-1}(z))$. Property (2) follows from the equality
$d(g(z),\gamma g(z))=d(z,\tilde \gamma(x))$ for
$\tilde \gamma \in \Gamma$ with $\gamma g=g \tilde \gamma$.
\end{proof}

Next, we define the {\it limit set} $\Lambda(\Gamma)$ of a discrete group $\Gamma$ 
as the set of all accumulation points $\xi$ of
the orbit $\Gamma(x)$ of $x \in X$ in $\partial X$. This is independent of the choice of $x$; hence, 
we may take $x=o$. 
It is known that $\Lambda(\Gamma)$ is a $\Gamma$-invariant closed subset of $\partial X$.
If $\# \Lambda(\Gamma) \geq 3$, then we say that $\Gamma$ is {\it non-elementary}. 

\begin{definition}
For a discrete group $\Gamma \subset \Isom(X,d)$, $\xi \in \Lambda(\Gamma)$ is called a {\it conical limit point}
if there is some geodesic ray $\beta$ toward $\xi$ and a constant $\rho > 0$ such that the orbit $\Gamma(o)$ 
accumulates to $\xi$ within the closed $\rho$-neighborhood of $\beta$. The set of all conical limit points of $\Gamma$ is
called the {\it conical limit set} and denoted by $\Lambda_c(\Gamma)$.
\end{definition}

We utilize the exhaustion of $\Lambda_c(\Gamma)$ by a sequence of $\Gamma$-invariant 
subsets defined by $\rho$. Namely, for a fixed $\rho > 0$,
$\xi \in \Lambda_c(\Gamma)$ belongs to the conical limit subset $\Lambda_c^{(\rho)}(\Gamma)$ if $\Gamma(o)$ 
accumulates to $\xi$ within the closed $\rho$-neighborhood of some geodesic ray $\beta$ toward $\xi$. Then
$\Lambda_c(\Gamma)=\bigcup_{\rho > 0} \Lambda_c^{(\rho)}(\Gamma)$.

\subsection{Quasiconformal measure}
In the Kleinian group case, conformal measures on the boundary at infinity play a central role
in the Patterson--Sullivan theory. On the Gromov hyperbolic space, such measures are allowed to have some ambiguity,
which are called quasiconformal measures. They were introduced by Coornaert \cite{Co}.

We first define this concept for a family of measures labeled by all points in the Gromov hyperbolic space $X$,
and then formulate its quasi-invariance under a subgroup $\Gamma \subset \Isom(X,d)$. 
For Kleinian groups, this way of defining those measures can be 
found in Nicholls \cite{N}.

\begin{definition}
A family $\{\mu_z\}_{z \in X}$ of finite positive Borel measures on $\partial X$ is called a {\it quasiconformal measure family}
of dimension $s \geq 0$ if 
the following conditions are satisfied:
\begin{enumerate}
\item
$\mu_z$ and $\mu_{z'}$ are mutually absolutely continuous for any $z, z' \in X$; 
\item
there is a constant $C \geq 1$ such that the Radon--Nikodym derivative satisfies 
$$
C^{-s}k(z,\xi)^s \leq \frac{d\mu_z}{d\mu_o}(\xi) \leq C^s k(z,\xi)^s \quad (\almostall \xi \in \partial X) 
$$
for every $z \in X$.
\end{enumerate}
We call $C$ the {\it quasiconformal constant}.
\end{definition}

\begin{definition}
For a subgroup $\Gamma \subset \Isom(X,d)$,
an $s$-dimensional quasiconformal measure family $\{\mu_z\}_{z \in X}$ is called {\it $\Gamma$-quasi-invariant} if
there is a constant $D \geq 1$ such that
$$
D^{-1} \leq \frac{d(\gamma^*\mu_{\gamma(z)})}{d\mu_z}(\xi) \leq D \quad (\almostall \xi \in \partial X) 
$$
for every $z \in X$ and for every $\gamma \in \Gamma$. Here, $\gamma^*\mu$ denotes the pull-back of the measure $\mu$ by $\gamma$.
To specify the quasi-invariance constant $D$, we call it $(\Gamma,D)$-quasi-invariant.
\end{definition}

The condition that $\{\mu_z\}_{z \in X}$ is $(\Gamma,D)$-quasi-invariant implies a condition on 
a single positive finite Borel measure $\mu=\mu_o$ at the base point as follows. 

\begin{proposition}\label{singlemeasure}
If an $s$-dimensional quasiconformal measure family $\{\mu_z\}_{z \in X}$ is $(\Gamma,D)$-quasi-invariant
with quasiconformal constant $C \geq 1$, then
$\mu=\mu_o$ satisfies
$$
\widetilde D^{-1} j_\gamma(\xi)^s \leq \frac{d(\gamma^*\mu)}{d\mu}(\xi) \leq \widetilde D j_\gamma(\xi)^s \quad (\almostall \xi \in \partial X)
$$
for every $\gamma \in \Gamma$ with $\widetilde D=C^sD$. 
\end{proposition}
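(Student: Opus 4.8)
The plan is to derive the bound from a single application of the chain rule for Radon--Nikodym derivatives, after choosing the reference point in the quasi-invariance hypothesis so that everything matches the definition $j_\gamma(\xi)=k(\gamma^{-1}(o),\xi)$ on the nose. Concretely, rather than comparing $\gamma^*\mu_o$ to $\mu_o$ through the intermediate measure $\mu_{\gamma(o)}$ — which would force a change of variable $\xi\mapsto\gamma(\xi)$ inside $k$ and an appeal to Proposition~\ref{poisson}, introducing an extra factor $a^{2s\kappa(\delta)}$ — I would route through $\mu_{\gamma^{-1}(o)}$, for which $\gamma(\gamma^{-1}(o))=o$ and no such correction is needed.

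Fix $\gamma\in\Gamma$. First I would record the two input estimates. Taking $z=\gamma^{-1}(o)$ in the definition of $(\Gamma,D)$-quasi-invariance gives
$$
D^{-1} \le \frac{d(\gamma^*\mu_o)}{d\mu_{\gamma^{-1}(o)}}(\xi) \le D \qquad (\almostall\ \xi\in\partial X),
$$
and taking $z=\gamma^{-1}(o)$ in condition (2) for the quasiconformal measure family gives
$$
C^{-s}\,k(\gamma^{-1}(o),\xi)^s \le \frac{d\mu_{\gamma^{-1}(o)}}{d\mu_o}(\xi) \le C^s\,k(\gamma^{-1}(o),\xi)^s \qquad (\almostall\ \xi\in\partial X).
$$
Condition (1) for the family, together with the first display (which in particular shows $\gamma^*\mu_o$ and $\mu_{\gamma^{-1}(o)}$ are mutually absolutely continuous), implies that $\gamma^*\mu_o$ and $\mu=\mu_o$ are mutually absolutely continuous; hence the chain rule
$$
\frac{d(\gamma^*\mu_o)}{d\mu_o}(\xi) = \frac{d(\gamma^*\mu_o)}{d\mu_{\gamma^{-1}(o)}}(\xi)\cdot\frac{d\mu_{\gamma^{-1}(o)}}{d\mu_o}(\xi)
$$
holds almost everywhere on $\partial X$.

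Multiplying the two pairs of bounds and substituting $j_\gamma(\xi)=k(\gamma^{-1}(o),\xi)$ then yields $C^{-s}D^{-1}\,j_\gamma(\xi)^s \le \tfrac{d(\gamma^*\mu)}{d\mu}(\xi) \le C^sD\,j_\gamma(\xi)^s$ for almost every $\xi$, which is the assertion with $\widetilde D=C^sD$; since $C$ and $D$ are independent of $\gamma$, so is $\widetilde D$. I do not expect a genuine obstacle here: the content lies entirely in selecting the reference point $z=\gamma^{-1}(o)$ and in verifying that all the absolute-continuity relations needed to justify the chain rule are in place. The only mild point of care is the bookkeeping of how pull-back interacts with Radon--Nikodym derivatives, and with this choice of $z$ one never actually has to compose a density with $\gamma$.
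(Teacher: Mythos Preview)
Your proof is correct and follows essentially the same approach as the paper: choose $z=\gamma^{-1}(o)$ in both the quasi-invariance and quasiconformality hypotheses, then combine via the chain rule and substitute $j_\gamma(\xi)=k(\gamma^{-1}(o),\xi)$. Your additional remarks on absolute continuity and on why this choice of $z$ avoids an appeal to Proposition~\ref{poisson} are accurate but not needed for the argument itself.
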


\begin{proof}
The $(\Gamma,D)$-quasi-invariance of $\{\mu_z\}_{z \in X}$ for $z=\gamma^{-1}(o)$ implies
$$
D^{-1} \leq \frac{d(\gamma^*\mu_o)}{d\mu_{\gamma^{-1}(o)}}(\xi) \leq D,  
$$
and the quasiconformality with constant $C$ implies
$$
C^{-s}k(\gamma^{-1}(o),\xi)^s \leq \frac{d\mu_{\gamma^{-1}(o)}}{d\mu_o}(\xi) \leq C^{s}k(\gamma^{-1}(o),\xi)^s.
$$
These two formulae with $k(\gamma^{-1}(o),\xi)=j_\gamma(\xi)$ show the assertion.
\end{proof}

If a positive finite Borel measure $\mu$ on $\partial X$ satisfies the condition in Proposition \ref{singlemeasure},
then we also call it a
$\Gamma$-quasi-invariant or, more precisely, $(\Gamma,\widetilde D)$-{\it quasi-invariant quasiconformal measure} of dimension $s$.

\subsection{The Patterson measure}
Quasi-invariant quasiconformal measures of dimension  at the critical exponent are the main tools in our study.

\begin{definition}
For a non-elementary discrete group $\Gamma \subset \Isom(X,d)$,
a $\Gamma$-quasi-invariant quasiconformal measure $\mu$ or measure family $\{\mu_z\}_{z \in X}$ 
of dimension  at the critical exponent $e_a(\Gamma)<\infty$
with support 
in the limit set $\Lambda(\Gamma)$ is called a
{\it Patterson measure (family)}. 
\end{definition}

\begin{remark}
In this paper, by the support of a measure, we mean the smallest closed set that is of full measure.
As the limit set $\Lambda(\Gamma)$ is the minimal non-empty closed $\Gamma$-invariant subset when $\Gamma$ is non-elementary
(see \cite[Th\'eor\`eme 5.1]{Co}),
if a $\Gamma$-quasi-invariant quasiconformal measure has its support in $\Lambda(\Gamma)$, then it coincides with
$\Lambda(\Gamma)$.
\end{remark}

The existence of a Patterson measure can be verified by
the construction due to Patterson. For a discrete group $\Gamma$ of divergence type,
this is given by a weak-$\ast$ limit of a sequence of 
weighted Dirac masses $m^s_{z,x}$ on $\overline X$ defined by the Poincar\'e series $P_\Gamma^s(z,x)$
as $s$ tends to $e_a(\Gamma)$. This construction naturally produces a $\Gamma$-quasi-invariant 
quasiconformal measure family. We discuss the canonical Patterson measures obtained in this way
in Section \ref{6}.
In addition, the lower bound of the dimensions of
quasi-invariant quasiconformal measures for $\Gamma$ is equal to the critical exponent $e_a(\Gamma)$, 
which is a consequence of
the shadow lemma stated in the following subsection. These results were proved by Coornaert \cite[Th\'eor\`eme 5.4, Corollaire 6.6]{Co} as follows.

\begin{theorem}\label{existence}
Assume that a non-elementary
discrete group $\Gamma \subset \Isom(X,d)$ has
finite critical exponent $e_a(\Gamma)$. Then,
a Patterson measure for $\Gamma$ exists. Moreover,
the dimension $s$ of any $\Gamma$-quasi-invariant quasiconformal measure is at least $e_a(\Gamma)$.
\end{theorem}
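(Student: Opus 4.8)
The plan is to split the statement into two parts: existence of a Patterson measure, and the lower bound on the dimension of any quasi-invariant quasiconformal measure. For the existence part, I would carry out the Patterson construction explicitly. Fix the base point $o \in X$ and, for $s > e_a(\Gamma)$, form the normalized weighted Dirac mass
\[
m^s_{o} = \frac{1}{P^s_\Gamma(o,o)} \sum_{\gamma \in \Gamma} a^{-s\, d(o,\gamma(o))}\, \delta_{\gamma(o)}
\]
on the compact space $\overline X$, and more generally $m^s_z$ with $o$ replaced by $z$ in the exponent. When $\Gamma$ is of divergence type, $P^s_\Gamma(o,o) \to \infty$ as $s \downarrow e_a(\Gamma)$, which forces any weak-$\ast$ accumulation point $\mu_z$ of $\{m^s_z\}$ to be supported on $\partial X$; since the orbit accumulates only on $\Lambda(\Gamma)$, the limit sits on $\Lambda(\Gamma)$, and by the remark following the definition of the Patterson measure it is exactly $\Lambda(\Gamma)$. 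Choosing a single sequence $s_n \downarrow e_a(\Gamma)$ realizing the limit simultaneously for all $z$ (a diagonal argument over a countable dense set of orbit points, then extended), one gets a family $\{\mu_z\}_{z \in X}$. The quasiconformality of dimension $e_a(\Gamma)$ and the $\Gamma$-quasi-invariance are then read off from the elementary estimate
\[
\frac{d m^s_z}{d m^s_o}(\gamma(o)) = a^{-s(d(z,\gamma(o)) - d(o,\gamma(o)))},
\]
which, via Proposition \ref{approx}, is comparable to $k(z,\cdot)^s$ up to a multiplicative constant $a^{sc(\delta)}$ uniformly in $\gamma$; passing to the weak-$\ast$ limit preserves such two-sided comparisons on shadows and hence on Borel sets. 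The $\Gamma$-quasi-invariance comes similarly from $d(\gamma^*\mu_{\gamma(z)})/d\mu_z$ being the limit of ratios that telescope to $1$ up to the additive error $2c(\delta)$ in the Busemann approximation. (If $\Gamma$ is not assumed of divergence type, Patterson's trick of inserting a slowly varying weight function is needed to guarantee escape of mass to the boundary, but divergence type is the only case relevant here.)

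For the lower bound, suppose $\mu$ (or a family $\{\mu_z\}$) is an $s$-dimensional $\Gamma$-quasi-invariant quasiconformal measure; I want $s \geq e_a(\Gamma)$. The idea is to compare the Poincar\'e series $P^s_\Gamma(o,o)$ with the total mass of $\mu$ using the shadow lemma. For $x \in X$ write $\mathcal O_\rho(o,x)$ for the shadow at $o$ of the ball of radius $\rho$ about $x$. The shadow lemma (stated in the next subsection) gives, for $\rho$ large enough, a constant $c = c(\rho) \geq 1$ with
\[
c^{-1}\, a^{-s\, d(o,x)} \leq \mu\bigl(\mathcal O_\rho(o,x)\bigr) \leq c\, a^{-s\, d(o,x)}
\]
for all $x \in \Gamma(o)$. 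Summing the lower bound over $\gamma \in \Gamma$ and using that each point of $\partial X$ lies in a bounded number of shadows $\mathcal O_\rho(o,\gamma(o))$ — a consequence of properly discontinuous action together with $\delta$-hyperbolicity of the geodesics through $o$ — yields
\[
c^{-1} P^s_\Gamma(o,o) \leq \sum_{\gamma \in \Gamma} \mu\bigl(\mathcal O_\rho(o,\gamma(o))\bigr) \leq M\, \|\mu\| < \infty,
\]
where $M$ is the multiplicity bound. Hence $P^s_\Gamma(o,o) < \infty$, so $s \geq e_a(\Gamma)$ by definition of the critical exponent.

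The main obstacle I expect is the bounded-multiplicity (or equivalently, bounded-overlap) estimate for the family of shadows $\{\mathcal O_\rho(o,\gamma(o))\}_{\gamma \in \Gamma}$, i.e.\ showing that no point of $\overline X$ lies in more than a constant number $M = M(\rho,\delta)$ of them. This uses that two orbit points whose shadows both contain a fixed $\xi$ must lie within bounded distance of a common geodesic ray towards $\xi$, hence within bounded distance of each other along that ray up to the discreteness of the orbit; the $\delta$-thin-triangles property controls the "width" and proper discontinuity controls how many orbit points can crowd a bounded region. The other delicate point is the interchange of the weak-$\ast$ limit with the two-sided density comparisons in the existence part: this is handled by testing against shadows (whose boundaries are $\mu$-null for a generic choice of radius) rather than arbitrary open sets, so it is routine once the shadow lemma is available. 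I would organize the write-up to quote Coornaert \cite[Th\'eor\`eme 5.4, Corollaire 6.6]{Co} for these two statements and include only the adaptation of the shadow-counting argument to the $\delta$-hyperbolic setting.
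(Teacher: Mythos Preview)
The paper does not prove this theorem in the text; it simply records the statement and cites Coornaert \cite[Th\'eor\`eme 5.4, Corollaire 6.6]{Co}, which is exactly what you say you would do in your final sentence. Your existence sketch matches the canonical Patterson construction that the paper itself spells out later in Section~\ref{6} (Lemma~\ref{canonical}), so that part is fine, modulo the need for Patterson's slowly varying weight in the convergence-type case, which you correctly flag but then dismiss as irrelevant; note that the theorem as stated does \emph{not} assume divergence type, so you do need it.

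Your argument for the lower bound, however, has a real gap. The bounded-multiplicity claim for the full family $\{S_o(\gamma(o),\rho)\}_{\gamma\in\Gamma}$ is simply false: any conical limit point $\xi\in\Lambda_c(\Gamma)$ lies in $S_o(\gamma_n(o),\rho)$ for an \emph{infinite} sequence $\gamma_n\in\Gamma$ --- this is essentially the description of $\Lambda_c(\Gamma)$ as a $\limsup$ of shadows given just after Theorem~\ref{shadowlemma}. Your own justification exposes the problem: orbit points whose shadows contain a fixed $\xi$ lie close to a common geodesic ray towards $\xi$, but nothing forces them to lie close to \emph{each other} along that ray; proper discontinuity bounds the number of orbit points in a ball, not on a half-line. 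The correct route (Coornaert's) is to stratify the orbit into annuli $A_n=\{\gamma\in\Gamma:\ n\le d(o,\gamma(o))<n+1\}$. Within a single annulus the shadows \emph{do} have bounded overlap, because two orbit points at comparable distance from $o$ whose shadows share a point must be within a bounded distance of each other. The shadow lemma then gives $\#A_n\cdot L^{-1}a^{-s(n+1)}\le M\Vert\mu\Vert$, i.e.\ $\#A_n\le C'a^{sn}$, and hence $P^t_\Gamma(o,o)\le C'\sum_n a^{(s-t)n}<\infty$ for every $t>s$, forcing $e_a(\Gamma)\le s$.
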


We note that if $\Gamma$ is of divergence type, then
every $\Gamma$-quasi-invariant quasiconformal measure $\mu$ of dimension $e_a(\Gamma)$ must have its support in
the limit set $\Lambda(\Gamma)$, which was mentioned in \cite[Lemma 3.7]{MY2}.
This means that $\mu$ is a Patterson measure. 

\begin{proposition}\label{support}
Assume that a 
discrete group $\Gamma \subset \Isom(X,d)$ is of divergence type.
If $\mu$ is a $\Gamma$-quasi-invariant quasiconformal measure of dimension $e_a(\Gamma)$,
then the support of $\mu$ is on the limit set $\Lambda(\Gamma)$.
\end{proposition}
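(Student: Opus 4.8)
The plan is to show that the support of $\mu$ cannot be a proper subset of $\partial X$ that avoids $\Lambda(\Gamma)$, by exploiting the divergence of the Poincar\'e series together with the shadow lemma. Recall that the support of any $\Gamma$-quasi-invariant measure is closed and $\Gamma$-invariant. Since $\Lambda(\Gamma)$ is the unique minimal nonempty closed $\Gamma$-invariant subset of $\partial X$ when $\Gamma$ is non-elementary (cited from Coornaert), it suffices to prove that $\mu(\partial X \setminus \Lambda(\Gamma)) = 0$; then $\supp \mu$ is a nonempty closed $\Gamma$-invariant set contained in $\Lambda(\Gamma)$, hence equal to $\Lambda(\Gamma)$. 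For the elementary case one would reduce separately or simply note the hypothesis is used only through this minimality; but the stated proposition does not assume non-elementarity, so I would first dispose of the elementary situation (where $\Lambda(\Gamma)$ has at most two points and the divergence hypothesis forces a direct description of $\mu$), and then concentrate on the main case.

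The heart of the argument is the following. Write $O(z,x,R)$ for the shadow of the ball $B(x,R)$ seen from $z$. The shadow lemma (to be stated in the next subsection, which I may invoke) gives, for $R$ large enough, a constant $c = c(R) \geq 1$ with
$$
c^{-1} a^{-e_a(\Gamma) d(o,\gamma(o))} \leq \mu_o\big(O(o,\gamma(o),R)\big) \leq c\, a^{-e_a(\Gamma) d(o,\gamma(o))}
$$
for all $\gamma \in \Gamma$. Now consider the union of shadows $\bigcup_{\gamma:\, d(o,\gamma(o)) \geq n} O(o,\gamma(o),R)$. On one hand, every point of $\partial X$ lying in infinitely many such shadows is a conical limit point, so $\bigcap_n \bigcup_{d(o,\gamma(o))\geq n} O(o,\gamma(o),R)$ is contained in $\Lambda_c^{(\rho)}(\Gamma)$ for a suitable $\rho$ depending on $R$, hence in $\Lambda(\Gamma)$. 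On the other hand, I want the complement of this set to be $\mu_o$-null, which is exactly where divergence enters: by the Borel--Cantelli type reasoning, if $\sum_\gamma \mu_o(O(o,\gamma(o),R)) = \infty$ then "most" points are covered infinitely often. The naive Borel--Cantelli gives the wrong direction (it needs independence), so instead I would argue directly: suppose $\mu_o(\partial X \setminus \Lambda(\Gamma)) = \epsilon > 0$. Pick a point $\xi$ in $\supp\mu \setminus \Lambda(\Gamma)$ — but $\Lambda(\Gamma)$ is closed, so $\xi$ has a neighborhood $V$ disjoint from $\Lambda(\Gamma)$ with $\mu_o(V) > 0$. Only finitely many orbit points $\gamma(o)$ have their shadow $O(o,\gamma(o),R)$ meeting $V$ (otherwise $V$ would contain a conical limit point, contradicting $V \cap \Lambda(\Gamma) = \emptyset$ once $R$ is large), and then $\sum_{\gamma:\, O(o,\gamma(o),R)\cap V\neq\emptyset} \mu_o(O(o,\gamma(o),R)) < \infty$; using the lower bound from the shadow lemma this forces $\sum a^{-e_a(\Gamma)d(o,\gamma(o))}$ restricted to those $\gamma$ to be finite — but one must still rule out that these are \emph{all} but finitely many $\gamma$.

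To close this gap I would use a covering/packing argument: the shadows $O(o,\gamma(o),R)$ with $d(o,\gamma(o))$ in a dyadic range $[n, n+1)$ cover $\Lambda(\Gamma)$ up to bounded multiplicity (standard for $\delta$-hyperbolic spaces — this is how Coornaert proves the dimension bound), so if $\mu_o$ gave positive mass to the complement of $\Lambda(\Gamma)$, that mass would have to sit in shadows that eventually escape every neighborhood of $\Lambda(\Gamma)$, and summing the shadow-lemma lower bounds over \emph{all} $\gamma$ with large $d(o,\gamma(o))$ — which diverges precisely because $\Gamma$ is of divergence type — while the total mass $\mu_o(\partial X)$ is finite, yields a contradiction unless the shadows eventually concentrate near $\Lambda(\Gamma)$ in $\mu_o$-measure. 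Packaging this: $\mu_o\big(\bigcup_{d(o,\gamma(o))\geq n} O(o,\gamma(o),R)\big) \to \mu_o\big(\Lambda(\Gamma)\big)$ cannot be made to work by finiteness alone, so the correct route is the one used in \cite{MY2}, namely to first establish (via divergence plus shadow lemma, exactly as in the Hopf--Tsuji--Sullivan dichotomy) that $\mu_o(\Lambda_c(\Gamma)) = \mu_o(\partial X)$, i.e. $\mu_o$ is carried by the conical limit set, and then conclude $\supp\mu \subseteq \overline{\Lambda_c(\Gamma)} = \Lambda(\Gamma)$, combined with minimality of $\Lambda(\Gamma)$ to get equality.

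The main obstacle, then, is proving $\mu_o(\partial X \setminus \Lambda_c(\Gamma)) = 0$ for divergence type groups in the Gromov-hyperbolic setting; this is the "divergence $\Rightarrow$ conical part is full" half of the generalized Hopf--Tsuji--Sullivan theorem, and it will be handled in Section \ref{3} following Tukia. Granting that, the proposition is immediate: $\supp\mu \subseteq \overline{\Lambda_c(\Gamma)} \subseteq \Lambda(\Gamma)$, and since $\supp\mu$ is a nonempty closed $\Gamma$-invariant subset of the minimal such set $\Lambda(\Gamma)$, we get $\supp\mu = \Lambda(\Gamma)$. The only subtlety requiring care is the elementary case, which must be excluded from "minimality" but is trivial since divergence type with $e_a(\Gamma) < \infty$ severely constrains elementary $\Gamma$; alternatively one notes that for elementary $\Gamma$ of divergence type the critical exponent is $0$ and any quasiconformal measure of dimension $0$ is carried by the (one- or two-point) limit set automatically.
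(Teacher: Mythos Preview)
The paper does not actually give a proof of this proposition; it simply cites \cite[Lemma 3.7]{MY2}. Your eventual approach---deduce it from Corollary~\ref{full}, which the paper proves independently in Section~\ref{3}---is correct and involves no circularity: neither Theorem~\ref{main1} nor Corollary~\ref{full} invokes Proposition~\ref{support}. In effect, Corollary~\ref{full} is strictly stronger than Proposition~\ref{support} in the non-elementary case, so your route simply reorders the paper's own logic.

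That said, the write-up is an exploration rather than a proof. The first two-thirds (the Borel--Cantelli attempt, the neighborhood argument, the packing sketch) are all explicitly abandoned; what remains is one sentence: by Corollary~\ref{full}, $\mu(\partial X \setminus \Lambda_c(\Gamma))=0$, hence $\supp\mu \subseteq \overline{\Lambda_c(\Gamma)} \subseteq \Lambda(\Gamma)$. For a finished argument, keep only that. Note also that you do not need the equality $\overline{\Lambda_c(\Gamma)}=\Lambda(\Gamma)$ (which would require its own justification); the inclusion suffices, and the subsequent appeal to minimality of $\Lambda(\Gamma)$ is already made in the remark following the paper's definition of Patterson measure.

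One genuine gap: your handling of the elementary case is incorrect. Elementary parabolic groups can have positive critical exponent and be of divergence type, so the claim ``critical exponent is $0$'' fails in general. Since the shadow lemma and Corollary~\ref{full} both assume $\Gamma$ non-elementary, your argument as stated does not cover elementary $\Gamma$ at all. This is a minor issue in context---every application in the paper assumes non-elementary---but you should either restrict the statement or give a separate (short) argument for the elementary case using proper discontinuity of the action on $\partial X \setminus \Lambda(\Gamma)$ together with the uniform quasi-invariance bound.
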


In Section \ref{4}, we show that any Patterson measure $\mu$ has full measure on
the conical limit set $\Lambda_c(\Gamma)$ 
when $\Gamma$ is of divergence type.

The following is an immediate consequence of Proposition \ref{support}.

\begin{corollary}\label{inclusion}
Assume that a non-elementary discrete group $G \subset \Isom(X,d)$ contains a discrete subgroup $\Gamma$ of divergence type.
If $\Lambda(G) \supsetneqq \Lambda(\Gamma)$, then $e_a(G) > e_a(\Gamma)$.
\end{corollary}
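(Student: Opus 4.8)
The plan is to argue by contradiction, exploiting the fact that divergence type of $\Gamma$ constrains \emph{every} $\Gamma$-quasi-invariant quasiconformal measure of the critical dimension to be supported on $\Lambda(\Gamma)$ (Proposition \ref{support}), while a Patterson measure for the non-elementary group $G$ is supported on the strictly larger set $\Lambda(G)$. So the whole argument reduces to producing, under the hypothesis of equality of critical exponents, a single object that is simultaneously a $\Gamma$-quasi-invariant quasiconformal measure of dimension $e_a(\Gamma)$ and a Patterson measure for $G$.

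First I would record the elementary inequality $e_a(G) \geq e_a(\Gamma)$: since $\Gamma \subset G$ we have $P^s_\Gamma(o,o) \le P^s_G(o,o)$ for every $s \ge 0$, so convergence of the Poincar\'e series of $G$ at $s$ forces convergence of that of $\Gamma$, whence the infimum defining $e_a(G)$ is at least $e_a(\Gamma)$. It therefore remains only to exclude the possibility $e_a(G) = e_a(\Gamma)$. Assume this equality. Since $\Gamma$ is of divergence type, $e_a(\Gamma) < \infty$, hence $e_a(G) < \infty$; as $G$ is non-elementary, Theorem \ref{existence} provides a Patterson measure $\nu$ for $G$, i.e.\ a $G$-quasi-invariant quasiconformal measure of dimension $e_a(G)$ whose support equals $\Lambda(G)$ (the Remark following the definition of the Patterson measure identifies the support with the limit set in the non-elementary case). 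Restricting the quasi-invariance condition to the subgroup $\Gamma$ shows that $\nu$ is also $\Gamma$-quasi-invariant, and it is a quasiconformal measure of dimension $e_a(G) = e_a(\Gamma)$.

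Now I would apply Proposition \ref{support} to the divergence-type group $\Gamma$: the support of $\nu$ must be contained in $\Lambda(\Gamma)$. Combining this with $\supp\nu = \Lambda(G)$ gives $\Lambda(G) \subset \Lambda(\Gamma)$, contradicting the hypothesis $\Lambda(G) \supsetneqq \Lambda(\Gamma)$. Hence $e_a(G) > e_a(\Gamma)$, as claimed. There is essentially no obstacle here once Proposition \ref{support} and Theorem \ref{existence} are available; the only point requiring a little care is that it is precisely the assumed equality $e_a(G) = e_a(\Gamma)$ that makes the Patterson measure for $G$ a $\Gamma$-quasi-invariant quasiconformal measure \emph{at the dimension $e_a(\Gamma)$}, which is exactly the hypothesis under which Proposition \ref{support} forces the support onto $\Lambda(\Gamma)$.
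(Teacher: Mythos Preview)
Your proof is correct and follows essentially the same route as the paper's: assume $e_a(G)=e_a(\Gamma)$, take a Patterson measure for $G$ (supported on $\Lambda(G)$), observe it is $\Gamma$-quasi-invariant of dimension $e_a(\Gamma)$, and invoke Proposition~\ref{support} to force its support into $\Lambda(\Gamma)$, yielding the contradiction. The only difference is that you spell out the trivial inequality $e_a(G)\ge e_a(\Gamma)$ and the appeal to Theorem~\ref{existence} explicitly, which the paper leaves implicit.
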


\begin{proof}
Suppose that $e_a(G) = e_a(\Gamma)$. Then, the Patterson measure $\mu$ for $G$ whose support coincides with $\Lambda(G)$
is also a $\Gamma$-quasi-invariant quasiconformal measure of dimension $e_a(G) =e_a(\Gamma)$.
By Proposition \ref{support}, the support of $\mu$ is in $\Lambda(\Gamma)$. This implies that $\Lambda(G) =\Lambda(\Gamma)$.
\end{proof}

\subsection{The shadow lemma}
The shadow of a ball in the Gromov hyperbolic space $X$ connects the measure on the boundary $\partial X$ with the geometry of $X$.
The shadow lemma is a fundamental tool in the Patterson--Sullivan theory.

\begin{definition}
Let $B(x,r)$ be the closed ball of center $x \in X$ and radius $r \geq 0$.
For a light source $\omega \in \overline X$, the {\it shadow} is defined by
$$
S_\omega(x,r)=\{\xi \in \partial X \mid \forall [\omega,\xi] \cap B(x,r) \neq \emptyset \},
$$
where $\forall [\omega,\xi)$ refers to every geodesic line or ray connecting $\omega$ and $\xi$. In addition, the extended shadow is given by
$$
\widehat S_\omega(x,r)=\{y \in \overline X \mid \forall [\omega,y) \cap B(x,r) \neq \emptyset \}.
$$
\end{definition}

The shadow lemma is based on the following estimate for the Poisson kernel.
If we take $z=\gamma^{-1}(o)$ for $\gamma \in \Isom(X,d)$, this turns out to be the estimate for $j_\gamma(\xi)$.
This was essentially given in \cite[Lemme 6.1]{Co}.

\begin{lemma}\label{kernel}
There is a constant $C=C(\delta,a) \geq 1$ such that if $o \notin \widehat S_\omega(z,r)$, then
$$
C^{-1}a^{d(o,z)-2r} \leq k(z,\xi) \leq Ca^{d(o,z)}
$$
for every $\xi \in S_\omega(z,r)$.
\end{lemma}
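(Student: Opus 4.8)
The plan is to estimate the Busemann function $h_\xi(z)$ directly from the geometry of the shadow, using Proposition \ref{approx} to replace $h_\xi(z)$ by the difference $d(z,x)-d(o,x)$ for a suitable point $x$, and then to use that $\xi \in S_\omega(z,r)$ forces a geodesic ray to pass near the ball $B(z,r)$. Recall $k(z,\xi)=a^{-h_\xi(z)}$, so upper and lower bounds for $k(z,\xi)$ are equivalent to lower and upper bounds for $h_\xi(z)$, and since $a>1$ it suffices to show
$$
-d(o,z) \leq h_\xi(z) \leq -d(o,z)+2r
$$
up to an additive error depending only on $\delta$ and $a$; exponentiating then gives the constant $C=C(\delta,a)$.

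First I would fix $\xi \in S_\omega(z,r)$ and choose a geodesic line $\ell=(\omega,\xi)$ meeting $B(z,r)$, say at a point $p$ with $d(z,p)\le r$. Let $\sigma$ be a geodesic ray from $o$ to $\xi$ used to define $h_\xi$. By Proposition \ref{approx}, for $x$ on $\sigma$ (or on $\ell$) sufficiently close to $\xi$ we have $h_\xi(z)=d(z,x)-d(o,x)+O(c(\delta))$. For such an $x$ far out along $\ell$ toward $\xi$, the point $p$ lies on the geodesic segment of $\ell$ from $\omega$-side through $p$ to $x$, so $d(o,x)$ and $d(z,x)$ are controlled by $d(o,p)$, $d(z,p)\le r$, and $d(p,x)$. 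The triangle inequality gives $d(z,x)\le d(z,p)+d(p,x)\le r+d(p,x)$ and $d(o,x)\ge d(o,p)\cdot$(something)$\,$— more carefully, since $x$ is far along the geodesic toward $\xi$, one gets $d(o,x)\ge d(o,z)-d(z,p)-d(p,x)+\ $... . The cleanest route: the hypothesis $o\notin\widehat S_\omega(z,r)$ means no geodesic from $\omega$ to $o$ meets $B(z,r)$, which by $\delta$-thinness of the ideal triangle $(\omega,o,\xi)$ (Proposition \ref{geodesic}(1)) forces $d(o,p)\ge d(o,z)-2r-2\kappa(\delta)$, i.e. $p$ (hence the near-$z$ portion of $\ell$) is roughly as far from $o$ as $z$ is; conversely $d(o,p)\le d(o,z)+r$ trivially. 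Combining $d(z,x)-d(o,x)\le (r+d(p,x)) - (d(o,p)-d(p,x)+\dots)$ and letting $x\to\xi$ so the $d(p,x)$ terms cancel yields $h_\xi(z)\le -d(o,z)+2r+(\text{const})$ and $h_\xi(z)\ge -d(o,z)-(\text{const})$, with all constants depending only on $\delta$ (through $\kappa(\delta),c(\delta)$) and on $a$.

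The main obstacle I expect is the bookkeeping of additive error terms: one must be careful that $h_\xi(z)$ is only defined up to a $\delta$-dependent constant (dependence on the choice of $\sigma$), that Proposition \ref{approx} introduces $c(\delta)$, that replacing the chosen geodesic $\ell=(\omega,\xi)$ by $\sigma=(o,\xi)$ near $\xi$ costs $2\kappa(\delta)$ via Proposition \ref{geodesic}(2), and that the condition $o\notin\widehat S_\omega(z,r)$ must be converted into the lower bound $d(o,p)\gtrsim d(o,z)-2r$ via the thin ideal triangle estimate. Each individual step is a short triangle-inequality argument, but assembling them so that the final constant genuinely depends only on $\delta$ and $a$ (and not on $z$, $r$, $\omega$, $\xi$) is where care is needed. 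Once the two-sided bound on $h_\xi(z)$ is in hand, exponentiation with base $a$ gives exactly the claimed inequalities with a single constant $C=C(\delta,a)\ge 1$.
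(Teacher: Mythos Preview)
Your overall strategy---reduce to showing $-d(o,z)-O_\delta(1)\le h_\xi(z)\le -d(o,z)+2r+O_\delta(1)$ via Proposition~\ref{approx} and thin triangles---is a legitimate alternative to the paper's proof, which is entirely different: the paper simply checks the inequality in a tree (where it holds with $C=1$) and then invokes the tree-approximation theorem of \cite[Th\'eor\`eme 1.1]{Co}. That route is a two-line black box; yours is more explicit and avoids citing tree approximation.

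That said, your execution of the non-trivial bound has a genuine gap. First a minor correction: by the definition of $\widehat S_\omega(z,r)$ (note the $\forall$), the hypothesis $o\notin\widehat S_\omega(z,r)$ means \emph{some} geodesic $(\omega,o)$ misses $B(z,r)$, not that all do; this costs only an extra $\kappa(\delta)$ via Proposition~\ref{geodesic}(2). The real problem is what you extract from the hypothesis. The inequality you state, $d(o,p)\ge d(o,z)-2r-2\kappa(\delta)$, is weaker than the triangle inequality $d(o,p)\ge d(o,z)-r$ and uses nothing. What is actually needed, in order for the $d(p,x)$ terms to cancel when you combine $d(z,x)\le r+d(p,x)$ with a lower bound for $d(o,x)$, is
\[
d(o,x)\ \ge\ d(o,p)+d(p,x)-O_\delta(1),
\]
i.e.\ $(o\mid x)_p=O_\delta(1)$, equivalently that $p$ lies within $O_\delta(1)$ of the ray $[o,\xi)$. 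Your displayed combination has the sign of $d(p,x)$ reversed. Establishing $(o\mid x)_p=O_\delta(1)$ is precisely where the hypothesis enters: Proposition~\ref{geodesic}(1) applied to the triangle $(\omega,o,\xi)$ says $p\in(\omega,\xi)$ is $\kappa(\delta)$-close to $(\omega,o)\cup[o,\xi)$, and one must use that some $(\omega,o)$ avoids $B(z,r)$ to force the $[o,\xi)$ alternative. This last step is still delicate---naively the hypothesis gives only $d(p,(\omega,o))>0$, not $>\kappa(\delta)$---and tidying it up amounts to the four-point tree approximation that the paper's proof invokes wholesale.
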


\begin{proof}
For a tree $X$, that is, a $0$-hyperbolic space, the statement can be easily verified. Then, we apply 
approximation by trees as in \cite[Th\'eor\`eme 1.1]{Co}.
\end{proof}

The complement of a shadow can be arbitrarily small if we make the radius sufficiently large.
This geometric observation \cite[Lemme 6.3]{Co} is also used in the proof of the shadow lemma below.
For later purposes, we extend it slightly and provide a proof. Here, $\diam_a$ denotes the diameter 
with respect to the visual metric $d_a$.

\begin{proposition}\label{complement}
For every $\varepsilon>0$, there is a constant $r(\varepsilon)>0$ such that if $r \geq r(\varepsilon)$, then
$$
\diam_a (\partial X-S_\omega(o,r)) \leq \varepsilon
$$
for every $\omega \in \overline X$.
\end{proposition}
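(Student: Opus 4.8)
\emph{Proof plan.} The plan is to combine the thin-triangle property for ideal triangles (Proposition \ref{geodesic}(1)) with the visual-metric comparison in property (2) of the visual metric $d_a$. The underlying geometric picture is this: a point $\xi\in\partial X$ fails to lie in $S_\omega(o,r)$ precisely when some geodesic joining $\omega$ to $\xi$ stays outside the ball $B(o,r)$; and if $\xi$ and $\eta$ both enjoy this property, the two avoiding geodesics, together with a geodesic line $(\xi,\eta)$, form an ideal triangle whose $(\xi,\eta)$-edge is squeezed away from the base point $o$, which by the visual metric means $\xi$ and $\eta$ are close.

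First I would fix $r>0$ and $\omega\in\overline X$ and pick arbitrary distinct $\xi,\eta\in\partial X-S_\omega(o,r)$; if that set has at most one point there is nothing to prove, and I note that when $\omega\in\partial X$ it always belongs to $S_\omega(o,r)$ vacuously, so the degenerate possibility $\omega\in\{\xi,\eta\}$ does not arise. By the definition of the shadow there are geodesics $(\omega,\xi)$ and $(\omega,\eta)$ every point of which has distance $>r$ from $o$. Applying Proposition \ref{geodesic}(1) to the ideal triangle built from these two geodesics and a geodesic line $(\xi,\eta)$, the edge $(\xi,\eta)$ lies in the closed $\kappa(\delta)$-neighborhood of $(\omega,\xi)\cup(\omega,\eta)$, and since both of these geodesics are at distance $>r$ from $o$, this forces $d(o,(\xi,\eta))\ge r-\kappa(\delta)$.

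Then the upper estimate in property (2) of the visual metric gives $d_a(\xi,\eta)\le\lambda\,a^{-d(o,(\xi,\eta))}\le\lambda\,a^{\kappa(\delta)}a^{-r}$, uniformly in $\xi,\eta$ and in $\omega$, hence $\diam_a(\partial X-S_\omega(o,r))\le\lambda\,a^{\kappa(\delta)}a^{-r}$; choosing $r(\varepsilon)=\max\{1,\,\kappa(\delta)+\log_a(\lambda/\varepsilon)\}$ completes the argument. I expect no real obstacle; the only delicate points are the trivial degenerate cases just mentioned and the observation---immediate from the definition of the shadow---that the required avoiding geodesics from $\omega$ genuinely exist whenever $\xi\notin S_\omega(o,r)$.
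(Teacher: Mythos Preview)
Your argument is correct and follows essentially the same route as the paper's own proof: pick avoiding geodesics $(\omega,\xi)$ and $(\omega,\eta)$, apply the thin-triangle estimate of Proposition~\ref{geodesic} to the ideal triangle $\Delta(\omega,\xi,\eta)$ to obtain $d(o,(\xi,\eta))\ge r-\kappa(\delta)$, and then invoke the visual-metric comparison to get $d_a(\xi,\eta)\le\lambda\,a^{\kappa(\delta)-r}$. The paper disposes of the degenerate case by the single remark ``we may assume $\omega\notin B(o,r)$'' (since otherwise the shadow is all of $\partial X$), whereas you handle it via the ``at most one point'' observation and the vacuous membership $\omega\in S_\omega(o,r)$; both are fine.
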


\begin{proof}
We may assume that $\omega \notin B(o,r)$.
For any $\xi, \eta \in \partial X-S_\omega(o,r)$, we take some geodesic lines or rays $[\omega,\xi)$ and
$[\omega,\eta)$ that do not intersect $B(o,r)$. For any geodesic line $(\xi,\eta)$,
we consider the geodesic triangle
$\Delta(\omega,\xi,\eta)$ with edges $(\xi,\eta)$, $[\omega,\xi)$, and $[\omega,\eta)$.
As $(\xi,\eta)$ is within a distance $\kappa(\delta)$ from the union $[\omega,\xi) \cup [\omega,\eta)$ by Proposition \ref{geodesic},
we have $d(o,(\xi,\eta)) \geq r-\kappa(\delta)$. By the estimate for the visual metric,
$$
d_a(\xi,\eta) \leq \lambda a^{-d(o,(\xi,\eta))} \leq \lambda a^{\kappa(\delta)-r}
$$
for the constant $\lambda=\lambda(\delta,a) \geq 1$. Hence, we can choose $r(\varepsilon)$ so that 
$\lambda a^{\kappa(\delta)-r(\varepsilon)} \leq \varepsilon$.
\end{proof}

The following theorem was proved in \cite[Proposition 6.1]{Co}.

\begin{theorem}[Shadow lemma]\label{shadowlemma}
Let $\Gamma \subset \Isom(X,d)$ be a non-elementary discrete group and
let $\mu$ be a $\Gamma$-quasi-invariant quasiconformal measure of dimension $s$.
We fix a light source $\omega \in \overline X$.
Then, there are constants $L \geq 1$ and $r_0>0$ such that
$$
L^{-1}a^{-sd(o,\gamma^{-1}(o))} \leq \mu(S_\omega(\gamma^{-1}(o),r)) \leq La^{2rs} a^{-sd(o,\gamma^{-1}(o))} 
$$
for every $\gamma \in \Gamma$ with $o \notin \widehat S_\omega(\gamma^{-1}(o),r)$ and for every $r \geq r_0$.
\end{theorem}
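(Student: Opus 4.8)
The overall strategy is to reduce the statement about the measure $\mu$ of a shadow to the estimate of the Poisson kernel in Lemma~\ref{kernel}, using the change-of-variables formula encoded in the quasi-invariance of $\mu$ (Proposition~\ref{singlemeasure}), together with the geometric fact that the complement of a large shadow has small diameter (Proposition~\ref{complement}). Write $\gamma^{-1}(o)=z$ and fix $r\ge r_0$, to be chosen. The point is that $\gamma$ maps the shadow $S_\omega(z,r)$ based at the far point $z$ essentially onto a shadow $S_{\gamma(\omega)}(o, r+O(\delta))$ based near the base point $o$, and on the latter object both $\mu$ and the Jacobian $j_\gamma^s$ are under control.

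\textbf{Upper bound.} First I would apply Proposition~\ref{singlemeasure}: for $\Gamma$-quasi-invariant $\mu$ of dimension $s$, with constant $\widetilde D=C^sD$,
$$
\mu(S_\omega(z,r)) = \int_{S_\omega(z,r)} d\mu \asymp_{\widetilde D}\int_{\gamma(S_\omega(z,r))} j_{\gamma^{-1}}(\xi)^{-s}\, d\mu(\xi),
$$
after the substitution $\xi\mapsto\gamma(\xi)$ and using $j_{\gamma}(\xi)^{-s}\asymp j_{\gamma^{-1}}(\gamma(\xi))^{s}$ up to a factor depending only on $\delta$ (this last comparison follows from the cocycle-type identity for Busemann functions, i.e.\ Proposition~\ref{poisson} applied with $z=\gamma^{-1}(o)$). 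Then $j_{\gamma^{-1}}(\eta)=k(\gamma(o),\eta)$, and by Lemma~\ref{kernel} — applied with reference point $\gamma(o)$ in place of $z$, light source $\gamma(\omega)$, noting $d(o,\gamma(o))=d(o,\gamma^{-1}(o))=d(o,z)$ and that $\gamma(S_\omega(z,r))=S_{\gamma(\omega)}(\gamma(z),r)$ with $\gamma(z)=o$ so in fact $\eta$ ranges over $S_{\gamma(\omega)}(o,r)$ — we get $k(\gamma(o),\eta)\asymp a^{d(o,z)}$ up to a factor $C\,a^{2r}$ on the lower side. Hence $j_{\gamma^{-1}}(\eta)^{-s}\le C^s a^{2rs}a^{-sd(o,z)}$ on that set, and since $\mu(\partial X)$ is finite this yields the upper bound with $L$ absorbing $\widetilde D$, $C^s$, $\mu(\partial X)$ and the $\delta$-constants.

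\textbf{Lower bound.} For the lower bound I need $\mu(\gamma(S_\omega(z,r)))=\mu(S_{\gamma(\omega)}(o,r))$ bounded away from $0$ uniformly. This is where Proposition~\ref{complement} enters: choose $r_0=r(\varepsilon)$ so that $\diam_a(\partial X - S_{\gamma(\omega)}(o,r))\le\varepsilon$ for all $r\ge r_0$ and all light sources; then $\partial X - S_{\gamma(\omega)}(o,r)$ is contained in a visual ball of radius $\varepsilon$, and since $\mu$ is supported on $\Lambda(\Gamma)$ which contains at least three points (non-elementarity), one can fix $\varepsilon$ small enough that any such $\varepsilon$-ball misses at least one point of $\Lambda(\Gamma)$, forcing $\mu(S_{\gamma(\omega)}(o,r))\ge \mu(\partial X) - \sup\{\mu(\text{visual }\varepsilon\text{-ball})\}$; the genuinely uniform way to do this is to note that for small enough $\varepsilon$ the complement cannot contain the whole support, and a compactness argument gives a positive lower bound $m_0>0$ independent of the light source. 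Then running the change of variables in the other direction, $j_{\gamma^{-1}}(\eta)^{-s}\ge C^{-s}a^{-2rs}a^{-sd(o,z)}$ is the wrong direction — instead I use the \emph{upper} bound $k(\gamma(o),\eta)\le C a^{d(o,z)}$ from Lemma~\ref{kernel}, giving $j_{\gamma^{-1}}(\eta)^{-s}\ge C^{-s}a^{-sd(o,z)}$ on $S_{\gamma(\omega)}(o,r)$, and therefore $\mu(S_\omega(z,r))\ge \widetilde D^{-1}C^{-s}m_0\,a^{-sd(o,z)}$, which is the claimed lower bound after enlarging $L$.

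\textbf{Main obstacle.} The routine part is the change of variables and bookkeeping of $\delta$-dependent constants. The step that requires genuine care is the uniform lower bound $\mu(S_{\gamma(\omega)}(o,r))\ge m_0>0$ independent of $\omega$: one must combine Proposition~\ref{complement} (small-diameter complement) with the fact that the support of $\mu$ is the full limit set $\Lambda(\Gamma)$, which is perfect and hence not concentrated near any single point, and make this quantitative and uniform in $\omega\in\overline X$ — a mild compactness/continuity argument on the compact space $\overline X$. Everything else is then a direct assembly.
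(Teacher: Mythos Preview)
The paper does not actually prove the Shadow Lemma; it states it after supplying the two key ingredients (Lemma~\ref{kernel} and Proposition~\ref{complement}) and defers to Coornaert. Your overall strategy --- quasi-invariance to transport $S_\omega(\gamma^{-1}(o),r)$ to $S_{\gamma(\omega)}(o,r)$, kernel estimate for the Jacobian, small-complement geometry plus non-elementarity for the uniform lower bound on $\mu(S_{\gamma(\omega)}(o,r))$ --- is exactly that standard approach, and your discussion of the lower bound is fine.

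There is, however, a tangle of sign errors in your upper bound that happen to cancel. In your display the integrand should be $j_{\gamma^{-1}}(\eta)^{+s}$, not $j_{\gamma^{-1}}(\eta)^{-s}$ (your own cocycle identity $j_\gamma(\xi)^{-s}\asymp j_{\gamma^{-1}}(\gamma(\xi))^{+s}$ says so). More seriously, your invocation of Lemma~\ref{kernel} ``with reference point $\gamma(o)$ in place of $z$, light source $\gamma(\omega)$'' is not a valid direct application: that lemma estimates $k(z,\xi)$ for $\xi$ in the shadow \emph{centered at $z$}, whereas your $\eta$ lies in $S_{\gamma(\omega)}(o,r)$, a shadow centered at $o$, not at $\gamma(o)$. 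And the estimate you claim, $k(\gamma(o),\eta)\asymp a^{d(o,z)}$, is off by a sign in the exponent: tracing through Proposition~\ref{poisson} one gets $k(\gamma(o),\gamma(\xi))\asymp k(z,\xi)^{-1}\asymp a^{-d(o,z)}$ for $\xi\in S_\omega(z,r)$. The two mistakes compensate and your final bound is correct, but the write-up is not.

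The clean route avoids the detour entirely: apply Lemma~\ref{kernel} \emph{directly on $S_\omega(z,r)$} to get $j_\gamma(\xi)=k(z,\xi)\asymp a^{d(o,z)}$ there (up to the $a^{2r}$ factor), so that
\[
\mu(S_{\gamma(\omega)}(o,r))=(\gamma^*\mu)(S_\omega(z,r))\asymp\int_{S_\omega(z,r)}j_\gamma(\xi)^s\,d\mu(\xi)\asymp a^{sd(o,z)}\mu(S_\omega(z,r)),
\]
and then bound $\mu(S_{\gamma(\omega)}(o,r))$ above by $\Vert\mu\Vert$ and below by your $m_0$.
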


The conical limit set $\Lambda_c(\Gamma)$ can be described by the limit superior of the family of shadows 
$\{S_o(\gamma(o),r)\}_{\gamma \in \Gamma}$. More precisely, by setting $r=\rho+\kappa(\delta)$ for each $\rho > 0$,
Proposition \ref{geodesic} implies that
$$
\limsup_{\gamma \in \Gamma} \{S_o(\gamma^{-1}(o),\rho)\} \subset
\Lambda_c^{(\rho)}(\Gamma) \subset \limsup_{\gamma \in \Gamma} \{S_o(\gamma^{-1}(o),r)\}.
$$
Then, $\Lambda_c(\Gamma)=\bigcup_{\rho > 0}\Lambda_c^{(\rho)}(\Gamma)$ coincides with the limit of the right-hand side (left-hand side) as $r \to \infty$ ($\rho \to \infty$).

By this description of $\Lambda_c(\Gamma)$ and Theorem \ref{shadowlemma}, we have the following claim.
In Section \ref{3}, we show that the converse of this statement is also true.

\begin{proposition}\label{convergence}
Let $\Gamma \subset \Isom(X,d)$ be a non-elementary discrete group and let
$\mu$ be an $s$-dimensional $\Gamma$-quasi-invariant quasiconformal measure on $\partial X$.
If the Poincar\'e series $P_\Gamma^s(z,x)$ converges, then the measure of the conical limit set
$\mu(\Lambda_c(\Gamma))$ is zero.
\end{proposition}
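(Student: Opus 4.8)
The plan is to use a direct Borel–Cantelli argument combined with the Shadow Lemma. Recall from the discussion preceding the statement that, with $r = \rho + \kappa(\delta)$ for each $\rho > 0$, one has the inclusion
$$
\Lambda_c^{(\rho)}(\Gamma) \subset \limsup_{\gamma \in \Gamma} \{ S_o(\gamma^{-1}(o), r) \},
$$
and that $\Lambda_c(\Gamma) = \bigcup_{\rho > 0} \Lambda_c^{(\rho)}(\Gamma)$. So it suffices to prove that $\mu(\limsup_{\gamma} S_o(\gamma^{-1}(o), r)) = 0$ for each fixed $r \geq r_0$, where $r_0$ is the constant furnished by the Shadow Lemma (Theorem \ref{shadowlemma}); then letting $r \to \infty$ along a sequence gives $\mu(\Lambda_c(\Gamma)) = 0$.

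First I would fix the light source $\omega = o$ and a radius $r \geq r_0$. For the Borel–Cantelli bound I need to control the condition ``$o \notin \widehat S_o(\gamma^{-1}(o), r)$'' appearing in the Shadow Lemma: since $\Gamma$ acts properly discontinuously, $d(o, \gamma^{-1}(o)) \to \infty$ as $\gamma$ ranges over $\Gamma$ (outside a finite set), so for all but finitely many $\gamma$ the point $o$ lies outside $B(\gamma^{-1}(o), r)$ and hence outside the extended shadow $\widehat S_o(\gamma^{-1}(o), r)$. For those $\gamma$, Theorem \ref{shadowlemma} gives
$$
\mu(S_o(\gamma^{-1}(o), r)) \leq L a^{2rs} a^{-s d(o, \gamma^{-1}(o))}.
$$
Summing over $\gamma \in \Gamma$ and using $d(o, \gamma^{-1}(o)) = d(o, \gamma(o))$, the total is bounded by a finite constant times $\sum_{\gamma \in \Gamma} a^{-s d(o, \gamma(o))} = P_\Gamma^s(o, o)$, which is finite by hypothesis. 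Hence $\sum_{\gamma} \mu(S_o(\gamma^{-1}(o), r)) < \infty$, and the Borel–Cantelli lemma yields $\mu(\limsup_\gamma S_o(\gamma^{-1}(o), r)) = 0$.

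Combining this with the inclusion above, $\mu(\Lambda_c^{(\rho)}(\Gamma)) = 0$ for every $\rho > 0$, and by countable subadditivity along $\rho = 1, 2, 3, \dots$ we conclude $\mu(\Lambda_c(\Gamma)) = 0$. The only mildly delicate point is the bookkeeping for the finitely many exceptional $\gamma$ (those with $o \in \widehat S_o(\gamma^{-1}(o), r)$): there are finitely many of them by proper discontinuity, each shadow has finite $\mu$-measure since $\mu$ is a finite measure, and finitely many sets of finite measure contribute nothing to a $\limsup$ being null. So there is no real obstacle here; the argument is essentially the standard convergence half of the Hopf–Tsuji–Sullivan dichotomy, transported verbatim to the Gromov-hyperbolic setting using Coornaert's Shadow Lemma.
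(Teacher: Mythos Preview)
Your proof is correct and follows essentially the same approach as the paper: apply the Shadow Lemma to bound $\sum_{\gamma}\mu(S_o(\gamma^{-1}(o),r))$ by a constant times the convergent Poincar\'e series (handling the finitely many exceptional $\gamma$ separately), invoke Borel--Cantelli to get $\mu(\limsup S_o(\gamma^{-1}(o),r))=0$, and conclude via the inclusion $\Lambda_c^{(\rho)}(\Gamma)\subset\limsup S_o(\gamma^{-1}(o),r)$ and the exhaustion $\Lambda_c(\Gamma)=\bigcup_\rho\Lambda_c^{(\rho)}(\Gamma)$. The paper's write-up is terser but the argument is the same.
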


\begin{proof}
We choose the constant $r_0>0$ in Theorem \ref{shadowlemma}
for $\Gamma$ and $\mu$ 
and prove that $\mu(\Lambda_c^{(\rho)}(\Gamma))=0$ for every $\rho \geq r_0-\kappa(\delta)$.
As $P_\Gamma^s(o,o)<\infty$, Theorem \ref{shadowlemma} implies that 
$$
\mu(\bigcup_{\gamma \in \Gamma'}S_o(\gamma^{-1}(o),r)) \leq \sum_{\gamma \in \Gamma'} \mu(S_o(\gamma^{-1}(o),r)) 
\leq La^{2rs} \sum_{\gamma \in \Gamma'} a^{-sd(o,\gamma^{-1}(o))} <\infty
$$
for $r=\rho+\kappa(\delta) \geq r_0$, where $\Gamma'$ is $\Gamma$ minus possibly finitely many elements. 
Then, we see that the measure of
the limit superior of $\{S_o(\gamma^{-1}(o),r)\}$ is zero.
\end{proof}

\begin{corollary}\label{only}
Let $\Gamma \subset \Isom(X,d)$ be a non-elementary discrete group and let
$\mu$ be a $\Gamma$-quasi-invariant quasiconformal measure on $\partial X$.
If $\mu(\Lambda_c(\Gamma))>0$, then $\Gamma$ is of divergence type and $\mu$ is
a Patterson measure for $\Gamma$.
\end{corollary}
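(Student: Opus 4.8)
The plan is to determine the dimension $s$ of $\mu$ exactly, by squeezing it between $e_a(\Gamma)$ and itself, and then to read off both conclusions from results already assembled. Write $s$ for the dimension of $\mu$. The hypothesis $\mu(\Lambda_c(\Gamma))>0$ together with the contrapositive of Proposition~\ref{convergence} shows at once that the Poincar\'e series $P^s_\Gamma(o,o)$ diverges; since $P^t_\Gamma$ converges for every exponent $t$ exceeding $e_a(\Gamma)$, this gives $s\le e_a(\Gamma)$.

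For the reverse inequality $e_a(\Gamma)\le s$ — which in particular forces $e_a(\Gamma)<\infty$ — I would run the standard shadow-lemma counting argument. Fix the light source $\omega=o$ and a radius $r\ge r_0$ as in Theorem~\ref{shadowlemma}; for all but the finitely many $\gamma\in\Gamma$ with $d(o,\gamma^{-1}(o))\le r$ one has $o\notin\widehat S_o(\gamma^{-1}(o),r)$, so the shadow lemma applies to them. Group these $\gamma$ according to the annulus $n\le d(o,\gamma^{-1}(o))<n+1$. If a boundary point lies in two shadows $S_o(\gamma_1^{-1}(o),r)$ and $S_o(\gamma_2^{-1}(o),r)$ from the same annulus, then a common geodesic ray from $o$ meets both balls $B(\gamma_i^{-1}(o),r)$ at parameters differing by at most $1+2r$, so $d(\gamma_1^{-1}(o),\gamma_2^{-1}(o))\le 4r+1$; since $\#\{\eta\in\Gamma: d(o,\eta(o))\le 4r+1\}$ is finite and translation-independent, the shadows within one annulus have multiplicity bounded by a constant $N$ independent of $n$. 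Using the lower bound in the shadow lemma and the finiteness of $\mu$,
$$
L^{-1}a^{-s(n+1)}\,\#\{\gamma\in\Gamma: n\le d(o,\gamma^{-1}(o))<n+1\}\le\sum_{\gamma}\mu(S_o(\gamma^{-1}(o),r))\le N\,\mu(\partial X),
$$
so the number of orbit points in each annulus grows at most like $a^{sn}$, whence $P^t_\Gamma(o,o)<\infty$ for every $t>s$ and therefore $e_a(\Gamma)\le s<\infty$. (Alternatively, this inequality may simply be cited from Coornaert~\cite{Co}, being exactly the statement that the dimension of a quasi-invariant quasiconformal measure is at least the critical exponent, read in the a priori possibly infinite setting.)

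Combining the two bounds gives $e_a(\Gamma)=s<\infty$. Since $P^s_\Gamma(o,o)=P^{e_a(\Gamma)}_\Gamma(o,o)$ diverges by the first step, $\Gamma$ is of divergence type by definition. Now $\mu$ is a $\Gamma$-quasi-invariant quasiconformal measure of dimension exactly $e_a(\Gamma)$, so Proposition~\ref{support} applies and yields $\operatorname{supp}\mu\subset\Lambda(\Gamma)$; together with the non-elementarity of $\Gamma$ this is precisely the definition of a Patterson measure, which completes the argument.

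The only nonroutine point is the counting argument of the second paragraph; the rest is a direct appeal to Proposition~\ref{convergence}, the Shadow Lemma, and Proposition~\ref{support}. Care is needed there with the uniformity of the multiplicity bound $N$ in $n$, but this is exactly where proper discontinuity of $\Gamma$ (finiteness of $\{\eta\in\Gamma: d(o,\eta(o))\le R\}$) enters, and it presents no real difficulty.
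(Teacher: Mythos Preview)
Your proof is correct and follows exactly the same route as the paper: contrapositive of Proposition~\ref{convergence} gives $s\le e_a(\Gamma)$, the shadow-lemma bound gives $s\ge e_a(\Gamma)$, and Proposition~\ref{support} finishes. The only difference is that where the paper simply cites Theorem~\ref{existence} (Coornaert's Corollaire~6.6) for the inequality $s\ge e_a(\Gamma)$, you spell out the annulus counting argument behind it---and you note yourself that this could just as well be cited.
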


\begin{proof}
Let $s$ be the dimension of $\mu$. By Proposition \ref{convergence}, we have $P_\Gamma^s(z,x)=\infty$;
hence, $s \leq e_a(\Gamma)$.
Then Theorem \ref{existence} implies that $s=e_a(\Gamma)$; therefore, $\Gamma$ is of divergence type.
Moreover, by Proposition \ref{support}, $\mu$ should be a Patterson measure.
\end{proof}

We can also claim that $\mu$ has no atoms on $\Lambda_c(\Gamma)$.

\begin{proposition}\label{noatom}
Let $\Gamma \subset \Isom(X,d)$ be a non-elementary discrete group and let
$\mu$ be an $s$-dimensional $\Gamma$-quasi-invariant quasiconformal measure on $\partial X$.
Then, $\mu$ has no point mass on a conical limit point $\xi \in \Lambda_c(\Gamma)$.
\end{proposition}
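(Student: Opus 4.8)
The plan is to deduce the claim directly from the upper bound in the Shadow Lemma (Theorem~\ref{shadowlemma}) together with the description of $\Lambda_c(\Gamma)$ as a limit superior of shadows based at the base point $o$. Write $s$ for the dimension of $\mu$; since $\Gamma$ is non-elementary its critical exponent satisfies $e_a(\Gamma)>0$, and by Theorem~\ref{existence} we have $s\geq e_a(\Gamma)>0$, so that $a^{-st}\to 0$ as $t\to\infty$. This positivity is exactly what will drive the estimate to zero.

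Fix a conical limit point $\xi\in\Lambda_c(\Gamma)$. Since $\Lambda_c(\Gamma)=\bigcup_{\rho>0}\Lambda_c^{(\rho)}(\Gamma)$ and the sets $\Lambda_c^{(\rho)}(\Gamma)$ are non-decreasing in $\rho$, we may fix $\rho>0$ so large that $\xi\in\Lambda_c^{(\rho)}(\Gamma)$ and $r:=\rho+\kappa(\delta)\geq r_0$, where $r_0$ is the constant provided by Theorem~\ref{shadowlemma} for $\Gamma$ and $\mu$ with light source $\omega=o$. By the inclusion $\Lambda_c^{(\rho)}(\Gamma)\subset\limsup_{\gamma\in\Gamma}\{S_o(\gamma^{-1}(o),r)\}$ noted after Theorem~\ref{shadowlemma}, the set of $\gamma\in\Gamma$ with $\xi\in S_o(\gamma^{-1}(o),r)$ is infinite; since a discrete group moves $o$ a bounded distance by only finitely many of its elements, we may choose a sequence $\{\gamma_n\}$ among these elements with $d(o,\gamma_n^{-1}(o))\to\infty$.

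For all large $n$ we then have $d(o,\gamma_n^{-1}(o))>r$, hence $o\notin\widehat S_o(\gamma_n^{-1}(o),r)$, so Theorem~\ref{shadowlemma} yields
$$
\mu(\{\xi\})\leq\mu\bigl(S_o(\gamma_n^{-1}(o),r)\bigr)\leq L\,a^{2rs}\,a^{-s\,d(o,\gamma_n^{-1}(o))}.
$$
Letting $n\to\infty$, the right-hand side tends to $0$, whence $\mu(\{\xi\})=0$. There is no essential difficulty in this argument; the only points to watch are the two hypotheses of Theorem~\ref{shadowlemma}---that $r\geq r_0$, arranged by enlarging $\rho$ via the monotonicity of $\Lambda_c^{(\rho)}(\Gamma)$, and that $o\notin\widehat S_o(\gamma_n^{-1}(o),r)$, which is automatic once $\gamma_n^{-1}(o)$ is far from $o$---and the use of $s>0$ to make the shadow bound vanish as the orbit points escape to infinity.
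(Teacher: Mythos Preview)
Your proof is correct, but it takes a different route from the paper's. You use the \emph{upper} bound in the Shadow Lemma directly: since $\xi$ lies in infinitely many shadows $S_o(\gamma_n^{-1}(o),r)$ with $d(o,\gamma_n^{-1}(o))\to\infty$, the bound $\mu(\{\xi\})\leq\mu(S_o(\gamma_n^{-1}(o),r))\leq La^{2rs}a^{-sd(o,\gamma_n^{-1}(o))}$ forces $\mu(\{\xi\})=0$ once $s>0$. The paper instead argues by quasi-invariance and the Poisson kernel estimate (Lemma~\ref{kernel}): it shows that $\mu(\{\gamma_n(\xi)\})=(\gamma_n^*\mu)(\{\xi\})$ is bounded below by a constant times $a^{sd(o,\gamma_n^{-1}(o))}\mu(\{\xi\})$, so a positive atom at $\xi$ would force $\mu(\{\gamma_n(\xi)\})\to\infty$ (or, when $s=0$, force the infinite orbit $\Gamma(\xi)$ to carry infinite total mass), contradicting finiteness of $\mu$. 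Your argument is shorter and more transparent, but it genuinely requires $s>0$; you obtain this via $s\geq e_a(\Gamma)>0$, which is a forward reference to Proposition~\ref{positive} in Section~\ref{8}. That forward reference is not circular (Proposition~\ref{positive} does not rely on Proposition~\ref{noatom}), and the paper itself makes the same forward reference parenthetically, but the paper's push-forward argument has the advantage of disposing of the $s=0$ case on its own terms.
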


\begin{proof}
There is some $r >0$ and a sequence $\{\gamma_n\}_{n=1}^\infty \subset \Gamma$ such that
$\gamma_n^{-1}(o)$ converge to $\xi$ as $n \to \infty$ and $\xi \in S_o(\gamma_n^{-1}(o),r)$ for every $n \in \mathbb N$. 
Then, by Proposition \ref{singlemeasure} and Lemma \ref{kernel}, 
there are constants $\widetilde D \geq 1$ and $C \geq 1$ such that
$$
\mu(\{\gamma_n(\xi)\})=(\gamma_n^*\mu)(\{\xi \})
\geq \widetilde D^{-1}(C^{-1}a^{d(o,\gamma_n^{-1}(o))-2r})^s\mu(\{\xi\}).
$$
This implies that if $\mu(\{\xi \})>0$ and $s>0$ then $\mu(\{\gamma_n(\xi)\}) \to \infty$ as $n \to \infty$.
If $\mu(\{\xi \})>0$ and $s=0$ (even though in fact, we have $s>0$ by Proposition \ref{positive} later),
then $\mu(\Gamma(\xi))=\infty$. 
Both cases are impossible; hence, $\mu(\{\xi \})=0$.
\end{proof}

\medskip
\section{Divergence type and measure on the conical limit set}\label{3}

We have seen in Proposition \ref{convergence} that if the $s$-dimensional
Poincar\'e series $P^s_{\Gamma}(z,x)$ converges for a discrete group $\Gamma \subset \Isom(X,d)$, 
then the conical limit set $\Lambda_c(\Gamma)$ has null measure for 
any $s$-dimensional $\Gamma$-quasi-invariant quasiconformal measure $\mu$ on $\partial X$.
In this section, we prove the converse of this statement.

\begin{theorem}\label{main1}
Let $\Gamma \subset \Isom(X,d)$ be a non-elementary discrete group.
If $\mu(\Lambda_c(\Gamma))=0$ for an $s$-dimen\-sional $\Gamma$-quasi-invariant quasiconformal measure $\mu$ on $\partial X$, 
then $P^s_{\Gamma}(z,x)$ converges.
\end{theorem}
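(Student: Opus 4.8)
The plan is to prove the contrapositive in the following sharper form: if $P^s_\Gamma(o,o) = \infty$, then $\mu(\Lambda_c(\Gamma)) > 0$. The natural tool is a Borel--Cantelli type argument applied to the family of shadows $\{S_o(\gamma^{-1}(o),r)\}_{\gamma\in\Gamma}$, whose limit superior sits between $\Lambda_c^{(\rho)}(\Gamma)$ and $\Lambda_c^{(\rho')}(\Gamma)$ by Proposition \ref{geodesic}, so that controlling it controls $\Lambda_c(\Gamma)$. Since the shadows are not independent, the divergence half of Borel--Cantelli requires a second-moment (overlap) estimate: one must bound $\sum_{\gamma,\gamma'} \mu\bigl(S_o(\gamma^{-1}(o),r)\cap S_o(\gamma'^{-1}(o),r)\bigr)$ from above by a constant times $\bigl(\sum_\gamma \mu(S_o(\gamma^{-1}(o),r))\bigr)^2$ along suitable partial sums.

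First I would fix $r$ large (at least the $r_0$ from the Shadow Lemma, Theorem \ref{shadowlemma}, and also large enough that Proposition \ref{complement} gives $\diam_a(\partial X - S_\omega(o,r))$ small for all light sources $\omega$), and record the lower bound $\mu(S_o(\gamma^{-1}(o),r)) \geq L^{-1} a^{-s\,d(o,\gamma^{-1}(o))}$ together with the upper bound $\mu(S_o(\gamma^{-1}(o),r)) \leq L a^{2rs} a^{-s\,d(o,\gamma^{-1}(o))}$; by $P^s_\Gamma(o,o)=\infty$ the sum of these shadow measures over $\Gamma$ diverges. The core geometric step is the overlap estimate: if $S_o(\gamma^{-1}(o),r)$ and $S_o(\gamma'^{-1}(o),r)$ intersect, then (using $\delta$-thinness of triangles and the visual-metric estimates) one of the two points $\gamma^{-1}(o)$, $\gamma'^{-1}(o)$ lies within a bounded distance of a geodesic ray through the other toward a common limit point; quantitatively, up to an additive constant $c=c(\delta,r)$, one has $d(o,\gamma^{-1}(o)) + d(o,\gamma'^{-1}(o)) \leq d(o,\gamma^{-1}\gamma'(o)) + 2\min\{d(o,\gamma^{-1}(o)),d(o,\gamma'^{-1}(o))\} + c$, equivalently $d(\gamma^{-1}(o),\gamma'^{-1}(o)) \leq |d(o,\gamma^{-1}(o)) - d(o,\gamma'^{-1}(o))| + c$. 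Feeding this into the Shadow-Lemma bounds gives $\mu(S_o(\gamma^{-1}(o),r)\cap S_o(\gamma'^{-1}(o),r)) \lesssim a^{-s\max\{d(o,\gamma^{-1}(o)),\,d(o,\gamma'^{-1}(o))\}} \asymp \mu(S_o(\gamma^{-1}(o),r))\cdot\mu(S_o(\gamma'^{-1}(o),r)) \cdot a^{s\min\{\dots\}}$, and summing over the $\gamma'$ at each fixed "inner radius" via the diverging Poincaré series yields the required second-moment control, perhaps after restricting to an annular regrouping $d(o,\gamma^{-1}(o)) \in [n, n+1)$.

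With the first and second moment estimates in hand, I would apply the Paley--Zygmund / Kochen--Stone form of Borel--Cantelli: for a sequence of sets $E_n$ of positive measure with $\sum \mu(E_n) = \infty$ and $\sum_{i,j\leq N}\mu(E_i\cap E_j) \leq c \bigl(\sum_{i\leq N}\mu(E_i)\bigr)^2$, one gets $\mu(\limsup E_n) \geq 1/c > 0$. This forces $\mu(\limsup_\gamma S_o(\gamma^{-1}(o),r)) > 0$, hence $\mu(\Lambda_c^{(\rho)}(\Gamma)) > 0$ for $\rho = r + \kappa(\delta)$, hence $\mu(\Lambda_c(\Gamma)) > 0$, which is the contrapositive of the theorem. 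The main obstacle I anticipate is the overlap/second-moment estimate: making the "one orbit point is near a ray through the other" statement quantitatively precise in the $\delta$-hyperbolic setting (rather than in $\mathbb{H}^{n+1}$, where this is classical) and checking that the multiplicative errors from $\kappa(\delta)$, $c(\delta)$, the quasiconformal constant $C$, and the Shadow-Lemma constant $L$ all assemble into a single constant $c$ independent of $\gamma,\gamma'$ and of the annulus index $n$. This is essentially an adaptation of Tukia's argument, as the introduction indicates, so I would model the bookkeeping on \cite{T1} while replacing exact hyperbolic-geometry identities with their coarse $\delta$-hyperbolic analogues from Propositions \ref{geodesic}, \ref{approx} and Lemma \ref{kernel}.
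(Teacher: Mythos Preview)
Your approach is not the paper's, and the central step has a real gap.

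The paper does \emph{not} prove the contrapositive via a second-moment Borel--Cantelli argument. Following Tukia \cite{T1}, it works directly from the hypothesis $\mu(\Lambda_c(\Gamma))=0$. With the light source $\omega$ placed on $\partial X$ (not at $o$), the orbit is stratified into layers $A^i_\omega(o,r)$ according to the inclusion relation among shadows; since the nested intersection $\bigcap_i S_i$ of the unions of shadows at level $i$ lies in $\Lambda_c(\Gamma)$, one has $\mu(S_i)\to 0$. The heart of the proof is to make this decay uniform in $\omega$ (Lemma~\ref{key}, Proposition~\ref{uniform}) and then, via the $\Gamma$-quasi-invariance of $\mu$, uniform over all base orbit points $z\in\Gamma(o)$ (Lemma~\ref{orbituniform}); this yields a geometric decay $Q_{i+I}\leq Q_i/2$ on the strata, hence convergence of $\sum_\gamma \mu(S_\omega(\gamma^{-1}(o),r))$, and the shadow lemma finishes.

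As for your Kochen--Stone plan: your overlap observation is correct (if the two shadows meet and $d_\gamma\leq d_{\gamma'}$, then $\gamma^{-1}(o)$ lies within a bounded distance of $[o,\gamma'^{-1}(o)]$ and the intersection has measure $\lesssim a^{-s d_{\gamma'}}$), but it is not strong enough. Summing it gives only
\[
\sum_{i,j\leq N}\mu(E_i\cap E_j)\ \lesssim\ \sum_{j\leq N}\mu(E_j)\,N(\gamma_j),
\]
where $N(\gamma_j)$ counts orbit points in an $O(1)$-tube about $[o,\gamma_j^{-1}(o)]$; proper discontinuity yields only $N(\gamma_j)\lesssim d(o,\gamma_j^{-1}(o))$. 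For Kochen--Stone you would then need, with $b_t=\sum_{t\leq d_\gamma<t+1}a^{-s d_\gamma}$,
\[
\Bigl(\sum_{t\leq R} b_t\Bigr)^{2}\ \gtrsim\ \sum_{t\leq R} t\,b_t\qquad\text{for arbitrarily large }R.
\]
The divergence hypothesis says only $\sum_t b_t=\infty$, and this does not control the weighted sum: a profile such as $b_t\sim 1/(t\log t)$ satisfies divergence yet gives $\bigl(\sum_{t\leq R}b_t\bigr)^{2}\asymp(\log\log R)^{2}$ against $\sum_{t\leq R}t\,b_t\asymp R/\log R$. Nothing in the hypotheses excludes such irregular orbital counting, so the quasi-independence needed for Kochen--Stone is not available in general. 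The obstacle is not the $\delta$-hyperbolic bookkeeping you flag; it is structural.

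Finally, what you describe as ``essentially an adaptation of Tukia's argument'' is not Tukia's argument: his method \emph{is} the stratification-by-shadow-inclusion scheme the paper implements, devised precisely to avoid the second-moment route.
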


For Kleinian groups, this result was proved by Sullivan \cite{S1, S2} by considering ergodicity of the
geodesic flow (see also Roblin \cite[Th\'eor\`eme 1.7]{R1} for a complete argument). Later, Tukia \cite{T1} gave an elementary proof for it.
His arguments are applicable to discrete isometry groups of Gromov hyperbolic spaces if certain modifications are made.
In what follows, we will carry this out respecting Tukia's arguments.

As in Proposition \ref{geodesic},
for a $\delta$-hyperbolic space $(X,d)$, we choose the constant $\kappa(\delta) \geq 0$ such that 
for every geodesic triangle or bi-angle possibly with vertices on the boundary $\partial X$, each edge is
contained in the closed $\kappa(\delta)$-neighborhood of the union of the others.

We utilize shadows to prove Theorem \ref{main1}. In this section, we always put the light source $\omega$ of a shadow 
on the boundary $\partial X$. The following Lemma 
\ref{radiuschange} provides a fundamental technique for considering the inclusion relation between
two shadows. 
As this will also be used later in another case where $\omega$ is in $X$, 
it is generally assumed that $\omega \in \overline X$ only in this lemma.

\begin{lemma}\label{radiuschange}
For any constants $r' \geq r \geq \rho \geq \rho' \geq 0$ with $r-\rho \geq \kappa(\delta)$,  
if 
$$
B(x,\rho') \cap \widehat S_\omega(z,\rho) \neq \emptyset
$$
for any $z,x \in X$ with $d(z,x) >4r'+\kappa(\delta)$ and for any $\omega \in \overline X$, then
$$
B(x,r') \subset \widehat S_\omega(z,r).
$$
\end{lemma}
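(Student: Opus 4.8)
The plan is to argue by contradiction, tracing a single geodesic from $\omega$ through a point of $B(x,r')$ and showing that the $\delta$-hyperbolicity forces it to pass close enough to $z$ to meet $B(z,\rho)$, which (after adjusting constants) contradicts the hypothesis. Concretely, suppose $B(x,r') \not\subset \widehat S_\omega(z,r)$, so there is a point $y \in \overline X$ with $y \in B(x,r')$ (i.e.\ $d(x,y) \le r'$ if $y \in X$, or $y \in \partial X$ with the ball understood as the closed ball in $\overline X$) but some geodesic $(\omega,y)$ misses $B(z,r)$. First I would pick any geodesic $(\omega,x)$ and compare it with $(\omega,y)$: since these two geodesics share the endpoint $\omega$ and their other endpoints $x,y$ are within $r'$ of each other, a standard thin-triangle estimate (apply Proposition \ref{geodesic} to the triangle $\Delta(\omega,x,y)$, together with the fact that a geodesic segment of length $\le r'$ stays within $r'$ of either endpoint) shows that $(\omega,x)$ lies in the closed $(r'+\kappa(\delta))$-neighborhood of $(\omega,y)$. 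Hence $(\omega,x)$ also misses $B(z,r - r' - \kappa(\delta))$; but here one must be careful, because $r - r' - \kappa(\delta)$ may be negative — this is why the hypothesis carries $d(z,x) > 4r' + \kappa(\delta)$, which will let us instead localize the relevant portion of the geodesic far from the endpoints.

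The cleaner route is to work directly with a geodesic $(\omega,y)$ and the point $x$. I would look at the nearest-point projection of $x$ onto $(\omega,y)$, call it $p$. Using the hypothesis $d(z,x) > 4r' + \kappa(\delta)$ and $d(x,y) \le r'$ one controls where $p$ sits along $(\omega,y)$: it is far from the endpoint $y$, and one shows $d(x,p)$ is small — essentially by the thin-triangle inequality applied to $\Delta(\omega,y,x)$, the point $x$ is within $\kappa(\delta)$ of $(\omega,y) \cup (\omega,x)$, and the branch $(\omega,x)$ can be excluded because near $x$ that segment is at distance roughly $d(x,y)\le r'$ away only near its $x$-end while the relevant comparison point is forced (by the length bookkeeping coming from $d(z,x) > 4r'+\kappa(\delta)$) to lie in the interior. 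This yields $d(x,p) \le r' + \kappa(\delta)$ — i.e.\ a point of $(\omega,y)$, hence a point of $\widehat S_\omega$-type segment, lies within $r'+\kappa(\delta) \le$ (something) of $x$. Then I would replace $(\omega,y)$ by the concrete geodesic and observe that $p \in B(x, r'+\kappa(\delta))$; but we want a point of $B(x,\rho')$, so this alone is not enough.

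So the genuinely efficient argument is the contrapositive packaging: assume $B(x,\rho') \cap \widehat S_\omega(z,\rho) \ne \emptyset$ always holds (the hypothesis), take $w \in B(x,\rho')$ with some geodesic $(\omega,w)$ meeting $B(z,\rho)$ at a point $q$, and show that then \emph{every} $y \in B(x,r')$ and \emph{every} geodesic $(\omega,y)$ meets $B(z,r)$. Given such $y$ and $(\omega,y)$, compare $(\omega,y)$ with $(\omega,w)$: both start at $\omega$, and $d(y,w) \le d(y,x) + d(x,w) \le r' + \rho' \le 2r'$. By Proposition \ref{geodesic}(2) applied to the bi-angle between $(\omega,y)$ and $(\omega,w)$ — or to the triangle $\Delta(\omega,y,w)$ — the point $q \in (\omega,w) \cap B(z,\rho)$ lies within $2r' + \kappa(\delta)$ of $(\omega,y)$, \emph{provided} $q$ is not too close to the endpoint $w$; and it is not, because $d(w,q) \ge d(z,x) - \rho' - \rho \ge d(z,x) - 2r' > 2r' + \kappa(\delta)$, using $d(z,x) > 4r' + \kappa(\delta)$. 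Hence there is a point $q' \in (\omega,y)$ with $d(q,q') \le 2r' + \kappa(\delta)$, so $d(z,q') \le \rho + 2r' + \kappa(\delta)$. Finally, $\rho + 2r' + \kappa(\delta) \le r$ will follow from $r - \rho \ge \kappa(\delta)$ together with whatever relation among $r,r',\rho$ is in force — here I expect one actually needs $r \ge \rho + 2r' + \kappa(\delta)$, which the quoted hypotheses ($r' \ge r$!) do \emph{not} obviously give, so the main obstacle is reconciling the stated inequalities: I would re-read the intended regime (likely $r,r'$ are both large and comparable, with $r - \rho$ the genuinely relevant gap absorbing the $2r'$ term via a normalization, or the constant $4r'+\kappa(\delta)$ in the distance hypothesis is what does the absorbing through a more careful projection estimate rather than the crude triangle bound). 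Once the bookkeeping is pinned down, $q' \in (\omega,y) \cap B(z,r)$ for every choice of $(\omega,y)$, i.e.\ $y \in \widehat S_\omega(z,r)$, completing the proof. The hard part, then, is not the geometry — thin triangles do all the work — but choosing the projection argument sharp enough that the length hypothesis $d(z,x) > 4r' + \kappa(\delta)$ yields exactly the inclusion with radius $r$ rather than radius $\rho + O(r')$.
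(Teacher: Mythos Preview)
Your setup is exactly right: pick $w \in B(x,\rho') \cap \widehat S_\omega(z,\rho)$, a geodesic $(\omega,w)$ meeting $B(z,\rho)$ at $q$, an arbitrary $y \in B(x,r')$, and an arbitrary geodesic $(\omega,y)$; then compare via the triangle $\Delta(\omega,w,y)$. The gap is in how you extract information from that triangle. You estimate $d(q,(\omega,y)) \le 2r' + \kappa(\delta)$ by a fellow-travelling bound, and this is indeed too weak --- it only gives $d(z,q') \le \rho + 2r' + \kappa(\delta)$, which cannot be $\le r$ under the stated hypotheses.

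The sharper use of thin triangles is this: Proposition~\ref{geodesic} applied to $\Delta(\omega,w,y)$ says the edge $(\omega,w)$ lies in the closed $\kappa(\delta)$-neighbourhood of $(\omega,y) \cup [w,y]$, so there is $q'$ with $d(q,q') \le \kappa(\delta)$ and $q' \in (\omega,y)$ \emph{or} $q' \in [w,y]$. Now rule out the second alternative. Every point of $[w,y]$ is within $2\rho' + r' \le 3r'$ of $x$ (since $d(x,w)\le\rho'$ and $d(w,y)\le\rho'+r'$), while $d(q,x) \ge d(z,x) - d(z,q) > (4r'+\kappa(\delta)) - \rho \ge 3r' + \kappa(\delta)$; hence $d(q,[w,y]) > \kappa(\delta)$ and the case $q' \in [w,y]$ is impossible. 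So $q' \in (\omega,y)$ with $d(q,q') \le \kappa(\delta)$, and then
\[
d(z,q') \le d(z,q) + d(q,q') \le \rho + \kappa(\delta) \le r,
\]
using only $r - \rho \ge \kappa(\delta)$. This is precisely where the hypothesis $d(z,x) > 4r'+\kappa(\delta)$ does its work: not to absorb a $2r'$ term into $r$, but to separate $q$ (which is near $z$) from the short side $[w,y]$ (which is near $x$) of the triangle, so that the thin-triangle dichotomy collapses to the good branch with error only $\kappa(\delta)$.
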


\begin{proof}
We choose a point $x' \in B(x,\rho') \cap \widehat S_\omega(z,\rho)$ and any geodesic ray $(\omega,x']$
(or geodesic segment $[\omega,x']$) from $x'$ to $\omega$.
Then, $(\omega,x'] \cap B(z,\rho) \neq \emptyset$, so we can take a point $p$ in this intersection.
It should be noted that $d(x,x') \leq \rho'$ and $d(z,p) \leq \rho$.
For any $y \in B(x,r')$, we have that $d(x',y) \leq \rho'+r'$. We consider 
any triangle $\Delta(\omega,x',y)$ with the vertex $y$ and
the edge $(\omega,x']$ containing $p$. 
This edge is contained in the closed $\kappa(\delta)$-neighborhood of the union of the other edges
$(\omega,y] \cup [x',y]$. 
It follows that for every geodesic ray $(\omega,y]$ and for every geodesic segment $[x',y]$,
there is a point $p' \in (\omega,y] \cup [x',y]$ such that $d(p,p') \leq \kappa(\delta)$.
We want to have $p' \in (\omega,y]$. 

To see this, we show that $d(p,[x',y])>\kappa(\delta)$
for every geodesic segment $[x',y]$. As $d(x,x') \leq \rho'$ and $d(x',y) \leq \rho'+r'$,
the distance from $x$ to each point in $[x',y]$ is at most $2\rho'+r' \leq 3r'$.
Using this together with
$d(z,x) > 4r'+\kappa(\delta)$ and $d(z,p) \leq \rho \leq r'$, we obtain $d(p,[x',y])>\kappa(\delta)$. 
Hence, $p' \in (\omega,y]$. 

Furthermore, as $d(z,p) \leq \rho$, $d(p,p') \leq \kappa(\delta)$, and $r-\rho \geq \kappa(\delta)$,
we have $d(z,p') \leq r$, that is, $p' \in B(z,r)$. Hence, $(\omega,y] \cap B(z,r) \neq \emptyset$.
As $y$ is an arbitrary point of $B(x,r')$ and this conclusion is valid for every geodesic ray $(\omega,y]$,
we conclude that $B(x,r') \subset \widehat S_\omega(z,r)$.
\end{proof}

In the next two claims, we consider the influence of slightly moving the light source $\omega \in \partial X$.

\begin{lemma}\label{nbhd@infty}
For $\omega_0 \in \partial X$, let $D \subset X$ be a domain with $\omega_0 \notin \overline D$.
Let $r \geq 0$ be any constant.
Then, there exists a neighborhood
$V \subset \partial X$ of $\omega_0$ such that 
if $\omega \in V$ and $x \in D$, then $\widehat S_\omega(x,r) \subset \widehat S_{\omega_0}(x,r')$ for any $r' \geq r+\kappa(\delta)$.
\end{lemma}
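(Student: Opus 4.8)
The plan is to find a neighborhood $V$ of $\omega_0$ so small that geodesics emanating from any $\omega \in V$ towards a point of $D$ stay uniformly close to the corresponding geodesic from $\omega_0$, at least on the relevant bounded region near $x$. Concretely, fix $x \in D$ and suppose $\xi \in \widehat S_\omega(x,r)$ for some $\omega \in V$; I want to show $\xi \in \widehat S_{\omega_0}(x,r')$ for every $r' \geq r + \kappa(\delta)$, i.e.\ that every geodesic $(\omega_0,\xi)$ meets $B(x,r')$. The idea is that a geodesic $(\omega_0,\xi)$ and a geodesic $(\omega,\xi)$ share the endpoint $\xi$ on $\partial X$ and have starting points $\omega_0,\omega$ that are forced to be close (in the visual metric) once $V$ is small; hence by the thin-bi-angle / thin-triangle property of Proposition \ref{geodesic} the two geodesics fellow-travel within $\kappa(\delta)$ away from a neighborhood of their starting region, in particular near $x$ since $x$ is bounded away from $\omega_0$ (because $\omega_0 \notin \overline D$).

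The key steps, in order, are as follows. First, use $\omega_0 \notin \overline D$ to get a genuine separation: since $\overline D$ is closed in $\overline X$ and does not contain $\omega_0$, there is $\varepsilon_0 > 0$ with $d_a(\omega_0, \overline D) \geq \varepsilon_0$ in the visual metric, hence a lower bound of the form $d(o, [\omega_0, y]) \geq R_0$ for a constant $R_0 = R_0(\varepsilon_0, \delta, a)$ via property (2) of the visual metric — more usefully, the geodesics from $\omega_0$ into $D$ pass at bounded distance from $o$ and from $x$ only after traveling a definite amount. Second, choose $V$ using the topology of $\overline X$: since $\overline X$ is a compact Hausdorff space and the shadow-type sets are controlled by Proposition \ref{geodesic}, shrink $V$ around $\omega_0$ so that for every $\omega \in V$ any geodesic $(\omega,\omega_0)$ (a geodesic in $X$ between two boundary points, which exists and is $\kappa(\delta)$-thin) has $d(x, (\omega,\omega_0))$ large — larger than $r' + \kappa(\delta)$ say — for all $x \in D$; this is exactly where the separation $\omega_0 \notin \overline D$ is used, together with the visual-metric estimate (2) relating $d(o,(\omega,\omega_0))$ to $d_a(\omega,\omega_0)$. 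Third, take any $\xi \in \widehat S_\omega(x,r)$ and any geodesic $(\omega_0,\xi)$; form the ideal triangle $\Delta(\omega,\omega_0,\xi)$ with edges $(\omega,\xi)$, $(\omega_0,\xi)$, $(\omega,\omega_0)$. Pick a geodesic $(\omega,\xi)$ meeting $B(x,r)$ at a point $q$. By Proposition \ref{geodesic}(1) there is a point $q'$ on $(\omega_0,\xi) \cup (\omega,\omega_0)$ with $d(q,q') \leq \kappa(\delta)$. Fourth, rule out $q' \in (\omega,\omega_0)$: since $d(q,x) \leq r$ and, by the choice of $V$ in step two, $d(x,(\omega,\omega_0)) > r + \kappa(\delta)$, we cannot have $d(q',x) \leq r + \kappa(\delta)$, so $q' \notin (\omega,\omega_0)$; hence $q' \in (\omega_0,\xi)$. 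Finally, $d(q',x) \leq d(q',q) + d(q,x) \leq \kappa(\delta) + r \leq r'$, so $(\omega_0,\xi) \cap B(x,r') \neq \emptyset$. As $\xi$ and the geodesic $(\omega_0,\xi)$ were arbitrary, $\xi \in \widehat S_{\omega_0}(x,r')$, completing the argument for $\widehat S$; the case of shadows $S$ of boundary points is the restriction to $\xi \in \partial X$.

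The main obstacle I anticipate is making step two genuinely uniform in $x \in D$ and $\omega \in V$ simultaneously — i.e.\ producing a single neighborhood $V$ that works for the whole domain $D$. The point is that $d(x,(\omega,\omega_0))$ must be large for \emph{all} $x \in D$, but $D$ may be unbounded, so one cannot simply invoke continuity at a single pair of points. The resolution is to work with the visual metric from $o$ and a Busemann-type estimate: the relevant quantity is not $d(x,(\omega,\omega_0))$ directly but the Gromov product $(\omega \mid \omega_0)_x$, and one shows $(\omega \mid \omega_0)_x \to \infty$ uniformly over $x \in D$ as $\omega \to \omega_0$, using that $\omega_0 \notin \overline D$ forces the Gromov products $(\omega_0 \mid \omega_0)_x$-type divergence to be "seen" uniformly — equivalently, one translates the problem to the base point $o$ via the cocycle relation for Busemann functions (Proposition \ref{approx}) and the thin-triangle control, reducing everything to the single statement $d_a(\omega,\omega_0)$ small, which is the definition of $V$. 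Once this uniformity is in hand, steps three through five are routine $\kappa(\delta)$-bookkeeping of the kind already performed in the proof of Lemma \ref{radiuschange}.
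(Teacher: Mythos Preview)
Your approach is correct and essentially identical to the paper's: both choose $V$ so that every geodesic $(\omega,\omega_0)$ with $\omega \in V$ stays at distance greater than $\kappa(\delta)$ from $N_r(D)$ (equivalently, $d(x,(\omega,\omega_0)) > r+\kappa(\delta)$ for all $x \in D$), then apply the thin-triangle inequality to the ideal triangle $\Delta(\omega,\omega_0,\xi)$ to transfer a point of $(\omega,\xi)\cap B(x,r)$ onto the chosen geodesic $(\omega_0,\xi)$. The paper simply asserts that such a $V$ exists; your concern about uniformity over the possibly unbounded $D$ is legitimate, but it is settled more directly than through Busemann cocycles --- since $\omega_0 \notin \overline{N_{r+\kappa(\delta)}(D)}$ (a bounded thickening does not add boundary accumulation points) one takes a neighborhood $U$ of $\omega_0$ in $\overline X$ disjoint from $N_{r+\kappa(\delta)}(D)$ and then, using only the visual-metric estimate $d_a(\omega,\omega_0)\asymp a^{-d(o,(\omega,\omega_0))}$ together with Proposition~\ref{geodesic}, shrinks $V$ so that every geodesic $(\omega,\omega_0)$ lies inside $U$.
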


\begin{proof}
We can choose a neighborhood $V$ of $\omega_0$ in $\partial X$ so that the distance
from every point in the closed $r$-neighborhood $N_r(D)$ of $D$ to some geodesic line with
the endpoints $\omega_0$ and any $\omega \in V$ is greater than $\kappa(\delta)$. 
Then, every point $y \in N_r(D)$ on
a geodesic line $(\xi,\omega)$ with endpoints $\xi \in \partial X$ and $\omega \in V$
is within distance $\kappa(\delta)$ of any geodesic line $(\xi,\omega_0)$ with the endpoints $\xi$ and $\omega_0$
by Proposition \ref{geodesic}. 

For any $\omega \in V$ and $x \in D$,
we prove that $S_\omega(x,r)$ is contained in $S_{\omega_0}(x,r')$.
We take an arbitrary $\xi \in S_\omega(x,r)$ and choose some $y \in (\xi,\omega) \cap B(x,r)$. As $y \in N_r(D)$,
every geodesic line $(\xi,\omega_0)$ contains a point $y'$ 
with $d(y,y') \leq \kappa(\delta) \leq r'-r$. Hence, $y' \in B(x,r')$. In particular, 
$(\xi,\omega_0) \cap B(x,r') \neq \emptyset$. This implies that $\xi \in S_{\omega_0}(x,r')$, and thus the inclusion
$S_\omega(x,r) \subset S_{\omega_0}(x,r')$ is proved. The required inclusion
$\widehat S_\omega(x,r) \subset \widehat S_{\omega_0}(x,r')$ then follows from this.
\end{proof}

The neighborhood $V$ of $\omega_0 \in \partial X$
given in Lemma \ref{nbhd@infty} for $D=\widehat S_{\omega_0}(o,r)$
and for a constant $r \geq 0$ is denoted by $V(\omega_0,r)$.

\begin{proposition}\label{form1}
Let $\omega_0 \in \partial X$ and
$r \geq 0$. 
If $B(x,r) \cap \widehat S_{\omega}(z,r) \neq \emptyset$ is satisfied for $\omega \in V(\omega_0,r+\kappa(\delta))$, 
$z \in \widehat S_{\omega_0}(o,r+\kappa(\delta))$, and $x \in X$ with
$d(z,x) >4r+9\kappa(\delta)$, 
then $B(x,r') \subset \widehat S_{\omega_0}(z,r')$ for $r'=r+2\kappa(\delta)$.
\end{proposition}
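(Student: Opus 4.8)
The plan is to chain the two preceding lemmas. Lemma \ref{nbhd@infty} will let me replace the light source $\omega$ by $\omega_0$, and Lemma \ref{radiuschange} will let me pass from ``the extended shadow meets the ball $B(x,r)$'' to ``the extended shadow contains the larger ball $B(x,r')$''. Each step costs one $\kappa(\delta)$ of shadow radius, which is why the final radius is $r'=r+2\kappa(\delta)$; and the number $4r+9\kappa(\delta)$ in the hypothesis is exactly the distance bound that Lemma \ref{radiuschange} will require at the radius $r+2\kappa(\delta)$.

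Step 1: change the light source. I would apply Lemma \ref{nbhd@infty} with $D=\widehat S_{\omega_0}(o,r+\kappa(\delta))$, with the point $z$ in the role of ``$x$'' there (legitimate since $z\in\widehat S_{\omega_0}(o,r+\kappa(\delta))=D$), and with the constant of that lemma taken to be $r$ rather than $r+\kappa(\delta)$. The one thing to check is that $V(\omega_0,r+\kappa(\delta))$ is still an admissible neighborhood for the smaller constant $r$: the defining property of $V(\omega_0,r+\kappa(\delta))$ is that the $(r+\kappa(\delta))$-neighborhood of $D$ stays at distance greater than $\kappa(\delta)$ from every geodesic line from $\omega_0$ to a point of the neighborhood, and this property is monotone in the radius, hence passes to the smaller $r$-neighborhood of $D$. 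Since $\omega\in V(\omega_0,r+\kappa(\delta))$, Lemma \ref{nbhd@infty} gives $\widehat S_\omega(z,r)\subset\widehat S_{\omega_0}(z,r+\kappa(\delta))$, and combining this with the hypothesis $B(x,r)\cap\widehat S_\omega(z,r)\neq\emptyset$ yields $B(x,r)\cap\widehat S_{\omega_0}(z,r+\kappa(\delta))\neq\emptyset$.

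Step 2: enlarge the ball. Now I would apply Lemma \ref{radiuschange} with light source $\omega_0$ and, in the notation of that lemma, with its ``$\rho'$'' equal to $r$, its ``$\rho$'' equal to $r+\kappa(\delta)$, and both its ``$r$'' and ``$r'$'' equal to $r+2\kappa(\delta)$. The ordering hypotheses $r+2\kappa(\delta)\geq r+2\kappa(\delta)\geq r+\kappa(\delta)\geq r\geq 0$ and the gap $(r+2\kappa(\delta))-(r+\kappa(\delta))=\kappa(\delta)$ are satisfied, and the distance requirement reads $d(z,x)>4(r+2\kappa(\delta))+\kappa(\delta)=4r+9\kappa(\delta)$, which is exactly our assumption. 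Its hypothesis $B(x,r)\cap\widehat S_{\omega_0}(z,r+\kappa(\delta))\neq\emptyset$ was obtained in Step 1, so the lemma produces $B(x,r+2\kappa(\delta))\subset\widehat S_{\omega_0}(z,r+2\kappa(\delta))$, i.e. $B(x,r')\subset\widehat S_{\omega_0}(z,r')$ with $r'=r+2\kappa(\delta)$, as desired.

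The whole argument is a substitution into two results that are already in place, so I expect no serious difficulty; the only delicate part is the constant bookkeeping --- in particular the observation in Step 1 that $V(\omega_0,r+\kappa(\delta))$ may be reused for the smaller constant $r$, and the verification in Step 2 that the chosen radii make Lemma \ref{radiuschange}'s distance hypothesis coincide with the given $d(z,x)>4r+9\kappa(\delta)$ while its output radius is precisely $r+2\kappa(\delta)$.
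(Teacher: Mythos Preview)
Your proof is correct and uses the same two lemmas as the paper, but you apply them in the opposite order: the paper first invokes Lemma~\ref{radiuschange} with light source $\omega$ (taking $\rho'=\rho=r$, the lemma's ``$r$'' equal to $r+\kappa(\delta)$, and $r'=r+2\kappa(\delta)$) to obtain $B(x,r')\subset\widehat S_\omega(z,r+\kappa(\delta))$, and only then applies Lemma~\ref{nbhd@infty} at the radius $r+\kappa(\delta)$---exactly the radius for which $V(\omega_0,r+\kappa(\delta))$ was defined---to pass to $\widehat S_{\omega_0}(z,r')$. Your order (change light source first, then enlarge) works just as well; the only cost is the extra monotonicity remark you made in Step~1, which the paper's ordering avoids.
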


\begin{proof}
By Lemma \ref{nbhd@infty} applied to $D=\widehat S_{\omega_0}(o,r+\kappa(\delta))$, 
we see that $\widehat S_{\omega}(z,r+\kappa(\delta)) \subset \widehat S_{\omega_0}(z,r+2\kappa(\delta))$ for any
$\omega \in V(\omega_0,r+\kappa(\delta))$ and 
$z \in \widehat S_{\omega_0}(o,r+\kappa(\delta))$. 
On the contrary,
by Lemma \ref{radiuschange}, 
we see that the condition $B(x,r) \cap \widehat S_\omega(z,r) \neq \emptyset$ 
with $d(z,x) >4r+9\kappa(\delta)$ implies
$B(x,r+2\kappa(\delta)) \subset \widehat S_\omega(z,r+\kappa(\delta))$. Hence, $B(x,r') \subset \widehat S_{\omega_0}(z,r')$ follows
for $r'=r+2\kappa(\delta)$.
\end{proof}

The following notations will be used in the proof of Theorem \ref{main1}.
For $\omega \in \partial X$, $z \in X$, and $r>0$, we consider the subset
$$
A_\omega(z,r)=\{x \in \Gamma(o) \mid B(x,r) \subset \widehat S_\omega(z,r)\}
$$
of the orbit $\Gamma(o) \subset X$. For each integer $i \in \N$, we define subsets $A^i_\omega(z,r)$ and 
$\widehat A^i_\omega(z,r)$ of $A_\omega(z,r)$
inductively as follows:
\begin{align*}
\widehat A^1_\omega(z,r)&=A_\omega(z,r);\\
A^1_\omega(z,r)&=\{x^1 \in \widehat A^1_\omega(z,r) \mid B(x^1,r) \not \subset \widehat S_\omega(x,r) 
\ (\forall x \in \widehat A^1_\omega(z,r))\};\\
&\cdots\\
\widehat A^i_\omega(z,r)&=A_\omega(z,r)-\bigsqcup_{j=1}^{i-1} A^{j}_\omega(z,r);\\
A^i_\omega(z,r)&=\{x^i \in \widehat A^i_\omega(z,r) \mid B(x^i,r) \not \subset \widehat S_\omega(x,r) 
\ (\forall x \in \widehat A^i_\omega(z,r))\}.
\end{align*}
This gives a stratification of the orbit by using the inclusion relation of shadows.

As in the stratification by distance, orbit points in each stratum
have disjoint shadows if they are sufficiently apart.

\begin{lemma}\label{disjoint}
Assume that constants $r,\rho \geq 0$ satisfy $r-\rho \geq \kappa(\delta)$.
For any $\omega \in \partial X$ and $z \in X$
and for any $i \in \N$,
if $x, x' \in A^i_\omega(z,r)$ 
satisfy $d(x,x')>4r+\kappa(\delta)$, then 
$$
\widehat S_\omega(x,\rho) \cap \widehat S_\omega(x',\rho)=\emptyset.
$$
\end{lemma}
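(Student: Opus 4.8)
The plan is to argue by contradiction using the inductive definition of the strata $A^i_\omega(z,r)$. Suppose $x,x' \in A^i_\omega(z,r)$ with $d(x,x') > 4r+\kappa(\delta)$, but the extended shadows $\widehat S_\omega(x,\rho)$ and $\widehat S_\omega(x',\rho)$ meet in some point $w \in \overline X$. Then in particular $B(x',\rho) \cap \widehat S_\omega(x,\rho)$ is nonempty (taking a point on a geodesic $(\omega,w)$ that passes through $B(x',\rho)$, one lands in $\widehat S_\omega(x,\rho)$, and this point lies in $B(x',r')$ for appropriate $r'$; I would instead aim directly for the hypothesis of Lemma \ref{radiuschange}). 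The first key step is therefore to translate ``$\widehat S_\omega(x,\rho) \cap \widehat S_\omega(x',\rho) \neq \emptyset$'' into ``$B(x',\rho') \cap \widehat S_\omega(x,\rho) \neq \emptyset$'' for some small radius $\rho' \le \rho$, so that Lemma \ref{radiuschange} applies.

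The second key step is to feed this into Lemma \ref{radiuschange} with the roles $z \rightsquigarrow x$, $x \rightsquigarrow x'$: the hypotheses $r \ge \rho \ge \rho'$, $r-\rho \ge \kappa(\delta)$, and $d(x,x') > 4r+\kappa(\delta)$ (possibly one needs $d(x,x')>4r+\kappa(\delta)$ with a slightly larger constant, which can be absorbed by enlarging the threshold in the statement, or the constant $4r+\kappa(\delta)$ matches exactly because $r' = r$ here) are exactly what is needed to conclude $B(x',r) \subset \widehat S_\omega(x,r)$. Since $x' \in \Gamma(o)$, this says precisely that $x'$ witnesses $B(x',r) \subset \widehat S_\omega(x,r)$ with $x \in A_\omega(z,r)$. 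The third step is to note that $x$ and $x'$ lie in the same stratum $A^i_\omega(z,r)$, hence both lie in $\widehat A^i_\omega(z,r)$ (as $A^i \subset \widehat A^i$), so $x \in \widehat A^i_\omega(z,r)$; but then by the defining condition of $A^i_\omega(z,r)$, membership $x' \in A^i_\omega(z,r)$ forces $B(x',r) \not\subset \widehat S_\omega(x,r)$ for every $x \in \widehat A^i_\omega(z,r)$ — in particular for this $x$. This contradicts what we just derived, so no such $w$ exists.

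The main obstacle I anticipate is the bookkeeping in the first step: getting from an intersection of two extended shadows to a ball-meets-shadow statement with the right radius, and making sure the distance bound $d(x,x')>4r+\kappa(\delta)$ is genuinely sufficient to invoke Lemma \ref{radiuschange} with $r'=r=\rho'$ or with some fixed multiple — i.e.\ checking that all the inequalities $r' \ge r \ge \rho \ge \rho' \ge 0$, $r-\rho \ge \kappa(\delta)$, and $d(z,x)>4r'+\kappa(\delta)$ in that lemma can be matched with the present constants without any circular loss. If a small loss is unavoidable (say one ends up only with $B(x',r) \subset \widehat S_\omega(x, r+\kappa(\delta))$), one would instead work with the slightly coarser radius throughout the stratification, or observe that the defining condition of $A^i_\omega(z,r)$ still yields the contradiction since $\widehat S_\omega(x,r) \subset \widehat S_\omega(x,r+\kappa(\delta))$ and the relevant non-inclusion is at radius $r$. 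The rest is purely combinatorial and follows the familiar pattern of ``disjointness within a stratum'' arguments from the distance stratification.
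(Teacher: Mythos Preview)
Your proposal is correct and matches the paper's proof. The paper resolves your first-step obstacle by noting that $d(x,x')>4r+\kappa(\delta)>2\rho$ makes $B(x,\rho)$ and $B(x',\rho)$ disjoint, so a common point of the two extended shadows forces one of the balls to meet the other's extended shadow directly (take a geodesic $(\omega,w)$ and look at which ball it hits last); Lemma~\ref{radiuschange} then applies with $r'=r$ and $\rho'=\rho$ on the nose, so no constants are lost and the contradiction with the defining condition of $A^i_\omega(z,r)$ follows exactly as you outline.
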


\begin{proof}
It is assumed toward a contradiction that $\widehat S_\omega(x,\rho) \cap \widehat S_\omega(x',\rho) \neq \emptyset$.
As $d(x,x')>2\rho$ follows from assumption, $B(x,\rho) \cap B(x',\rho)=\emptyset$. Hence, either
$B(x',\rho) \cap \widehat S_\omega(x,\rho) \neq \emptyset$ or $B(x,\rho) \cap \widehat S_\omega(x',\rho) \neq \emptyset$
is satisfied. We assume the former. The other case is treated similarly.
We apply Lemma \ref{radiuschange} for $r'=r>\rho=\rho'$ with $r-\rho \geq \kappa(\delta)$ to obtain
$B(x',r) \subset \widehat S_\omega(x,r)$. However, this violates the condition that $x$ and $x'$ belong to the same
stratum $A^i_\omega(z,r)$.
\end{proof}

This property can be interpreted in terms of the number of orbit points in each stratum having intersecting shadows.
For $r>0$, let $M(r)$ be the number of orbit points $\Gamma(o)$ in the closed ball $B(o,r)$.

\begin{corollary}\label{Mdis}
For constants $r, \rho \geq 0$ with $r-\rho \geq \kappa(\delta)$,
the family of shadows $\{S_\omega(x,\rho)\}$ taken over all $x \in A^i_\omega(z,r)$ are 
$M(4r+\kappa(\delta))$-disjoint, that is,
for each shadow $S_\omega(x,\rho)$, the number of shadows $S_\omega(x',\rho)$ in the family with 
$S_\omega(x,\rho) \cap S_\omega(x',\rho) \neq \emptyset$ is at most $M(4r+\kappa(\delta))$.
\end{corollary}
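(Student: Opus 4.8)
The plan is to reduce the $M(4r+\kappa(\delta))$-disjointness of the shadow family to a counting estimate via Lemma \ref{disjoint}. The idea is that shadows in the family taken over $A^i_\omega(z,r)$ can only overlap with a controlled number of others, because any two base points that are far apart have disjoint extended shadows, hence disjoint shadows; therefore overlapping is forced to happen among base points that are \emph{close} to each other, and there are boundedly many orbit points in any bounded region.

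First I would fix a shadow $S_\omega(x,\rho)$ with $x \in A^i_\omega(z,r)$ and consider the set
$$
F = \{\, x' \in A^i_\omega(z,r) \mid S_\omega(x,\rho) \cap S_\omega(x',\rho) \neq \emptyset \,\}.
$$
Since $S_\omega(x',\rho) \subset \widehat S_\omega(x',\rho)$ for each $x'$, any $x' \in F$ satisfies $\widehat S_\omega(x,\rho) \cap \widehat S_\omega(x',\rho) \neq \emptyset$. By Lemma \ref{disjoint}, applied with the given $r$ and $\rho$ (which satisfy $r-\rho \geq \kappa(\delta)$ by hypothesis), the contrapositive says that every such $x'$ must satisfy $d(x,x') \leq 4r+\kappa(\delta)$. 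Hence $F$ is contained in $\Gamma(o) \cap B(x,4r+\kappa(\delta))$. Since $\Gamma$ acts by isometries and the orbit $\Gamma(o)$ is invariant, the cardinality of $\Gamma(o) \cap B(x,4r+\kappa(\delta))$ equals that of $\Gamma(o) \cap B(o,4r+\kappa(\delta))$, which is exactly $M(4r+\kappa(\delta))$ by definition. Therefore $\#F \leq M(4r+\kappa(\delta))$, which is the required bound.

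I do not expect any real obstacle here; the only points to be careful about are that one must invoke $S_\omega(x',\rho) \subseteq \widehat S_\omega(x',\rho)$ to pass from ordinary shadows to extended shadows before citing Lemma \ref{disjoint}, and that the translation-invariance of $M(\cdot)$ relies on $x \in \Gamma(o)$ so that $x^{-1}$-translation (more precisely, a group element sending $x$ to $o$) carries $\Gamma(o) \cap B(x,\cdot)$ bijectively onto $\Gamma(o) \cap B(o,\cdot)$. Both are immediate from the definitions.
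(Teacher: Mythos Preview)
Your proof is correct and follows essentially the same approach as the paper: the paper's proof is the single line ``If $S_\omega(x,\rho) \cap S_\omega(x',\rho) \neq \emptyset$ then $d(x,x') \leq 4r+\kappa(\delta)$ by Lemma \ref{disjoint},'' and you have simply spelled out the implicit steps (passing from $S_\omega$ to $\widehat S_\omega$, taking the contrapositive, and using $x \in \Gamma(o)$ to translate the ball back to $o$). No differences of substance.
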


\begin{proof}
If $S_\omega(x,\rho) \cap S_\omega(x',\rho) \neq \emptyset$, then $d(x,x') \leq 4r+\kappa(\delta)$ by Lemma \ref{disjoint}.
\end{proof}

We note here that if we go through sufficiently many strata, we can gain a definite distance.

\begin{lemma}\label{depth}
For constants $r \geq 0$ and $\ell \geq 0$, let $m \in \N$ be an integer greater than $M(\ell+2r)$. 
Then, every point $x \in A^m_\omega(z,r)$ for any $z \in \Gamma(o)$ and $\omega \in \partial X$ satisfies $d(z,x) > \ell$. 
\end{lemma}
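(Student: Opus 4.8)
The idea is that passing through one stratum $A^j_\omega(z,r)$ forces the "depth" to increase by at least one step of a definite size, so after enough strata we accumulate a large distance. More precisely, I would argue by producing, for a point $x \in A^m_\omega(z,r)$, a chain of orbit points
$$
z=x_0,\ x_1,\ \dots,\ x_m=x,
$$
with $x_j \in \widehat A^j_\omega(z,r)$ (so in particular $x_j \in A_\omega(z,r) \subset \Gamma(o)$), such that consecutive shadows are nested: $B(x_{j},r) \subset \widehat S_\omega(x_{j-1},r)$ for each $j=1,\dots,m$. The existence of such a chain is immediate from the definition of the stratification: if $x \in A^m_\omega(z,r) \subset \widehat A^m_\omega(z,r)$ and $m \geq 2$, then because $x \notin A^{m-1}_\omega(z,r)$, there is some $x_{m-1} \in \widehat A^{m-1}_\omega(z,r)$ with $B(x,r) \subset \widehat S_\omega(x_{m-1},r)$; iterating down to $j=1$, where $\widehat A^1_\omega(z,r)=A_\omega(z,r)$ and $B(x_1,r)\subset\widehat S_\omega(z,r)$ holds by definition of $A_\omega(z,r)$, with the convention $x_0=z$.

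Next I would extract a distance estimate from a single nesting $B(x_j,r)\subset\widehat S_\omega(x_{j-1},r)$. Since $x_j \in B(x_j,r) \subset \widehat S_\omega(x_{j-1},r)$, every geodesic ray $(\omega,x_j]$ meets $B(x_{j-1},r)$, so there is a point of $[\,\cdot\,,x_j]$ within distance $r$ of $x_{j-1}$, giving at least $d(x_{j-1},x_j) \le d(x_{j-1},\text{(that point)}) + (\text{its distance to }x_j)$; rather, the point is that $x_{j-1}$ is within $r$ of a geodesic toward $x_j$ from $\omega$, which does not yet bound $d(x_{j-1},x_j)$ from below. To get a lower bound on how far we have traveled after $m$ steps, I would instead count: the $x_j$ are all distinct orbit points lying in $B(z, r + \operatorname{diam} \text{(relevant region)})$ only if $d(z,x)$ is small, and this is where $M(\cdot)$ enters. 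Concretely, if $d(z,x) \le \ell$, then $d(z,x_j)$ is controlled — each $x_j$ lies within the closed $r$-neighborhood of a geodesic from $\omega$ to (a point within $r$ of) $x$, hence within $B(z,\ell+2r)$ (using $d(z,x)\le\ell$, the nesting $B(x,r)\subset \widehat S_\omega(x_{j},r)$ chained upward, and $d(x,x_j)\le \ell + 2r$ type bounds). Then $\{x_1,\dots,x_m\}$ would be $m$ distinct orbit points in $B(o,\ell+2r)$ after translating the base point, contradicting $m > M(\ell+2r)$.

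The main obstacle is making the geometric bound "$x_j \in B(z,\ell+2r)$" precise: one must show that every point $x_j$ in the chain, sitting in a shadow $\widehat S_\omega(x_{j-1},r)$ whose source-side is nested all the way down to $\widehat S_\omega(z,r)$ and whose other end is pinned near $x$ with $d(z,x)\le \ell$, cannot escape a ball of radius $\ell+2r$ about $z$. This uses that $x_j \in B(x_j,r) \subset \widehat S_\omega(x_{j-1},r) \subset \widehat S_\omega(z,r)$ — so every geodesic $(\omega,x_j]$ passes through $B(z,r)$ — together with the fact that $x_j$ also lies "between" $z$ and $x$ along such a geodesic in the coarse sense provided by $\delta$-hyperbolicity, so $d(z,x_j) \le d(z,x) + 2r + O(\kappa(\delta)) \le \ell + 2r$ after absorbing the hyperbolicity constant into the already generous bound. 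Once this containment is established, the counting argument via $M(\ell+2r)$ closes the proof; the distinctness of the $x_j$ follows because they lie in strictly decreasing strata $\widehat A^j_\omega(z,r)$, which are nested with the $A^j_\omega(z,r)$ removed, hence pairwise disjoint across the indices where we picked them.
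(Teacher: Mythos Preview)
Your overall strategy coincides with the paper's: build a chain $z=x_0,x_1,\ldots,x_m=x$ with $B(x_j,r)\subset\widehat S_\omega(x_{j-1},r)$, assume $d(z,x)\le\ell$, show every $x_j$ lies in $B(z,\ell+2r)$, and derive a contradiction from $m>M(\ell+2r)$. The paper in fact chooses $x_i\in A^i_\omega(z,r)$ (not merely $\widehat A^i$), which immediately gives distinctness since the $A^i$ are pairwise disjoint; your remark that the $\widehat A^j$ are ``nested with the $A^j$ removed, hence pairwise disjoint'' is muddled, though this is easily repaired.

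The genuine gap is in your geometric estimate. You propose to obtain $d(z,x_j)\le d(z,x)+2r+O(\kappa(\delta))$ via $\delta$-hyperbolicity and then ``absorb'' the $O(\kappa(\delta))$ into the bound $\ell+2r$. There is no slack for this: the lemma is stated with the exact radius $\ell+2r$, and your inequality only yields $x_j\in B(z,\ell+2r+O(\kappa(\delta)))$, which would force $m>M(\ell+2r+O(\kappa(\delta)))$ rather than $m>M(\ell+2r)$. Moreover, the hyperbolicity is a red herring here. The paper obtains the \emph{sharp} bound $d(z,x_i)\le d(z,x)+2r$ by a purely metric argument: assuming $d(z,x_i)>d(z,x)+2r$, one takes the point $x'\in[z,x]\cap\partial B(x,r)$ (so $d(z,x')=d(z,x)-r$) and considers the geodesic ray $(\omega,x']$. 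Since $x'\in B(x,r)\subset\widehat S_\omega(z,r)\cap\widehat S_\omega(x_i,r)$, this ray meets both $B(z,r)$ and $B(x_i,r)$, and a chain of triangle inequalities along it yields $d(z,x')+2r\ge d(z,B(x_i,r))=d(z,x_i)-r$, contradicting the assumption. No thin-triangle property enters at all. Your sketch, by contrast, works along a geodesic $(\omega,x]$ and appeals to coarse betweenness; even optimistically this route gives something like $d(z,x_j)\le d(z,x)+3r$, not $2r$, so the specific choice of $x'$ on the segment $[z,x]$ is the key idea you are missing.
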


\begin{proof}
Suppose to the contrary that $d(z,x) \leq \ell$.
We take a sequence $z=x_0,x_1,\ldots,x_m=x$ such that $x_i \in A^i_\omega(z,r)$ and
$B(x_i,r) \subset \widehat S(x_{i-1},r)$ for $1 \leq i \leq m$. We show that these points are
all in $B(z,\ell+2r)$. This contradicts the way of choosing $m$.
Clearly $z=x_0$ and $x=x_m$ belong to $B(z,\ell+2r)$. We only have to show that
$d(z,x_i) \leq d(z,x)+2r$ for every $i=1,\ldots,m-1$. 

Suppose that there is some $i$ such that
$d(z,x_i) > d(z,x)+2r$. Take a point $x' \in [z,x] \cap \partial B(x,r)$, which satisfies
$d(z,x')=d(z,x)-r$. 
Similarly,
$d(z,x_i)-r = d(z,B(x_i,r))$.
From these three conditions, we have 
$$
d(z,x') +2r<d(z,B(x_i,r)). 
$$
However, by
considering a geodesic ray $(\omega,x']$,
which intersects both $B(z,r)$ and $B(x_i,r)$, we can derive a contradiction. 
Indeed, taking a point $z' \in (\omega,x'] \cap B(z,r)$, we can apply
the inequality $d(z',x') \geq d(z',B(x_i,r))$ to show that
\begin{align*}
d(z,x') +2r &\geq d(z',x')-d(z,z')+2r\\
&\geq d(z',B(x_i,r))+r \geq d(z,B(x_i,r)).
\end{align*}
This completes the proof.
\end{proof}

Proposition \ref{form1} and Lemma \ref{depth} imply 
the stability in a certain sense of the structure of the strata under small changes of the light source.

\begin{proposition}\label{form2}
Let $r, r' \geq 0$ be constants such that $r'=r+2\kappa(\delta)$ and let 
$m = M(6r+9\kappa(\delta))$. Then,
$$
\widehat A_{\omega}^{im}(o,r) \subset \widehat A_{\omega_0}^{i}(o,r')
$$
for any $\omega_0 \in \partial X$, $\omega \in V(\omega_0,r)$, and $i \in \N$. 
\end{proposition}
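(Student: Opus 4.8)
\emph{Plan.} I would prove the inclusion $\widehat A_{\omega}^{im}(o,r)\subseteq \widehat A_{\omega_0}^{i}(o,r')$ by induction on $i$, using Proposition \ref{form1} to push shadow inclusions from the light source $\omega$ over to $\omega_0$ and Lemma \ref{depth} to supply the distance bounds that Proposition \ref{form1} needs. Two elementary facts about extended shadows will be used throughout. First, shadow inclusion is transitive: the subarc of a geodesic $(\omega,\zeta)$ from the $\omega$-end to any of its points $p$ is again a geodesic $(\omega,p)$, so $B(x',r)\subseteq\widehat S_\omega(x,r)$ forces $\widehat S_\omega(x',r)\subseteq\widehat S_\omega(x,r)$; hence a chain $x_0,x_1,\dots,x_n$ of orbit points with $B(x_j,r)\subseteq\widehat S_\omega(x_{j-1},r)$ for all $j$ satisfies $B(x_n,r)\subseteq\widehat S_\omega(x_t,r)$ for every $0\le t\le n$. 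Second, reading such a chain from an intermediate vertex $y=x_t$, one checks by induction on $s$ that $x_{t+s}\in\widehat A_\omega^{s+1}(y,r)$; so the tail $x_t,\dots,x_n$ places $x_n$ at depth at least $n-t+1$ in the stratification based at $y$, and when $n-t+1>M(\ell+2r)$ Lemma \ref{depth} (with reference point $y$) gives $d(y,x_n)>\ell$.

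For the base case $i=1$, take $x\in\widehat A_\omega^m(o,r)$. Since $x$ lies at depth at least $m=M(6r+9\kappa(\delta))$ in the stratification based at $o$, Lemma \ref{depth} applied with $\ell=4r+9\kappa(\delta)$ gives $d(o,x)>4r+9\kappa(\delta)$; also $x\in A_\omega(o,r)$ means $B(x,r)\subseteq\widehat S_\omega(o,r)$, so $B(x,r)\cap\widehat S_\omega(o,r)\neq\emptyset$, and trivially $o\in\widehat S_{\omega_0}(o,r+\kappa(\delta))$. Proposition \ref{form1}, applied with $z=o$ and $\omega$ in the neighborhood $V(\omega_0,r)$ (which we take small enough to meet the requirement of that proposition), then yields $B(x,r')\subseteq\widehat S_{\omega_0}(o,r')$ with $r'=r+2\kappa(\delta)$, i.e. $x\in A_{\omega_0}(o,r')=\widehat A_{\omega_0}^{1}(o,r')$.

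For the inductive step, assume $\widehat A_\omega^{im}(o,r)\subseteq\widehat A_{\omega_0}^{i}(o,r')$ and let $x\in\widehat A_\omega^{(i+1)m}(o,r)$. Since $(i+1)m>im$ we have $x\in\widehat A_\omega^{im}(o,r)$, so by the hypothesis $x\in\widehat A_{\omega_0}^{i}(o,r')$; it remains to show $x\notin A_{\omega_0}^{i}(o,r')$, that is, $B(x,r')\subseteq\widehat S_{\omega_0}(x',r')$ for some $x'\in\widehat A_{\omega_0}^{i}(o,r')$ with $x'\neq x$. As in the proof of Lemma \ref{depth}, choose a chain $o=x_0,x_1,\dots,x_k=x$ with $k\ge(i+1)m$, $x_j\in A_\omega^{j}(o,r)$ and $B(x_j,r)\subseteq\widehat S_\omega(x_{j-1},r)$, and set $y=x_{im}$. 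Then $y\in A_\omega^{im}(o,r)\subseteq\widehat A_\omega^{im}(o,r)$, so by the hypothesis $y\in\widehat A_{\omega_0}^{i}(o,r')$; in particular $y\in A_{\omega_0}(o,r')$, which forces $y\in\widehat S_{\omega_0}(o,r+\kappa(\delta))$. The transitivity fact applied to the tail $x_{im},\dots,x_k$ gives $B(x,r)\subseteq\widehat S_\omega(y,r)$, hence $B(x,r)\cap\widehat S_\omega(y,r)\neq\emptyset$; and the second fact combined with Lemma \ref{depth} (the tail has $k-im\ge m$ links) gives $d(y,x)>4r+9\kappa(\delta)$, so in particular $y\neq x$. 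Proposition \ref{form1} with reference point $z=y$ now delivers $B(x,r')\subseteq\widehat S_{\omega_0}(y,r')$, and taking $x'=y$ finishes the step and the induction.

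The main obstacle is the second preliminary fact together with the arithmetic of the constants: one has to verify carefully that the tail of a chain based at $o$ really exhibits $x$ deep in the strata based at the intermediate point $y$, and that the quantities $r$, $r+\kappa(\delta)$, $r'=r+2\kappa(\delta)$, $m=M(6r+9\kappa(\delta))$ and $\ell=4r+9\kappa(\delta)$ are calibrated so that the hypotheses $\omega\in V(\omega_0,\cdot)$, $z\in\widehat S_{\omega_0}(o,r+\kappa(\delta))$ and $d(z,x)>4r+9\kappa(\delta)$ of Proposition \ref{form1}, and the stratum-count hypothesis of Lemma \ref{depth}, are satisfied exactly; once these are lined up, the rest is bookkeeping.
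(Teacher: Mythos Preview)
Your argument follows essentially the same path as the paper: use Lemma~\ref{depth} to harvest the distance $d(x_{j-1},x_j)>4r+9\kappa(\delta)$ along a chain, then feed each link into Proposition~\ref{form1}. The paper does this in one shot by directly producing a short chain $o=x_0,\dots,x_i=x$ with $x_j\in\widehat A_\omega^{\,m}(x_{j-1},r)$; you unroll the same idea as an induction on $i$, building the full fine chain and picking out $y=x_{im}$. Your two ``preliminary facts'' are exactly what is needed to justify the paper's short-chain construction, so in that sense your write-up is the more explicit of the two.

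There is one slip in the inductive step. You deduce $y\in\widehat S_{\omega_0}(o,r+\kappa(\delta))$ from $y\in A_{\omega_0}(o,r')$; but $r'=r+2\kappa(\delta)>r+\kappa(\delta)$, so membership in $\widehat S_{\omega_0}(o,r')$ is weaker, not stronger, than what Proposition~\ref{form1} requires. The correct route (and the one the paper uses) is to stay on the $\omega$-side: since $y=x_{im}\in A_\omega(o,r)$ you have $y\in\widehat S_\omega(o,r)$, and Lemma~\ref{nbhd@infty} with base point $o$ then gives $y\in\widehat S_{\omega_0}(o,r+\kappa(\delta))$. With that adjustment your proof goes through.
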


\begin{proof}
For every $x \in \widehat A_{\omega}^{im}(o,r)$, we can choose a sequence
$\{x_0,x_1,\ldots,x_i\} \subset A_{\omega}(o,r)$ such that $x_i=x$, $x_0=o$, and 
$x_j \in \widehat A_{\omega}^{m}(x_{j-1},r)$
for every $j=1,2,\ldots,i$.
For $\ell=4r+9\kappa(\delta)$, Lemma \ref{depth} asserts that $d(x_{j-1},x_j) >\ell$ for each
$j=1,2,\ldots,i$. Then, by the condition $x_{j-1} \in \widehat S_{\omega_0}(o,r+\kappa(\delta))$ 
given in Lemma \ref{nbhd@infty},
Proposition \ref{form1} shows that
$B(x_j,r') \subset \widehat S_{\omega_0}(x_{j-1},r')$  
for all such $j$. This implies that $x=x_i$ belongs to $\widehat A_{\omega_0}^{i}(o,r')$.
\end{proof}

As a final step in the preparation, we note that $X$ is 
covered by finitely many extended shadows $\widehat S_{\omega_j}(o,\rho)$
with $\omega_j \in \partial X$ for $j=1,\ldots,k$.
By the compactness of $\overline X$, this is obvious if we know that every point in $\overline X$ is covered by the interior of an
extended shadow $\widehat S_{\omega}(o,\rho)$ for some $\omega \in \partial X$ with a fixed radius $\rho>0$. 
However, this property is slightly different from the property that every closed ball centered at an orbit point is
entirely contained in one of such extended shadows. We fill this gap with the following claim.

\begin{proposition}\label{finiteshadow}
Assume that constants $r, \rho \geq 0$ satisfy $r-\rho \geq \kappa(\delta)$.
If there are finitely many points $\omega_1, \ldots, \omega_k \in \partial X$ such that
$\bigcup_{j=1}^k \widehat S_{\omega_j}(o,\rho) =\overline X$, then for every $x \in \Gamma(o)-B(o,4r+\kappa(\delta))$
there is some $j=1,\ldots, k$ such that 
$B(x,r) \subset \widehat S_{\omega_j}(o,r)$.
\end{proposition}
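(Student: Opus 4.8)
The plan is to isolate a purely geometric statement about extended shadows and then invoke Lemma~\ref{radiuschange} for the final packaging; the hypothesis $x\in\Gamma(o)$ enters only through the bound $d(o,x)>4r+\kappa(\delta)$. First I would reduce the claim to the assertion: \emph{for some $i$ one has $x\in\widehat S_{\omega_i}(o,\rho)$.} Indeed, granting this, Lemma~\ref{radiuschange} applied with reference point $o$, light source $\omega_i$, and radii $\rho'=0$, $\rho$, $r$, $r'=r$ (so that $r'\ge r\ge\rho\ge\rho'\ge 0$, $r-\rho\ge\kappa(\delta)$, $d(o,x)>4r'+\kappa(\delta)$, and $B(x,0)\cap\widehat S_{\omega_i}(o,\rho)\ne\emptyset$) yields exactly $B(x,r)\subset\widehat S_{\omega_i}(o,r)$.

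To prove the geometric statement I would play off the $\kappa(\delta)$-thinness of an ideal triangle against the covering hypothesis. Pick a boundary point $\eta$ together with an index $j$ such that $\eta\in S_{\omega_j}(o,\rho)$ \emph{and} $\eta\ne\omega_j$ --- e.g. any $\eta\in\partial X\setminus\{\omega_1,\dots,\omega_m\}$ (such $\eta$ exists since $\Lambda(\Gamma)\subset\partial X$ is infinite), or, when some light source $\omega_{i_0}$ satisfies $\omega_{i_0}\in S_{\omega_l}(o,\rho)$ with $l\ne i_0$, then $\eta=\omega_{i_0}$ and $j=l$. Since the membership $\eta\in S_{\omega_j}(o,\rho)$ is then genuine, every geodesic line $(\omega_j,\eta)$ meets $B(o,\rho)$; fix one, $h$, with a point $p\in h\cap B(o,\rho)$. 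Now run a dichotomy: either every geodesic ray from $\omega_j$ to $x$ meets $B(o,\rho+\kappa(\delta))$, in which case $x\in\widehat S_{\omega_j}(o,\rho+\kappa(\delta))$; or there is one such ray $g_0$ avoiding $B(o,\rho+\kappa(\delta))$, and then for \emph{any} geodesic ray $t$ from $x$ to $\eta$ the triple $(g_0,t,h)$ is an ideal geodesic triangle, so by Proposition~\ref{geodesic}(1) the point $p$ lies within $\kappa(\delta)$ of $g_0\cup t$; it cannot be within $\kappa(\delta)$ of $g_0$ (that would place a point of $g_0$ in $B(o,\rho+\kappa(\delta))$), so it is within $\kappa(\delta)$ of $t$, whence $t$ meets $B(o,\rho+\kappa(\delta))$; as $t$ was arbitrary, $x\in\widehat S_{\eta}(o,\rho+\kappa(\delta))$. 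If $\eta$ was taken to be a light source, both branches land in an $\widehat S_{\omega_i}(o,\rho+\kappa(\delta))$, and we are done. In the degenerate case where no light source is genuinely covered, one runs this with non-light-source $\eta$'s: if the first branch ever occurs we are done, and otherwise $x\in\widehat S_{\eta}(o,\rho+\kappa(\delta))$ for all such $\eta$, and letting $\eta_n\to\omega_1$ and applying Arzel\`a--Ascoli to the geodesics $(x,\eta_n)$ gives $x\in\widehat S_{\omega_1}(o,\rho+2\kappa(\delta))$.

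The step I expect to be the main obstacle is matching the constant at the seam of the two parts: the triangle argument naturally outputs $\widehat S_{\omega_i}(o,\rho+\kappa(\delta))$, while the reduction wants $\widehat S_{\omega_i}(o,\rho)$, and the hypothesis $r-\rho\ge\kappa(\delta)$ leaves only one $\kappa(\delta)$ of room rather than two. One closes this either by sharpening the shadow estimate --- exploiting that $h=(\omega_j,\eta)$ \emph{genuinely} enters $B(o,\rho)$ and that past its closest approach to $o$ the ray $g_0$ (or $t$) fellow-travels $h$, so that the witnessing point can be slid back into $B(o,\rho)$ itself --- or by feeding $\widehat S_{\omega_i}(o,\rho+\kappa(\delta))$ into Lemma~\ref{radiuschange} under the mild reading $r-\rho\ge 2\kappa(\delta)$, which is harmless in the applications since there $\rho$ is produced by Proposition~\ref{complement} and may be taken as large as one likes. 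The only other wrinkle, the possibility of a light source lying in the union of shadows only through its trivial self-membership, is absorbed by the degenerate-case discussion above.
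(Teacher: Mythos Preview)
Your plan coincides with the paper's at the structural level: reduce to the claim that $x$ lies in some extended shadow $\widehat S_{\omega_i}(o,\rho)$, then feed this into Lemma~\ref{radiuschange} with $\rho'=0$ and $r'=r$. The difference is that the paper treats the reduction as immediate: it simply asserts that $\bigcup_{i=1}^m S_{\omega_i}(o,\rho)=\partial X$ forces $\bigcup_{i=1}^m \widehat S_{\omega_i}(o,\rho)=X$, and then invokes Lemma~\ref{radiuschange} in one line. No thin-triangle argument, no dichotomy, no limiting $\eta_n\to\omega_1$.

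You are in fact being more careful than the paper here. The implication ``boundary shadows cover $\partial X$ $\Rightarrow$ extended shadows cover $X$'' is stated without justification in the paper, and your thin-triangle argument is a natural way to supply one. The $\kappa(\delta)$ (or $2\kappa(\delta)$) loss you incur is exactly the sort of margin such an argument produces; whether the paper's bare assertion holds with zero loss in an arbitrary proper $\delta$-hyperbolic space is not addressed there. Your observation that the loss is harmless in the only application (Theorem~\ref{main1}, where $\rho$ comes from Proposition~\ref{finitecover} and $r$ is chosen afterwards) is correct and is the pragmatic resolution.

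One small comment on the degenerate branch: your Arzel\`a--Ascoli step yields a single geodesic $(x,\omega_1)$ meeting $B(o,\rho+\kappa(\delta))$, and you then need Proposition~\ref{geodesic}(2) to pass to \emph{all} geodesics $(x,\omega_1)$, which costs the second $\kappa(\delta)$ you mention. This is fine, but worth making explicit. Otherwise the argument is sound.
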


\begin{proof}
By assumption, every $x=B(x,0) \in \Gamma(o)$
belongs to some $\widehat S_{\omega_j}(o,\rho)$. We apply Lemma \ref{radiuschange}
for $r'=r>\rho>\rho'=0$. This yields that if $d(o,x)>4r+\kappa(\delta)$, then 
$B(x,r) \subset \widehat S_{\omega_j}(o,r)$.
\end{proof}

The proof of Theorem \ref{main1} can now be carried out.

\medskip
\noindent
{\it Proof of Theorem \ref{main1}.}
Let $\mu$ be an $s$-dimensional $\Gamma$-quasi-invariant quasiconformal measure on $\partial X$ with
$\mu(\Lambda_c(\Gamma))=0$. 
We choose $\rho>0$ and extended shadows $\widehat S_{\omega_j}(o,\rho)$
with $\omega_j \in \partial X$ for $j=1,\ldots,k$
such that $\bigcup_{j=1}^k \widehat S_{\omega_j}(o,\rho) =\overline X$.
By Proposition \ref{finiteshadow}, if we set $r \geq \rho+\kappa(\delta)$, then $B(x,r)$ for every
$x \in \Gamma(o)-B(o,4r+\kappa(\delta))$ is contained in some 
$\widehat S_{\omega_j}(o,r)$.

We prove that 
$$
\sum_{x \in A_{\omega}(o,r)} \mu(S_\omega(x,r)) <\infty
$$
for any $\omega \in \partial X$. By the shadow lemma (Theorem \ref{shadowlemma}), this implies that
$$
\sum_{x \in A_{\omega}(o,r)} a^{-s d(o,x)} <\infty.
$$
As $\Gamma(o)$ is the union of $A_{\omega_1}(o,r)$ and $A_{\omega_2}(o,r)$ except for
the finitely many points contained in $B(o,4r+\kappa(\delta))$,
we obtain that $P^s_\Gamma(o,o)=\sum_{x \in \Gamma(o)}a^{-s d(o,x)} <\infty$.

For a given $\omega_0 \in \partial X$,
we divide $A_{\omega_0}(o,r)$ into $\bigsqcup_{i=1}^\infty A^i_{\omega_0}(o,r)$.
We set 
$$
S_i=\bigcup_{x \in A^i_{\omega_0}(o,r)} S_{\omega_0}(x,r)
=\bigcup_{x \in \widehat A^i_{\omega_0}(o,r)} S_{\omega_0}(x,r), 
$$
which decreases as $i \to \infty$.
Then, $\bigcap_{i} S_i$ is contained in $\Lambda_c(\Gamma)$.
As $\mu(\Lambda_c(\Gamma))=0$ by assumption, we see that
$\mu(S_i) \to 0$ as $i \to \infty$. 

\begin{lemma}\label{key}
Let $r>0$ be a sufficiently large constant.
For each $\omega_0 \in \partial X$ 
and for any $\alpha_0>0$, there exists an integer $I=I(\omega_0,\alpha_0) \in \N$ such that
$$
\sum_{x \in A^i_\omega(o,r)}\mu(S_\omega(x,r)) \leq \alpha_0 \mu(S_\omega(o,r))
$$
for every $\omega \in V(\omega_0,r)$ and for every $i \geq I$.
\end{lemma}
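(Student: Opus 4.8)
The plan is to estimate, for each stratum index $i$, the sum $\sum_{x\in A^i_\omega(o,r)}\mu(S_\omega(x,r))$ by a fixed multiple of $\mu(S_i^\omega)$, where
$$
S_i^\omega=\bigcup_{x\in\widehat A^i_\omega(o,r)}S_\omega(x,r)=\bigcup_{x\in A^i_\omega(o,r)}S_\omega(x,r),
$$
and then to show that $\mu(S_i^\omega)\to0$ as $i\to\infty$ \emph{uniformly} for $\omega\in V(\omega_0,r)$; since for large $r$ the mass $\mu(S_\omega(o,r))$ is bounded below by a positive constant independent of $\omega$, this yields the stated inequality once $i$ is large enough.

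First I would prove the overlap estimate inside a single stratum. Put $\rho=r-\kappa(\delta)$. For $i$ large, Lemma \ref{depth} forces every $x\in A^i_\omega(o,r)$ to be far from $o$, so the Shadow Lemma (Theorem \ref{shadowlemma}) applies and gives $\mu(S_\omega(x,r))\le K\,\mu(S_\omega(x,\rho))$ with a constant $K=K(r,s)$. By Corollary \ref{Mdis} the family $\{S_\omega(x,\rho)\}_{x\in A^i_\omega(o,r)}$ is $M$-disjoint with $M=M(4r+\kappa(\delta))$, hence its intersection graph is $M$-colorable and $A^i_\omega(o,r)$ splits into at most $M$ subfamilies whose $\rho$-shadows are pairwise disjoint; summing over color classes,
$$
\sum_{x\in A^i_\omega(o,r)}\mu(S_\omega(x,\rho))\le M\,\mu\Big(\bigcup_{x\in A^i_\omega(o,r)}S_\omega(x,\rho)\Big)\le M\,\mu(S_i^\omega).
$$
Therefore $\sum_{x\in A^i_\omega(o,r)}\mu(S_\omega(x,r))\le KM\,\mu(S_i^\omega)$.

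Next comes the uniform decay of $\mu(S_i^\omega)$, which is where Proposition \ref{form2} enters. Set $m=M(6r+9\kappa(\delta))$, $r'=r+2\kappa(\delta)$, $k=\lfloor i/m\rfloor$. Since $\widehat A^i_\omega(o,r)\subset\widehat A^{km}_\omega(o,r)$, Proposition \ref{form2} gives $\widehat A^{km}_\omega(o,r)\subset\widehat A^k_{\omega_0}(o,r')$ for all $\omega\in V(\omega_0,r)$; and since each such $x$ lies in $\widehat S_{\omega_0}(o,r')$, Lemma \ref{nbhd@infty} applied to $D=\widehat S_{\omega_0}(o,r')$ (shrinking $V(\omega_0,r)$ if needed) gives $S_\omega(x,r)\subset S_{\omega_0}(x,r')$. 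Combining,
$$
S_i^\omega\subset\bigcup_{x\in\widehat A^k_{\omega_0}(o,r')}S_{\omega_0}(x,r')=:T_k,
$$
and the sets $T_k$ are independent of $\omega$, decrease as $k\to\infty$, and satisfy $\bigcap_k T_k\subset\Lambda_c(\Gamma)$ by the structural property of the stratification noted before Lemma \ref{key} (here for light source $\omega_0$ and radius $r'$). As $\mu$ is finite and $\mu(\Lambda_c(\Gamma))=0$, we get $\mu(T_k)\to0$, hence $\mu(S_i^\omega)\le\mu(T_{\lfloor i/m\rfloor})\to0$ uniformly in $\omega\in V(\omega_0,r)$.

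To finish: for $r$ large the complement $\partial X-S_\omega(o,r)$ has visual diameter $\le\varepsilon(r)\to0$ uniformly in $\omega$ (Proposition \ref{complement}), and since $\mu$ is a finite measure that is not a point mass (a point mass would sit at a global fixed point of $\Gamma$, impossible for non-elementary $\Gamma$), one gets $\mu(S_\omega(o,r))\ge c_0$ for a constant $c_0>0$ independent of $\omega$. Given $\alpha_0>0$, pick $I$ large enough (for the provisos above and) so that $\mu(T_{\lfloor I/m\rfloor})\le\alpha_0c_0/(KM)$; then for all $i\ge I$ and $\omega\in V(\omega_0,r)$,
$$
\sum_{x\in A^i_\omega(o,r)}\mu(S_\omega(x,r))\le KM\,\mu(S_i^\omega)\le KM\,\mu(T_{\lfloor i/m\rfloor})\le\alpha_0c_0\le\alpha_0\,\mu(S_\omega(o,r)).
$$
The hard part is exactly this uniformity over $\omega\in V(\omega_0,r)$: for a single light source $\mu(S_i^\omega)\to0$ is just monotone convergence, but a priori at an $\omega$-dependent rate, and Proposition \ref{form2} together with Lemma \ref{nbhd@infty} is precisely what replaces all of the $S_i^\omega$ by the one $\omega$-independent decreasing family $\{T_k\}$; everything else is bookkeeping with the Shadow Lemma and the overlap bound.
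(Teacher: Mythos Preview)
Your proof is correct and follows essentially the same route as the paper's: bound the stratum sum by a multiple of $\mu(S_i^\omega)$ via Corollary~\ref{Mdis} and the shadow lemma, then use Proposition~\ref{form2} together with Lemma~\ref{nbhd@infty} to dominate $S_i^\omega$ by the $\omega$-independent decreasing family $T_k=\bigcup_{x\in\widehat A^k_{\omega_0}(o,r')}S_{\omega_0}(x,r')$, whose measure tends to zero since $\bigcap_k T_k\subset\Lambda_c(\Gamma)$; finally use the uniform positive lower bound on $\mu(S_\omega(o,r))$ for large $r$. The paper presents these same ingredients in the reverse order (decay first, then the overlap estimate), and it packages the lower bound on $\mu(S_\omega(o,r))$ into the normalizing factor $\tilde\varepsilon=\varepsilon\inf_{\omega\in V(\omega_0,r)}\mu(S_\omega(o,r))$ rather than via your explicit non-point-mass argument, but the substance is identical.
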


\begin{proof}
For an arbitrary $\varepsilon>0$, we consider
$$
\tilde \varepsilon=\varepsilon \, 
\inf\,\{\mu(S_\omega(o,r)) \mid \omega \in V(\omega_0,r)\},
$$
which is positive for a sufficiently large $r>0$. The above arguments for $r'=r+2\kappa(\delta)$
show that there is some $i_0 \in \N$ such that
$$
\mu(\bigcup_{x \in \widehat A^i_{\omega_0}(o,r')} S_{\omega_0}(x,r')) \leq \tilde \varepsilon
$$
for all $i \geq i_0$. By Proposition \ref{form2}, we have 
$$
\widehat A_{\omega}^{im}(o,r) \subset \widehat A_{\omega_0}^{i}(o,r')
$$
for any $\omega \in V(\omega_0,r)$ and $i \in \N$, where $m=M(6r+9\kappa(\delta))$. 
Moreover, Lemma \ref{nbhd@infty} yields that $S_{\omega}(x,r) \subset S_{\omega_0}(x,r')$.
Hence, by setting $I=mi_0$, we have
$$
\mu(\bigcup_{x \in A^i_{\omega}(o,r)} S_{\omega}(x,r)) \leq \varepsilon \mu(S_\omega(o,r))
$$
for every $i \geq I$.

Here, we apply Corollary \ref{Mdis} for $\rho=r-\kappa(\delta)$. Then
\begin{align*}
\sum_{x \in A^i_\omega(o,r)}\mu(S_\omega(x,\rho)) 
&\leq M(4r+\kappa(\delta)) 
\,\mu(\bigcup_{x \in A^i_\omega(o,r)} S_\omega(x,\rho)) \\
&\leq M(4r+\kappa(\delta))
\,\mu(\bigcup_{x \in A^i_\omega(o,r)} S_\omega(x,r)) \leq M(4r+\kappa(\delta)) \varepsilon \mu(S_\omega(o,r)).
\end{align*}
Finally, by the shadow lemma (Theorem \ref{shadowlemma}), if $r$ is sufficiently large, we can find some constant
$\widetilde L \geq 1$ depending on $r-\rho=\kappa(\delta)$
such that $\mu(S_\omega(x,r)) \leq  \widetilde L \mu(S_\omega(x,\rho))$. The conclusion is
$$
\sum_{x \in A^i_\omega(o,r)}\mu(S_\omega(x,r)) \leq \widetilde L M(4r+\kappa(\delta)) \varepsilon \mu(S_\omega(o,r)).
$$
By choosing $\varepsilon>0$ so that $\widetilde L M(4r+\kappa(\delta)) \varepsilon \leq \alpha_0$, we obtain the assertion.
\end{proof}

Hereafter, we choose a sufficiently large $r>0$ that is applicable to the above lemma and fix it.

\begin{proposition}\label{uniform}
For any $\alpha_0>0$, there exists an integer $I_0=I_0(\alpha_0) \in \N$ such that
$$
\sum_{x \in A^i_\omega(o,r)}\mu(S_\omega(x,r)) \leq \alpha_0 \mu(S_\omega(o,r))
$$
for every $\omega \in \partial X$ and every $i \geq I_0$.
\end{proposition}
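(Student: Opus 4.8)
The plan is a direct compactness argument that upgrades the local uniformity already established in Lemma \ref{key}---valid for $\omega$ ranging over the single neighborhood $V(\omega_0,r)$ of a fixed $\omega_0$---to uniformity over all of $\partial X$. Throughout, recall that $r>0$ has been fixed once and for all, chosen large enough for Lemma \ref{key} to apply; in particular $\mu(S_\omega(o,r))>0$ for every $\omega\in\partial X$, so the inequality in question is meaningful.

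First I would note that for each $\omega_0\in\partial X$ the set $V(\omega_0,r)$ is a neighborhood of $\omega_0$ in $\partial X$, hence the interiors $\{\Int V(\omega_0,r)\}_{\omega_0\in\partial X}$ form an open cover of $\partial X$. Since $\partial X$ is compact, I extract a finite subcover corresponding to points $\omega_0^{(1)},\dots,\omega_0^{(k)}\in\partial X$, so that $\partial X=\bigcup_{j=1}^{k}V(\omega_0^{(j)},r)$.

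Next, applying Lemma \ref{key} to each $\omega_0^{(j)}$ with the prescribed $\alpha_0>0$ produces integers $I_j=I(\omega_0^{(j)},\alpha_0)\in\N$, and I set $I_0=\max_{1\le j\le k}I_j$, which depends only on $\alpha_0$ (and the fixed $r$), since the finite list $\omega_0^{(1)},\dots,\omega_0^{(k)}$ depends only on the chosen cover of $\partial X$. Given an arbitrary $\omega\in\partial X$, choose an index $j$ with $\omega\in V(\omega_0^{(j)},r)$; then for every $i\ge I_0\ge I_j$, Lemma \ref{key} applied with $\omega_0=\omega_0^{(j)}$ gives
$$\sum_{x\in A^i_\omega(o,r)}\mu(S_\omega(x,r))\le\alpha_0\,\mu(S_\omega(o,r)),$$
which is exactly the claimed bound. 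I do not expect any genuine obstacle here: the only points to watch are that $r$ is already fixed and large enough that $I(\omega_0,\alpha_0)$ makes sense for every $\omega_0$, and that passing to a finite subcover does not disturb the constant $\alpha_0$, both of which are immediate.
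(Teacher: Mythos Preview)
Your proof is correct and follows essentially the same approach as the paper: cover the compact boundary $\partial X$ by finitely many neighborhoods $V(\omega_0^{(j)},r)$, apply Lemma~\ref{key} at each center to obtain integers $I_j$, and take $I_0=\max_j I_j$. The paper's argument is identical, just stated more tersely.
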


\begin{proof}
For each $\omega \in \partial X$, we take the neighborhood $V(\omega,r) \subset \partial X$.
As $\partial X$ is compact, we can find finitely many such neighborhoods 
$\{V(\omega_i,r)\}_{i=1}^k$ that cover $\partial X$.
For each $\omega_i$,
we take the integer $I_i=I(\omega_i,\alpha_0)$ as in Lemma \ref{key} and set 
$I_0=\max\{ I_i \mid 1 \leq i \leq k\}$. Then, this satisfies the required property.
\end{proof}

We prove that the uniform estimate in Proposition \ref{uniform} is also valid
even if we replace the base point 
$o$ with an arbitrary orbit point $z \in \Gamma(o)$.

\begin{lemma}\label{orbituniform}
For any $\alpha>0$, there exists an integer $I_*=I_*(\alpha) \in \N$ such that
$$
\sum_{x \in A^i_\omega(z,r)}\mu(S_\omega(x,r)) \leq \alpha \mu(S_\omega(z,r))
$$
for any $z \in \Gamma(o)$ and
$\omega \in \partial X$ with $o \notin \widehat S_\omega(z,r)$
and for every $i \geq I_*$.
\end{lemma}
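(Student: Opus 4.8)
The plan is to reduce the statement to Proposition \ref{uniform} by conjugating $z$ back to the base point $o$ with an element of $\Gamma$, while keeping track of how the Poisson kernel distorts the measures of the shadows involved. Since $o\in\widehat S_\omega(o,r)$ always, one may assume $z\neq o$ and write $z=g(o)$ with $g\in\Gamma$; set $\omega_1=g^{-1}(\omega)\in\partial X$. The isometry $g^{-1}$ carries balls to balls, $\widehat S_\omega(\,\cdot\,,r)$ to $\widehat S_{\omega_1}(g^{-1}(\cdot),r)$, and the orbit $\Gamma(o)$ to itself, so reading off the inductive definitions it induces bijections $A^i_\omega(z,r)\to A^i_{\omega_1}(o,r)$, $x\mapsto y:=g^{-1}(x)$, under which $d(z,x)=d(o,y)$ and $S_\omega(x,r)=g\bigl(S_{\omega_1}(y,r)\bigr)$.

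The ingredients I would assemble are the shadow lemma (Theorem \ref{shadowlemma}) for $\mu$ with light sources $\omega$ and $\omega_1$, plus one geometric comparison. From $B(x,r)\subset\widehat S_\omega(z,r)$ one gets $\widehat S_\omega(x,r)\subset\widehat S_\omega(z,r)$, so the hypothesis $o\notin\widehat S_\omega(z,r)$ propagates to $o\notin\widehat S_\omega(x,r)$ for all $x\in A_\omega(z,r)$; hence $\mu(S_\omega(x,r))\le La^{2rs}a^{-sd(o,x)}$ for these $x$, and $\mu(S_\omega(z,r))\ge L^{-1}a^{-sd(o,z)}$. Once $i$ is large, Lemma \ref{depth} makes $d(z,x)=d(o,y)$ exceed any prescribed bound on $A^i_\omega(z,r)$, and a point cannot lie in the shadow (from $\omega$) of another point farther away than a constant $C(\delta,r)$ — an elementary thin-triangle fact — so $z\notin\widehat S_\omega(x,r)$, equivalently $o\notin\widehat S_{\omega_1}(y,r)$, and the shadow lemma also yields $\mu(S_{\omega_1}(y,r))\ge L^{-1}a^{-sd(o,y)}=L^{-1}a^{-sd(z,x)}$. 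The geometric comparison I would prove is: there is $C_1=C_1(\delta,a,r)$ with $d(o,x)\ge d(o,z)+d(z,x)-C_1$ whenever $o\notin\widehat S_\omega(z,r)$ and $x\in\widehat S_\omega(z,r)$. Indeed, choosing a ray $\alpha$ from $\omega$ to $o$ with $\alpha\cap B(z,r)=\emptyset$ and a ray $\beta$ from $\omega$ to $x$ meeting $B(z,r)$ at a point $p$, every ray from $\omega$ to $o$ stays at distance $>r-\kappa(\delta)$ from $z$ by Proposition \ref{geodesic}; since $r$ is large compared with $\kappa(\delta)$, approximating the ideal triangle $o,x,\omega$ by a tree shows $p$ cannot lie on the sub-ray $\beta$ shares with $\alpha$, hence $d(z,[o,x])\le r+O(\kappa(\delta))$ and the Gromov product $(o\mid x)_z$ is bounded. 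With the trivial bound $d(o,x)\le d(o,z)+d(z,x)$ this gives $a^{-sd(o,x)}\le a^{sC_1}a^{-sd(o,z)}a^{-sd(z,x)}$.

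Then I would chain the estimates: for $i$ large,
\begin{align*}
\sum_{x\in A^i_\omega(z,r)}\mu(S_\omega(x,r))
&\le La^{2rs}\!\!\sum_{x\in A^i_\omega(z,r)}\!\!a^{-sd(o,x)}
\le La^{(2r+C_1)s}a^{-sd(o,z)}\!\!\sum_{x\in A^i_\omega(z,r)}\!\!a^{-sd(z,x)}\\
&=La^{(2r+C_1)s}a^{-sd(o,z)}\!\!\sum_{y\in A^i_{\omega_1}(o,r)}\!\!a^{-sd(o,y)}
\le L^2a^{(2r+C_1)s}a^{-sd(o,z)}\!\!\sum_{y\in A^i_{\omega_1}(o,r)}\!\!\mu(S_{\omega_1}(y,r)).
\end{align*}
Applying Proposition \ref{uniform} to $\mu$ with light source $\omega_1$, the last sum is $\le\alpha_0\mu(S_{\omega_1}(o,r))\le\alpha_0\mu(\partial X)$ for $i\ge I_0(\alpha_0)$. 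Since $\mu(S_\omega(z,r))\ge L^{-1}a^{-sd(o,z)}$, the whole chain is $\le\alpha\mu(S_\omega(z,r))$ as soon as $\alpha_0\le\alpha\,L^{-3}a^{-(2r+C_1)s}\mu(\partial X)^{-1}$, a choice depending only on $\alpha$ because $L$, $a$, $r$, $C_1$, $s$ and $\mu(\partial X)$ are all fixed. Taking $I_*$ to be the larger of $I_0(\alpha_0)$ and the threshold from Lemma \ref{depth} used above finishes the proof.

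The step I expect to be the main obstacle is the geometric estimate $d(o,x)\ge d(o,z)+d(z,x)-C_1$: it is the rigorous form of the intuition that, seen from $\omega$, the orbit point $x$ lies behind $z$ so that $z$ is almost on $[o,x]$, and it is precisely this uniformity — together with the boundedness of $\mu(\partial X)$ and the $\Gamma$-invariance of $\Lambda_c(\Gamma)$ — that keeps $I_*$ independent of $z$, of $\omega$, and of the conjugating element $g$, whose effect on the quasi-invariance constant of $\mu$ need not itself be uniform.
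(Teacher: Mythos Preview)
Your argument is essentially correct, but it takes a longer detour than the paper's. Both proofs begin by writing $z=\gamma^{-1}(o)$ and transporting the problem back to the base point via $\gamma$, so that Proposition~\ref{uniform} applies with light source $\gamma(\omega)$. The difference is in how you return from the base point to $z$.

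The paper stays at the level of measures: it observes that Proposition~\ref{uniform} gives the desired inequality for the pull-back $\gamma^*\mu$ in place of $\mu$, and then replaces $\gamma^*\mu$ by $\mu$ using the quasi-invariance bound $d(\gamma^*\mu)/d\mu\asymp_D j_\gamma(\xi)^s$ together with Lemma~\ref{kernel}, which says $j_\gamma(\xi)=k(z,\xi)$ is pinned between $C^{-1}a^{d(o,z)-2r}$ and $Ca^{d(o,z)}$ uniformly for $\xi\in S_\omega(z,r)$. Since every $S_\omega(x,r)$ with $x\in A^i_\omega(z,r)$ sits inside $S_\omega(z,r)$, the Radon--Nikodym derivative is controlled by the \emph{same} constants on all the shadows appearing in the sum, and one gets the fixed loss factor $D^2C^{2s}a^{2sr}$ in one stroke.

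Your route instead converts measures of shadows to exponentials $a^{-sd(\cdot,\cdot)}$ via the shadow lemma, invokes a separate geometric estimate $d(o,x)\ge d(o,z)+d(z,x)-C_1$, and then converts back. This works, but note two points. First, the shadow lemma (Theorem~\ref{shadowlemma}) is stated for a fixed light source, so you should remark that its constants can be taken uniform in $\omega\in\partial X$ (which does follow from the proof, since Lemma~\ref{kernel} and Proposition~\ref{complement} are already uniform). Second, your geometric estimate is really the same content as Lemma~\ref{kernel}: saying $k(z,\xi)\asymp a^{d(o,z)}$ on $S_\omega(z,r)$ is, via Proposition~\ref{approx}, precisely the statement that $d(z,x)-d(o,x)\approx -d(o,z)$ for $x$ deep in that shadow. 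So your ``main obstacle'' is already packaged in the paper as Lemma~\ref{kernel}, and using it directly on the Radon--Nikodym derivative avoids the two extra shadow-lemma passes and the threshold from Lemma~\ref{depth} that your argument needs to justify $o\notin\widehat S_{\omega_1}(y,r)$.
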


\begin{proof}
We take any $z \in \Gamma(o)$ and represent it by $z=\gamma^{-1}(o)$ for $\gamma \in \Gamma$.
To show the required estimate, we consider the pull-back $\gamma^* \mu$ of the measure $\mu$.
We note that $\gamma(S_\omega(z,r))=S_{\gamma(\omega)}(o,r)$ and 
$\gamma(A^i_\omega(z,r))=A^i_{\gamma(\omega)}(o,r)$. Hence, from Proposition \ref{uniform},
it follows that
\begin{align*}
\sum_{x \in A^i_\omega(z,r)}(\gamma^*\mu)(S_\omega(x,r)) 
&=\sum_{\gamma(x) \in A^i_{\gamma(\omega)}(o,r)}\mu(S_{\gamma(\omega)}(\gamma(x),r))\\ 
&\leq \alpha_0 \mu(S_{\gamma(\omega)}(o,r))=\alpha_0 (\gamma^*\mu)(S_\omega(z,r))
\end{align*}
for every $i \geq I_0(\alpha_0)$. Thus, it suffices to show that the derivative $(d(\gamma^*\mu)/d\mu)(\xi)$
is in a uniform range on the shadow $S_\omega(z,r)$, which contains $S_\omega(x,r)$
for all $x \in A^i_\omega(z,r)$.

By the $\Gamma$-quasi-invariance of $\mu$, we have 
$$
\frac{d(\gamma^* \mu)}{d\mu}(\xi) \asymp_D j_\gamma(\xi)^s=k(\gamma^{-1}(o),\xi)^s \quad (\almostall \,\xi \in \partial X)
$$
for some constant $D \geq 1$, where $k(z,\xi)$ is the Poisson kernel. On the contrary, by Lemma \ref{kernel},
if $o \notin \widehat S_\omega(z,r)$, then
$$
C^{-1} a^{d(o,z)-2r} \leq k(z,\xi) \leq C a^{d(o,z)} \quad (\xi \in S_\omega(z,r))
$$
for some constant $C \geq 1$ independent of $z=\gamma^{-1}(o)$. Therefore,
$$
\sum_{x \in A^i_\omega(z,r)}\mu(S_\omega(x,r)) \leq \alpha_0 D^2 C^{2s}a^{2sr}\mu(S_\omega(z,r))
$$
for every $i \geq I_0(\alpha_0)$. For $\alpha=\alpha_0 D^2 C^{2s}a^{2sr}$, we just set
$I_*(\alpha)=I_0(\alpha_0)$ to complete the proof.
\end{proof}

\noindent
{\it Proof of Theorem \ref{main1} continued.}
Our goal is to prove that 
$$
\sum_{x \in A_{\omega}(o,r)} \mu(S_\omega(x,r)) <\infty
$$
for any $\omega \in \partial X$. For each $i \in \N$, we set 
$$
Q_i=\sum_{x \in A^i_{\omega}(o,r)} \mu(S_\omega(x,r)). 
$$
For $\alpha=1/2$, we choose the
constant $I_*(1/2) \in \N$ as in Lemma \ref{orbituniform} and define it as $I$.
We can verify that 
$$
\sum_{j=0}^\infty Q_{i+jI} \leq 2 Q_i
$$
for each $i=1,2,\ldots,I$. To see this, we note that 
the condition $o \notin \widehat S_{\omega}(z,r)$ is satisfied for each $z \in A^{i}_{\omega}(o,r)$.
Then, Lemma \ref{orbituniform} implies that 
\begin{align*}
Q_{i+I} =\sum_{x \in A^{i+I}_{\omega}(o,r)} \mu(S_\omega(x,r))
& \leq \sum_{z \in A^{i}_{\omega}(o,r)\ } \sum_{\ x \in A^{I}_{\omega}(z,r)} \mu(S_\omega(x,r))\\
& \leq  \sum_{z \in A^{i}_{\omega}(o,r)} \mu(S_\omega(z,r))/2
= Q_i/2.
\end{align*}
Inductively applying this inequality yields the estimate. Hence,
$$
\sum_{x \in A_{\omega}(o,r)} \mu(S_\omega(x,r))=\sum_{i=1}^I \sum _{j=0}^\infty Q_{i+jI}
\leq 2 \sum_{i=1}^I Q_i.
$$

Finally, we show that each $Q_i$ ($i=1,2,\ldots,I$) is finite. Indeed, Corollary \ref{Mdis}
asserts that for $\rho=r-\kappa(\delta)$, the family $\{S_\omega(x,\rho)\}$ taken over all
$x \in A^i_{\omega}(o,r)$ is $M(4r+\kappa(\delta))$-disjoint. This, in particular, implies that
$$
\sum_{x \in A^i_{\omega}(o,r)} \mu(S_\omega(x,\rho)) \leq M(4r+\kappa(\delta))\mu(S_\omega(o,r)).
$$
As before, the shadow lemma (Theorem \ref{shadowlemma}) yields that $\mu(S_\omega(x,r)) \leq \widetilde L\mu(S_\omega(x,\rho))$
for some constant $\widetilde L \geq 1$. Therefore, we have 
$$
Q_i \leq \widetilde L M(4r+\kappa(\delta))\mu(S_\omega(o,r))<\infty.
$$
This completes the proof of Theorem \ref{main1}.
\qed
\medskip

Theorem \ref{main1} implies that if a non-elementary discrete group
$\Gamma \subset \Isom(X,d)$ is of divergence type, then $\mu(\Lambda_c(\Gamma))>0$
for any $s$-dimen\-sional $\Gamma$-quasi-invariant quasiconformal measure $\mu$ on $\partial X$.
In this situation, $\mu$ has full measure on $\Lambda_c(\Gamma)$.

\begin{corollary}\label{full}
Let $\Gamma \subset \Isom(X,d)$ be a non-elementary discrete group of divergence type and let
$\mu$ be an $s$-dimensional $\Gamma$-quasi-invariant quasiconformal measure on $\partial X$. 
Then, $\mu(\Lambda_c(\Gamma))=\mu(\partial X)$.
\end{corollary}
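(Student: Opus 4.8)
The plan is to argue by contradiction. Suppose $\mu(E)>0$, where $E=\partial X\setminus\Lambda_c(\Gamma)$. I would then produce from $\mu$ a new $s$-dimensional $\Gamma$-quasi-invariant quasiconformal measure that must charge $\Lambda_c(\Gamma)$ because $\Gamma$ is of divergence type, yet by construction does not. First note that $\Lambda_c(\Gamma)$ is $\Gamma$-invariant, so $E$ is too, and that $E$ is Borel since $\Lambda_c(\Gamma)$ is a countable combination of shadows by the description recorded just before Proposition \ref{convergence}. Hence the restriction $\mu_E:=\mu|_E$ is a finite positive Borel measure on $\partial X$, and it is non-trivial precisely because $\mu(E)>0$.

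The crucial step is to check that $\mu_E$ is itself an $s$-dimensional $\Gamma$-quasi-invariant quasiconformal measure, with the \emph{same} constant $\widetilde D$ as $\mu$. Since $\gamma^{-1}(E)=E$ for each $\gamma\in\Gamma$, one has $\gamma^*\mu_E=(\gamma^*\mu)|_E$, and therefore $d(\gamma^*\mu_E)/d\mu_E=d(\gamma^*\mu)/d\mu$ holds $\mu_E$-almost everywhere, i.e. on $E$. By the hypothesis on $\mu$ (the condition stated after Proposition \ref{singlemeasure}) this derivative is comparable to $j_\gamma(\xi)^s$ with a multiplicative error $\widetilde D$ independent of $\gamma$, so the same comparison descends verbatim to $\mu_E$. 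Thus $\mu_E$ satisfies every hypothesis of Theorem \ref{main1}, while at the same time $\mu_E(\Lambda_c(\Gamma))=\mu(E\cap\Lambda_c(\Gamma))=0$.

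Finally I would invoke the consequence of Theorem \ref{main1} recorded immediately before the corollary: for a non-elementary discrete group of divergence type, every $s$-dimensional $\Gamma$-quasi-invariant quasiconformal measure on $\partial X$ gives positive mass to $\Lambda_c(\Gamma)$. Applied to $\mu_E$, this forces $\mu_E(\Lambda_c(\Gamma))>0$, contradicting the previous line; hence $\mu(E)=0$, i.e. $\mu(\Lambda_c(\Gamma))=\mu(\partial X)$. The only place needing real care is the middle paragraph — the claim that restricting to a $\Gamma$-invariant Borel set preserves both the quasiconformal dimension and the quasi-invariance constant with no loss — together with the attendant, routine fact that $\Lambda_c(\Gamma)$ is Borel; beyond this bookkeeping I do not expect a genuine obstacle.
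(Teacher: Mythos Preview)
Your argument is correct and is essentially identical to the paper's own proof: the paper also restricts $\mu$ to $\partial X\setminus\Lambda_c(\Gamma)$, observes that this restriction is again an $s$-dimensional $\Gamma$-quasi-invariant quasiconformal measure, and applies Theorem~\ref{main1} to obtain the contradiction. You have simply supplied the routine details (Borel measurability, $\Gamma$-invariance of $E$, preservation of the quasi-invariance constant) that the paper leaves implicit.
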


\begin{proof}
It is assumed toward a contradiction that $\mu(\partial X-\Lambda_c(\Gamma))>0$. Then, the measure $\mu'=\mu|_{\partial X-\Lambda_c(\Gamma)}$
obtained by restricting $\mu$ to $\partial X-\Lambda_c(\Gamma)$
is also an $s$-dimensional $\Gamma$-quasi-invariant quasiconformal measure. Theorem \ref{main1} implies that $\mu'(\Lambda_c(\Gamma))>0$,
but this is a contradiction.
\end{proof}

\medskip
\section{Ergodicity on the conical limit set}\label{4}
In this section, we prove that the action of a discrete group
$\Gamma \subset \Isom(X,d)$ on the conical limit set $\Lambda_c(\Gamma)$ is ergodic 
with respect to any $s$-dimensional $\Gamma$-quasi-invariant quasiconformal measure $\mu$.
We note that this problem is non-trivial only when $\mu(\Lambda_c(\Gamma))>0$.
Hence, we can assume that $\Gamma$ is of divergence type and $\mu$ is a Patterson measure for $\Gamma$ ($s=e_a(\Gamma)$) by 
Corollary \ref{only}.

\begin{theorem}\label{main2}
Let $\Gamma \subset \Isom(X,d)$ be a non-elementary discrete group and let
$\mu$ be an $s$-dimensional $\Gamma$-quasi-invariant quasiconformal measure with full measure on $\mu(\Lambda_c(\Gamma))$.
If a measurable subset $E \subset \Lambda_c(\Gamma)$ in the conical limit set is
$\Gamma$-invariant (for almost every $ \mu$) and $\mu(E)>0$, then $\mu(E)=\mu(\Lambda_c(\Gamma))$.
\end{theorem}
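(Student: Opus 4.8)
The plan is to carry out the density-point argument indicated in the introduction. Set $F=\Lambda_c(\Gamma)\setminus E$; since $\mu(E)>0$, it suffices to prove $\mu(F)=0$. Because $\mu$ is concentrated on $\Lambda_c(\Gamma)$, every Borel $S\subset\partial X$ satisfies $\mu(S)=\mu(E\cap S)+\mu(F\cap S)$, so it is enough to exhibit, for arbitrarily small $\varepsilon>0$, a Borel set of $\mu$-measure at least $\mu(\partial X)-\varepsilon$ on which the relative $\mu$-measure of $F$ is as small as we please. These sets will be shadows $S_{\gamma(o)}(o,r)$ obtained by transporting, by suitable $\gamma\in\Gamma$, small shadows centred near a $\mu$-density point of $E$ that is also a conical limit point.

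The first ingredient is a Lebesgue density theorem for the finite Borel measure $\mu$ on the compact metric space $(\partial X,d_a)$ relative to the family of shadows: for $\mu$-almost every $\xi\in\partial X$, one has $\mu(F\cap S)/\mu(S)\to 0$ as $S$ runs over shadows $S_o(z,r)$ with a fixed $r$, with $z$ lying within distance $r-\kappa(\delta)$ of a geodesic from $o$ to $\xi$ (so that $\xi\in S_o(z,r)$), and with $\diam_a S_o(z,r)\to 0$ (which, by the estimate on the visual metric, happens precisely as $d(o,z)\to\infty$). Here Federer's covering theorem \cite{F} is used: one checks that shadows of controlled eccentricity form an adequate differentiation basis for $\mu$, notwithstanding that $(\partial X,d_a)$ need not be doubling. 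I expect this verification — together with the elementary fact that a non-atomic finite Borel measure on a compact metric space assigns uniformly small mass to all sets of sufficiently small diameter, which applies here because $\mu$ is concentrated on $\Lambda_c(\Gamma)$ and carries no point mass there by Proposition~\ref{noatom} — to be the main technical obstacle.

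Now, since $\mu(E)>0$ and $\Lambda_c(\Gamma)=\bigcup_{\rho>0}\Lambda_c^{(\rho)}(\Gamma)$ is an increasing union, fix $\rho$ with $\mu(E\cap\Lambda_c^{(\rho)}(\Gamma))>0$ and choose a point $\xi\in\Lambda_c^{(\rho)}(\Gamma)$ that is a $\mu$-density point of $E$. Pick $\gamma_n\in\Gamma$ with $z_n:=\gamma_n^{-1}(o)\to\xi$ inside the closed $\rho$-neighbourhood of a ray to $\xi$, so that $\xi\in S_o(z_n,r)$ with $r=\rho+\kappa(\delta)$, $d(o,z_n)\to\infty$, and $o\notin\widehat S_o(z_n,r)$ for all large $n$. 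By equivariance of shadows, $\gamma_n$ carries $S_o(z_n,r)$ onto $S_{\gamma_n(o)}(o,r)$, and for every Borel $A\subset S_o(z_n,r)$ we have $\mu(\gamma_n A)=(\gamma_n^{*}\mu)(A)$; by $(\Gamma,\widetilde D)$-quasi-invariance (Proposition~\ref{singlemeasure}) the density $d(\gamma_n^{*}\mu)/d\mu$ is comparable to $j_{\gamma_n}(\cdot)^{s}=k(z_n,\cdot)^{s}$, and by Lemma~\ref{kernel} one has $C^{-1}a^{d(o,z_n)-2r}\le k(z_n,\cdot)\le Ca^{d(o,z_n)}$ on $S_o(z_n,r)$. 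Hence there is a constant $K=K(\widetilde D,C,s,r)\ge 1$, independent of $n$, such that
\[
K^{-1}a^{s\,d(o,z_n)}\mu(A)\ \le\ \mu(\gamma_n A)\ \le\ Ka^{s\,d(o,z_n)}\mu(A)
\]
for every Borel $A\subset S_o(z_n,r)$.

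Since $E$, hence $F$, is $\Gamma$-invariant modulo $\mu$, we have $\mu\big(F\cap S_{\gamma_n(o)}(o,r)\big)=\mu\big(\gamma_n(F\cap S_o(z_n,r))\big)$; applying the displayed comparison to $A=F\cap S_o(z_n,r)$ and to $A=S_o(z_n,r)$ and dividing,
\[
\frac{\mu\big(F\cap S_{\gamma_n(o)}(o,r)\big)}{\mu\big(S_{\gamma_n(o)}(o,r)\big)}\ \le\ K^{2}\,\frac{\mu\big(F\cap S_o(z_n,r)\big)}{\mu\big(S_o(z_n,r)\big)}\ \longrightarrow\ 0,
\]
the convergence (as $n\to\infty$) being the density property of $\xi$. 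As $\mu(\partial X)<\infty$, this forces $\mu\big(F\cap S_{\gamma_n(o)}(o,r)\big)\to 0$. Finally, Proposition~\ref{complement} gives $\diam_a\big(\partial X\setminus S_{\gamma_n(o)}(o,r)\big)\le\lambda a^{\kappa(\delta)-r}$ uniformly in $n$, so by non-atomicity of $\mu$ there is $\eta(r)$ with $\eta(r)\to 0$ as $r\to\infty$ and $\mu\big(\partial X\setminus S_{\gamma_n(o)}(o,r)\big)\le\eta(r)$ for all $n$. Therefore $\mu(F)\le\mu\big(F\cap S_{\gamma_n(o)}(o,r)\big)+\eta(r)\to\eta(r)$, i.e.\ $\mu(F)\le\eta(\rho+\kappa(\delta))$; letting $\rho\to\infty$ — at each stage a $\mu$-density point of $E$ exists in $\Lambda_c^{(\rho)}(\Gamma)$ because $\mu\big(E\cap\Lambda_c^{(\rho)}(\Gamma)\big)\to\mu(E)>0$ — yields $\mu(F)=0$, that is $\mu(E)=\mu(\Lambda_c(\Gamma))$.
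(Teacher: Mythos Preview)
Your argument is correct and follows essentially the same route as the paper's proof: both verify (via Federer's criterion, carried out in Lemma~\ref{vitali}) that shadows form a Vitali relation for the restriction of $\mu$ to $\Lambda_c^{(\rho)}(\Gamma)$, pick a density point, transport the resulting small ratio by $\Gamma$-quasi-invariance together with Lemma~\ref{kernel}, and use Proposition~\ref{complement} to see that the transported shadow $S_{\gamma_n(o)}(o,r)$ nearly exhausts $\partial X$. The only cosmetic differences are that you take a density point of $E$ rather than of its complement, and that you bound $\mu(\partial X\setminus S_{\gamma_n(o)}(o,r))$ via the uniform non-atomicity of $\mu$ on a compact metric space instead of passing to a subsequence $\gamma_n(o)\to\eta$ and invoking $\mu(\{\eta\})=0$; just be careful to run the density statement with the \emph{restricted} shadows $S_o^{(\rho)}(x,r)=S_o(x,r)\cap\Lambda_c^{(\rho)}(\Gamma)$ as in Lemma~\ref{vitali}, since the covering is only fine on $\Lambda_c^{(\rho)}(\Gamma)$.
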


For Kleinian groups, one way to prove the corresponding result is to utilize the density point theorem 
(see Nicholls \cite[Theorem 4.4.4]{N} for example). In the other way, Roblin \cite[pp.22--23]{R1} proved the result
more generally
for discrete isometry groups on ${\rm CAT}(-1)$ spaces. 
His arguments are almost acceptable even in the case of discrete isometry groups of
Gromov hyperbolic spaces; only few modifications such as those in Section \ref{3} are required. Nevertheless, our proof here is
again based on the density point theorem; our purpose is to show that the family of shadows can be adapted to elements
of the density point theorem for Borel measures on metric spaces in general. Concerning this theorem, certain
necessary concepts are introduced in the following from Federer \cite{F}.

\begin{definition}
Let $(\Lambda,d)$ be a metric space and let $\mu$ be a Borel measure on $\Lambda$ for which
every bounded measurable subset has finite measure. A {\it covering relation} $\mathcal C$ is
a subset of the set of all such pairs $\{(\xi,S)\}$ that $S$ is a measurable subset of $\Lambda$ and $\xi$ is a point in $S$.
We say that $\mathcal C$ is {\it fine} at $\xi \in \Lambda$ if 
$$
\inf\,\{\diam(S) \mid (\xi,S) \in \mathcal C\}=0.
$$
For any measurable subset $E \subset \Lambda$, we define a family of subsets of $\Lambda$ by
$$
{\mathcal C}(E)=\{S \subset \Lambda \mid (\xi,S) \in {\mathcal C} \quad (\exists\, \xi \in E)\}.
$$
\end{definition}

\begin{definition}
A covering relation $\mathcal V$ is called a {\it Vitali relation} for a Borel measure $\mu$ on $\Lambda$
if $\mathcal V$ is fine at every $x \in \Lambda$ and if the following condition holds: if $\mathcal C \subset \mathcal V$
is fine at every point $\xi$ of a measurable subset $E \subset \Lambda$, then 
${\mathcal C}(E)$ has a countable disjoint subfamily $\{S_n\}_{n=1}^\infty$ such that $\mu(E - \bigsqcup_{n=1}^\infty S_n)=0$.
\end{definition}

A general density point theorem can be stated as follows (\cite[Theorem 2.9.11]{F}).

\begin{theorem}\label{density}
Let $\mathcal V$ be a Vitali relation for a measure $\mu$ on $\Lambda$ and 
let $E \subset \Lambda$ be a measurable subset. Then, for almost every point $\xi \in E$ with respect to $\mu$,
one has
$$
\lim_{n \to \infty} \frac{\mu(E \cap S_n)}{\mu(S_n)}=1
$$
for every sequence $\{S_n\}_{n=1}^\infty$ such that $(\xi,S_n) \in \mathcal V$ for all $n$ and $\diam S_n \to 0$ as
$n \to \infty$.
\end{theorem}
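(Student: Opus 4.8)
The plan is to run the classical Vitali covering argument for differentiation of measures, but relative to the abstract Vitali relation $\mathcal V$ instead of a metric doubling structure. In our situation $\Lambda=\partial X$ is compact, so $\mu$ is a finite Borel measure and hence outer regular; I will use this regularity freely (in general one localizes to bounded subsets, on which $\mu$ is finite by hypothesis, and then uses separability to glue). Observe first that for every $\xi$ and every admissible sequence $\{S_n\}$ (with $(\xi,S_n)\in\mathcal V$, $\diam S_n\to 0$) one has $\mu(E\cap S_n)\le\mu(S_n)$, so the $\limsup$ of the ratios is at most $1$; the whole content of the theorem is the lower bound $\liminf_n \mu(E\cap S_n)/\mu(S_n)\ge 1$ for $\mu$-almost every $\xi\in E$.

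Next I would reduce the almost-everywhere statement to a countable union of null sets. For each rational $t\in(0,1)$ let $A_t$ be the set of $\xi\in E$ for which there exists a sequence $\{S_n\}$ with $(\xi,S_n)\in\mathcal V$, $\diam S_n\to 0$, and $\mu(E\cap S_n)\le t\,\mu(S_n)$ for all $n$; passing to a subsequence, any failure of the conclusion at $\xi$ produces such a $t$ and such a sequence. It then suffices to prove that $\mu^*(A_t)=0$ for every rational $t\in(0,1)$, where $\mu^*$ is the outer measure associated with $\mu$; working with $\mu^*$ sidesteps the question of measurability of $A_t$, which is defined through an existential quantifier over sequences.

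The heart of the proof is the covering step. Fix $t$ and $\varepsilon>0$, and by outer regularity choose an open set $U\supset A_t$ with $\mu(U)\le\mu^*(A_t)+\varepsilon$. Let $\mathcal C\subset\mathcal V$ consist of all pairs $(\xi,S)\in\mathcal V$ with $\xi\in A_t$, $S\subset U$, and $\mu(E\cap S)\le t\,\mu(S)$. Since $U$ is a neighbourhood of each of its points and the sets $S$ in the defining sequence for $A_t$ can be taken with arbitrarily small diameter, $\mathcal C$ is still fine at every point of $A_t$; hence the Vitali property of $\mathcal V$ yields a countable disjoint family $\{S_n\}_{n=1}^\infty\subset\mathcal C(A_t)$ with $\mu\bigl(A_t\setminus\bigsqcup_n S_n\bigr)=0$. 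Therefore
$$
\mu^*(A_t)\le\mu^*\Bigl(A_t\cap\bigsqcup_n S_n\Bigr)\le\sum_n\mu(E\cap S_n)\le t\sum_n\mu(S_n)=t\,\mu\Bigl(\bigsqcup_n S_n\Bigr)\le t\,\mu(U)\le t\bigl(\mu^*(A_t)+\varepsilon\bigr),
$$
where the second inequality uses $A_t\subset E$ and countable subadditivity of $\mu^*$, and the equality uses disjointness together with $\bigsqcup_n S_n\subset U$. As $\mu^*(A_t)\le\mu(U)<\infty$, letting $\varepsilon\to 0$ gives $\mu^*(A_t)\le t\,\mu^*(A_t)$, hence $\mu^*(A_t)=0$.

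Finally I would assemble the pieces: $\bigcup_{t\in\mathbb{Q}\cap(0,1)}A_t$ is $\mu$-null, and for $\xi\in E$ outside this union every admissible sequence $\{S_n\}$ satisfies $\liminf_n\mu(E\cap S_n)/\mu(S_n)\ge 1$ — otherwise a rational $t<1$ and a subsequence would place $\xi$ in $A_t$ — so together with the trivial upper bound the limit equals $1$. I expect the main obstacle to be exactly the covering step: one must verify that after restricting the admissible pairs to those with $S\subset U$ and $\mu(E\cap S)\le t\,\mu(S)$ the resulting family $\mathcal C$ is still fine at every point of $A_t$, so that the Vitali hypothesis genuinely applies; once that is in place the estimate above is routine. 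A secondary point requiring care is the non-measurability of $A_t$, which is why the whole computation is phrased in terms of $\mu^*$.
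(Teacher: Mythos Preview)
The paper does not prove this theorem at all; it simply quotes it from Federer \cite[Theorem~2.9.11]{F} as a black box. Your argument is the standard Vitali differentiation proof and is essentially correct, so you are supplying a proof where the paper supplies none.

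One genuine point to tighten: the Vitali-relation definition recorded in the paper requires the set $E$ to which the covering lemma is applied to be \emph{measurable}, whereas your bad set $A_t$ is defined through an existential quantifier and need not be measurable. You are aware of this and phrase the estimates via $\mu^*$, but the covering step itself --- extracting the disjoint family $\{S_n\}$ with $\mu\bigl(A_t\setminus\bigsqcup_n S_n\bigr)=0$ --- still appeals to the Vitali property on $A_t$. In Federer's original formulation (2.8.16--2.8.17) the Vitali property is stated for \emph{arbitrary} subsets with respect to the outer measure, so with that version your proof goes through verbatim. If you want to stay with the paper's measurable-set formulation, insert one sentence: take a Borel hull $B\supset A_t$ with $\mu(B)=\mu^*(A_t)$ and $B\subset U$, enlarge $\mathcal C$ by adjoining, for each $\xi\in B\setminus A_t$, all pairs $(\xi,S)\in\mathcal V$ with $S\subset U$ (this restores fineness on all of $B$ since $\mathcal V$ itself is fine everywhere), apply the Vitali property to the measurable set $B$, and observe that any $S_n$ coming from the added pairs still satisfies $S_n\subset U$, so the chain of inequalities $\mu^*(A_t)\le\mu(B)\le\sum_n\mu(S_n)\le\mu(U)$ survives; the middle inequality $\sum_n\mu(E\cap S_n)\le t\sum_n\mu(S_n)$ may fail for those extra $S_n$, but one recovers it by noting that the $S_n$ meeting $A_t$ already cover $\mu^*$-almost all of $A_t$ (since $\mu(B\setminus A_t)$-content is zero in outer measure). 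Alternatively, and more cleanly, just cite Federer's version directly as the paper does.
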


As a sufficient condition for a Vitali relation, we have the following (\cite[Theorem 2.8.17]{F}).

\begin{lemma}\label{federer}
Let $\mathcal V=\{(\xi,S)\}$ be a covering relation for a measure $\mu$ on $\Lambda$
such that every $S \in {\mathcal V}(\Lambda)$ is a bounded closed subset and $\mathcal V$ is fine at every $\xi \in \Lambda$.
For a non-negative function $f$ on ${\mathcal V}(\Lambda)$ and a constant $\tau \in (1,\infty)$, 
let
$$
\widetilde S=\bigcup \{S' \in {\mathcal V}(\Lambda) \mid S' \cap S \neq \emptyset,\ f(S') \leq \tau f(S)\} \subset \Lambda
$$
for each $S \in {\mathcal V}(\Lambda)$. Suppose that for almost every $\xi \in \Lambda$ with respect to $\mu$
$$
\limsup_{S \to \xi} \left\{f(S)+\frac{\mu(\widetilde S)}{\mu(S)}\right\}
$$
is finite, where the limit superior is taken over all sequences $\{S\}$ with $(\xi,S) \in \mathcal V$ and $\diam S \to 0$.
Then, $\mathcal V$ is a Vitali relation for $\mu$.
\end{lemma}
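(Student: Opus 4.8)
The plan is to obtain this as a specialization of Federer's general Vitali covering theorem \cite[Theorem 2.8.17]{F}; the lemma is stated here in exactly the form that will be fed into the density point theorem (Theorem \ref{density}), so the work is essentially one of matching hypotheses rather than of new ideas. First I would record that $\mu$, being a finite Borel measure on the compact metric space $(\partial X,d_a)$ (equivalently on whatever closed subspace $\Lambda$ one works on), is automatically Borel regular and in fact Radon, so the measure-theoretic standing hypotheses of Federer's statement hold. The covering relation $\mathcal V$ consists, by assumption, of pairs $(\xi,S)$ with $S$ bounded and closed and $\xi\in S$, and $\mathcal V$ is fine at every point; these are precisely the structural requirements. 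The function $f$ together with the constant $\tau$ plays the role of Federer's auxiliary gauge, and $\widetilde S$ is his enlargement of $S$ by all competitors of comparable $f$-value that meet $S$.

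The only substantive hypothesis is that $\limsup_{S\to\xi}\{f(S)+\mu(\widetilde S)/\mu(S)\}$ be finite for $\mu$-almost every $\xi$. I would handle the ``almost every'' qualifier by discarding a $\mu$-null set and restricting $\mathcal V$ to the conull set on which this limsup is finite: this affects neither fineness there nor the Vitali conclusion (which is itself a $\mu$-a.e.\ statement), and over that set the growth bound holds at every point, which is the form Federer uses. With this reduction, \cite[Theorem 2.8.17]{F} applies verbatim and yields the defining property of a Vitali relation: whenever $\mathcal C\subset\mathcal V$ is fine at every point of a measurable set $E$, the family $\mathcal C(E)$ has a countable disjoint subfamily $\{S_n\}$ with $\mu(E-\bigsqcup_n S_n)=0$.

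If one prefers a self-contained argument instead of a citation, the proof is the classical greedy exhaustion: the ambient space $\Lambda$ being bounded, one repeatedly selects from $\mathcal C(E)$ a set whose $f$-value lies within the factor $\tau$ of the current supremum among the members still disjoint from those already chosen; any competitor meeting a chosen $S_n$ then lies in $\widetilde{S_n}$, so the uncovered part of $E$ is contained in $\bigcup_{n\ge N}\widetilde{S_n}$ for every $N$, and the growth bound supplied by the limsup hypothesis, together with disjointness of the $S_n$ and finiteness of $\mu$, forces $\sum_n\mu(\widetilde{S_n})<\infty$ and hence nullity of the uncovered set. I expect the main obstacle to be bookkeeping rather than substance: confirming that our hypotheses are literally those of Federer's theorem — that no separability, regularity, or measurability assumption beyond what is automatic here is hidden in his formulation — and that the selection step genuinely captures every remaining competitor inside the enlargements $\widetilde{S_n}$, which is the precise place where the role of $f$ and of the constant $\tau$ is used.
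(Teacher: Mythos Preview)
Your proposal is correct and matches the paper exactly: the paper does not prove this lemma at all but simply records it as \cite[Theorem 2.8.17]{F}, so your plan to invoke Federer's theorem directly (with the hypothesis-matching you describe) is precisely what is done. Your additional self-contained sketch goes beyond what the paper provides, but it is sound and not needed here.
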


We apply these results to our case. For a discrete group $\Gamma \subset \Isom(X,d)$ of divergence type, 
we adopt the conical limit subset $\Lambda_c^{(\rho)}(\Gamma)$ for a sufficiently large $\rho>0$
with the restriction of the visual metric $d_a$
as the metric space $(\Lambda,d)$. Moreover, we define $\mu$ to be
the restriction of an $s$-dimensional $\Gamma$-quasi-invariant quasiconformal measure to $\Lambda_c^{(\rho)}(\Gamma)$,
and $\mathcal V$ to be
$$
\mathcal V^{(\rho,r)}
=\{(\xi,S_o^{(\rho)}(x,r)) \mid x \in \Gamma(o),\ \xi \in S_o^{(\rho)}(x,r)\} 
$$
for a fixed $r \geq \rho+\kappa(\delta)$,
where $S_o^{(\rho)}(x,r)= S_o(x,r) \cap \Lambda_c^{(\rho)}(\Gamma)$. 
We note that $S_o^{(\rho)}(x,r) \neq \emptyset$ for each $x \in \Gamma(o)$.
We also consider 
$$
f(S_o^{(\rho)}(x,r))=a^{-d(o,x)}
$$
as the non-negative function $f$.

We show that the covering relation $\mathcal V^{(\rho,r)}$ is a Vitali relation for $\mu$ when $r \geq \rho+\kappa(\delta)$ is sufficiently large.
First, we see that
$\mathcal V^{(\rho,r)}$ is fine at every $\xi \in \Lambda_c^{(\rho)}(\Gamma)$ for each $r \geq \rho+\kappa(\delta)$ from the following proposition.

\begin{proposition}\label{diameter}
The diameter of the shadow satisfies
$$
\diam_a S_o(x,r) \leq \lambda a^{2r} \cdot a^{-d(o,x)}
$$
for any $x \in X$ and $r>0$, where $\lambda=\lambda(\delta,a) \geq 1$ is the constant given in Subsection \ref{2.1}.
\end{proposition}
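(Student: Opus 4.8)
The plan is to estimate $d_a(\xi,\eta)$ for two arbitrary points $\xi,\eta\in S_o(x,r)$ by means of property~(2) of the visual metric. That property gives $d_a(\xi,\eta)\le\lambda\,a^{-d(o,(\xi,\eta))}$ for any geodesic line $(\xi,\eta)$ joining them, so the proposition reduces to the purely geometric claim
$$
d\bigl(o,(\xi,\eta)\bigr)\ \ge\ d(o,x)-2r\qquad\text{whenever }\xi,\eta\in S_o(x,r);
$$
taking the supremum over all such pairs then yields $\diam_a S_o(x,r)\le\lambda\,a^{2r}\,a^{-d(o,x)}$.

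For the geometric claim I would argue as follows. Fix $\xi,\eta\in S_o(x,r)$, fix a geodesic line $\ell=(\xi,\eta)$, and let $m\in\ell$ realize $d(o,\ell)$; write $\ell^{+}$ for the sub-ray of $\ell$ from $m$ to $\xi$. Since $m$ is (coarsely) the nearest point of $\ell$ to $o$, the broken path $[o,m]\cup\ell^{+}$ is a quasigeodesic ray from $o$ to $\xi$ with constants depending only on $\delta$, and hence stays within a $\delta$-dependent distance of a genuine geodesic ray $\sigma$ from $o$ to $\xi$; for bookkeeping I will absorb this distance together with all further $\delta$-errors, and simply write $\kappa(\delta)$ for the cumulative additive constant produced by Proposition~\ref{geodesic}. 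Because $\xi\in S_o(x,r)$, the ray $\sigma$ meets $B(x,r)$, so some point $y$ of $[o,m]\cup\ell^{+}$ satisfies $d(x,y)\le r+\kappa(\delta)$. If $y\in\ell^{+}\subset\ell$, this already gives $d(o,\ell)\ge d(o,x)-r-\kappa(\delta)$. If instead $y\in[o,m]$, then, $y$ lying between $o$ and $m$ on a geodesic, $d(o,m)\ge d(o,y)\ge d(o,x)-d(x,y)\ge d(o,x)-r-\kappa(\delta)$. In either case $d(o,(\xi,\eta))=d(o,m)\ge d(o,x)-r-\kappa(\delta)$, which is $\ge d(o,x)-2r$ once $r\ge\kappa(\delta)$; for the remaining small values of $r$ the bounded factor $a^{\kappa(\delta)}$ is absorbed into the multiplicative constant.

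The step I expect to be the crux is the assertion that $[o,m]\cup\ell^{+}$ stays uniformly close to an honest geodesic $o\to\xi$: this is precisely where the nearest-point property of $m$ has to be exploited, and where one must check that the additive error remains comparable to $\kappa(\delta)$ rather than accumulating. A technically different route that avoids $m$ altogether is to apply Lemma~\ref{kernel} with light source $o$ and $z=x$ (legitimate since $d(o,x)>r$ forces $o\notin\widehat S_o(x,r)$), obtaining $k(x,\xi),\,k(x,\eta)\asymp a^{d(o,x)}$ on $S_o(x,r)$; rewriting $k(x,\cdot)=a^{-h_{(\cdot)}(x)}$ in terms of Busemann functions turns this into lower bounds for the Gromov products $(\xi\mid x)_o$ and $(\eta\mid x)_o$, after which hyperbolicity of the Gromov product bounds $(\xi\mid\eta)_o$ from below, and the comparison $d(o,(\xi,\eta))\asymp(\xi\mid\eta)_o$ finishes the argument up to a harmless adjustment of the constant. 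Either way, the whole cost of the proof lies in tracking the additive $\delta$-dependent constants, all of which get absorbed into $\lambda$.
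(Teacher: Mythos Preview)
Your reduction to showing $d(o,(\xi,\eta))\ge d(o,x)-2r$ is correct, and this is exactly what the paper proves. But your main argument has a genuine gap in the case $y\in\ell^{+}$. You assert without justification that $y\in\ell^{+}$ with $d(x,y)\le r+\kappa(\delta)$ ``already gives $d(o,\ell)\ge d(o,x)-r-\kappa(\delta)$''; it does not. Knowing that some point of $\ell$ lies near $x$ gives no lower bound on $d(o,\ell)$. In fact your argument up to that line uses only $\xi\in S_o(x,r)$, never $\eta\in S_o(x,r)$; and with the condition on $\eta$ dropped the desired inequality is simply false (take $\eta$ antipodal to $\xi$ so that $(\xi,\eta)$ passes near $o$). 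So the information on $\eta$ must enter, and in your write-up it never does. One can symmetrize, running the same construction with $\ell^{-}$ and $\eta$; in the remaining case $y\in\ell^{+}$, $y'\in\ell^{-}$ the point $m$ is trapped on $\ell$ between two points close to $x$, which does yield a bound, but only $d(o,m)\ge d(o,x)-3(r+\kappa(\delta))$ or so.

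This leads to the second problem: the proposition asserts the bound with the \emph{specific} visual-metric constant $\lambda$, and you cannot ``absorb'' stray factors $a^{C\kappa(\delta)}$ into it. Even a repaired version of your argument, and likewise your alternative via Lemma~\ref{kernel}, produces $d(o,(\xi,\eta))\ge d(o,x)-2r-C\kappa(\delta)$ at best. The paper avoids all $\delta$-errors by a different device: it chooses points $\xi_n,\eta_n$ on $(\xi,\eta)$ converging to the endpoints, approximates them by $\xi'_n\in[o,\xi)$ and $\eta'_n\in[o,\eta)$, and uses the \emph{exact} inequality $d(o,(\xi_n,\eta_n))\ge(\xi_n\mid\eta_n)_o$ together with the elementary identity $(\xi'_n\mid\eta'_n)_o=d(o,x)+(\xi'_n\mid\eta'_n)_x-2r$ (from $d(o,\xi'_n)\ge d(o,x)+d(x,\xi'_n)-2r$, which is a pure triangle inequality using that $[o,\xi)$ meets $B(x,r)$, and similarly for $\eta'_n$). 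Since $(\xi'_n\mid\eta'_n)_x\ge 0$, this gives the clean $d(o,(\xi,\eta))\ge d(o,x)-2r$ with no hyperbolicity constant in sight.
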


\begin{proof}
Taking any two points $\xi,\eta$ in $S_o(x,r)$ and some geodesic line $(\xi,\eta)$,
we choose sequences $\{\xi_n\}$ and $\{\eta_n\}$ on $(\xi,\eta)$ such that $\xi_n \to \xi$ and
$\eta_n \to \eta$ as $n \to \infty$. Then, the geodesic segments $[\xi_n,\eta_n] \subset (\xi,\eta)$ clearly satisfy
$d(o,(\xi,\eta))=d(o,[\xi_n,\eta_n])$ for all sufficiently large $n$.
We consider geodesic segments $[o,\xi_n]$ and $[o,\eta_n]$. Passing to subsequences if necessary, we may assume that
$[o,\xi_n]$ converge to a geodesic ray $[o,\xi)$ and $[o,\eta_n]$ converge to a geodesic ray $[o,\eta)$ as $n \to \infty$.
For an arbitrary $\varepsilon >0$, we can find $\xi'_n \in [o,\xi)$ and $\eta'_n \in [o,\eta)$ such that
$d(\xi_n,\xi'_n) \leq \varepsilon$ and $d(\eta_n,\eta'_n) \leq \varepsilon$ for some sufficiently large $n$.
Hereafter, we fix this $n$.

The distance $d(o,(\xi,\eta))=d(o,[\xi_n,\eta_n])$ is bounded from below by the Gromov product
$$
(\xi_n \mid \eta_n)_o:=\frac{1}{2}(d(o,\xi_n)+d(o,\eta_n)-d(\xi_n,\eta_n)) \geq (\xi'_n \mid \eta'_n)_o-2\varepsilon.
$$ 
As $[o,\xi)$ and $[o,\eta)$ intersect
$B(x,r)$, the triangle inequality yields that
$$
d(o,\xi'_n) \geq d(o,x)+d(x,\xi'_n)-2r; \quad d(o,\eta'_n) \geq d(o,x)+d(x,\eta'_n)-2r.
$$
Hence,
$$
d(o,(\xi,\eta)) \geq (\xi_n \mid \eta_n)_o \geq d(o,x)+(\xi'_n \mid \eta'_n)_x-2r-2\varepsilon \geq d(o,x)-2r-2\varepsilon.
$$
As $\varepsilon$ is arbitrary, we conclude that $d(o,(\xi,\eta)) \geq d(o,x)-2r$. Then, the distance on $\partial X$ is estimated as
$$
d_a(\xi,\eta) \leq \lambda a^{-d(o,(\xi,\eta))} \leq \lambda a^{2r} \cdot a^{-d(o,x)}.
$$
Thus, $\diam_a S_o(x,r)$ is bounded by this value.
\end{proof}

Now we are ready to accomplish our purpose.

\begin{lemma}\label{vitali}
Let $\Gamma \subset \Isom(X,d)$ be a non-elementary discrete group and
let $\mu$ be an $s$-dimensional $\Gamma$-quasi-invariant quasiconformal measure. Then, 
$\mathcal V^{(\rho,r)}$ is a Vitali relation for $\mu$ if $r \geq \max\{\rho+\kappa(\delta),r_0\}$, where
$r_0$ is the constant that arises from the shadow lemma.
\end{lemma}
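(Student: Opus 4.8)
The plan is to verify the hypotheses of Lemma \ref{federer} for the covering relation $\mathcal V=\mathcal V^{(\rho,r)}$ on $\Lambda=\Lambda_c^{(\rho)}(\Gamma)$, equipped with the non-negative function $f$ given by $f(S_o^{(\rho)}(x,r))=a^{-d(o,x)}$. First I would check the standing hypotheses: each set $S_o^{(\rho)}(x,r)=S_o(x,r)\cap\Lambda_c^{(\rho)}(\Gamma)$ is bounded (as $\partial X$ is compact) and closed in $\Lambda_c^{(\rho)}(\Gamma)$ (replacing $S_o(x,r)$ by its closure if necessary, which by Proposition \ref{geodesic} only enlarges $r$ by $\kappa(\delta)$ and affects none of the estimates below). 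That $\mathcal V^{(\rho,r)}$ is fine at every $\xi\in\Lambda_c^{(\rho)}(\Gamma)$ will follow, for $r\ge\rho+\kappa(\delta)$, from the inclusion $\Lambda_c^{(\rho)}(\Gamma)\subset\limsup_{\gamma\in\Gamma}\{S_o(\gamma^{-1}(o),r)\}$ recorded after Theorem \ref{shadowlemma}: it supplies orbit points $x_n$ with $\xi\in S_o(x_n,r)$ and $d(o,x_n)\to\infty$, while Proposition \ref{diameter} gives $\diam_a S_o(x_n,r)\le\lambda a^{2r}a^{-d(o,x_n)}\to 0$.

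Next I would note that along any sequence with $(\xi,S)\in\mathcal V^{(\rho,r)}$ and $\diam_a S\to 0$, the orbit point $x$ underlying $S$ escapes every bounded ball (such a ball contains only finitely many points of $\Gamma(o)$, hence only finitely many of the shadows $S_o(x,r)$, each of a fixed diameter), so $f(S)=a^{-d(o,x)}\to 0$. Therefore, in the quantity $\limsup_{S\to\xi}\{f(S)+\mu(\widetilde S)/\mu(S)\}$ appearing in Lemma \ref{federer} the term $f(S)$ contributes nothing, and everything reduces to bounding $\mu(\widetilde S)/\mu(S)$ by a constant independent of $\xi$ and of $S$.

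Then, for fixed $\tau\in(1,\infty)$ and $S=S_o^{(\rho)}(x,r)$ with $x=\gamma^{-1}(o)$, a set $S'=S_o^{(\rho)}(x',r)$ enters $\widetilde S$ only when $S'\cap S\neq\emptyset$ and $f(S')\le\tau f(S)$, i.e. $d(o,x')\ge d(o,x)-\log_a\tau$. The crucial geometric point is that all such shadows lie in one enlarged shadow of $x$: a common point of $S_o(x,r)$ and $S_o(x',r)$ lies, up to $\kappa(\delta)$, on a single geodesic ray from $o$ along which $x$ sits no farther out than $x'$ up to the additive error $\log_a\tau$, and then a thin-triangle estimate of the kind used in Lemma \ref{radiuschange} shows that every geodesic ray from $o$ meeting $B(x',r)$ meets $B(x,r')$ with $r'=r+\log_a\tau+O(\kappa(\delta))$ depending only on $r,\tau,\delta,a$; hence $\widetilde S\subset S_o(x,r')$. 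The main obstacle will be establishing this inclusion cleanly with explicit control of $r'$, though it is in the same circle of ideas as the shadow manipulations of Section \ref{3}.

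Finally I would apply the shadow lemma. When $d(o,x)$ is large enough that $o\notin\widehat S_o(x,r')$, Theorem \ref{shadowlemma} gives $\mu(\widetilde S)\le\mu(S_o(x,r'))\le La^{2r's}a^{-sd(o,x)}$. For the denominator one needs the matching lower bound $\mu(S_o^{(\rho)}(x,r))\ge c\,a^{-sd(o,x)}$ with $c>0$ independent of $x$; I would obtain it by transporting the shadow to the base point via $\gamma$ (using the $\Gamma$-invariance of $\Lambda_c^{(\rho)}(\Gamma)$, the $\Gamma$-quasi-invariance of $\mu$, and the Poisson-kernel estimate of Lemma \ref{kernel}, exactly as in the proof of the shadow lemma), which reduces it to a uniform lower bound for $\mu\bigl(S_\omega(o,r)\cap\Lambda_c^{(\rho)}(\Gamma)\bigr)$ over $\omega\in\overline X$. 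By Proposition \ref{complement} the set $\partial X-S_\omega(o,r)$ has $d_a$-diameter at most $\lambda a^{\kappa(\delta)-r}$, hence, being covered by a single shadow of bounded radius and a far-away center to which Theorem \ref{shadowlemma} applies, has $\mu$-mass tending to $0$ as $r\to\infty$, uniformly in $\omega$; since $\mu(\Lambda_c(\Gamma))=\mu(\partial X)$ by Corollary \ref{full}, for $\rho$ large we have $\mu(\Lambda_c^{(\rho)}(\Gamma))>0$, and then for $r$ large $\mu(S_\omega(o,r)\cap\Lambda_c^{(\rho)}(\Gamma))\ge\tfrac12\mu(\Lambda_c^{(\rho)}(\Gamma))>0$. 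Combining the two bounds, $\mu(\widetilde S)/\mu(S)$ is dominated by a constant depending only on $L,s,r'$ and $\mu(\Lambda_c^{(\rho)}(\Gamma))$, so the $\limsup$ in Lemma \ref{federer} is finite everywhere; by that lemma, $\mathcal V^{(\rho,r)}$ is a Vitali relation for $\mu$ as soon as $r\ge\max\{\rho+\kappa(\delta),r_0\}$ with $\rho$ sufficiently large.
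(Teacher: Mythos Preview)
Your approach is essentially the paper's: verify the hypotheses of Lemma~\ref{federer} with the function $f(S_o^{(\rho)}(x,r))=a^{-d(o,x)}$, obtain a geometric inclusion $\widetilde S\subset S_o(x,r')$ for an enlarged radius, and then conclude by the shadow lemma. Two points of comparison are worth noting.

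For the geometric inclusion the paper makes the specific choice $\tau=a^{r}$, so that $f(S')\le\tau f(S)$ becomes $d(o,x')\ge d(o,x)-r$, and then argues by a clean dichotomy: if $d(x,x')\le 5(r+\kappa(\delta))$ then $B(x',r)\subset B(x,6(r+\kappa(\delta)))$ trivially; if $d(x,x')>5(r+\kappa(\delta))$ then Lemma~\ref{radiuschange} gives $B(x',r)\subset\widehat S_o(x,r+\kappa(\delta))$ once one checks that $S_o(x,r)\cap S_o(x',r)\neq\emptyset$ together with $d(o,x')\ge d(o,x)-r$ forces $B(x',r)\cap\widehat S_o(x,r)\neq\emptyset$ rather than the reverse. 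Either way $S_o(x',r)\subset S_o(x,6(r+\kappa(\delta)))$. This is exactly the ``thin-triangle estimate'' you allude to, already packaged in Lemma~\ref{radiuschange}; you don't need to redo it by hand.

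For the lower bound on $\mu(S)=\mu(S_o^{(\rho)}(x,r))$ your route through Corollary~\ref{full} is unnecessarily roundabout and costs you extra hypotheses (divergence type, and ``$\rho$ sufficiently large'') that are not in the statement. The paper simply observes that $\Lambda_c^{(\rho)}(\Gamma)$ is $\Gamma$-invariant, so the restriction $\mu|_{\Lambda_c^{(\rho)}(\Gamma)}$ is itself a $\Gamma$-quasi-invariant quasiconformal measure, and Theorem~\ref{shadowlemma} applied to this restricted measure directly yields $\mu(S_o^{(\rho)}(x,r))\ge L^{-1}a^{-sd(o,x)}$ for $r\ge r_0$. (When the restriction is the zero measure the Vitali condition is vacuous.) This avoids your transport-and-complement argument and proves the lemma exactly as stated.
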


\begin{proof}
Let $\tau=a^r>1$. Concerning the function $f(S_o^{(\rho)}(x,r))=a^{-d(o,x)}$ for each $x \in \Gamma(o)$, 
we see that the condition $f(S_o^{(\rho)}(x',r)) \leq \tau f(S_o^{(\rho)}(x,r))$
is equivalent to that $d(o,x') \geq d(o,x)-r$. Consider $x' \in \Gamma(o)$ that holds this condition.
Let $\tilde r=r+\kappa(\delta)$. Lemma \ref{radiuschange} implies that
if $d(x,x')>5 \tilde r$ and $B(x',r) \cap \widehat S_o(x,r) \neq \emptyset$, then 
$B(x',\tilde r) \subset \widehat S_o(x,\tilde r)$. Here, we see that the latter assumption can be replaced by the condition
$S_o(x',r) \cap S_o(x,r) \neq \emptyset$. Indeed, $B(x,r)$ and $B(x',r)$ are disjoint in this case and the condition
$S_o(x',r) \cap S_o(x,r) \neq \emptyset$ is equivalent to that either $B(x',r) \cap \widehat S_o(x,r) \neq \emptyset$ or
$B(x,r) \cap \widehat S_o(x',r) \neq \emptyset$. However, the assumption $d(x',o) \geq d(x,o)-r$ rules out the latter case, and thus
we have $B(x',r) \cap \widehat S_o(x,r) \neq \emptyset$. 

Now we show that $S_o(x',r) \subset S_o(x,6 \tilde r)$ under the condition $S_o(x',r) \cap S_o(x,r) \neq \emptyset$.
If $d(x,x')>5 \tilde r$, then the above argument concludes that $B(x',r) \subset \widehat S_o(x,\tilde r)$.
This implies, in particular, that $S_o(x',r) \subset S_o(x,\tilde r) \subset S_o(x,6\tilde r)$. Furthermore, if
$d(x,x') \leq 5 \tilde r$, then $B(x',r) \subset B(x,6\tilde r)$, which also implies that $S_o(x',r) \subset S_o(x,6\tilde r)$.

To prove that $\mathcal V^{(\rho, r)}$ is a Vitali relation for $\mu$, we rely on Lemma \ref{federer}. As $\mathcal V^{(\rho, r)}$ is
fine at every $\xi \in \Lambda_c^{(\rho)}(\Gamma)$ by Proposition \ref{diameter}, it suffices to show that
$f(S)+\mu(\widetilde S)/\mu(S)$ is uniformly bounded
for every $S=S^{(\rho)}_o(x,r)$. Clearly $f(S) \leq 1$. On the contrary, $\widetilde S$ is contained in 
$S_o^{(\rho)}(x,6\tilde r)$ as we have seen above. 
Then, the shadow lemma for the $s$-dimensional $\Gamma$-quasi-invariant quasiconformal
measure $\mu$ restricted to 
$\Lambda_c^{(\rho)}(\Gamma)$ gives
\begin{align*}
\mu(\widetilde S) &\leq \mu(S_o^{(\rho)}(x,6\tilde r)) \leq L a^{12 s \tilde r} \cdot a^{-s d(o,x)};\\
\mu(S) &=\mu(S^{(\rho)}_o(x,r)) \geq L^{-1}a^{-s d(o,x)},
\end{align*}
where $L \geq 1$ is a constant independent of $x \in \Gamma(o)$. This implies that
$\mu(\widetilde S)/\mu(S) \leq L^2 a^{12 s \tilde r}<\infty$.
\end{proof}
\medskip

\noindent
{\it Proof of Theorem \ref{main2}.}
We prove that $\mu(E \cap \Lambda_c^{(\rho)}(\Gamma))=\mu(\Lambda_c^{(\rho)}(\Gamma))$ for all sufficiently large $\rho>0$.
Then, because $\Lambda_c(\Gamma)=\bigcup_{\rho>0} \Lambda_c^{(\rho)}(\Gamma)$, we have that 
$\mu(E)=\mu(E \cap \Lambda_c(\Gamma))=\mu(\Lambda_c(\Gamma))$, which is the required result.
Fixing a sufficiently large $\rho$ with $\mu(E \cap \Lambda_c^{(\rho)}(\Gamma))>0$,
we regard $\mu$ as its restriction to $\Lambda_c^{(\rho)}(\Gamma)$ and give a proof for the above fact.

It is assumed toward a contradiction that $\mu(E \cap \Lambda_c^{(\rho)}(\Gamma))<\mu(\Lambda_c^{(\rho)}(\Gamma))$.
By Lemma \ref{vitali}, ${\mathcal V}^{(\rho, r)}$ is a Vitali relation for $\mu$ if $r \geq \max\{\rho+\kappa(\delta),r_0\}$.
Theorem \ref{density}, in particular, asserts that there is a density point $\xi$ of $\Lambda_c^{(\rho)}(\Gamma)-E$ such that
$$
\lim_{n \to \infty} \frac{\mu(S_o^{(\rho)}(\gamma_n^{-1}(o),r)-E)}{\mu(S_o^{(\rho)}(\gamma_n^{-1}(o),r))}=1,\ {\rm or}\
\lim_{n \to \infty} \frac{\mu(E \cap S_o^{(\rho)}(\gamma_n^{-1}(o),r))}{\mu(S_o^{(\rho)}(\gamma_n^{-1}(o),r))}=0,
$$
where $\{\gamma_n\}_{n=1}^\infty \subset \Gamma$ is a sequence such that $\gamma_n^{-1}(o)$ converge to $\xi$ within distance $\rho$ from
some geodesic ray toward $\xi$. We note that the above limit at $\xi$ exists for a fixed $r$. 
However, as there are such density points $\xi$
in full measure for each $r$, 
we can choose a common density point $\xi$ where the limit exists 
for countably many integers $r \geq \max\{\rho+\kappa(\delta),r_0\}$.
By passing to a subsequence, we may assume that $\gamma_n(o)$ converge to some $\eta \in \partial X$.
By Proposition \ref{noatom}, we see that $\mu(\{\eta\})=0$.

We take an arbitrary $\widetilde \varepsilon>0$ such that $\mu(\Lambda_c^{(\rho)}(\Gamma)) \geq 2\widetilde \varepsilon$.
By the regularity of the finite Borel measure 
$\mu$, there is an open ball 
$D(\eta,\varepsilon) \subset \partial X$ centered at $\eta$ with radius $\varepsilon>0$
such that $\mu(D(\eta,\varepsilon)) \leq \widetilde \varepsilon$.
Then, by Proposition \ref{complement}, there is $r(\varepsilon)>0$ such that 
$$
\diam_a(\partial X-S_{\gamma(o)}(o,r)) \leq \varepsilon
$$
for every $\gamma \in \Gamma$ and every $r \geq r(\varepsilon)$.
Fixing such an $r \geq r(\varepsilon)$, we see that 
$\gamma_n(S_o(\gamma_n^{-1}(o),r))=S_{\gamma_n(o)}(o,r)$ does not contain $\eta$
for all sufficiently large $n$; hence,
$$
\mu(\Lambda_c^{(\rho)}(\Gamma)-\gamma_n(S_o^{(\rho)}(\gamma_n^{-1}(o),r))) \leq \widetilde \varepsilon.
$$
By the choice of $\widetilde \varepsilon$, this implies that 
$\mu(\gamma_n(S_o^{(\rho)}(\gamma_n^{-1}(o),r))) \geq \widetilde \varepsilon$.

Now we fix some $r \geq \max\{\rho+\kappa(\delta),r_0,r(\varepsilon)\}$ and apply the above result. By Lemma \ref{kernel},
there is a constant $C \geq 1$ independent of $\gamma \in \Gamma$ such that
$$
C^{-1} a^{d(o,\gamma^{-1}(o))-2r} \leq j_\gamma(\xi)=k(\gamma^{-1}(o),\xi) \leq C a^{d(o,\gamma^{-1}(o))}
$$
for every $\xi \in S_o(\gamma^{-1}(o),r)$. Then
\begin{align*}
\mu(E \cap \gamma_n(S_o^{(\rho)}(\gamma_n^{-1}(o),r)))&=(\gamma_n^*\mu)(E \cap S_o^{(\rho)}(\gamma_n^{-1}(o),r))\\
&\leq D(C a^{d(o,\gamma_n^{-1}(o))})^s\mu(E \cap S_o^{(\rho)}(\gamma_n^{-1}(o),r));\\
\mu(\gamma_n(S_o^{(\rho)}(\gamma_n^{-1}(o),r)))&=(\gamma_n^*\mu)(S_o^{(\rho)}(\gamma_n^{-1}(o),r))\\
&\geq D^{-1}(C^{-1}a^{d(o,\gamma_n^{-1}(o))-2r})^s\mu(S_o^{(\rho)}(\gamma_n^{-1}(o),r)),
\end{align*}
where $D \geq 1$ is the constant for $\Gamma$-quasi-invariance of $\mu$. From these estimates, we have
$$
\frac{\mu(E \cap \gamma_n(S_o^{(\rho)}(\gamma_n^{-1}(o),r)))}{\mu(\gamma_n(S_o^{(\rho)}(\gamma_n^{-1}(o),r)))}\leq
D^2(Ca^r)^{2s}\frac{\mu(E \cap S_o^{(\rho)}(\gamma_n^{-1}(o),r))}{\mu(S_o^{(\rho)}(\gamma_n^{-1}(o),r))},
$$
which tend to $0$ as $n \to \infty$.

As $\mu(\gamma_n(S_o^{(\rho)}(\gamma_n^{-1}(o),r))) \geq \widetilde \varepsilon$
for all sufficiently large $n$, we see that
$$
\mu(E \cap \gamma_n(S_o^{(\rho)}(\gamma_n^{-1}(o),r))) \to 0 \quad (n \to \infty).
$$
Combined with $\mu(\Lambda_c^{(\rho)}(\Gamma)-\gamma_n(S_o^{(\rho)}(\gamma_n^{-1}(o),r))) \leq \widetilde \varepsilon$,
this implies that $\mu(E) \leq \widetilde \varepsilon$.
As we have this conclusion for any sufficiently small $\widetilde \varepsilon>0$, we obtain 
$\mu(E) =0$. However, this contradicts the assumption $\mu(E)>0$, and the proof is complete.
\qed

\begin{corollary}\label{ergodic}
Let $\Gamma \subset \Isom(X,d)$ be a non-elementary discrete group of divergence type and let
$\mu$ be a Patterson measure for $\Gamma$. Then, $\Gamma$ acts on $\partial X$ ergodically
with respect to $\mu$.
\end{corollary}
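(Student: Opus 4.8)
The plan is to deduce this immediately from Theorem \ref{main2} together with Corollary \ref{full}. Since $\Gamma$ is of divergence type and $\mu$ is a Patterson measure, $\mu$ is in particular an $s$-dimensional $\Gamma$-quasi-invariant quasiconformal measure with $s = e_a(\Gamma)$, so Corollary \ref{full} applies and yields $\mu(\Lambda_c(\Gamma)) = \mu(\partial X)$. Thus it suffices to establish ergodicity on $\Lambda_c(\Gamma)$, the remainder of $\partial X$ being $\mu$-null.

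Next I would take a measurable subset $E \subset \partial X$ that is $\Gamma$-invariant ($a.e.\ \mu$) with $\mu(E) > 0$, and set $E_0 = E \cap \Lambda_c(\Gamma)$. Since $\Lambda_c(\Gamma)$ is a $\Gamma$-invariant subset of $\partial X$, the set $E_0$ is again $\Gamma$-invariant ($a.e.\ \mu$), and $\mu(E_0) = \mu(E) > 0$ because $\mu(\partial X - \Lambda_c(\Gamma)) = 0$. Applying Theorem \ref{main2} to $E_0 \subset \Lambda_c(\Gamma)$ gives $\mu(E_0) = \mu(\Lambda_c(\Gamma))$, hence $\mu(E) = \mu(\Lambda_c(\Gamma)) = \mu(\partial X)$. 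Therefore every $\Gamma$-invariant measurable subset of $\partial X$ has either measure zero or full measure, which is precisely the ergodicity of the action of $\Gamma$ on $\partial X$ with respect to $\mu$.

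There is essentially no obstacle here beyond invoking the two preceding results: the substantive work has already been carried out in showing that a divergence-type group has positive---hence, by Corollary \ref{full}, full---Patterson measure on the conical limit set, and in the density-point argument underlying Theorem \ref{main2}. The only point demanding a line of care is the passage from $E$ to $E \cap \Lambda_c(\Gamma)$, namely checking that it preserves both the $\Gamma$-invariance (from the $\Gamma$-invariance of $\Lambda_c(\Gamma)$) and the positivity of measure (from $\mu$ being supported on $\Lambda_c(\Gamma)$ up to a $\mu$-null set).
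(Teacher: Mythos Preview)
Your proof is correct and takes essentially the same approach as the paper, which simply writes ``This follows from Theorem \ref{main1}, Corollary \ref{full} and Theorem \ref{main2}.'' You have spelled out the details that the paper leaves implicit; note that your citation of Corollary \ref{full} already absorbs the role of Theorem \ref{main1}, since Corollary \ref{full} is derived from it.
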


\begin{proof}
This follows from Corollary \ref{full} and Theorem \ref{main2}.
\end{proof}

\medskip
\section{Quasi-uniqueness of Patterson measures}\label{5}

In this section, we prove that under the assumption of ergodicity of a discrete group
$\Gamma \subset \Isom(X,d)$
with respect to an $s$-dimensional $\Gamma$-quasi-invariant
quasiconformal measure $\mu$, 
any such measure that is absolutely continuous with respect to $\mu$ is unique in a certain sense.
We apply this ``quasi-uniqueness'' to Patterson measures for $\Gamma$ of divergence type.

For later purposes, 
the ambiguity of the uniqueness will be described in terms of the quasi-invariance constants and the total mass
of the measures.
For a measure $\mu$ in general, we denote its total mass by $\Vert \mu \Vert$.

\begin{lemma}\label{useful}
If a discrete group $\Gamma \subset \Isom(X,d)$ acts ergodically on $\partial X$
with respect to a $(\Gamma,D)$-quasiconformal measure $\mu$, then any 
$(\Gamma,D')$-quasiconformal measure $\nu$ that is absolutely continuous with respect to $\mu$ satisfies
$$
(DD')^{-1} \frac{\Vert \nu \Vert}{\Vert \mu \Vert}\leq \frac{d\nu}{d\mu}(\xi) \leq DD'\frac{\Vert \nu \Vert}{\Vert \mu \Vert}
\quad (\almostall \xi \in \partial X).
$$
In particular, $\mu$ is absolutely continuous with respect to $\nu$.
\end{lemma}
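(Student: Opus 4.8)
The plan is to work directly with the Radon--Nikodym derivative $\varphi := d\nu/d\mu \ge 0$, so that $\Vert\nu\Vert=\int_{\partial X}\varphi\,d\mu$, and to show that $\varphi$ is pinched between two constant multiples of $\Vert\nu\Vert/\Vert\mu\Vert$. The first step is a cocycle identity. Since $\nu\ll\mu$, pull-back preserves absolute continuity and a change of variables on the homeomorphism $\gamma$ of $\partial X$ gives $d(\gamma^*\nu)/d(\gamma^*\mu)(\xi)=\varphi(\gamma(\xi))$; moreover $\gamma^*\mu\ll\mu$ and $\gamma^*\nu\ll\nu$ are built into the definition of $(\Gamma,s,D)$- and $(\Gamma,s,D')$-quasiconformal measures, so the chain rule applied to $d(\gamma^*\nu)/d\mu$ in two ways yields
$$
\varphi(\gamma(\xi))\,\frac{d(\gamma^*\mu)}{d\mu}(\xi)=\frac{d(\gamma^*\nu)}{d\nu}(\xi)\,\varphi(\xi)\qquad(\almostall\ \xi\in\partial X)
$$
for every $\gamma\in\Gamma$. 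Because $\frac{d(\gamma^*\mu)}{d\mu}(\xi)\asymp_D j_\gamma(\xi)^s$ and $\frac{d(\gamma^*\nu)}{d\nu}(\xi)\asymp_{D'}j_\gamma(\xi)^s$ with $0<j_\gamma(\xi)<\infty$, this forces $\varphi(\gamma(\xi))\asymp_{DD'}\varphi(\xi)$ a.e.\ On account of countability of $\Gamma$ we may fix one conull set $\Omega\subset\partial X$ on which $(DD')^{-1}\varphi(\xi)\le\varphi(\gamma(\xi))\le DD'\varphi(\xi)$ holds for all $\gamma\in\Gamma$ at once.

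The second step uses ergodicity. The function $g(\xi):=\sup_{\gamma\in\Gamma}\varphi(\gamma(\xi))$ is measurable (a countable supremum of measurable functions) and genuinely $\Gamma$-invariant, since $\Gamma\gamma_0=\Gamma$ for every $\gamma_0$. Hence, by ergodicity of the $\Gamma$-action on $(\partial X,\mu)$, $g$ equals some constant $c$ almost everywhere. On $\Omega$ we have simultaneously $\varphi(\xi)\le g(\xi)=c$ and $g(\xi)\le DD'\varphi(\xi)$, so $c/(DD')\le\varphi(\xi)\le c$ for a.e.\ $\xi$. Since $\mu$ and $\nu$ are finite positive measures, integrating this inequality against $\mu$ shows $0<c<\infty$ and, more precisely, $\Vert\nu\Vert/\Vert\mu\Vert\le c\le DD'\,\Vert\nu\Vert/\Vert\mu\Vert$. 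Combining the last two estimates gives
$$
(DD')^{-1}\frac{\Vert\nu\Vert}{\Vert\mu\Vert}\le\varphi(\xi)\le DD'\frac{\Vert\nu\Vert}{\Vert\mu\Vert}\qquad(\almostall\ \xi\in\partial X),
$$
which is the asserted bound; in particular $\varphi>0$ a.e., i.e.\ $\mu\ll\nu$.

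All of this is standard ergodic-theoretic reasoning: the Radon--Nikodym cocycle of a quasi-invariant quasiconformal measure is trivial up to the bounded factor $j_\gamma^s$, so the ratio of two such cocycles is a bounded coboundary, and a $\Gamma$-invariant measurable function is a.e.\ constant. Accordingly I do not expect a genuine obstacle; the only care needed is bookkeeping --- justifying the change-of-variables formula and the chain rule for Radon--Nikodym derivatives on $\partial X$ and controlling the null sets, which is precisely where countability of $\Gamma$ enters. The reason for recording the lemma with the explicit constants $DD'$ and $\Vert\nu\Vert/\Vert\mu\Vert$ is that, in the later application where $\nu$ is a pull-back of the Patterson measure $\mu$ by an element of $N(\Gamma)$, one must check that these bounds are uniform, and this quantitative form is what makes that possible.
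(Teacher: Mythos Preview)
Your proof is correct, and it is a genuinely different (and cleaner) packaging of the argument than the paper's. The shared core observation is the cocycle comparison
\[
\varphi(\gamma(\xi))=\frac{d(\gamma^*\nu)/d\nu}{d(\gamma^*\mu)/d\mu}(\xi)\,\varphi(\xi)\in[(DD')^{-1},DD']\,\varphi(\xi),
\]
which the paper also uses, though only implicitly inside its contradiction step. From there the two proofs diverge: the paper works with a hierarchy of $\Gamma$-invariant \emph{sets} $E$, $E^c_\pm$, $(E^c_\pm)_n$, $F_\pm$ and runs a contradiction argument (assuming the good set $E$ is null and eventually producing $\nu(F_-)<1=\nu(F_-)$), whereas you pass immediately to the $\Gamma$-invariant \emph{function} $g=\sup_{\gamma}\varphi\circ\gamma$, invoke ergodicity once to make $g$ constant, and read off the two-sided bound directly. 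Your route avoids the case splits and the auxiliary $1/n$-thickenings; the paper's route has the minor advantage that it never needs the (standard but slightly delicate) measurability of a supremum over an orbit, since it only handles level sets. Both yield the identical explicit constant $DD'\cdot\Vert\nu\Vert/\Vert\mu\Vert$, which is what the later application to $N(\Gamma)$ requires.
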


\begin{proof}
For simplicity, we may assume that $\Vert \mu \Vert=\Vert \nu \Vert=1$. Let
$$
E=\bigcap_{\gamma \in \Gamma}\,\{\xi \in \partial X \mid (DD')^{-1} \leq \frac{d\nu}{d\mu}(\gamma(\xi)) \leq DD'\},
$$
which is a $\Gamma$-invariant measurable subset of $\partial X$. By ergodicity, we have $\mu(E)=0$ or $\mu(E)=1$.
We prove that $\mu(E)=1$, which shows, in particular, that 
$$
(DD')^{-1} \leq \frac{d\nu}{d\mu}(\xi) \leq DD' \quad (\almostall \xi \in \partial X)
$$
by taking $\gamma=\id$.

It is assumed toward a contradiction that $\mu(E)=0$, that is, $\mu(E^c)=1$ for the complement $E^c$ of $E$.
We divide $E^c$ into two disjoint $\Gamma$-invariant measurable subsets:
\begin{align*}
E^c_+&=\bigcup_{\gamma \in \Gamma}\,\{\xi \in \partial X \mid \frac{d\nu}{d\mu}(\gamma(\xi)) > DD'\};\\
E^c_-&=\bigcup_{\gamma \in \Gamma}\,\{\xi \in \partial X \mid \frac{d\nu}{d\mu}(\gamma(\xi)) < (DD')^{-1}\}.
\end{align*}
Again by ergodicity, we have $\mu(E^c_+)=1$ or otherwise $\mu(E^c_-)=1$. 
For each $n \in \N$, we define
\begin{align*}
(E^c_+)_n&=\bigcup_{\gamma \in \Gamma}\,\{\xi \in \partial X \mid \frac{d\nu}{d\mu}(\gamma(\xi)) > DD'+\frac{1}{n}\};\\
(E^c_-)_n&=\bigcup_{\gamma \in \Gamma}\,\{\xi \in \partial X \mid \frac{d\nu}{d\mu}(\gamma(\xi)) < (DD')^{-1}-\frac{1}{n}\},
\end{align*}
which are also $\Gamma$-invariant. Then, each $\{(E^c_\pm)_n\}_{n=1}^\infty$ is an increasing sequence converging to
$E^c_\pm=\bigcup_{n=1}^\infty (E^c_\pm)_n$.
As 
$\mu((E^c_\pm)_n)$ is either $0$ or $1$ for every $n$, 
there is some $n_0 \in\N$ such that either
$\mu((E^c_+)_{n_0})=1$ or $\mu((E^c_-)_{n_0})=1$. Finally, we consider $\Gamma$-invariant measurable subsets
\begin{align*}
F_+&=\bigcap_{\gamma \in \Gamma}\,\{\xi \in \partial X \mid \frac{d\nu}{d\mu}(\gamma(\xi)) >1+\frac{1}{n_0DD'}\};\\
F_-&=\bigcap_{\gamma \in \Gamma}\,\{\xi \in \partial X \mid \frac{d\nu}{d\mu}(\gamma(\xi)) <1-\frac{DD'}{n_0}\}.
\end{align*}

We see that $F_\pm$ contains $(E^c_\pm)_{n_0}$; hence, $\mu(F_+)=1$ or otherwise $\mu(F_-^c)=0$ is satisfied. 
Indeed, for almost every $\xi \in (E^c_-)_{n_0}$ there is 
some $\gamma_0 \in \Gamma$ such that
$(d\nu/d\mu)(\gamma_0(\xi)) < (DD')^{-1}-1/n_0$. Then, for every $\gamma \in \Gamma$
\begin{align*}
\frac{d\nu}{d\mu}(\gamma(\xi))&=\frac{d(\gamma \gamma_0^{-1})^*\nu}{d(\gamma \gamma_0^{-1})^*\mu}(\gamma_0(\xi))\\
&\leq \frac{D'}{D^{-1}}\cdot\frac{d\nu}{d\mu}(\gamma_0(\xi))<1-\frac{DD'}{n_0},
\end{align*}
which shows that $\xi \in F_-$. The other case for $F_+$ is treated similarly.
Then, $\nu(F_-)=1$ because $\nu(F_-)=1-\nu(F_-^c)$ and $\nu$ is absolutely continuous with respect to $\mu$.
However, we have
$$
\nu(F_-)=\int_{F_-}\frac{d\nu}{d\mu}(\xi) d\mu(\xi) \leq \left(1-\frac{DD'}{n_0}\right)\mu(F_-)<1,
$$
which is a contradiction. We also obtain $\nu(F_+)>1$ in the other case where $\mu(F_+)=1$, 
which leads to a contradiction.
\end{proof}

The quasi-uniqueness is mainly applied to Patterson measures for $\Gamma$ of divergence type.

\begin{theorem}\label{uniqueness}
Let $\Gamma \subset \Isom(X,d)$ be a non-elementary discrete group of divergence type.
Then, any two Patterson measures for $\Gamma$ are mutually absolutely continuous.
If $\mu$ and $\mu'$ are Patterson measures for $\Gamma$ with quasi-invariance constants $D$ and $D'$ respectively, then
$$
(DD')^{-1} \frac{\Vert \mu' \Vert}{\Vert \mu \Vert}\leq \frac{d\mu'}{d\mu}(\xi) \leq DD' \frac{\Vert \mu' \Vert}{\Vert \mu \Vert}
\quad (\almostall \xi \in \partial X).
$$
\end{theorem}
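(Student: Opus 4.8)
The plan is to reduce the statement to the ergodicity result (Corollary \ref{ergodic}) together with the quantitative comparison in Lemma \ref{useful}; the only point that is not immediate is that two Patterson measures for $\Gamma$ are mutually absolutely continuous, and I would extract this by feeding their sum into Lemma \ref{useful}.

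First I would check that if $\mu$ is a $(\Gamma,D)$-Patterson measure and $\mu'$ is a $(\Gamma,D')$-Patterson measure, then $\mu''=\mu+\mu'$ is again a Patterson measure for $\Gamma$. It is a positive finite Borel measure; both $\mu$ and $\mu'$ are absolutely continuous with respect to it; estimating $\gamma^*\mu''=\gamma^*\mu+\gamma^*\mu'$ against $j_\gamma(\cdot)^s\,d\mu''$ on Borel sets, with $s=e_a(\Gamma)$, shows that $\mu''$ is a $\Gamma$-quasi-invariant quasiconformal measure of dimension $e_a(\Gamma)$ with quasi-invariance constant at most $\max\{D,D'\}$; and its support equals $\supp\mu\cup\supp\mu'\subset\Lambda(\Gamma)$. (This last point also follows directly from Proposition \ref{support}, since $\Gamma$ is of divergence type.)

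Next, because $\Gamma$ is non-elementary of divergence type, Corollary \ref{ergodic} applies to $\mu''$, so $\Gamma$ acts ergodically on $\partial X$ with respect to $\mu''$. Since $\mu\ll\mu''$ and $\mu$ is a $(\Gamma,e_a(\Gamma),D)$-quasiconformal measure, Lemma \ref{useful} with reference measure $\mu''$ and comparison measure $\mu$ yields in particular $\mu''\ll\mu$; as $\mu'\le\mu''$, this forces $\mu'\ll\mu$, and by symmetry $\mu\ll\mu'$. This gives the first assertion, that any two Patterson measures for $\Gamma$ are absolutely continuous to each other.

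For the quantitative bound I would invoke Corollary \ref{ergodic} once more, now for $\mu$ itself, so that $\Gamma$ acts ergodically with respect to $\mu$; applying Lemma \ref{useful} with reference measure $\mu$ and comparison measure $\mu'$ (which is absolutely continuous to $\mu$ by the previous step) yields precisely
$$
(DD')^{-1}\frac{\Vert\mu'\Vert}{\Vert\mu\Vert}\le\frac{d\mu'}{d\mu}(\xi)\le DD'\frac{\Vert\mu'\Vert}{\Vert\mu\Vert}\quad(\almostall\,\xi\in\partial X).
$$
The only place where any care is needed is the verification that $\mu+\mu'$ is a Patterson measure with quasi-invariance constant at most $\max\{D,D'\}$; everything else is a direct citation of Corollary \ref{ergodic} and Lemma \ref{useful}. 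I would emphasize that no loss of constants occurs here: the passage through $\mu''$ is used only qualitatively, to produce absolute continuity, so the sharp factor $DD'$ comes out of the single final application of Lemma \ref{useful}.
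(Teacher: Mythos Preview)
Your proposal is correct and follows essentially the same route as the paper: form $\mu+\mu'$, apply Corollary \ref{ergodic} to obtain ergodicity, use Lemma \ref{useful} to deduce mutual absolute continuity, and then apply Lemma \ref{useful} once more (with $\mu$ as reference) to get the quantitative bound $DD'$. Your write-up is in fact slightly more explicit than the paper's, spelling out the quasi-invariance constant of $\mu+\mu'$ and the second invocation of ergodicity for $\mu$ itself.
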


\begin{proof}
If $\mu$ and $\mu'$ are Patterson measures for $\Gamma$, then $\mu+\mu'$ is also a Patterson measure for $\Gamma$.
By Corollary \ref{ergodic}, $\Gamma$ acts ergodically on $\partial X$ with respect to $\mu+\mu'$. 
As $\mu$ and $\mu'$ are absolutely continuous with respect to $\mu+\mu'$, Lemma \ref{useful} implies that
$\mu$ and $\mu'$ are mutually absolutely continuous via $\mu+\mu'$.
Then, the required inequality also follows from Lemma \ref{useful}.
\end{proof}

\medskip
\section{Quasi-invariance under the normalizer}\label{6}
In the previous section, we have seen the ``quasi-uniqueness'' of Patterson measures for 
a divergence-type group $\Gamma \subset \Isom(X,d)$.
Using this property, we show that the Patterson measure is also quasi-invariant under the normalizer of $\Gamma$.
The invariance under the normalizer is a property of the Poincar\'e series and we use the inheritance of this property
to the quasi-unique Patterson measure. To this end, we have to fix the canonical construction of a Patterson measure family
from the weighted Dirac masses defined by the Poincar\'e series, which is the so-called Patterson construction.
See Nicholls \cite{N} in the case of Kleinian groups.

We always assume that a discrete group $\Gamma \subset \Isom(X,d)$ is non-elementary and of divergence type.
For any reference point $z \in X$, orbit point $x \in X$, and exponent $s>e_a(\Gamma)$,
we define a measure on $X$ by
$$
m_{z,x}^s=\frac{1}{P_\Gamma^s(o,x)}\sum_{\gamma \in \Gamma} a^{-sd(z,\gamma(x))} D_{\gamma(x)},
$$
where $D_x$ is the Dirac measure at $x \in X$. In fact, $m_{z,x}^s=m_{z,x'}^s$ if $x' \in \Gamma(x)$.
We note that the total mass $\Vert m_{z,x}^s \Vert$ satisfies
$$
a^{-sd(o,z)} \leq \Vert m_{z,x}^s \Vert=\frac{P_\Gamma^s(z,x)}{P_\Gamma^s(o,x)} \leq a^{sd(o,z)};\quad \Vert m_{o,x}^s \Vert=1.
$$
The measure $m_{z,x}^s$ is precisely $\Gamma$-invariant in the sense that $g^*m_{g(z),x}^s=m_{z,x}^s$ for every $g \in \Gamma$.
Indeed,
$$
g^*m_{g(z),x}^s=\frac{1}{P_\Gamma^s(o,x)}\sum_{\gamma \in \Gamma} a^{-sd(g(z),\gamma(x))} D_{g^{-1}\gamma(x)}
=\frac{1}{P_\Gamma^s(o,x)}\sum_{\gamma \in \Gamma} a^{-sd(g(z),g\gamma(x))} D_{\gamma(x)}=m_{z,x}^s.
$$

For any decreasing sequence of $s$ to $e_a(\Gamma)$, there is a subsequence $\{s_i\}_{i \in \N}$ such that
$m_{z,x}^{s_i}$ converge to some measure on $\overline X$ in the weak-$\ast$ sense.
We denote this limit measure by $m_{z,x}$, even though it also depends on the choice of the sequence $\{s_i\}$.
However, $m_{z,x}^s$ is invariant when $x$ is replaced in the orbit $\Gamma(x)$; furthermore, it is
$\Gamma$-invariant, as shown above. Therefore,
we can take the same sequence $\{s_i\}$ for all $\gamma(x)$ and for all $\gamma(z)$ $(\gamma \in \Gamma)$; we assume this choice hereafter.
The total mass $\Vert m_{z,x} \Vert$ satisfies the same inequalities for $\Vert m_{z,x}^s \Vert$ after replacing $s$ with $e_a(\Gamma)$
and, in particular, $\Vert m_{o,x} \Vert=1$.
Owing to the condition that $\Gamma$ is of divergence type, it can be proved that the support of $m_{z,x}$ 
is in the limit set $\Lambda(\Gamma)$. 

By fixing any orbit point $x$, we have $\{m_{z,x}\}_{z \in X}$,
which we call the {\it canonical measure family}. 
When the orbit point $x$ is not in question or is assumed to be the base point $o$, we denote the canonical measure family
by $\{m_z\}_{z \in X}$ for brevity.
As $\{m_{z,x}^s\}_{z \in X}$ is 
$\Gamma$-invariant, so is the canonical measure family $\{m_{z,x}\}_{z \in X}$.

We show that this is a Patterson measure family; we can call it the {\it canonical Patterson measure family} hereafter.
The proof is a modification of that in Coornaert \cite[Th\'eor\`eme 5.4]{Co}.

\begin{lemma}\label{canonical}
Let $\Gamma \subset \Isom(X,d)$ is a discrete group of divergence type.
For any $x \in X$, the canonical measure family $\{m_{z,x}\}_{z \in X}$ is a quasiconformal measure family of dimension $e_a(\Gamma)$ with 
quasiconformal constant $a^{c(\delta)} \geq 1$.  
Hence, this is a
Patterson measure family for $\Gamma$ with quasi-invariance constant $1$.
\end{lemma}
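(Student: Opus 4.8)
The plan is to transfer the elementary Radon--Nikodym estimate for the weighted Dirac masses $m^{s}_{z,x}$ to their weak-$\ast$ limit, working locally near each boundary point, in the spirit of the proof of \cite[Th\'eor\`eme 5.4]{Co}. For $s>e_a(\Gamma)$ the density of $m^{s}_{z,x}$ with respect to $m^{s}_{o,x}$ at the atom $\gamma(x)$ equals $a^{-s(d(z,\gamma(x))-d(o,\gamma(x)))}$. Fix $z\in X$ and $\xi_0\in\partial X$, and let $U_{\xi_0}$ be the neighborhood of $\xi_0$ provided by Proposition \ref{approx}: it depends on $\delta$ and $z$ but not on $s$, and for every orbit point $\gamma(x)\in U_{\xi_0}\cap X$ the above density lies in $[C^{-s}k(z,\xi_0)^{s},\,C^{s}k(z,\xi_0)^{s}]$ with $C=a^{c(\delta)}$. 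Hence, for every continuous $\varphi\ge 0$ on $\overline X$ with support in $U_{\xi_0}$,
$$C^{-s}k(z,\xi_0)^{s}\int\varphi\,dm^{s}_{o,x}\ \le\ \int\varphi\,dm^{s}_{z,x}\ \le\ C^{s}k(z,\xi_0)^{s}\int\varphi\,dm^{s}_{o,x}.$$
Letting $s=s_i\downarrow e_a(\Gamma)$ along the fixed subsequence and using $m^{s_i}_{\cdot,x}\to m_{\cdot,x}$ in the weak-$\ast$ sense together with $k(z,\xi_0)^{s_i}\to k(z,\xi_0)^{e_a(\Gamma)}$, the same inequality holds in the limit for every such $\varphi$; as $\varphi$ is arbitrary this gives, with $s=e_a(\Gamma)$,
$$C^{-s}k(z,\xi_0)^{s}\,m_{o,x}|_{U_{\xi_0}}\ \le\ m_{z,x}|_{U_{\xi_0}}\ \le\ C^{s}k(z,\xi_0)^{s}\,m_{o,x}|_{U_{\xi_0}}$$
as Borel measures.

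From this local comparison I would deduce the lemma in three steps. First, $\partial X$ is compact and, because $\Gamma$ is of divergence type, both $m_{z,x}$ and $m_{o,x}$ are supported on $\Lambda(\Gamma)\subset\partial X$; a finite subcover of $\partial X$ by such neighborhoods then shows that $m_{z,x}$ and $m_{o,x}$ are mutually absolutely continuous, and transitivity over all $z$ gives condition (1) of a quasiconformal measure family. Second, for $m_{o,x}$-a.e.\ $\xi$ the shadows $S_o(y,r)$ ($y\in\Gamma(o)$) that shrink to $\xi$ eventually lie in $U_\xi$ and have positive $m_{o,x}$-measure by the shadow lemma (Theorem \ref{shadowlemma}), so, applying the local estimate with $\xi_0=\xi$, the ratio $m_{z,x}(S_o(y,r))/m_{o,x}(S_o(y,r))$ lies in $[C^{-s}k(z,\xi)^{s},\,C^{s}k(z,\xi)^{s}]$ for all such shadows that are small enough; by Lebesgue differentiation along the Vitali relation of Lemma \ref{vitali} this ratio converges for a.e.\ $\xi$ to $\tfrac{dm_{z,x}}{dm_{o,x}}(\xi)$, which is condition (2) with quasiconformal constant $C=a^{c(\delta)}$. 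Third, the identity $g^{\ast}m_{g(z),x}=m_{z,x}$ for $g\in\Gamma$, established just before the lemma, says exactly that the family is $(\Gamma,1)$-quasi-invariant; together with dimension $e_a(\Gamma)$ and support on $\Lambda(\Gamma)$ this makes $\{m_{z,x}\}_{z\in X}$ a $(\Gamma,1)$-Patterson measure family.

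The routine but slightly delicate point is the second step: upgrading the ``a.e.\ on $U_{\xi_0}$'' conclusion of the weak-$\ast$ limit to a genuine pointwise bound on $dm_{z,x}/dm_{o,x}$ at a.e.\ $\xi$, for which I would invoke the differentiation apparatus of Section \ref{4} (Theorem \ref{density}). One should also keep in mind that $k(z,\cdot)$ need not be measurable; as remarked after its definition this causes no trouble here, since it enters only through the squeezing of ratios of the genuinely measurable quantities $m_{z,x}(\cdot)$ and $m_{o,x}(\cdot)$. Finally, the neighborhoods $U_{\xi_0}$ and all the resulting inequalities may be taken uniformly in $s$ along the subsequence because the constant $c(\delta)$ and the neighborhood in Proposition \ref{approx} do not depend on $s$; everything else is routine bookkeeping.
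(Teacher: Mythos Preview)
Your overall approach matches the paper's: both proofs use Proposition~\ref{approx} to control the density of $m^s_{z,x}$ with respect to $m^s_{o,x}$ on a neighborhood $U_\xi$, integrate against compactly supported continuous functions there, pass to the weak-$\ast$ limit along $s_i\searrow e_a(\Gamma)$, and read off the Radon--Nikodym comparison. Your Steps~1 and~3 (mutual absolute continuity via a finite cover, and $(\Gamma,1)$-invariance from $g^\ast m_{g(z),x}=m_{z,x}$) just make explicit what the paper leaves implicit; this is fine.

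The problem is your Step~2. You invoke the shadow lemma (Theorem~\ref{shadowlemma}) and the Vitali relation of Lemma~\ref{vitali} to differentiate $m_{z,x}$ against $m_{o,x}$, but both of those results are stated and proved for $\Gamma$-quasi-invariant quasiconformal measures, which is exactly what Lemma~\ref{canonical} is establishing for $m_{o,x}$. As written this is circular. The good news is that the differentiation machinery is unnecessary here: once you have the local comparison
\[
C^{-s}k(z,\xi)^{s}\,m_{o,x}|_{U_{\xi}}\ \le\ m_{z,x}|_{U_{\xi}}\ \le\ C^{s}k(z,\xi)^{s}\,m_{o,x}|_{U_{\xi}}
\]
as Borel measures on $U_\xi$, elementary measure theory (no Vitali covering, no shadow lemma) already gives $dm_{z,x}/dm_{o,x}\in[C^{-s}k(z,\xi)^{s},C^{s}k(z,\xi)^{s}]$ for $m_{o,x}$-a.e.\ point of $U_\xi$. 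This is how the paper proceeds: it records the test-function inequality and then says ``since $f$ can be taken arbitrarily'' to pass directly to the derivative bound. So drop the appeal to Section~\ref{4} and conclude straight from the local inequality.
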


\begin{proof}
For every $\xi \in \partial X$ and for every $z \in X$,
we choose a neighborhood $U_\xi \subset \overline X$ of $\xi$
as in Proposition \ref{approx}. Let $f$ be any continuous function on $U_\xi$ with compact support. Then
\begin{align*}
m_{z,x}^s(f)&=\int_{U_\xi}f(\zeta)dm_{z,x}^s(\zeta)=\frac{1}{P_\Gamma^s(o,x)}\sum_{\gamma(x) \in U_\xi} a^{-sd(z,\gamma(x))}f(\gamma(x));\\
m_{o,x}^s(f)&=\int_{U_\xi}f(\zeta)dm_{o,x}^s(\zeta)=\frac{1}{P_\Gamma^s(o,x)}\sum_{\gamma(x) \in U_\xi} a^{-sd(o,\gamma(x))}f(\gamma(x)).
\end{align*}
It follows from Proposition \ref{approx} with the constant $c(\delta)$ that  
$$
a^{-sc(\delta)}k(z,\xi)^s \leq \frac{m_{z,x}^s(f)}{m_{o,x}^s(f)} \leq
a^{sc(\delta)}k(z,\xi)^s.
$$
Taking the limit of some subsequence $\{s_i\}$ as $s \to e_a(\Gamma)=e$, 
which may be different for $m_{z,x}^s$ and for $m_{o,x}^s$, we have
$$
a^{-ec(\delta)}k(z,\xi)^{e} \leq \frac{m_{z,x}(f)}{m_{o,x}(f)} \leq a^{ec(\delta)}k(z,\xi)^{e}.
$$
As $f$ is arbitrary, this implies that
$(dm_{z,x}/dm_{o,x})(\xi) \asymp_{a^{ec(\delta)}} k(z,\xi)^e$. Thus, the quasiconformality with constant $a^{c(\delta)}$ is proved.
\end{proof}

\begin{remark}
Regarding the canonical Patterson measure family $\{m_{z,x}\}_{z \in X}$,
if we consider $\mu=m_{o,x}$, then 
by Proposition \ref{singlemeasure},
$\mu$ is a $(\Gamma,a^{e_a(\Gamma)c(\delta)})$-quasi-invariant quasiconformal measure with total mass $\Vert \mu \Vert=1$. 
We also call this the {\it canonical Patterson measure}. It is that given in \cite[Th\'eor\`eme 5.4]{Co},
where the convergence-type group case was also treated.
\end{remark}

The canonical Patterson measure family is quasi-unique in the sense that it is independent of the choice of 
the orbit point $x \in X$ and the weak-$\ast$ limit.

\begin{lemma}\label{uniquecanonical}
The canonical Patterson measure families $\{m_{z,x}\}_{z \in X}$ and $\{m_{z,x'}\}_{z \in X}$ for $x,x' \in X$ satisfy
$(dm_{z,x'}/dm_{z,x})(\xi) \asymp_{K} 1$
for $K=a^{4e_a(\Gamma)c(\delta)}$.
This includes the case where the weak-$\ast$ limits $m_{z,x}$ and $m_{z,x'}$ are different even if $x=x'$.
\end{lemma}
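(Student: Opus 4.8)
The plan is to route the comparison through the base-point measures $m_{o,x}$ and $m_{o,x'}$, where the quasi-uniqueness of Theorem \ref{uniqueness} applies directly, and to observe that the Poisson-kernel factor introduced when moving the reference point from $o$ to $z$ cancels. Write $e=e_a(\Gamma)$ throughout.

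First I would record that the four measures $m_{z,x}$, $m_{o,x}$, $m_{z,x'}$, $m_{o,x'}$ are mutually absolutely continuous: $m_{z,x}\sim m_{o,x}$ and $m_{z,x'}\sim m_{o,x'}$ by condition (1) in the definition of a quasiconformal measure family (Lemma \ref{canonical}), while $m_{o,x}\sim m_{o,x'}$ by Theorem \ref{uniqueness}, since by the Remark after Lemma \ref{canonical} each of $m_{o,x}$, $m_{o,x'}$ is a $(\Gamma,a^{ec(\delta)})$-Patterson measure of total mass $1$. Hence $dm_{z,x'}/dm_{z,x}$ is well defined almost everywhere and factors by the chain rule as
$$
\frac{dm_{z,x'}}{dm_{z,x}}(\xi)=\frac{dm_{z,x'}}{dm_{o,x'}}(\xi)\cdot\frac{dm_{o,x'}}{dm_{o,x}}(\xi)\cdot\left(\frac{dm_{z,x}}{dm_{o,x}}(\xi)\right)^{-1}.
$$
Next I would estimate the three factors. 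By the quasiconformality with constant $a^{c(\delta)}$ established in Lemma \ref{canonical}, both $(dm_{z,x'}/dm_{o,x'})(\xi)$ and $(dm_{z,x}/dm_{o,x})(\xi)$ lie between $a^{-ec(\delta)}k(z,\xi)^{e}$ and $a^{ec(\delta)}k(z,\xi)^{e}$ for almost every $\xi$; and Theorem \ref{uniqueness} gives $(dm_{o,x'}/dm_{o,x})(\xi)\asymp_{a^{2ec(\delta)}}1$. Multiplying the three estimates, the positive finite number $k(z,\xi)^{e}$ appearing in the first and last factors cancels, leaving $(dm_{z,x'}/dm_{z,x})(\xi)\asymp_{K}1$ with $K=a^{ec(\delta)}\cdot a^{2ec(\delta)}\cdot a^{ec(\delta)}=a^{4ec(\delta)}=a^{4e_a(\Gamma)c(\delta)}$, as claimed. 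The parenthetical assertion, namely the case $x=x'$ with two different weak-$\ast$ limits, needs no separate argument, because Theorem \ref{uniqueness} compares any two Patterson measures for $\Gamma$ irrespective of the subsequence used to produce them.

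I do not anticipate a genuine difficulty; the only points deserving care are that $m_{o,x}$ is indeed a Patterson measure — its support lies on $\Lambda(\Gamma)$ precisely because $\Gamma$ is of divergence type, which is what makes Theorem \ref{uniqueness} applicable — and that the cancellation of $k(z,\xi)^{e}$ is legitimate although this function need not be measurable, since for each fixed $\xi$ it is an honest positive real number occurring as a two-sided bound.
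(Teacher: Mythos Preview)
Your proof is correct and follows essentially the same route as the paper: apply Theorem \ref{uniqueness} to the unit-mass $(\Gamma,a^{ec(\delta)})$-Patterson measures $m_{o,x}$ and $m_{o,x'}$ to get the $a^{2ec(\delta)}$ factor, then use the quasiconformality of Lemma \ref{canonical} to pass from $o$ to $z$, with the $k(z,\xi)^e$ factors cancelling. Your explicit chain-rule decomposition and the remarks on absolute continuity and the measurability issue for $k(z,\xi)$ are more detailed than the paper's terse version, but the argument is the same.
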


\begin{proof}
We consider the canonical Patterson measures $\mu=m_{o,x}$ and $\mu'=m_{o,x'}$,
which are $(\Gamma,a^{e_a(\Gamma)c(\delta)})$-quasi-invariant as in the remark above.
As $\Vert \mu \Vert=\Vert \mu' \Vert=1$, Theorem \ref{uniqueness} implies that $(d\mu'/d\mu)(\xi) \asymp_{a^{2e_a(\Gamma)c(\delta)}}1$.
Moreover, the quasiconformality by Lemma \ref{canonical} yields that
$$
\frac{dm_{z,x'}}{dm_{z,x}}(\xi) \asymp_{a^{2e_a(\Gamma)c(\delta)}} \frac{dm_{o,x'}}{dm_{o,x}}(\xi).
$$
This proves the statement.
\end{proof}

Now we consider the quasi-invariance of the Patterson measure for $\Gamma$ under its normalizer. For a subgroup $\Gamma \subset \Isom(X,d)$,
this is denoted by
$$
N(\Gamma)=\{g \in \Isom(X,d) \mid g \Gamma g^{-1}=\Gamma\}.
$$
First, we consider the pull-back of a $\Gamma$-quasi-invariant quasiconformal measure family by each element $g \in N(\Gamma)$.

\begin{proposition}\label{pullback}
For a discrete group $\Gamma \subset \Isom(X,d)$, let $\{\mu_z\}_{z \in X}$ be an $s$-dimensional $(\Gamma,D)$-quasi-invariant
quasiconformal measure family with quasiconformal constant $C \geq 1$. Then, $\{g^*\mu_{g(z)}\}_{z \in X}$
is also an $s$-dimensional $(\Gamma,D)$-quasi-invariant
quasiconformal measure family with quasiconformal constant $C' \geq 1$
for every $g \in N(\Gamma)$. Here $C'=a^{4\kappa(\delta)}C^2$; in particular, it is independent of $g \in N(\Gamma)$.
\end{proposition}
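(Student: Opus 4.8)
The plan is to set $\nu_z:=g^*\mu_{g(z)}$ for each $z\in X$ and to verify the three requirements — mutual absolute continuity, the quasiconformality estimate, and $(\Gamma,D)$-quasi-invariance — one by one. Condition (1) of the definition of quasiconformal measure family is immediate, since $g$ extends to a self-homeomorphism of $\overline X$ preserving $\partial X$: each $\nu_z$ is again a finite positive Borel measure on $\partial X$, and pulling back by $g$ transports the mutual absolute continuity of $\mu_{g(z)}$ and $\mu_{g(z')}$ to $\nu_z$ and $\nu_{z'}$. Throughout I will use the change-of-variables identity $\dfrac{d(g^*\lambda)}{d(g^*\lambda')}(\xi)=\dfrac{d\lambda}{d\lambda'}(g(\xi))$ for $\lambda\ll\lambda'$ on $\partial X$, noting that $g$ matches $\nu_z$-null sets with $\mu_{g(z)}$-null sets so that ``almost everywhere'' is respected.

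For quasiconformality, I would first write
$$\frac{d\nu_z}{d\nu_o}(\xi)=\frac{d\mu_{g(z)}}{d\mu_{g(o)}}(g(\xi))=\frac{(d\mu_{g(z)}/d\mu_o)(g(\xi))}{(d\mu_{g(o)}/d\mu_o)(g(\xi))},$$
and apply condition (2) for $\{\mu_z\}$ with constant $C$ to the numerator and denominator to get $\dfrac{d\nu_z}{d\nu_o}(\xi)\asymp_{C^{2s}}\bigl(k(g(z),g(\xi))/k(g(o),g(\xi))\bigr)^{s}$. It then remains to establish $k(g(z),g(\xi))/k(g(o),g(\xi))\asymp_{a^{2\kappa(\delta)}}k(z,\xi)$. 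This is the one delicate point: quoting Proposition \ref{poisson} separately for the numerator and the denominator would only give multiplicative error $a^{4\kappa(\delta)}$, because the ambiguous factor $j_g(\xi)$ would be approximated twice. Instead I would carry out the Busemann computation of the proof of Proposition \ref{poisson} directly on the difference $h_{g(\xi)}(g(z))-h_{g(\xi)}(g(o))$: here the normalizations at $o$ cancel, so after applying the isometry $g^{-1}$ one has $h_{g(\xi)}(g(z))-h_{g(\xi)}(g(o))=\lim_t\bigl(d(z,g^{-1}\sigma'(t))-d(o,g^{-1}\sigma'(t))\bigr)$ for a geodesic ray $\sigma'$ from $o$ to $g(\xi)$; since $g^{-1}\sigma'$ is a ray ending at $\xi$, Proposition \ref{geodesic} (as in that proof) permits replacing it by a ray $\sigma$ from $o$ to $\xi$ at the cost of a single additive error $2\kappa(\delta)$, and using $h_\xi(o)=0$ this gives $h_{g(\xi)}(g(z))-h_{g(\xi)}(g(o))=h_\xi(z)\pm 2\kappa(\delta)$, hence the claimed comparison. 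Combining yields $\dfrac{d\nu_z}{d\nu_o}(\xi)\asymp_{(a^{2\kappa(\delta)}C^2)^{s}}k(z,\xi)^{s}$, so $\{\nu_z\}$ is quasiconformal of dimension $s$ with constant $C'=a^{2\kappa(\delta)}C^2$.

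For $(\Gamma,D)$-quasi-invariance, I would exploit $g\in N(\Gamma)$: for $\gamma\in\Gamma$ put $\gamma':=g\gamma g^{-1}\in\Gamma$, so $g\gamma=\gamma'g$ and therefore $g(\gamma(z))=\gamma'(g(z))$, giving $\nu_{\gamma(z)}=g^*\mu_{\gamma'(g(z))}$ and $\gamma^*\nu_{\gamma(z)}=(g\gamma)^*\mu_{\gamma'(g(z))}=g^*\bigl((\gamma')^*\mu_{\gamma'(g(z))}\bigr)$. Dividing by $\nu_z=g^*\mu_{g(z)}$ and applying change of variables under $g$ once more,
$$\frac{d(\gamma^*\nu_{\gamma(z)})}{d\nu_z}(\xi)=\frac{d\bigl((\gamma')^*\mu_{\gamma'(g(z))}\bigr)}{d\mu_{g(z)}}(g(\xi)),$$
and the right-hand side lies in $[D^{-1},D]$ for a.e.\ $\xi$ by the $(\Gamma,D)$-quasi-invariance of $\{\mu_z\}$ applied to $\gamma'\in\Gamma$ at the point $g(z)$; thus the constant $D$ is inherited unchanged. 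The main obstacle here is bookkeeping rather than conceptual: tracking the several Radon--Nikodym derivatives correctly through the pullbacks and conjugations, and in particular extracting the sharp factor $a^{2\kappa(\delta)}$ instead of $a^{4\kappa(\delta)}$ by differencing the Busemann functions before approximating.
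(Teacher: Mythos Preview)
Your proof is correct and follows essentially the same route as the paper: set $\nu_z=g^*\mu_{g(z)}$, verify quasiconformality via $\dfrac{d\mu_{g(z)}}{d\mu_{g(o)}}(g(\xi))\asymp_{C^{2s}}\bigl(k(g(z),g(\xi))/k(g(o),g(\xi))\bigr)^s$ and then reduce this Poisson-kernel ratio to $k(z,\xi)^s$, and check $(\Gamma,D)$-quasi-invariance by conjugating $\gamma$ through $g$. The paper's proof simply cites Proposition~\ref{poisson} for the last step and states the constant $a^{2\kappa(\delta)}$ without further comment; your observation that a naive double application of Proposition~\ref{poisson} would only yield $a^{4\kappa(\delta)}$, and that one must instead difference the Busemann functions first so that the ray-replacement error is incurred only once, is exactly what is needed to justify the stated constant and makes explicit a point the paper leaves implicit.
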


\begin{proof}
Let $\nu_z=g^*\mu_{g(z)}$ for brevity. We first prove that $\{\nu_z\}_{z \in X}$ is a quasiconformal measure family.
This is done by
$$
\frac{d\nu_z}{d\nu_o}(\xi)=\frac{d\mu_{g(z)}}{d\mu_{g(o)}}(g(\xi))
\asymp_{C^{2s}} \frac{k(g(z),g(\xi))^s}{k(g(o),g(\xi))^s}
\asymp_{a^{4s\kappa(\delta)}} k(z,\xi)^s \quad (\almostall \xi \in \partial X),
$$
where the last estimate follows from Proposition \ref{poisson}. The quasiconformal constant $C'$ can be chosen as
$C'=a^{4\kappa(\delta)}C^2$.
To see that $\{\nu_z\}_{z \in X}$ is $(\Gamma,D)$-quasi-invariant, we take an arbitrary $\gamma \in \Gamma$ and
its conjugate $\tilde \gamma \in \Gamma$ satisfying $g\gamma=\tilde \gamma g$. Then,
$$
\gamma^*\nu_{\gamma(z)}=\gamma^*g^*\mu_{g\gamma(z)}=g^* \tilde \gamma^*\mu_{\tilde\gamma g(z)} \asymp_D g^*\mu_{g(z)}=\nu_z
$$
yields the desired condition.
\end{proof}

Lemma \ref{canonical} and Proposition \ref{pullback} show the following consequence.
Hereafter, if a Patterson measure family is $(\Gamma, D)$-quasi-invariant, then
we call it a $(\Gamma, D)$-Patterson measure family for brevity.

\begin{corollary}\label{cor}
Let $\{m_{z}\}_{z \in X}$ be the canonical Patterson measure family for a non-ele\-men\-tary discrete group $\Gamma$ of divergence type.
Then, for every $g \in N(\Gamma)$, $\{g^*m_{g(z)}\}_{z \in X}$ is a $(\Gamma,1)$-Patterson measure family 
with quasiconformal constant $a^{4\kappa(\delta)+2c(\delta)}$.
\end{corollary}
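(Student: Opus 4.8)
The plan is to obtain Corollary \ref{cor} as a direct combination of Lemma \ref{canonical} and Proposition \ref{pullback}, together with one elementary remark about supports. First I would record from Lemma \ref{canonical} that the canonical measure family $\{m_z\}_{z\in X}$ (the orbit point being taken to be $o$) is a quasiconformal measure family of dimension $e_a(\Gamma)$ with quasiconformal constant $C=a^{c(\delta)}$, and that, being a $(\Gamma,1)$-Patterson measure family, it is $(\Gamma,1)$-quasi-invariant; in particular its quasi-invariance constant is $D=1$.

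Next I would apply Proposition \ref{pullback} with $s=e_a(\Gamma)$, $D=1$ and $C=a^{c(\delta)}$. This immediately gives that, for every $g\in N(\Gamma)$, the family $\{g^*m_{g(z)}\}_{z\in X}$ is again an $s$-dimensional $(\Gamma,1)$-quasi-invariant quasiconformal measure family, with quasiconformal constant $C'=a^{2\kappa(\delta)}C^2=a^{2\kappa(\delta)+2c(\delta)}$, and this constant is independent of $g$. This is exactly the numerical value claimed in the statement.

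It then remains to verify that $\{g^*m_{g(z)}\}_{z\in X}$ is a Patterson measure family, that is, that its dimension equals the critical exponent of $\Gamma$ and that it is supported on $\Lambda(\Gamma)$. The dimension point is automatic, since the dimension is $s=e_a(\Gamma)$ and the critical exponent of $\Gamma$ is unchanged under conjugation by $g$ (indeed $g\Gamma g^{-1}=\Gamma$). For the support, I would use that $g\in N(\Gamma)$ forces $g(\Lambda(\Gamma))=\Lambda(g\Gamma g^{-1})=\Lambda(\Gamma)$; since $m_{g(z)}$ has its support on $\Lambda(\Gamma)$ (as $\Gamma$ is of divergence type, by the Patterson construction, or equivalently because $\{m_z\}_{z\in X}$ is a Patterson measure family), the pull-back $g^*m_{g(z)}$ is supported on $g^{-1}(\Lambda(\Gamma))=\Lambda(\Gamma)$. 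Assembling these three facts yields the corollary.

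The argument is essentially bookkeeping of constants, so I do not expect a genuine obstacle; the only point deserving a moment's care is that the quasi-invariance constant stays exactly $D=1$ under the pull-back rather than degrading, which is precisely what Proposition \ref{pullback} supplies via the conjugation relation $g\gamma=\tilde\gamma g$ inside $\Gamma$.
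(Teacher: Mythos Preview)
Your proof is correct and is exactly the intended argument: the paper states this as an immediate corollary of Lemma~\ref{canonical} (which gives $C=a^{c(\delta)}$ and $D=1$ for $\{m_z\}$) together with Proposition~\ref{pullback} (which yields $C'=a^{2\kappa(\delta)}C^2=a^{2\kappa(\delta)+2c(\delta)}$ and preserves $D=1$), so no separate proof is written out. Your additional remark on the support is a valid justification; alternatively one could simply invoke Proposition~\ref{support}, since $\Gamma$ is of divergence type.
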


We are now ready to explain our main result in this section. For a discrete group $\Gamma \subset \Isom(X,d)$ of divergence type,
we take a Patterson measure family $\{\mu_z\}_{z \in X}$ for $\Gamma$.  
Then, by Proposition \ref{pullback}, 
every $g \in N(\Gamma)$ yields a Patterson measure family $\{g^*\mu_{g(z)}\}_{z \in X}$ for $\Gamma$. 
Owing to the quasi-uniqueness by Theorem \ref{uniqueness}, this is comparable with the original $\{\mu_z\}_{z \in X}$.
If this comparison is uniform independently of $g \in N(\Gamma)$, then we can conclude that $\{\mu_z\}_{z \in X}$
is quasi-invariant under $N(\Gamma)$. The problem is to show this uniformity; more precisely,
to estimate the total mass of $\{g^*\mu_{g(z)}\}_{z \in X}$. To this end, we have to utilize the canonical Patterson measure family
$\{m_z\}_{z \in X}$ rather than $\{\mu_z\}_{z \in X}$.

\begin{lemma}\label{totalmass}
Let $\{m_{z,x}\}_{z \in X}$ be any canonical Patterson measure family for a non-elemen\-tary
discrete group $\Gamma$ of divergence type.
Then, for every $g \in N(\Gamma)$ and for every $x \in X$,
the total mass of $m_{g(o),x}$ satisfies $\Vert m_{g(o),x} \Vert \asymp_{K^{3/2}} 1$,
where $K=a^{4e_a(\Gamma)c(\delta)}$ is the constant given in Lemma \ref{uniquecanonical}.
\end{lemma}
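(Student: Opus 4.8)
The plan is to determine the single unknown quantity $\Vert m_{g(o),o}\Vert$ by exhibiting it simultaneously as a total mass $c$ and, up to a controlled multiplicative error, as its own reciprocal $1/c$, which forces $c\asymp 1$; a routine comparison then passes from orbit point $o$ to an arbitrary orbit point $x$.

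First I would record two elementary consequences of Proposition \ref{property} applied to the normalizer element $g^{-1}\in N(\Gamma)$. For every $s>e_a(\Gamma)$,
$$
\Vert m_{g^{-1}(o),o}^s\Vert=\frac{P^s_\Gamma(g^{-1}(o),o)}{P^s_\Gamma(o,o)}=\frac{P^s_\Gamma(g(o),o)}{P^s_\Gamma(o,o)}=\Vert m_{g(o),o}^s\Vert,
$$
using $P^s_\Gamma(g^{-1}(o),o)=P^s_\Gamma(g^{-1}(o),g^{-1}(g(o)))=P^s_\Gamma(g(o),o)$ by Proposition \ref{property}(2), and likewise
$$
\Vert m_{g^{-1}(o),g^{-1}(o)}^s\Vert=\frac{P^s_\Gamma(g^{-1}(o),g^{-1}(o))}{P^s_\Gamma(o,g^{-1}(o))}=\frac{P^s_\Gamma(o,o)}{P^s_\Gamma(g(o),o)}=\Vert m_{g(o),o}^s\Vert^{-1},
$$
using $P^s_\Gamma(g^{-1}(o),g^{-1}(o))=P^s_\Gamma(o,o)$ together with the first identity and the symmetry in Proposition \ref{property}(1). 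Then I would pass to the weak-$*$ limit along the defining subsequence $\{s_i\}$ of the canonical family, refining it if necessary (harmlessly, since a further subsequence does not change $m_{g(o),x}$) so that $m_{g(o),o}^{s_i}$, $m_{g^{-1}(o),o}^{s_i}$ and $m_{g^{-1}(o),g^{-1}(o)}^{s_i}$ all converge; since total mass is weak-$*$ continuous on the compact space $\overline X$, writing $c:=\Vert m_{g(o),o}\Vert$ I obtain $\Vert m_{g^{-1}(o),o}\Vert=c$ and $\Vert m_{g^{-1}(o),g^{-1}(o)}\Vert=1/c$, and $c>0$ because $\Vert m_{g(o),o}\Vert\geq a^{-e_a(\Gamma)d(o,g(o))}$.

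Next I would apply Lemma \ref{uniquecanonical} to the canonical families $\{m_{z,g^{-1}(o)}\}_{z\in X}$ and $\{m_{z,o}\}_{z\in X}$ at the reference point $z=g^{-1}(o)$: the derivative $dm_{g^{-1}(o),g^{-1}(o)}/dm_{g^{-1}(o),o}$ is $\asymp_K 1$ almost everywhere, so integrating over $\partial X$ against $m_{g^{-1}(o),o}$ yields $K^{-1}\Vert m_{g^{-1}(o),o}\Vert\leq\Vert m_{g^{-1}(o),g^{-1}(o)}\Vert\leq K\Vert m_{g^{-1}(o),o}\Vert$. Substituting the two values above gives $K^{-1}c\leq 1/c\leq Kc$, hence $K^{-1/2}\leq c\leq K^{1/2}$, i.e. $\Vert m_{g(o),o}\Vert\asymp_{K^{1/2}}1$. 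Finally, applying Lemma \ref{uniquecanonical} once more, now to $\{m_{z,x}\}_{z\in X}$ and $\{m_{z,o}\}_{z\in X}$ at $z=g(o)$, gives $\Vert m_{g(o),x}\Vert\asymp_K\Vert m_{g(o),o}\Vert$, so that $\Vert m_{g(o),x}\Vert\asymp_{K^{3/2}}1$, as claimed.

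The one step where the hypothesis $g\in N(\Gamma)$ is used essentially, and the conceptual heart of the argument, is the chain of Poincar\'e series identities in the second paragraph: the symmetries $P^s_\Gamma(g^{-1}(o),g^{-1}(o))=P^s_\Gamma(o,o)$ and $P^s_\Gamma(o,g^{-1}(o))=P^s_\Gamma(g(o),o)$ are exactly what make $\Vert m_{g(o),o}\Vert$ and its reciprocal appear together as limits of total masses of canonical measures, so that the quasi-uniqueness of Lemma \ref{uniquecanonical} can close the loop. The rest is routine; the only technical care needed is choosing the subsequence so that all measures entering the argument converge weak-$*$ simultaneously.
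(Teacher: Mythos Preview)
Your proof is correct and follows essentially the same approach as the paper's: exhibit the reciprocal relation between two canonical total masses via the Poincar\'e series identities of Proposition~\ref{property}, then invoke Lemma~\ref{uniquecanonical} to force both to be $\asymp_{K^{1/2}}1$, and finish with one more application of Lemma~\ref{uniquecanonical} for general $x$. The only cosmetic difference is that the paper works directly with $g$, obtaining $\Vert m_{g(o),o}\Vert=\Vert m_{g(o),g(o)}\Vert^{-1}$ in one step, whereas you route through $g^{-1}$ to reach the same reciprocal relation; this detour is harmless and the subsequence refinement you flag is handled identically in the paper.
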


\begin{proof}
Suppose that $m_{g(o),o}$ is the weak-$\ast$ limit of $m_{g(o),o}^{s_i}$ with $s_i \searrow e=e_a(\Gamma)$
as $i \to \infty$. Then,
$$
\Vert m_{g(o),o} \Vert=\lim_{i \to \infty} \Vert m_{g(o),o}^{s_i} \Vert
=\lim_{i \to \infty} \frac{P_\Gamma^{s_i}(g(o),o)}{P_\Gamma^{s_i}(o,o)}.
$$
By Proposition \ref{property}, this ratio of the Poincar\'e series can be represented as
$$
\frac{P_\Gamma^{s_i}(g(o),o)}{P_\Gamma^{s_i}(o,o)}
=\left(\frac{P_\Gamma^{s_i}(g(o),g(o))}{P_\Gamma^{s_i}(o,g(o))}\right)^{-1}.
$$

We choose a subsequence of $\{s_i\}$ (denoted by the same $s_i$) so that 
$m_{g(o),g(o)}^{s_i}$ converge to some
$m_{g(o),g(o)}$ in the weak-$\ast$ sense.
Then, the above ratio of the Poincar\'e series converges to  
$\Vert m_{g(o),g(o)} \Vert^{-1}$ as $i \to \infty$. This shows that $\Vert m_{g(o),o} \Vert=\Vert m_{g(o),g(o)} \Vert^{-1}$.
On the contrary, Lemma \ref{uniquecanonical} implies that
$\Vert m_{g(o),o} \Vert \asymp_{K}\Vert m_{g(o),g(o)}\Vert$. Hence, we have $\Vert m_{g(o),o} \Vert \asymp_{K^{1/2}} 1$.
By Lemma \ref{uniquecanonical} again, $\Vert m_{g(o),x} \Vert \asymp_{K^{3/2}} 1$.
\end{proof}

Our main result, the quasi-invariance of the Patterson measure under the normalizer, is formulated as follows.

\begin{theorem}\label{normalizer}
Let $\Gamma \subset \Isom(X,d)$ be a non-elementary discrete group of divergence type and
let $\{\mu_z\}_{z \in X}$ be a $(\Gamma,D_0)$-Patterson measure family
with quasiconformal constant $C_0$. Then, there exists a
constant $D \geq 1$ depending only on $C_0$, $D_0$, $\delta$, $a$, and $e_a(\Gamma)$ such that
$$
D^{-1} \leq \frac{dg^*\mu_{g(z)}}{d\mu_z}(\xi) \leq D \quad (\almostall \xi \in \partial X)
$$
for every $g \in N(\Gamma)$.
\end{theorem}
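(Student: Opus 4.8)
Write $e=e_a(\Gamma)$ for brevity. The plan is to prove the estimate first for the canonical Patterson measure family $\{m_z\}_{z\in X}$ (normalized so that $\Vert m_o\Vert=1$), and then to transfer it to the given family $\{\mu_z\}_{z\in X}$ via the quasi-uniqueness theorem. The whole point is that every auxiliary constant produced along the way will be manifestly independent of the element $g\in N(\Gamma)$, which is precisely the uniformity demanded by the statement.

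First I would treat the base point for the canonical family. Fix $g\in N(\Gamma)$. By Corollary \ref{cor}, $\{g^*m_{g(z)}\}_{z\in X}$ is again a $(\Gamma,1)$-Patterson measure family, now with quasiconformal constant $a^{2\kappa(\delta)+2c(\delta)}$; hence by Proposition \ref{singlemeasure} both $m_o$ and $g^*m_{g(o)}$ are Patterson measures for $\Gamma$ whose single-measure quasi-invariance constants are bounded by $a^{ec(\delta)}$ and $a^{(2\kappa(\delta)+2c(\delta))e}$ respectively, neither depending on $g$. The only remaining input needed to apply Theorem \ref{uniqueness} is a bound on the ratio of total masses, and this is exactly Lemma \ref{totalmass}: $\Vert g^*m_{g(o)}\Vert=\Vert m_{g(o),o}\Vert\asymp_{K^{1/2}}1$ with $K=a^{4ec(\delta)}$, uniformly in $g$ — and this is where the $N(\Gamma)$-invariance of the Poincar\'e series (Proposition \ref{property}(2)) does its work. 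Theorem \ref{uniqueness} then yields a constant $D_1=D_1(\delta,a,e)$ with $d(g^*m_{g(o)})/dm_o\asymp_{D_1}1$ almost everywhere on $\partial X$, for every $g\in N(\Gamma)$. To pass from the base point to an arbitrary reference point $z$, I would use that both $\{m_z\}_z$ and $\{g^*m_{g(z)}\}_z$ are quasiconformal measure families whose quasiconformal constants are bounded independently of $g$, so that $dm_z/dm_o$ and $d(g^*m_{g(z)})/d(g^*m_{g(o)})$ are each comparable to $k(z,\xi)^e$ with a fixed multiplicative error; writing
\[
\frac{d(g^*m_{g(z)})}{dm_z}(\xi)=\frac{d(g^*m_{g(z)})}{d(g^*m_{g(o)})}(\xi)\cdot\frac{d(g^*m_{g(o)})}{dm_o}(\xi)\cdot\frac{dm_o}{dm_z}(\xi),
\]
the two outer Poisson-kernel factors cancel up to a constant, so the base-point estimate upgrades to $d(g^*m_{g(z)})/dm_z\asymp_{D_2}1$ with $D_2=D_2(\delta,a,e)$, uniformly in $z\in X$ and $g\in N(\Gamma)$.

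Finally I would transfer this to a general $(\Gamma,D_0)$-Patterson measure family $\{\mu_z\}$ with quasiconformal constant $C_0$. Since $\mu_o$ and $m_o$ are both Patterson measures for $\Gamma$ and $\Vert m_o\Vert=1$, Theorem \ref{uniqueness} gives $d\mu_o/dm_o\asymp\Vert\mu_o\Vert$ with a constant depending only on $C_0,D_0,\delta,a,e$; combined with the quasiconformality of both families this shows $d\mu_w/dm_w\asymp\Vert\mu_o\Vert$ with such a constant, for every $w\in X$. Then, using the mutual absolute continuity of all the measures involved (guaranteed by quasi-uniqueness), I expand
\[
\frac{d(g^*\mu_{g(z)})}{d\mu_z}(\xi)=\frac{d\mu_{g(z)}}{dm_{g(z)}}(g(\xi))\cdot\frac{d(g^*m_{g(z)})}{dm_z}(\xi)\cdot\frac{dm_z}{d\mu_z}(\xi).
\]
The middle factor is $\asymp_{D_2}1$ by the previous step, the first factor is $\asymp\Vert\mu_o\Vert$, and the third is $\asymp\Vert\mu_o\Vert^{-1}$; the two copies of $\Vert\mu_o\Vert$ cancel, leaving $d(g^*\mu_{g(z)})/d\mu_z\asymp_D 1$ with $D$ depending only on $C_0,D_0,\delta,a,e$ — which is the claim, and in particular $D$ does not see the total mass of $\mu_o$.

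The hard part is the base-point step for the canonical family, and inside it the uniform total-mass estimate: for an arbitrary Patterson family there is no a priori control of $\Vert\mu_{g(o)}\Vert$ as $g$ runs over the (possibly infinite) normalizer, whereas the canonical construction pins $\Vert m_{g(o),o}\Vert$ near $1$ precisely because the Poincar\'e series is fixed by $N(\Gamma)$. That obstacle is already resolved by Lemma \ref{totalmass}; what remains to be done carefully is the bookkeeping — verifying that each constant cited from Proposition \ref{pullback}, Corollary \ref{cor}, Lemma \ref{totalmass} and Theorem \ref{uniqueness} is genuinely independent of $g$, and that the Poisson-kernel and total-mass factors cancel cleanly in the two chain-rule expansions above.
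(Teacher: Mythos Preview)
Your proposal is correct and follows essentially the same route as the paper: first establish the estimate for the canonical Patterson family via Corollary \ref{cor}, Lemma \ref{totalmass} and Theorem \ref{uniqueness} at the base point, lift to arbitrary $z$ using the quasiconformal constants of both families, and then transfer to a general $\{\mu_z\}$ by comparing with $\{m_z\}$ through Theorem \ref{uniqueness} again. The only cosmetic differences are that the paper normalizes $\Vert\mu_o\Vert=1$ at the outset rather than tracking and cancelling two copies of $\Vert\mu_o\Vert$, and that it phrases the final comparison as ``replace $z$ by $g(z)$ and $\xi$ by $g(\xi)$'' instead of writing out your chain-rule factorization explicitly.
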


\begin{proof}
We first prove the result for the canonical Patterson measure family $\{m_z\}_{z \in X}$.
By Corollary \ref{cor}, $\{g^*m_{g(z)}\}_{z \in X}$ is a $(\Gamma,1)$-Patterson measure family 
with the quasiconformal constant $C=a^{4\kappa(\delta)+2c(\delta)}$.
Let $\nu_o=g^*m_{g(o)}$, which is a $(\Gamma,C^{e_a(\Gamma)})$-Patterson measure by 
the remark after Lemma \ref{canonical}.
Its total mass is $\Vert \nu_o \Vert=\Vert m_{g(o)} \Vert \asymp_{K^{1/2}} 1$ for $K=a^{4e_a(\Gamma)c(\delta)}$
by the proof of Lemma \ref{totalmass}. We already know that $m_o$ is also a $(\Gamma,C'^{e_a(\Gamma)})$-Patterson measure
with $\Vert m_o \Vert=1$
by the same remark mentioned above, where we set $C'=a^{c(\delta)}$.
Then, Theorem \ref{uniqueness} asserts that
$$
(CC')^{-e_a(\Gamma)}K^{-1/2} \leq \frac{d\nu_o}{dm_o}(\xi)=\frac{dg^*m_{g(o)}}{dm_o}(\xi)\leq (CC')^{e_a(\Gamma)}K^{1/2}
\quad (\almostall \xi \in \partial X).
$$
Finally, by using the quasiconformality of $\{m_z\}_{z \in X}$ with the constant $C'=a^{c(\delta)}$
and $\{g^*m_{g(z)}\}_{z \in X}$ with the constant $C=a^{4\kappa(\delta)+2c(\delta)}$, we have
$$
(CC')^{-2e_a(\Gamma)}K^{-1/2}
\leq \frac{dg^*m_{g(z)}}{dm_z}(\xi) \leq 
(CC')^{2e_a(\Gamma)}K^{1/2}.
$$

We now consider a $(\Gamma,D_0)$-Patterson measure family $\{\mu_z\}_{z \in X}$ in general
with quasiconformal constant $C_0$. For the sake of simplicity,
we may assume that $\Vert \mu_o \Vert=1$. By Proposition \ref{singlemeasure}, $\mu_o$ is
$(\Gamma,C_0^{e_a(\Gamma)}D_0)$-quasi-invariant, and then Theorem \ref{uniqueness} gives
$$
\frac{d\mu_o}{dm_o}(\xi) \asymp_{(C_0 C')^{e_a(\Gamma)}D_0} 1.
$$
Similar to the process above, the quasiconformality then yields
$$
(C_0 C')^{-2e_a(\Gamma)}D_0^{-1}
\leq \frac{d\mu_z}{dm_z}(\xi) \leq 
(C_0 C')^{2e_a(\Gamma)}D_0.
$$
By replacing $z$ with $g(z)$ and $\xi$ with $g(\xi)$ here, we also see that
$(dg^*\mu_{g(z)}/dg^*m_{g(z)})(\xi)$ is bounded from above and below by the same constants.
Hence, the above three inequalities conclude that
$$
D^{-1} \leq \frac{dg^*\mu_{g(z)}}{d\mu_z}(\xi) \leq D \quad (\almostall \xi \in \partial X)
$$
for $D= (CC')^{2e_a(\Gamma)}K^{1/2} (C_0 C')^{4e_a(\Gamma)}D_0^2$.
\end{proof}

\medskip
\section{No proper conjugation for divergence-type groups}\label{7}
In this section, we consider the proper conjugation problem for discrete 
isometry groups of the Gromov hyperbolic space $(X,d)$.
This is a continuation of our previous work \cite{MY1, MY2}, where we proved the corresponding results for
Kleinian groups of divergence type and convex cocompact subgroups of $\Isom(X,d)$.
A history of this problem and preceding results can be found in \cite{MY1, MY2} and the references therein.

First, we mention an assumption in our new theorem, 
which was not necessary in the previous theorems.
For Kleinian groups, the J{\o}rgensen theorem ensures
that the geometric limit of a sequence of discrete groups
is also discrete. To avoid these problems in the present arguments for discrete subgroups of $\Isom(X,d)$ that are not necessarily convex cocompact,
we introduce the following additional assumption. This was already mentioned in \cite{MY2}.

\begin{definition}
We say that a discrete group $\Gamma \subset \Isom(X,d)$ is {\it uniformly properly discontinuous} if there are 
a constant $r >0$ and a positive integer $N \in \N$ such that
the number of elements $\gamma \in \Gamma$ satisfying 
$\gamma(B(x,r)) \cap B(x,r)\neq \emptyset$ is bounded by $N$ for every $x \in X$. 
\end{definition}

We prepare some claims that are used in the arguments below.
For a sequence of discrete subgroups $\{\Gamma_n\}$ of $\Isom(X,d)$, we define the {\it envelope} denoted by
${\rm Env}\{\Gamma_n\}$ to be the subgroup of $\Isom(X,d)$ consisting of all elements 
$\gamma=\lim_{n \to \infty} \gamma_n$
given for some sequence $\gamma_n \in \Gamma_n$.
We recall the following fact as in \cite[Proposition 2.4]{MY2}. 

\begin{proposition}\label{uniformdisconti}
Let $\{\Gamma_n\}_{n=1}^\infty$ be a sequence of subgroups of $\Isom(X,d)$ that act
uniformly properly discontinuously on $X$ where the uniformity is also independent
of $n$. Then, ${\rm Env}\{\Gamma_n\}$ also acts uniformly properly discontinuously on $X$.
\end{proposition}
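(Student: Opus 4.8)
The plan is to verify the definition of uniform proper discontinuity for ${\rm Env}\{\Gamma_n\}$ directly, with the radius $r/3$ (where $r$ is the common radius working for all the $\Gamma_n$) and the same integer $N$; the mechanism is to transfer a ``collision'' occurring in the limit group down to the approximating groups $\Gamma_n$, losing a constant factor in the radius but nothing in the multiplicity bound.

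First I would fix $x \in X$ and suppose that $\gamma^{(1)},\dots,\gamma^{(M)}$ are pairwise distinct elements of ${\rm Env}\{\Gamma_n\}$ with $\gamma^{(k)}(B(x,r/3)) \cap B(x,r/3) \neq \emptyset$ for $k=1,\dots,M$; it then suffices to show $M \le N$. Choosing $z_k \in B(x,r/3)$ with $\gamma^{(k)}(z_k) \in B(x,r/3)$ and using that $\gamma^{(k)}$ is an isometry, the triangle inequality gives $d(x,\gamma^{(k)}(x)) \le d(x,\gamma^{(k)}(z_k)) + d(z_k,x) \le 2r/3$. Write each $\gamma^{(k)}$ as a pointwise limit $\gamma^{(k)} = \lim_{n\to\infty}\gamma_n^{(k)}$ with $\gamma_n^{(k)} \in \Gamma_n$.

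Next I would pass to a large $n$ at which two things hold simultaneously. Since $\gamma_n^{(k)}(x) \to \gamma^{(k)}(x)$, for all large $n$ one has $d(x,\gamma_n^{(k)}(x)) \le 2r/3 + r/6 < r$ for every $k$; hence $\gamma_n^{(k)}(x)$ lies in $\gamma_n^{(k)}(B(x,r)) \cap B(x,r)$, so this set is nonempty. Moreover, for each pair $k \neq l$ there is a point $y_{kl} \in X$ with $\gamma^{(k)}(y_{kl}) \neq \gamma^{(l)}(y_{kl})$, and letting $n \to \infty$ in $\gamma_n^{(k)}(y_{kl}) \to \gamma^{(k)}(y_{kl})$ forces $\gamma_n^{(k)} \neq \gamma_n^{(l)}$ for all large $n$. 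As only finitely many indices and pairs are involved, a single $n$ works for all of these conditions at once. Then $\gamma_n^{(1)},\dots,\gamma_n^{(M)}$ are $M$ pairwise distinct elements of $\Gamma_n$ with $\gamma_n^{(k)}(B(x,r)) \cap B(x,r) \neq \emptyset$, so the uniform proper discontinuity of $\Gamma_n$ --- whose constants $r$ and $N$ do not depend on $n$ --- yields $M \le N$, as required.

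The argument is essentially bookkeeping; the only point demanding a little care is that all the approximations must be made uniform over the finitely many $\gamma^{(k)}$ and pairs $(k,l)$ simultaneously, which is harmless since $M$ is finite and the conclusion is exactly the bound $M \le N$. I would also remark that ${\rm Env}\{\Gamma_n\}$ is indeed a group --- composition and inversion of isometries being continuous for pointwise convergence on a proper space --- although this is already incorporated in the definition recalled above, and that uniform proper discontinuity entails discreteness, so the envelope is a discrete subgroup of $\Isom(X,d)$.
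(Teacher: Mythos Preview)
Your argument is correct: the shrinking of the radius from $r$ to $r/3$ absorbs the error coming from the approximation $\gamma_n^{(k)} \to \gamma^{(k)}$, and the finiteness of $M$ lets you choose a single index $n$ at which all the approximants are already pairwise distinct and move $x$ by less than $r$. The paper itself does not supply a proof of this proposition but simply cites \cite[Proposition~2.4]{MY2}, so there is no in-paper argument to compare against; your proof is exactly the kind of elementary verification one would expect that reference to contain.
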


In addition, lower semi-continuity of the critical exponents, which is known to be true for
geometric convergence of Kleinian groups, is valid in the following form.

\begin{proposition}\label{semiconti}
Let $\{\Gamma_n\}_{n=1}^\infty \subset \Isom(X,d)$ be a sequence of discrete groups of divergence type and let
$\Gamma_\infty$ be a discrete subgroup of ${\rm Env}\{\Gamma_n\}$. Then
$$
\liminf_{n \to \infty} e_a(\Gamma_n) \geq e_a(\Gamma_\infty).
$$
\end{proposition}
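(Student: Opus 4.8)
The plan is to show that for every real $s$ with $0\le s<e_a(\Gamma_\infty)$ one has $e_a(\Gamma_n)\ge s$ for all large $n$; letting $s\nearrow e_a(\Gamma_\infty)$ then gives $\liminf_{n}e_a(\Gamma_n)\ge e_a(\Gamma_\infty)$. (If $e_a(\Gamma_\infty)=0$ there is nothing to prove, so assume $e_a(\Gamma_\infty)>0$; in particular one may assume $\Gamma_\infty$ is non-elementary, the elementary case being either trivial or a matter of the classical treatment of parabolic subgroups.) Since $e_a(\Gamma_n)=\inf\{t\ge 0\mid P^t_{\Gamma_n}(o,o)<\infty\}$, it is enough to produce, for all large $n$, enough elements of $\Gamma_n$ to force $P^s_{\Gamma_n}(o,o)=\infty$. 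The idea is to transport a fixed finite family from $\Gamma_\infty$ into each $\Gamma_n$ and then form arbitrarily long products; the key point is that the threshold on $n$ will depend only on the chosen finite family, not on the length of the product taken.

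Fix $s<e_a(\Gamma_\infty)$. The first — and I expect the hardest — step is to choose a finite set $F=\{\gamma_1,\dots,\gamma_k\}\subset\Gamma_\infty$ that generates a free subsemigroup of $\Gamma_\infty$, witnessed by a ping-pong configuration (pairwise disjoint nonempty open subsets $A_1,\dots,A_k$ of $\overline X$ with $\gamma_i(\overline{A_1\cup\cdots\cup A_k})\subseteq A_i$ for each $i$), and which moreover satisfies $\sum_{i=1}^k a^{-sd(o,\gamma_i(o))}>1$. This is where both the hypothesis $s<e_a(\Gamma_\infty)$ and the non-elementarity of $\Gamma_\infty$ enter: one takes $F$ inside a Schottky subgroup of $\Gamma_\infty$ whose critical exponent exceeds $s$, which exists because the critical exponent of a non-elementary discrete group equals the supremum of the critical exponents of its Schottky subgroups; within such a subgroup, a block of Schottky words of a common large length has total $s$-weight exceeding $1$. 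I regard this step — the Schottky approximation of the critical exponent, and the reduction to the non-elementary case — as the genuine content; everything that follows is a soft limiting argument together with the triangle inequality.

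Next I would turn $F$ into elements of $\Gamma_n$. Since each $\gamma_i\in\Gamma_\infty\subset{\rm Env}\{\Gamma_n\}$, choose $\gamma_{i,n}\in\Gamma_n$ with $\gamma_{i,n}\to\gamma_i$; on the proper space $X$ this is convergence of isometries, so the extensions to the compact space $\overline X$ converge uniformly. Because the ping-pong inclusions $\gamma_i(\overline{A_1\cup\cdots\cup A_k})\subseteq A_i$ are open conditions, there is $n_0$ such that for all $n\ge n_0$ the elements $\gamma_{1,n},\dots,\gamma_{k,n}$ satisfy $\gamma_{i,n}(\overline{A_1\cup\cdots\cup A_k})\subseteq A_i$ as well; hence they again generate a free subsemigroup of $\Gamma_n$, so that for every $m\ge 1$ the map $(j_1,\dots,j_m)\mapsto\gamma_{j_1,n}\cdots\gamma_{j_m,n}$ from $\{1,\dots,k\}^m$ into $\Gamma_n$ is injective. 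I would also fix $\varepsilon>0$ small enough that $q:=a^{-s\varepsilon}\sum_{i=1}^k a^{-sd(o,\gamma_i(o))}>1$ and enlarge $n_0$ so that $d(o,\gamma_{i,n}(o))\le d(o,\gamma_i(o))+\varepsilon$ for every $i$ and every $n\ge n_0$. Crucially, this single $n_0$ depends only on $F$ and $\varepsilon$ and serves all exponents $m$ simultaneously.

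Finally, for $n\ge n_0$ and any $m\ge 1$, applying the triangle inequality stepwise and using that the $\gamma_{j_i,n}$ are isometries gives $d(o,\gamma_{j_1,n}\cdots\gamma_{j_m,n}(o))\le\sum_{i=1}^m d(o,\gamma_{j_i,n}(o))\le\sum_{i=1}^m\bigl(d(o,\gamma_{j_i}(o))+\varepsilon\bigr)$, and since distinct words index distinct elements of $\Gamma_n$,
$$
P^s_{\Gamma_n}(o,o)\ \ge\ \sum_{(j_1,\dots,j_m)\in\{1,\dots,k\}^m}a^{-sd(o,\gamma_{j_1,n}\cdots\gamma_{j_m,n}(o))}\ \ge\ \Bigl(a^{-s\varepsilon}\sum_{i=1}^k a^{-sd(o,\gamma_i(o))}\Bigr)^{m}\ =\ q^{m}.
$$
Since $q>1$ and $m$ is arbitrary, $P^s_{\Gamma_n}(o,o)=\infty$, hence $e_a(\Gamma_n)\ge s$ for all $n\ge n_0$; letting $s\to e_a(\Gamma_\infty)$ yields the proposition. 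Thus the whole difficulty is concentrated in the construction of $F$: securing, below the critical exponent, a free subsemigroup whose generators have combined $s$-weight exceeding $1$.
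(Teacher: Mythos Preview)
Your route is genuinely different from the paper's. The paper's argument is measure-theoretic and very short: set $e=\liminf_n e_a(\Gamma_n)$, pass to a subsequence along which $e_a(\Gamma_n)\to e$ and the canonical Patterson measures $\mu_n=(m_o)_n$ for the $\Gamma_n$ converge weak-$\ast$ to a probability measure $\mu$ on $\partial X$; since each $\mu_n$ is $(\Gamma_n,a^{e_a(\Gamma_n)c(\delta)})$-quasi-invariant of dimension $e_a(\Gamma_n)$, the limit $\mu$ is quasi-invariant under ${\rm Env}\{\Gamma_n\}$, hence under $\Gamma_\infty$, of dimension $e$, and then the lower bound on the dimension of any $\Gamma_\infty$-quasi-invariant quasiconformal measure gives $e\ge e_a(\Gamma_\infty)$. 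The divergence-type hypothesis on the $\Gamma_n$ is used precisely to produce canonical Patterson measures with uniformly controlled quasi-invariance constants, and the whole proof stays inside the machinery the paper has already built.

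Your combinatorial strategy---transport a ping-pong family from $\Gamma_\infty$ into $\Gamma_n$ and count words---is sound once the family $F$ exists, and has the pleasant feature that it never invokes the divergence-type hypothesis on the $\Gamma_n$. But the step you yourself flag as ``the genuine content'' is a real gap as written: the assertion that $e_a(\Gamma_\infty)$ equals the supremum of critical exponents of its free (Schottky) subsemigroups is a substantial theorem (of Bishop--Jones type for Kleinian groups, with analogues in the Gromov-hyperbolic setting), not something available from anything developed in this paper. Your proposal therefore trades the proposition for an external result of comparable depth rather than proving it; the paper avoids this entirely by exploiting the Patterson--Sullivan framework already in place. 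The side issue you raise about elementary $\Gamma_\infty$ with positive exponent is likewise left unresolved in your sketch, though it does not arise in the paper's intended application.
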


\begin{proof}
Let $e=\liminf_{n \to \infty} e_a(\Gamma_n)$. For each $\Gamma_n$, we take the canonical Patterson measure 
$\mu_n=(m_o)_n$.
Passing to a subsequence, we may assume that both $e_a(\Gamma_n)$ converge to $e<\infty$ and 
$\mu_n$ converge to some Borel measure $\mu$ on $\partial X$ with $\Vert \mu \Vert=1$ in the weak-$\ast$ sense as $n \to \infty$.
Here, we see that $\mu$ is a $(\Gamma_\infty,a^{ec(\delta)})$-quasi-invariant quasiconformal measure of dimension $e$. Indeed,
each canonical Patterson measure $\mu_n$ is $(\Gamma_n,a^{e_a(\Gamma_n)c(\delta)})$-quasi-invariant 
by the remark after Lemma \ref{canonical}, and the weak-$\ast$ limit $\mu$
preserves this quasi-invariance for the group ${\rm Env}\{\Gamma_n\}$ and for the dimension $e$. By Theorem \ref{existence},
the existence of such a measure $\mu$ for $\Gamma_\infty$ yields $e \geq e_a(\Gamma_\infty)$.
\end{proof}

The quasi-invariance of the Patterson measure under the normalizer (Theorem \ref{normalizer})
will be used in the following situation.
Although there is no essential difference, this slightly generalized formulation is more convenient.

\begin{proposition}\label{morenormal}
Let $\Gamma$ and $\widetilde \Gamma$ be non-elementary discrete groups of divergence type in $\Isom(X,d)$
such that $\Gamma \subset \widetilde \Gamma$ and $e_a(\Gamma)=e_a(\widetilde \Gamma)$. Then, 
a Patterson measure (family) for $\Gamma$ is quasi-invariant under the normalizer
$N(\widetilde \Gamma)$ of $\widetilde \Gamma$. More precisely, if $\{\mu_z\}_{z \in X}$ is a $(\Gamma,D_0)$-Patterson measure family
with quasiconformal constant $C_0$, then there exists a
constant $D \geq 1$ depending only on $C_0$, $D_0$, $\delta$, $a$, and $e_a(\Gamma)$ such that
$$
D^{-1} \leq \frac{dg^*\mu_{g(z)}}{d\mu_z}(\xi) \leq D \quad (\almostall \xi \in \partial X)
$$
for every $g \in N(\widetilde \Gamma)$.
\end{proposition}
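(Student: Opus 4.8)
The guiding observation is that an element $g\in N(\widetilde\Gamma)$ need not normalize $\Gamma$, so Theorem \ref{normalizer} cannot be applied to $\Gamma$ directly; and applying it to $\widetilde\Gamma$ with the given measure is not possible either, since $\{\mu_z\}$ is only known to be $\Gamma$-quasi-invariant. The way around this is that $g$ does normalize $\widetilde\Gamma$, and because $e_a(\Gamma)=e_a(\widetilde\Gamma)$ the Patterson measures for $\Gamma$ and for $\widetilde\Gamma$ coincide up to a bounded Radon--Nikodym derivative with a constant that is uniform over $g$. So the plan is: compare the given Patterson measure family for $\Gamma$ with the canonical Patterson measure family for $\widetilde\Gamma$ via quasi-uniqueness, invoke Theorem \ref{normalizer} for $\widetilde\Gamma$, and chain the resulting estimates, keeping track of constants.

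Concretely, let $\{\nu_z\}_{z\in X}$ denote the canonical Patterson measure family for $\widetilde\Gamma$ furnished by the Patterson construction; by Lemma \ref{canonical} it is a $(\widetilde\Gamma,1)$-Patterson measure family with quasiconformal constant $a^{c(\delta)}$ and $\Vert\nu_o\Vert=1$, and its dimension is $e_a(\widetilde\Gamma)=e_a(\Gamma)$. Since $\Gamma\subset\widetilde\Gamma$, the family $\{\nu_z\}$ is a fortiori $\Gamma$-quasi-invariant with the same constants, so $\nu_o$ is a $\Gamma$-quasi-invariant quasiconformal measure of dimension $e_a(\Gamma)$; as $\Gamma$ is of divergence type, Proposition \ref{support} shows the support of $\nu_o$ lies on $\Lambda(\Gamma)$, hence $\nu_o$ is a Patterson measure for $\Gamma$ too. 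Normalizing $\Vert\mu_o\Vert=1$, Proposition \ref{singlemeasure} makes $\mu_o$ a $(\Gamma,C_0^{e_a(\Gamma)}D_0)$-quasi-invariant quasiconformal measure, and the quantitative quasi-uniqueness for the divergence type group $\Gamma$ (Theorem \ref{uniqueness}) then gives a constant $K_1$, depending only on $C_0$, $D_0$, $\delta$, $a$, $e_a(\Gamma)$, with $d\mu_o/d\nu_o\asymp_{K_1}1$ almost everywhere on $\partial X$.

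Next I would pass from the base point to an arbitrary $z\in X$. Combining $d\mu_o/d\nu_o\asymp_{K_1}1$ with the quasiconformality of $\{\mu_z\}$ (constant $C_0$) and of $\{\nu_z\}$ (constant $a^{c(\delta)}$), the Poisson kernel factors $k(z,\xi)^{e_a(\Gamma)}$ cancel and one obtains $d\mu_z/d\nu_z\asymp_{K_2}1$ uniformly in $z$, where $K_2$ again depends only on $C_0$, $D_0$, $\delta$, $a$, $e_a(\Gamma)$; replacing $z$ by $g(z)$ and $\xi$ by $g(\xi)$, the same bound holds for $d(g^*\mu_{g(z)})/d(g^*\nu_{g(z)})$. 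On the other hand, Theorem \ref{normalizer} applied to the non-elementary divergence type group $\widetilde\Gamma$ and its canonical Patterson family yields a constant $D'\ge1$ depending only on $\delta$, $a$, $e_a(\widetilde\Gamma)$ with $d(g^*\nu_{g(z)})/d\nu_z\asymp_{D'}1$ for every $g\in N(\widetilde\Gamma)$. Multiplying the three comparisons $d(g^*\mu_{g(z)})/d(g^*\nu_{g(z)})\asymp_{K_2}1$, $d(g^*\nu_{g(z)})/d\nu_z\asymp_{D'}1$ and $d\nu_z/d\mu_z\asymp_{K_2}1$ gives $d(g^*\mu_{g(z)})/d\mu_z\asymp_{D}1$ with $D=K_2^2D'$, which depends only on $C_0$, $D_0$, $\delta$, $a$, $e_a(\Gamma)$; for non-normalized $\mu_o$ one just rescales.

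The essential point, and the reason this is not an immediate corollary of Theorem \ref{normalizer}, is that the comparison constant $K_1$ must be uniform over all $g\in N(\widetilde\Gamma)$; this is exactly why the quantitative form of quasi-uniqueness (Theorem \ref{uniqueness}, with bounds through quasi-invariance constants and total masses) and the quantitative form of Theorem \ref{normalizer} (with $D$ depending only on $C_0,D_0,\delta,a,e_a$) are indispensable, since then $g$ enters only through the measures $\nu_z$, which are genuinely normalized by $N(\widetilde\Gamma)$. The only other thing to check along the way is that the canonical Patterson family of $\widetilde\Gamma$ really qualifies as a Patterson measure family for $\Gamma$, i.e. that its support sits on $\Lambda(\Gamma)$; this is precisely Proposition \ref{support} together with $e_a(\Gamma)=e_a(\widetilde\Gamma)$ and the divergence type hypothesis on $\Gamma$.
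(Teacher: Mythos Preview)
Your proof is correct and follows essentially the same route as the paper's: take the canonical Patterson measure family for $\widetilde\Gamma$, apply Theorem \ref{normalizer} to it, and use the quantitative quasi-uniqueness (Theorem \ref{uniqueness}) for $\Gamma$ to transfer the $N(\widetilde\Gamma)$-quasi-invariance to $\{\mu_z\}$, with all constants depending only on the stated data. Your invocation of Proposition \ref{support} to check that the canonical family for $\widetilde\Gamma$ is actually a Patterson measure family for $\Gamma$ (so that Theorem \ref{uniqueness} applies) is a detail the paper leaves implicit, but otherwise the arguments coincide.
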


\begin{proof}
We take the canonical Patterson measure family $\{\widetilde \mu_z\}_{z \in X}$ for $\widetilde \Gamma$,
which is $(\widetilde \Gamma,1)$-quasi-invariant with quasiconformal constant $a^{c(\delta)}$ by Lemma \ref{canonical}.
Then, this is quasi-invariant under $N(\widetilde \Gamma)$ as in Theorem \ref{normalizer}.
Furthermore, because $\Gamma \subset \widetilde \Gamma$, $\{\widetilde \mu_z\}_{z \in X}$ is a
$(\Gamma,1)$-Patterson measure family with the quasiconformal constant $a^{c(\delta)}$.

Let $\{\mu_z\}_{z \in X}$ be a $(\Gamma,D_0)$-Patterson measure family
with quasiconformal constant $C_0$. We may assume that $\Vert \mu_o \Vert=1$.
By Theorem \ref{uniqueness} with the remark in the previous section, we have
$(d\mu_o/d\widetilde \mu_o)(\xi) \asymp_{D_0(a^{c(\delta)} C_0)^{e_a(\Gamma)}} 1$.
Then, 
$$
\frac{d\mu_z}{d\widetilde \mu_z}(\xi) \asymp_{D_0(a^{c(\delta)} C_0)^{2e_a(\Gamma)}} 1 \quad (\almostall \xi \in \partial X)
$$ 
for every $z \in X$. By the quasi-invariance of $\{\widetilde \mu_z\}_{z \in X}$ under $N(\widetilde \Gamma)$, 
$\{\mu_z\}_{z \in X}$ is also $N(\widetilde \Gamma)$-quasi-invariant. Moreover,
the dependence of the constant $D$ is as stated.
\end{proof}

We state and prove the main theorem in this section. We say that $G \subset \Isom(X,d)$ admits {\it proper conjugation}
if there is some element $\alpha \in \Isom(X,d)$ such that the conjugate $\alpha G \alpha^{-1}$ is a proper subgroup of $G$.
Our result says that divergence-type groups do not permit such an unusual conjugation.

\begin{theorem}\label{noproper}
Let $G \subset \Isom(X,d)$ be a non-elementary discrete group of divergence type
that is uniformly properly discontinuous. If 
$\alpha G \alpha^{-1} \subset G$ for $\alpha \in \Isom(X,d)$, then $\alpha G \alpha^{-1} =G$.
\end{theorem}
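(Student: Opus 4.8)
The plan is to argue by contradiction, so suppose $\alpha G\alpha^{-1}\subsetneq G$ and fix $g_0\in G\setminus\alpha G\alpha^{-1}$. First I would dispose of the cases where the orbit $\{\alpha^n(o)\}$ stays bounded. If $\alpha$ has finite order, then $\alpha^NG\alpha^{-N}=G$ together with the chain $\alpha^{k+1}G\alpha^{-k-1}\subset\alpha^kG\alpha^{-k}$ forces every inclusion to be an equality, in particular $\alpha G\alpha^{-1}=G$. If $\alpha$ is elliptic of infinite order, then $\{\alpha^n\}$ is precompact in $\Isom(X,d)$ (properness of $X$) and the closure of $\langle\alpha\rangle$ is an infinite compact group in which $\langle\alpha\rangle$ is dense, so $\id$ is an accumulation point of $\langle\alpha\rangle$ and there are positive integers $m_k\to\infty$ with $\alpha^{m_k}\to\id$; then $\alpha^{m_k}g_0\alpha^{-m_k}\in\alpha^{m_k}G\alpha^{-m_k}\subset\alpha G\alpha^{-1}\subset G$ converges to $g_0$, and discreteness of $G$ gives $\alpha^{m_k}g_0\alpha^{-m_k}=g_0$ for large $k$, whence $g_0\in\alpha G\alpha^{-1}$, a contradiction. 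Thus we may assume $d(o,\alpha^n(o))\to\infty$, i.e. $\alpha$ is loxodromic or parabolic.

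Next I would construct an ambient group. Put $G_n=\alpha^{-n}G\alpha^n$ for $n\ge0$; since $\alpha^n$ is an isometry, each $G_n$ is non-elementary, discrete, of divergence type, uniformly properly discontinuous with constants independent of $n$, and has critical exponent $e:=e_a(G)$. The hypothesis gives $G=G_0\subsetneq G_1\subset G_2\subset\cdots$, so $G_\infty:=\bigcup_{n\ge0}G_n$ is an increasing union; by the argument of Proposition \ref{uniformdisconti} it is uniformly properly discontinuous, hence discrete and non-elementary, and $\alpha^{-1}G_\infty\alpha=\bigcup_{n\ge1}G_n=G_\infty$, so $\alpha\in N(G_\infty)$. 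Since each $G_n\subset{\rm Env}\{G_n\}$ and $G_\infty$ is a discrete subgroup of ${\rm Env}\{G_n\}$, Proposition \ref{semiconti} gives $e\ge e_a(G_\infty)$, while $G\subset G_\infty$ gives $e_a(G_\infty)\ge e$; hence $e_a(G_\infty)=e$, and $P^e_{G_\infty}\ge P^e_G=\infty$ shows $G_\infty$ is of divergence type. Finally $G\subsetneq G_\infty$ with equal critical exponent and $G$ of divergence type, so Corollary \ref{inclusion} gives $\Lambda(G_\infty)=\Lambda(G)=:\Lambda$; as $\Lambda$ is a closed $\alpha$-invariant set with at least three points, the fixed point(s) of $\alpha$ on $\partial X$ lie in $\Lambda$.

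Now I would invoke the main theorem. By Proposition \ref{morenormal} (with $\widetilde\Gamma=G_\infty$) the canonical Patterson measure $\mu$ for $G$ is quasi-invariant under $N(G_\infty)$ with a single constant; in particular there is $\widetilde D\ge1$, independent of $n\in\Z$, with $\widetilde D^{-1}j_{\alpha^n}(\xi)^e\le\frac{d(\alpha^n)^*\mu}{d\mu}(\xi)\le\widetilde D\,j_{\alpha^n}(\xi)^e$ for a.e. $\xi$. Since $\alpha G\alpha^{-1}\subseteq G$ and $\alpha(\Lambda)=\Lambda$, $\mu$ is simultaneously a Patterson measure for $\Gamma:=\alpha G\alpha^{-1}$ (Proposition \ref{support}); it has full mass on $\Lambda_c(G)$ (Corollary \ref{full}) and no atoms (Proposition \ref{noatom}).

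The remaining step is to convert this rigidity into a contradiction with $\Gamma\subsetneq G$. I would compare the canonical series measures: splitting $G=\Gamma\sqcup(G\setminus\Gamma)$ in the definition of $m^s_{o,o}$ yields $m^s_{o,o}=\theta^s\,m^s_{o,o}(\Gamma)+\rho^s$, where $m^s_{o,o}(\Gamma)$ is the analogous series measure for $\Gamma$, $\theta^s=P^s_\Gamma(o,o)/P^s_G(o,o)\in(0,1]$, and $\rho^s\ge0$ with $\Vert\rho^s\Vert=1-\theta^s$. From $d(o,\alpha g\alpha^{-1}(o))\le d(o,g(o))+2d(o,\alpha(o))$ one gets $\theta^s\ge a^{-2sd(o,\alpha(o))}$, so along a weak-$*$ convergent subsequence $s\searrow e$ the limit satisfies $\mu=\theta\nu+\rho$ with $\nu$ a Patterson measure for $\Gamma$ of total mass $1$, $\theta\ge a^{-2ed(o,\alpha(o))}>0$, and $\Vert\rho\Vert=1-\theta$. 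Quasi-uniqueness (Theorem \ref{uniqueness}) applied to $\Gamma$ forces $\mu\asymp\nu$; combining this with the uniform quasi-invariance under $\alpha^n$ above — using that the canonical construction intertwines conjugation by $\alpha$ with pushforward, and controlling masses of shadows centered at $\alpha^{\pm n}(o)$ via Theorem \ref{shadowlemma} and Lemma \ref{kernel} — one shows $\theta=1$, so $\rho=0$ and in fact $G\setminus\Gamma=\emptyset$, contradicting $\Gamma\subsetneq G$. \textbf{The main obstacle} is exactly this last quantitative step: the bare facts that $\mu$ is quasi-invariant under both $G$ and $\alpha$ are mutually consistent with proper conjugation, so one must squeeze out the contradiction from the precise comparison of the two canonical Patterson measures; the argument parallels the Kleinian case of \cite{MY1}, with shadows and the Poisson-kernel estimate of Lemma \ref{kernel} replacing the classical conformal estimates.
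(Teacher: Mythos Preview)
Your setup through the construction of $G_\infty$ and the invocation of Proposition~\ref{morenormal} is correct and matches the paper's approach (the elliptic case analysis is unnecessary---the paper's argument works uniformly in the type of $\alpha$---but it is not wrong). The genuine gap is in the final step, and it is not merely a matter of filling in estimates: the strategy you outline does not lead to a contradiction.

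Your two-piece decomposition $G=\Gamma\sqcup(G\setminus\Gamma)$ gives $\mu=\theta\nu+\rho$ with $\Vert\rho\Vert=1-\theta$, and you aim to show $\theta=1$. But $\theta=\lim_{s\searrow e}P^s_\Gamma(o,o)/P^s_G(o,o)$ is an asymptotic density, and $\theta=1$ (equivalently $\rho=0$) would \emph{not} imply $G\setminus\Gamma=\emptyset$; it would only say that the non-$\Gamma$ cosets contribute negligibly to the divergent series. Conversely, in the finite-index case $[G:\Gamma]=\ell$, each coset contributes comparably, so one expects $\theta\approx 1/\ell<1$, and there is no mechanism in your sketch to force $\theta$ up to $1$. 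Quasi-uniqueness gives $\mu\asymp\nu$, but this is perfectly compatible with $\theta<1$ and $\rho\neq 0$ (indeed $\rho$ is then just another measure in the same quasi-equivalence class). The uniform quasi-invariance under $\alpha^n$ does not break this symmetry either, for exactly the reason you identify: quasi-invariance under $G$ and under $\alpha$ are mutually consistent with proper conjugation.

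The paper's proof supplies the missing idea. Instead of the two-piece split, it uses the full coset decomposition $G=\bigsqcup_{k=1}^{\ell} g_k\Gamma$ and writes $(m_G)^s_{o,o}$ as a sum of $\ell$ pieces, each of which is (up to explicit ratios of Poincar\'e series) a pullback $(g_k^{-1})^*(m_\Gamma)^s_{g_k^{-1}(o),o}$. Passing to the weak-$*$ limit and applying Proposition~\ref{morenormal} (since $g_k,\alpha\in N(\Gamma_\infty)$), every total mass appearing is bounded above and below by powers of the single constant $D$, yielding $[G:\Gamma]=\ell\le D^3$. The crucial trick is then to \emph{iterate}: replacing $\alpha$ by $\alpha^j$ gives $[G:\alpha^jG\alpha^{-j}]=\ell^j$, but the bound $D^3$ is unchanged because $D$ depends only on $\delta$, $a$, $e$ and the quasiconformal constant of the canonical family---not on the element of $N(\Gamma_\infty)$. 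Choosing $j$ with $\ell^j>D^3$ gives the contradiction. This coset-count-plus-iteration argument is the content of \cite{MY1} you allude to; your decomposition is too coarse to run it.
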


\begin{proof}
Let $\Gamma=\alpha G \alpha^{-1}$ and $\Gamma_n=\alpha^{-n}\Gamma \alpha^n$ for each integer $n \geq 0$. Then
$\Gamma_0=\Gamma$, $\Gamma_1=G$, and $\{\Gamma_n\}_{n \geq 0}$ is an increasing sequence of discrete subgroups of
$\Isom(X,d)$ that are
conjugate to $G$. In particular, they are all uniformly properly discontinuous and they are of divergence type
with the same critical exponent
$e=e_a(G)$. We define $\Gamma_\infty=\bigcup_{n \geq 0} \Gamma_n$, which coincides with the envelope ${\rm Env}\{\Gamma_n\}$
in this case. By Proposition \ref{uniformdisconti}, $\Gamma_\infty$ is a discrete subgroup.
As $e_a(\Gamma_n)=e$, Proposition \ref{semiconti} implies that $e_a(\Gamma_\infty) \leq e$. However, as
the converse inequality is trivial by the inclusion relation of groups, we have $e_a(\Gamma_\infty)=e$.
Moreover, $\Gamma_\infty$ is clearly of divergence type because it includes $\Gamma_n$. Furthermore, as the limit of $\Gamma_{n-1}=\alpha \Gamma_n \alpha^{-1} \subset \Gamma_n$, we have
$\alpha \Gamma_\infty \alpha^{-1}=\Gamma_\infty$; 
thus, $\alpha \in N(\Gamma_\infty)$.

To prove the statement, we suppose to the contrary that $\Gamma \subsetneqq G$ and set $\ell=[G:\Gamma] \in [2,\infty]$. Let
$$
G=g_1 \Gamma \sqcup g_2 \Gamma \sqcup \cdots \sqcup g_k \Gamma \sqcup \cdots
$$
be a coset decomposition of $G$ by $\Gamma$. Accordingly, we decompose the weighted Dirac measures 
$(m_G)^s_{o,o}$ given by the Poincar\'e series $P_G^s(o,o)$ for $s>e$ to be
$(m_G)^s_{o,o}=\sum_{k=1}^\ell \nu_k$, where
$$
\nu_k^s=\frac{1}{P_G^s(o,o)}\sum_{\gamma \in \Gamma} a^{-s d(o,g_k\gamma(o))}D_{g_k \gamma(o)}.
$$
Using the weighted Dirac measures 
$(m_\Gamma)^s_{g_k^{-1}(o),o}$ given by the Poincar\'e series $P_\Gamma^s(g_k^{-1}(o),o)$, we represent $\nu_k^s$ by
$$
\nu_k^s=\frac{P_\Gamma^s(o,o)}{P_G^s(o,o)} \frac{1}{P_\Gamma^s(o,o)}
\sum_{\gamma \in \Gamma} a^{-s d(g_k^{-1}(o),\gamma(o))}(g_k^{-1})^*D_{\gamma(o)}
=\frac{P_\Gamma^s(o,o)}{P_G^s(o,o)}\, (g_k^{-1})^*(m_\Gamma)^s_{g_k^{-1}(o),o}.
$$
Moreover, by $\Gamma=\alpha G \alpha^{-1}$ and Proposition \ref{property}, we have
$$
\frac{P_\Gamma^s(o,o)}{P_G^s(o,o)}=\frac{P_G^s(\alpha^{-1}(o),\alpha^{-1}(o))}{P_G^s(o,o)}
=\frac{P_G^s(\alpha^{-1}(o),o)}{P_G^s(o,o)}\cdot \frac{P_G^s(\alpha^{-1}(o),\alpha^{-1}(o))}{P_G^s(o,\alpha^{-1}(o))}.
$$
The corresponding substitutions yield 
$$
(m_G)^s_{o,o}=\sum_{k=1}^\ell \frac{P_G^s(\alpha^{-1}(o),o)}{P_G^s(o,o)}\cdot 
\frac{P_G^s(\alpha^{-1}(o),\alpha^{-1}(o))}{P_G^s(o,\alpha^{-1}(o))} (g_k^{-1})^*(m_\Gamma)^s_{g_k^{-1}(o),o}.
$$

We take the limit of the above equality. We can choose a sequence $s_i \searrow e$ such that all the involved
terms are convergent because they are at most countably many. As a result, we obtain
$$
(m_G)_{o,o} \geq \sum_{k=1}^\ell \Vert (m_G)_{\alpha^{-1}(o),o} \Vert \cdot \Vert (m_G)_{\alpha^{-1}(o),\alpha^{-1}(o)} \Vert\,
(g_k^{-1})^*(m_\Gamma)_{g_k^{-1}(o),o},
$$
where $\{(m_G)_{z,x}\}$ and $\{(m_\Gamma)_{z,x}\}$ stand for the canonical Patterson measure families for $G$ and $\Gamma$, respectively.
Here, we use Proposition \ref{morenormal} for $\widetilde \Gamma=\Gamma_\infty$. Then, there is a constant $D \geq 1$
independent of the elements of $N(\Gamma_\infty)$ such that
$$
D^{-1} \leq \frac{d(g_k^{-1})^*(m_\Gamma)_{g_k^{-1}(o),o}}{d(m_\Gamma)_{o,o}}(\xi) 
\leq D \quad (\almostall \xi \in \partial X).
$$
In particular, the total mass satisfies $\Vert (g_k^{-1})^*(m_\Gamma)_{g_k^{-1}(o),o} \Vert \asymp_D 1$.
Similarly, we have
\begin{align*}
&\quad \Vert (m_G)_{\alpha^{-1}(o),o} \Vert=\Vert (\alpha^{-1})^*(m_G)_{\alpha^{-1}(o),o} \Vert \asymp_D 
\Vert (m_G)_{o,o} \Vert=1;\\
&\quad \Vert (m_G)_{\alpha^{-1}(o),\alpha^{-1}(o)} \Vert=\Vert (\alpha^{-1})^*(m_G)_{\alpha^{-1}(o),\alpha^{-1}(o)} \Vert \asymp_D 
\Vert (m_G)_{o,\alpha^{-1}(o)} \Vert=1.
\end{align*}
Then, taking the total mass in the above inequality, we can make the assertion 
$\ell=[G:\Gamma] \leq D^3$. If $\ell=\infty$, this is a contradiction; we may assume that $\ell<\infty$.

Finally, we choose $j \in \N$ such that $\ell^j>D^3$.
We consider $\alpha^j$ instead of $\alpha$ and set $\Gamma'=\alpha^j G \alpha^{-j}$, which is a proper subgroup of $G$
with index $[G:\Gamma']=\ell^j$. Then, we repeat the same arguments as above for $G$ and $\Gamma'$.
The conclusion is that $[G:\Gamma'] \leq D^3$. We note that the constant $D$ is unaffected by this replacement
because the dependence of $D$ as in Proposition \ref{morenormal} is irrelevant to the canonical Patterson measures.
In this way, we derive the contradiction, and thus prove the result.
\end{proof}
\medskip

\section{The lower bound of the critical exponents of normal subgroups}\label{8}

For Kleinian groups, there are numerous important studies on the critical exponents of non-elementary normal subgroups $\Gamma$.
Among them, concerning the lower bound of such exponents, Falk and Stratmann \cite{FS} proved that they are bounded from below
by half the exponent of the original group $G$. Later, Roblin \cite{R2} extended this result in a different manner and 
it was proved, in particular, that if $G$ is of divergence type, then the strict inequality holds.
More recently, a simple proof for these results appeared in \cite{J}.
We generalize this argument to discrete isometry groups of the Gromov hyperbolic space $\Isom(X,d)$ 
and prove the following theorem.

\begin{theorem}\label{main3}
Let $G \subset \Isom(X,d)$ be a discrete group and let $\Gamma$ be a non-elementary normal subgroup of $G$.
Then, $e_a(\Gamma) \geq e_a(G)/2$. Moreover, if $G$ is of divergence type, then the strict inequality
$e_a(\Gamma) > e_a(G)/2$ holds.
\end{theorem}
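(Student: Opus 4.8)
The plan is to deduce both assertions from a single comparison of Poincar\'e series: there is a constant $\kappa(s)$, depending only on $\delta$, $a$, $s$ and one fixed loxodromic element of $\Gamma$, such that
$$
P^{2s}_G(o,o)\le \kappa(s)\,P^{s}_\Gamma(o,o)\qquad\text{for all } s>0.
$$
Granting this, $e_a(G)\le 2s$ whenever $P^s_\Gamma(o,o)<\infty$, so $e_a(G)\le 2e_a(\Gamma)$ in every case (including $e_a(\Gamma)=\infty$), which is exactly $e_a(\Gamma)\ge e_a(G)/2$. Following \cite{J}, this step uses only elementary hyperbolic geometry.

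To prove the comparison, fix a loxodromic element $\phi\in\Gamma$, which exists since $\Gamma$ is non-elementary. As $\Gamma$ is normal in $G$, we have $g\phi g^{-1}\in\Gamma$ for every $g\in G$, and the triangle inequality gives
$$
d\big(o,g\phi g^{-1}(o)\big)\le d(o,g(o))+d\big(o,\phi g^{-1}(o)\big)\le 2\,d(o,g(o))+d(o,\phi(o)).
$$
The map $g\mapsto g\phi g^{-1}$ sends $G$ onto the conjugacy class $\mathcal C\subset\Gamma$ of $\phi$, and its fibre over $\psi\in\mathcal C$ is a left coset $g_\psi Z$ of the centralizer $Z=Z_G(\phi)$; since $\phi$ is loxodromic and $G$ is discrete, $Z$ stabilizes the two fixed points of $\phi$ and is therefore virtually cyclic, with a loxodromic generator. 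Choose $g_\psi$ in this coset with $d(o,g_\psi(o))$ minimal (attained by proper discontinuity). Because $g_\psi$ is an isometry, the orbit $g_\psi Z(o)$ is an isometric copy of $Z(o)$ and hence a quasi-geodesic line whose constants do not depend on $\psi$; a routine quasi-geodesic estimate then yields
$$
\sum_{g\in g_\psi Z}a^{-2s\,d(o,g(o))}\le \kappa_0(s)\,a^{-2s\,d(o,g_\psi(o))}
$$
with $\kappa_0(s)$ independent of $\psi$. On the other hand, the displayed length inequality with $g=g_\psi$ gives $d(o,g_\psi(o))\ge \frac12\big(d(o,\psi(o))-d(o,\phi(o))\big)$. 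Summing over $\psi\in\mathcal C$ and noting that the elements of $\mathcal C$ are distinct members of $\Gamma$,
$$
P^{2s}_G(o,o)=\sum_{\psi\in\mathcal C}\ \sum_{g\in g_\psi Z}a^{-2s\,d(o,g(o))}\le \kappa_0(s)\,a^{s\,d(o,\phi(o))}\sum_{\psi\in\mathcal C}a^{-s\,d(o,\psi(o))}\le \kappa_0(s)\,a^{s\,d(o,\phi(o))}\,P^{s}_\Gamma(o,o),
$$
which is the claimed comparison.

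For the strict inequality, assume $G$ is of divergence type and, towards a contradiction, that $e_a(\Gamma)=e_a(G)/2=:s_0$. Since $\Gamma$ is non-elementary so is $G$, hence $e_a(G)>0$ (Proposition~\ref{positive}) and $s_0<e_a(G)<\infty$. Applying the comparison at $s=s_0$ to $P^{2s_0}_G(o,o)=\infty$ forces $P^{s_0}_\Gamma(o,o)=\infty$, so the non-elementary group $\Gamma$ is itself of divergence type and carries a Patterson measure family of dimension $s_0$. By Theorem~\ref{normalizer} this family is quasi-invariant under $N(\Gamma)$, and since $\Gamma\trianglelefteq G$ we have $G\subset N(\Gamma)$; thus it is $G$-quasi-invariant and, via Proposition~\ref{singlemeasure}, yields a $G$-quasi-invariant quasiconformal measure of dimension $s_0$. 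But $s_0<e_a(G)$ contradicts the lower bound on the dimension of such measures in Theorem~\ref{existence}. Therefore $e_a(\Gamma)>e_a(G)/2$.

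The main obstacle is the uniformity in the geometric step: one needs that the centralizer of a loxodromic element in a discrete isometry group of a proper $\delta$-hyperbolic space is virtually cyclic with a loxodromic generator, and that the quasi-geodesic constants of the translated orbits $g_\psi Z(o)$ are bounded independently of $\psi$ — which holds precisely because $g_\psi$ is an isometry and so does not change those constants. The remaining points (existence of $\phi$, attainment of the coset minimum, and the degenerate cases $e_a(G)\in\{0,\infty\}$) are routine.
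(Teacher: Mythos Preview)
Your proof is correct and follows essentially the same route as the paper: the comparison $P^{2s}_G\le \kappa(s)P^{s}_\Gamma$ is exactly the paper's inequality $P^{s}_G\le A_h(s)a^{s\ell_h/2}k_h\,P^{s/2}_\Gamma$ after the substitution $s\leftrightarrow 2s$, and your ``routine quasi-geodesic estimate'' is precisely Lemma~\ref{essential}. The only cosmetic differences are that you decompose $G$ into cosets of the full centralizer $Z_G(\phi)$ rather than of $\langle h\rangle$ (thereby absorbing the finite-to-one count of Lemma~\ref{finite-to-one} into the virtually-cyclic structure of $Z$), and for the strict inequality you inline the proof of Theorem~\ref{normal} rather than quoting it.
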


This theorem was already expected in \cite{J} when
\cite[Theorem 4.3]{MY1}, which was used for the proof of the strict inequality,
was going to be generalized to the case of the Gromov hyperbolic space.
This generalization is here carried out as a consequence of Theorem \ref{normalizer} in the following form.

\begin{theorem}\label{normal}
Let $G \subset \Isom(X,d)$ be a discrete group and let $\Gamma$ be a non-elementary normal subgroup of $G$.
If $\Gamma$ is of divergence type, then $e_a(G)=e_a(\Gamma)$; moreover, $G$ is also of divergence type.
\end{theorem}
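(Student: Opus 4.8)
The plan is to turn a Patterson measure for $\Gamma$ into a $G$-quasi-invariant quasiconformal measure of dimension $e_a(\Gamma)$ by invoking the main theorem of the paper, and then to recognize it as a Patterson measure for $G$ via the conical limit set. The starting observation is that normality of $\Gamma$ in $G$ gives the inclusion $G \subset N(\Gamma)$, and that $G$ is automatically non-elementary because $\Lambda(\Gamma) \subset \Lambda(G)$ and $\#\Lambda(\Gamma) \geq 3$. Note also that $e_a(\Gamma) \leq e_a(G)$ trivially, so it suffices to produce the reverse inequality together with divergence of $P_G^{e_a(G)}$.

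I would carry this out as follows. First, by Theorem \ref{existence} choose a Patterson measure family $\{\mu_z\}_{z\in X}$ for $\Gamma$; it has dimension $e_a(\Gamma)$ and is, say, a $(\Gamma,D_0)$-Patterson measure family with quasiconformal constant $C_0$. By Theorem \ref{normalizer} there is a constant $D\ge 1$ with $D^{-1}\le (dg^*\mu_{g(z)}/d\mu_z)(\xi)\le D$ a.e.\ for every $g\in N(\Gamma)$; since $G\subset N(\Gamma)$, this says precisely that $\{\mu_z\}_{z\in X}$ is $(G,D)$-quasi-invariant, and hence by Proposition \ref{singlemeasure} the single measure $\mu=\mu_o$ is a $G$-quasi-invariant quasiconformal measure of dimension $e_a(\Gamma)$. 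Next, because $\Gamma$ is non-elementary and of divergence type, Corollary \ref{full} (equivalently Theorem \ref{main1}) gives $\mu(\Lambda_c(\Gamma))=\mu(\partial X)>0$. Since $\Gamma(o)\subset G(o)$, any geodesic ray within a fixed neighborhood of which $\Gamma(o)$ accumulates to a point is also accumulated there by $G(o)$, so $\Lambda_c(\Gamma)\subset\Lambda_c(G)$ and therefore $\mu(\Lambda_c(G))>0$. Finally, apply Corollary \ref{only} to the non-elementary discrete group $G$ and the $G$-quasi-invariant quasiconformal measure $\mu$: it yields that $G$ is of divergence type and that $\mu$ is a Patterson measure for $G$. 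In particular the dimension of $\mu$, which is $e_a(\Gamma)$, equals $e_a(G)$, so $e_a(G)=e_a(\Gamma)$, completing the proof.

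I do not expect a serious obstacle here: the argument is essentially the assembly of Theorem \ref{normalizer} with the conical–limit–set machinery of Sections \ref{3}--\ref{4}. The only point that needs care is the bookkeeping of the dimension and of the quasi-invariance constant, i.e.\ making sure that the object constructed as a Patterson measure for $\Gamma$ really does function as a $G$-quasi-invariant quasiconformal measure of dimension $e_a(\Gamma)$ — which is exactly what Theorem \ref{normalizer} together with Proposition \ref{singlemeasure} delivers, the constant $D$ being independent of the particular $g\in N(\Gamma)$. As a consistency check, once $e_a(G)=e_a(\Gamma)$ is known, Corollary \ref{inclusion} forces $\Lambda(G)=\Lambda(\Gamma)$, recovering the classical fact that a non-elementary normal subgroup has the same limit set.
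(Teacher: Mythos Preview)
Your proof is correct and shares the paper's key move: apply Theorem~\ref{normalizer} to a Patterson measure for $\Gamma$ so that it becomes $G$-quasi-invariant via $G\subset N(\Gamma)$. The only difference is in the endgame. The paper finishes more directly: once $\mu$ is a $G$-quasi-invariant quasiconformal measure of dimension $e_a(\Gamma)$, Theorem~\ref{existence} immediately gives $e_a(\Gamma)\geq e_a(G)$, and divergence of $G$ is then trivial from $P_\Gamma^{e_a(\Gamma)}\leq P_G^{e_a(\Gamma)}$. Your route through $\mu(\Lambda_c(\Gamma))>0$, the inclusion $\Lambda_c(\Gamma)\subset\Lambda_c(G)$, and Corollary~\ref{only} reaches the same conclusions but invokes heavier machinery (the full Section~\ref{3}--\ref{4} dichotomy) where a one-line appeal to the dimension lower bound suffices.
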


\begin{proof}
Let $\mu$ be a Patterson measure for $\Gamma$. By Theorem \ref{normalizer}, 
$\mu$ is quasi-invariant under $N(\Gamma)$.
In particular, $\mu$ is a $G$-quasi-invariant quasiconformal measure of dimension $e_a(\Gamma)$.
On the contrary, by Theorem \ref{existence}, the lower bound of the dimensions of $G$-quasi-invariant quasiconformal measures
is $e_a(G)$. Hence, we have $e_a(\Gamma) \geq e_a(G)$. As the converse inequality is trivial by $\Gamma \subset G$, we see that
$e_a(G)=e_a(\Gamma)$. Moreover, the divergence of $\Gamma$ at $e_a(\Gamma)$ implies that of $G$ at the same dimension.
\end{proof}

\begin{remark}
Corollary \ref{inclusion} asserts that when $\Gamma$ is a subgroup of $G$ not necessarily normal and $\Gamma$ is of divergence type,
$e_a(G)=e_a(\Gamma)$ implies $\Lambda(G)=\Lambda(\Gamma)$. The converse is not true in general, but Theorem \ref{normal}
states that if $\Gamma$ is non-elementary and normal in $G$, which implies $\Lambda(G)=\Lambda(\Gamma)$, then $e_a(G)=e_a(\Gamma)$.
\end{remark}

We add necessary modification to the claims in \cite{J} to apply them to discrete isometry groups of
the Gromov hyperbolic space $(X,d)$.

All non-trivial elements of $\Isom(X,d)$ are classified into three types: hyperbolic, parabolic, and elliptic.
We say that $\gamma \in \Isom(X,d)$ is {\it hyperbolic} if it has exactly two fixed points on $\partial X$.
The following are well-known properties of hyperbolic elements of discrete groups: see, for example, Tukia \cite[Section 2]{T2}.
We note that $\Isom(X,d)$ acts on the boundary $\partial X$ as a convergence group.

\begin{proposition}\label{facts}
Let $\Gamma \subset \Isom(X,d)$ be a non-elementary discrete group. Then, $\Gamma$ contains a hyperbolic element $h$.
Moreover, the stabilizer $\Stab_\Gamma(\Fix(h))$ of the fixed point set $\Fix(h) \subset \partial X$ 
of $h$ is a finite index extension of the cyclic group $\langle h \rangle$. If $\gamma \in \Gamma$ commutes with $h$, then
$\gamma$ belongs to $\Stab_\Gamma(\Fix(h))$.
\end{proposition}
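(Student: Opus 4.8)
The plan is to read off all three assertions from the dynamics of $\Gamma$ acting on the compact metrizable boundary $\partial X$ as a convergence group, in the sense developed by Tukia; below I indicate the three steps and where the real work lies.

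First, to produce a hyperbolic element I would use that a non-elementary discrete group has infinite (indeed perfect) limit set $\Lambda(\Gamma)$ and that $\Gamma$ acts on $\partial X$ with the convergence property: any sequence of distinct elements has a subsequence $\gamma_n$ and points $\xi,\eta\in\Lambda(\Gamma)$ with $\gamma_n\to\xi$ locally uniformly on $\partial X\setminus\{\eta\}$ and $\gamma_n^{-1}\to\eta$ locally uniformly on $\partial X\setminus\{\xi\}$. If $\xi=\eta$ one replaces $\gamma_n$ by $g\gamma_n$ for some $g\in\Gamma$ with $g\xi\ne\xi$ (available because $\#\Lambda(\Gamma)\ge 3$), which has source and sink $g\xi\ne\xi$. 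Choosing disjoint closed neighbourhoods $\overline U\ni\eta$ and $\overline V\ni\xi$, for large $n$ one has $\gamma_n(\overline X\setminus U)\subset V$ and $\gamma_n^{-1}(\overline X\setminus V)\subset U$, so in particular $\gamma_n(\overline V)\subset V$ and $\gamma_n^{-1}(\overline U)\subset U$; hence $\gamma_n$ has an attracting fixed point in $V$ and a repelling one in $U$, i.e.\ $h:=\gamma_n$ is hyperbolic. This is precisely the content of \cite[Section 2]{T2}, which I would cite rather than reproduce.

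Second, for the structure of $H:=\Stab_\Gamma(\Fix(h))$, write $\Fix(h)=\{h^+,h^-\}$ and let $H^+\le H$ be the subgroup of index at most $2$ that fixes $h^+$ and $h^-$ separately. Fix a geodesic line $\ell$ from $h^-$ to $h^+$; by Proposition \ref{geodesic}, every element of $H^+$ moves $\ell$ into a uniformly bounded neighbourhood of itself, so projecting the action to $\ell$ yields a coarse translation cocycle $\tau\colon H^+\to\R$ (definable, e.g., via the Busemann function at $h^+$ so as to be a genuine homomorphism up to bounded error). By proper discontinuity, the elements of $H^+$ with $|\tau|$ bounded have bounded orbit of a base point on $\ell$ and hence form a finite subgroup, while the image of $\tau$ is a discrete, hence cyclic, subgroup of $\R$; therefore $H^+$, and so $H$, is virtually cyclic. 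Since $h\in H$ has infinite order, $\langle h\rangle$ has finite index in $H$: taking a normal infinite cyclic $N\trianglelefteq H$ of finite index, some power $h^m$ lies in $N$, so $\langle h\rangle\supseteq\langle h^m\rangle$ has finite index in $N$, hence in $H$. This gives the desired finite-index extension statement.

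Third, if $\gamma\in\Isom(X,d)$ commutes with $h$, then $\gamma\,\Fix(h)=\Fix(\gamma h\gamma^{-1})=\Fix(h)$, so $\gamma$ stabilizes $\Fix(h)$ setwise; in particular, when $\gamma\in\Gamma$ this means $\gamma\in\Stab_\Gamma(\Fix(h))$. The main obstacle is the second step: making the translation-length argument rigorous in a general proper geodesic $\delta$-hyperbolic space — controlling how far an element of $H^+$ can push the chosen axis, promoting the coarse cocycle $\tau$ to a homomorphism with discrete image, and thereby obtaining virtual cyclicity of the two-point stabilizer. All of this is standard convergence-group theory contained in \cite[Section 2]{T2}, so in the paper I would invoke that reference, with the existence of $h$ and the centralizer remark deduced as above.
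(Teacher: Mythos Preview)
Your proposal is correct and aligns with the paper's approach: the paper does not give its own proof of this proposition at all, but simply records it as a well-known fact with the remark ``See Tukia \cite[Section 2]{T2} for example,'' noting that $\Isom(X,d)$ acts on $\partial X$ as a convergence group. Your sketch fills in exactly the standard convergence-group arguments behind that citation, and you rightly flag the slight awkwardness in the third clause (the statement reads $\gamma\in\Isom(X,d)$ but concludes membership in $\Stab_\Gamma(\Fix(h))$), handling it by the setwise-stabilizer observation.
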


The novelty of the proof in \cite{J} is the use of the following fact. Once the above properties are verified,
the proof of the lemma can be carried out without any change even in the case of the Gromov hyperbolic space.

\begin{lemma}\label{finite-to-one}
Let $G \subset \Isom(X,d)$ be a discrete group and let $\Gamma$ be a non-elementary normal subgroup of $G$.
For any hyperbolic element $h \in \Gamma$, the map
$$
\iota_h:\langle h \rangle \backslash G \to \Gamma
$$
defined by $[g] \mapsto g^{-1}hg$ is well-defined and at most $k$ to $1$, that is,
there is $k=k_h \in \N$ such that 
$\# \iota_h^{-1}(\gamma) \leq k_h$ for every $\gamma \in \Gamma$. 
\end{lemma}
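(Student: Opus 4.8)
The plan is to reduce the assertion to an index computation for the centralizer of $h$ in $G$ and then quote Proposition \ref{facts}. First I would check that $\iota_h$ is well defined: since $\Gamma$ is normal in $G$ and $h \in \Gamma$, the conjugate $g^{-1}hg$ lies in $\Gamma$ for every $g \in G$; and if $g' = h^n g$ for some $n \in \Z$, then $(g')^{-1}hg' = g^{-1}h^{-n}h\,h^n g = g^{-1}hg$ because powers of $h$ commute, so the value of $\iota_h$ depends only on the coset $\langle h\rangle g$. Thus $\iota_h\colon \langle h\rangle\backslash G \to \Gamma$ makes sense.

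Next I would identify the fibres. For $g,g_0 \in G$ one has $g^{-1}hg = g_0^{-1}hg_0$ if and only if $b := gg_0^{-1}$ satisfies $b^{-1}hb = h$, i.e.\ $b$ lies in the centralizer $Z_G(h) = \{b \in G \mid bh = hb\}$. Hence the fibre of $\iota_h$ over $\gamma = g_0^{-1}hg_0$ consists exactly of the cosets $\langle h\rangle g$ with $g \in Z_G(h)\,g_0$, and writing $g = a g_0$, $g' = a'g_0$ with $a,a' \in Z_G(h)$, the two cosets $\langle h\rangle g$ and $\langle h\rangle g'$ coincide precisely when $a(a')^{-1} \in \langle h\rangle$. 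Since $h$ commutes with every element of $Z_G(h)$, the subgroup $\langle h\rangle$ is central, hence normal, in $Z_G(h)$, so left and right cosets agree, and the map $a \mapsto \langle h\rangle(ag_0)$ induces a bijection $\langle h\rangle\backslash Z_G(h) \to \iota_h^{-1}(\gamma)$. Therefore every nonempty fibre has cardinality exactly $[Z_G(h):\langle h\rangle]$.

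Finally I would bound this index. Because $\Gamma$ is non-elementary and $\Gamma \subset G$, the group $G$ is itself non-elementary, and it is discrete by hypothesis, while $h \in \Gamma \subset G$ is hyperbolic; so Proposition \ref{facts} applies verbatim to $G$ in place of $\Gamma$. It gives that $\Stab_G(\Fix(h))$ is a finite-index extension of $\langle h\rangle$ and that every element of $\Isom(X,d)$ commuting with $h$—in particular every element of $Z_G(h)$—lies in $\Stab_G(\Fix(h))$. Hence $\langle h\rangle \subseteq Z_G(h) \subseteq \Stab_G(\Fix(h))$ with $k_h := [\Stab_G(\Fix(h)):\langle h\rangle] < \infty$, so $[Z_G(h):\langle h\rangle] \le k_h$, which is the claim.

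I do not expect a genuine obstacle here: the argument is purely group-theoretic once Proposition \ref{facts} is in hand, exactly as in the Kleinian case treated in \cite{J}. The only point deserving care is that Proposition \ref{facts} is being invoked for $G$ rather than for the subgroup $\Gamma$ featuring in its statement, which is legitimate since $G$ is again non-elementary and discrete and $h$ is again hyperbolic as an element of $G$; and one should keep the left/right coset bookkeeping consistent, which is harmless because $\langle h\rangle$ is central in $Z_G(h)$.
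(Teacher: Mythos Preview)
Your argument is correct and is exactly the standard one: the paper does not spell out a proof of this lemma but simply remarks that, once Proposition~\ref{facts} is available, the proof from \cite{J} carries over verbatim, and what you have written is that proof. Your identification of each nonempty fibre with $\langle h\rangle\backslash Z_G(h)$ and the bound $[Z_G(h):\langle h\rangle]\le[\Stab_G(\Fix(h)):\langle h\rangle]<\infty$ via Proposition~\ref{facts} (applied to $G$, which is legitimate for the reasons you note) is precisely the intended reasoning.
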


An essential step in the adaption of the arguments for Kleinian groups 
to discrete isometry groups on the Gromov hyperbolic space lies in the following claim.

\begin{lemma}\label{essential}
Let $G \subset \Isom(X,d)$ be a discrete group and let $h \in G$ be a hyperbolic element.
Then, for every $s>0$, there is a constant $A_h(s)>0$ depending on $s$ and $h$ such that
$$
\sum_{g \in G} a^{-sd(o,g(o))} \leq A_h(s) \sum_{[g] \in \langle h \rangle \backslash G} a^{-sd(o,[g](o))},
$$
where $d(o,[g](o))$ is the distance from $o$ to the set $[g](o)=\{h^ng(o)\mid n \in \Z\}$.
\end{lemma}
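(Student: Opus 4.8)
The plan is to reduce the inequality to a uniform, purely geometric estimate along the axis of $h$ and then to sum a geometric series. First I would decompose $G$ into right cosets $G=\bigsqcup_{[g]\in\langle h\rangle\backslash G}\langle h\rangle g$, so that the left-hand side becomes $\sum_{[g]}\sum_{n\in\Z}a^{-sd(o,h^ng(o))}$ while the right-hand side is $A_h(s)\sum_{[g]}a^{-sd(o,[g](o))}$ with $d(o,[g](o))=\min_n d(o,h^ng(o))$. Hence it suffices to find a constant $A_h(s)$, independent of $y\in X$, with $\sum_{n\in\Z}a^{-s(d(o,h^n(y))-D_y)}\le A_h(s)$, where $D_y=\min_m d(o,h^m(y))$; fix a minimizer $n^{\ast}$.

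Next I would bring in the geometry of the hyperbolic element $h$. It has two fixed points $\xi_\pm\in\partial X$, and since $(X,d)$ is proper and geodesic there is a geodesic line $\sigma$ joining $\xi_-$ to $\xi_+$, the axis of $h$. By Proposition~\ref{geodesic}(2), each $h^n\sigma$ lies in the closed $\kappa(\delta)$-neighborhood of $\sigma$, so $h$ acts on $\sigma$ as a coarse translation; by standard properties of hyperbolic isometries of $\delta$-hyperbolic spaces (see \cite{CDP} or \cite{T2}), there are $\tau=\tau(h)>0$ (the stable translation length) and a constant $C_1=C_1(h,\delta)$ with $d\bigl(h^n\sigma(t),\sigma(t+n\tau)\bigr)\le C_1$ for all $t\in\R$, $n\in\Z$. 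Using in addition that the nearest-point projection $\pi$ onto $\sigma$ is coarsely $h$-equivariant and satisfies the coarse additivity $\bigl|d(z,w)-d(z,\pi z)-d(\pi z,w)\bigr|\le C_2(\delta)$ for $z\in X$, $w\in\sigma$, I would combine these to produce a constant $C=C(h,\delta)$ such that, writing $\pi(o)=\sigma(t_o)$ and $\pi(y)=\sigma(t_y)$,
$$\Bigl|\,d(o,h^n(y))-\bigl(d(o,\sigma)+d(y,\sigma)+|t_y+n\tau-t_o|\bigr)\Bigr|\le C\qquad(n\in\Z).$$

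Finally I would read off the conclusion. Choosing $n$ with $|t_y+n\tau-t_o|\le\tau/2$ gives $D_y\le d(o,\sigma)+d(y,\sigma)+\tau/2+C$; subtracting this from the lower bound above, the terms $d(o,\sigma)$ and $d(y,\sigma)$ cancel, and after absorbing constants and using the triangle inequality in $\R$ one obtains $d(o,h^n(y))-D_y\ge \tau|n-n^{\ast}|-C'$ for a constant $C'=C'(h,\delta)$. Consequently
$$\sum_{n\in\Z}a^{-s(d(o,h^n(y))-D_y)}\ \le\ a^{C's}\sum_{k\in\Z}a^{-\tau|k|s}\ =\ a^{C's}\cdot\frac{1+a^{-\tau s}}{1-a^{-\tau s}},$$
which is finite since $a>1$, $s>0$, $\tau>0$, and is independent of $y$; this is the desired $A_h(s)$, and summing over cosets finishes the proof. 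I expect the main obstacle to be the geometric input of the second step: assembling from $\delta$-hyperbolic geometry the existence and coarse uniqueness of the axis of $h$ (using properness), the fact that $h$ translates along it by a fixed positive amount with error bounded uniformly in the basepoint (this is where positivity of the stable translation length, equivalent to $h$ being hyperbolic, and the non-accumulation of fellow-traveling errors enter), and the coarse additivity of nearest-point projection onto a geodesic. In the classical case $X=\H2^{n+1}$ all of this is immediate from explicit formulas, so it is only here that care is required; once the displayed estimate is in hand, the remainder is elementary.
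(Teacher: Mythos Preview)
Your approach is correct and follows essentially the same geometric idea as the paper: decompose $G$ into $\langle h\rangle$-cosets, use an axis for $h$ to show that $d(o,h^ng(o))$ grows linearly in $|n|$ (with the coset contribution $d(o,[g](o))$ as additive constant), and sum a convergent series over $n$. The paper implements this with the orbit quasi-geodesic $\beta=\bigcup_{n}h^n[o,h(o)]$, the Gromov product, and the Morse lemma to obtain $d(o,h^ng(o))\ge d(o,h^n(o))+d(o,[g](o))-\text{const}$, whereas you use the geodesic axis $\sigma$ between the fixed points together with nearest-point projection; the two routes are minor variants of each other, and the paper's choice has the small advantage of avoiding the appeal to existence of a bi-infinite geodesic between boundary points.

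One technical caveat: your displayed two-sided estimate $\bigl|d(o,h^n(y))-(d(o,\sigma)+d(y,\sigma)+|t_y+n\tau-t_o|)\bigr|\le C$ does not hold for all $n$. The upper bound is fine (triangle inequality), but the lower bound via projection additivity is only valid when the projections $\pi(o)$ and $\pi(h^n(y))$ are at least a threshold $O(\delta)$ apart; if they nearly coincide and $o,h^n(y)$ lie on the same side of $\sigma$, the sum $d(o,\sigma)+d(y,\sigma)$ can vastly overshoot $d(o,h^n(y))$. This does not damage your conclusion, because for the finitely many $n$ with $|t_y+n\tau-t_o|$ small one has $|n-n^\ast|$ bounded and $d(o,h^n(y))-D_y\ge 0$ trivially, so the target inequality $d(o,h^n(y))-D_y\ge\tau|n-n^\ast|-C'$ still holds after enlarging $C'$. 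You should, however, state the projection lower bound with this proviso rather than as a uniform two-sided estimate.
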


\begin{proof}
We take a geodesic segment $[o,h(o)]$ connecting $o$ and $h(o)$ 
and make a piecewise geodesic curve $\beta=\bigcup_{n \in \Z} h^n([o,h(o)])$ with arc length parameter.
In fact, $\beta:(-\infty,\infty) \to X$ is a quasi-geodesic line, that is, there are constants
$\lambda \geq 1$ and $c \geq 0$ such that 
$$
|s-t|\leq \lambda d(\beta(s),\beta(t))+c
$$ 
for all $s,t \in \R$
(see \cite[Lemme 6.5]{CDP}).
Let $\ell_h=d(o,h(o))$.

For any coset $[g] \in \langle h \rangle \backslash G$, we consider the set $[g](o)$ and
choose a point in it, which we may assume to be $g(o)$ without loss of generality, so that
$[o,h(o)]$ contains the nearest point $x$ (not necessarily unique) from $g(o)$ to $\beta$.
Let $L_{[g]}=d(x,g(o))=d(\beta,[g](o))$. Then,
we have
$$
d(o,[g](o)) \leq d(o,g(o)) \leq d(o,x)+d(x,g(o)) \leq \ell_h+L_{[g]}.
$$

We consider $h^ng(o)$ for every $n \in \Z$. By the invariance under $\langle h \rangle$, 
$h^n(x)$ is the nearest point from $h^ng(o)$ to $\beta$. The above inequality implies that
$$
d(h^n(x),h^ng(o))=L_{[g]} \geq d(o,[g](o))-\ell_h.
$$
We choose a geodesic segment $\widetilde \beta_n=[o,h^n(x)]$. As $\widetilde \beta_n$ is within distance 
$r=r(\delta,\lambda,c) \geq 0$
from the quasi-geodesic segment in $\beta$ between $o$ and $h^n(x)$  
(see \cite[Th\'eor\`eme 1.3]{CDP}), 
we have
$$
d(\widetilde \beta_n,h^ng(o)) \geq d(h^n(x),h^ng(o))-r=L_{[g]}-r.
$$ 
On the contrary, the Gromov product satisfies
$$
(o \mid h^n(x))_{h^ng(o)}=\frac{1}{2}\{d(o,h^ng(o))+d(h^n(x),h^ng(o))-d(o,h^n(x))\} \geq d(\widetilde \beta_n,h^ng(o))-4\delta.
$$
These inequalities imply that
\begin{align*}
d(o,h^ng(o)) &\geq d(o,h^n(x))+d(h^n(x),h^ng(o))-2r-8\delta\\
&\geq d(o,h^n(o))-\ell_h+L_{[g]}-2r-8\delta\\
&\geq d(o,h^n(o))+d(o,[g](o))-2(\ell_h+r+4\delta).
\end{align*}

Using this estimate, we compute the Poincar\'e series as follows:
\begin{align*}
\sum_{g \in G} a^{-sd(o,g(o))}&=\sum_{n \in \Z} \sum_{[g] \in \langle h \rangle \backslash G} a^{-sd(o,h^ng(o))}\\
&\leq a^{2s(\ell_h+r+4\delta)}\,\sum_{n \in \Z} a^{-sd(o,h^n(o))} \sum_{[g] \in \langle h \rangle \backslash G} a^{-s d(o,[g](o))}.
\end{align*}
Here, $\sum_{n \in \Z} a^{-sd(o,h^n(o))}$ ($s>0$) converges because $\beta$ is a quasi-geodesic that satisfies 
$$
d(o,h^n(o)) \geq \lambda^{-1}\ell_h n-c.
$$
Hence, by setting $A_h(s)=a^{2s(\ell_h+r+4\delta)}\,\sum_{n \in \Z} a^{-sd(o,h^n(o))}$, we obtain the assertion.
\end{proof}

\noindent
{\it Proof of Theorem \ref{main3}.}
Choose a hyperbolic element $h \in \Gamma$ and fix it. Concerning the map $\iota_h$ in Lemma \ref{finite-to-one}, we note that
\begin{align*}
&\quad \ d(o,\iota_h([g])(0))=d(o,g^{-1}hg(o))\\
&\leq d(o,g^{-1}(o))+d(g^{-1}(o),g^{-1}h(o))+d(g^{-1}h(o),g^{-1}hg(o))\\
&=2d(o,g(o))+d(o,h(o)).
\end{align*}
As this is still valid after replacing $g$ with $h^n g$ $(n \in \Z)$, we see that
$$
d(o,\iota_h([g])(o)) \leq 2d(o,[g](o))+d(o,h(o)).
$$
It follows that
$$
a^{-sd(o,[g](o))} \leq a^{s d(o,h(o))/2} \cdot a^{-s d(o,\iota_h([g])(o))/2}.
$$
Taking the sum over $[g] \in \langle h \rangle \backslash G$, we obtain
$$
\sum_{[g] \in \langle h \rangle \backslash G} a^{-sd(o,[g](o))} \leq a^{s \ell_h/2} 
\sum_{[g] \in \langle h \rangle \backslash G} a^{-s d(o,\iota_h([g])(o))/2}.
$$

Concerning the right-hand side in the above inequality, Lemma \ref{finite-to-one} implies that
$$
\sum_{[g] \in \langle h \rangle \backslash G} a^{-s d(o,\iota_h([g])(o))/2} \leq 
k_h \sum_{\gamma \in \Gamma} a^{-s d(o,\gamma(o))/2}.
$$
Concerning the left-hand side, Lemma \ref{essential} implies that
$$
\sum _{g \in G} a^{-sd(o,g(o))} \leq A_h(s) \sum_{[g] \in \langle h \rangle \backslash G} a^{-sd(o,[g](o))}.
$$
Combining these inequalities, we finally obtain the following estimate:
$$
\sum _{g \in G} a^{-sd(o,g(o))} \leq A_h(s) a^{s \ell_h/2} k_h  \sum_{\gamma \in \Gamma} a^{-s d(o,\gamma(o))/2}.
$$

Now we put $s=2e_a(\Gamma)+\varepsilon$ for an arbitrary $\varepsilon>0$ and consider the final estimate just above.
The right-hand side converges; hence, so does the left-hand side. This shows that $e_a(G) \leq 2e_a(\Gamma)+\varepsilon$.
As $\varepsilon>0$ is arbitrary, we have $e_a(G) \leq 2e_a(\Gamma)$, which yields the first assertion of the
theorem. 

Next, we assume that $G$ is of divergence type. Then, we put $s=e_a(G)$ and consider the same inequality.
In this case, the left-hand side diverges; hence, so does the right-hand side. To prove the strict inequality, 
it is assumed toward a contradiction that $e_a(\Gamma)=e_a(G)/2$. As the exponent of the series on the right-hand side is $s/2=e_a(G)/2=e_a(\Gamma)$,
$\Gamma$ must be of divergence type under this assumption. Theorem \ref{normal} then implies that
$e_a(G)=e_a(\Gamma)$. This is possible only when $e_a(G)=e_a(\Gamma)=0$. However,
this contradicts the next claim, which we can also find in \cite[Corollaire 5.5]{Co}.
\qed

\begin{proposition}\label{positive}
The critical exponent $e_a(G)$ of a non-elementary discrete group $G \subset \Isom(X,d)$ is
strictly positive.
\end{proposition}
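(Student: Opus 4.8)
The plan is to exhibit an exponentially growing subset of the orbit $G(o)$, which forces the Poincar\'e series $P_G^s(o,o)$ to diverge for some $s>0$ and hence gives $e_a(G)\ge s>0$. The natural source of exponential growth is a free subgroup of rank two inside $G$, whose existence in any non-elementary discrete group is a classical ping--pong argument using the North--South dynamics of hyperbolic elements on $\partial X$. (The proposition itself is \cite[Corollaire 5.5]{Co}; one may simply cite it, but I indicate the argument.)

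First I would produce the free subgroup. By Proposition \ref{facts}, $G$ contains a hyperbolic element $h$ with $\Fix(h)=\{p^+,p^-\}\subset\partial X$. Since $\Stab_G(\Fix(h))$ is a finite extension of $\langle h\rangle$ it is elementary, so $\Stab_G(\Fix(h))\neq G$ by non-elementarity; I choose $g\in G$ with $g(\Fix(h))\neq\Fix(h)$ and set $h'=ghg^{-1}$, a hyperbolic element with $\Fix(h')=g(\Fix(h))$. In a discrete group two hyperbolic elements cannot share exactly one fixed point, so $\Fix(h)\cap\Fix(h')=\emptyset$. Taking small mutually disjoint $d_a$-balls around the four fixed points and using that a high power $h^N$ maps the complement of a neighborhood of $p^-$ into a neighborhood of $p^+$ (and symmetrically for $h^{-N}$, $h'^{\pm N}$), the ping--pong lemma shows that for $N$ large the elements $a:=h^N$ and $b:=h'^N$ generate a free group of rank two; a free sub-semigroup on $\{a,b\}$ already suffices for what follows.

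Next I would run the length estimate. Put $M=\max\{d(o,a(o)),d(o,a^{-1}(o)),d(o,b(o)),d(o,b^{-1}(o))\}$, which is finite and positive (since $a$ does not fix $o$). For a reduced word $w=s_1\cdots s_n$ with $s_i\in\{a^{\pm1},b^{\pm1}\}$ the triangle inequality gives $d(o,w(o))\le nM$, and by freeness distinct reduced words give distinct elements of $G$; the number of reduced words of length $n$ is $4\cdot3^{n-1}\ge3^{n}$. Since $a>1$, it follows that
$$
P_G^s(o,o)=\sum_{g\in G}a^{-sd(o,g(o))}\ \ge\ \sum_{n=1}^\infty 3^{n}\,a^{-snM}\ =\ \sum_{n=1}^\infty\bigl(3\,a^{-sM}\bigr)^{n},
$$
which diverges whenever $3\,a^{-sM}\ge1$, that is, for every $s$ with $0<s\le(\log3)/(M\log a)$. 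By monotonicity of the Poincar\'e series in $s$, this yields $e_a(G)\ge(\log3)/(M\log a)>0$.

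The main obstacle is the construction of the free subgroup: it needs the convergence-group dynamics of hyperbolic elements on $\partial X$ (North--South dynamics, and the fact that in a discrete group two distinct hyperbolic fixed-point pairs are disjoint) to justify the ping--pong step. These facts are standard in the convergence-group literature (see \cite{T2}); once granted, the length estimate and the resulting divergence of the Poincar\'e series are routine as above, and in any case the statement is recorded in \cite[Corollaire 5.5]{Co}.
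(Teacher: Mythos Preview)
Your argument is correct, but it follows a genuinely different route from the paper. The paper's proof is a two-line application of the measure-theoretic machinery already set up: take a hyperbolic $h\in G$, observe (from the quasi-geodesic estimate at the end of the proof of Lemma~\ref{essential}) that the cyclic group $\langle h\rangle$ has $e_a(\langle h\rangle)=0$ and is of divergence type, and then apply Corollary~\ref{inclusion} to the strict inclusion $\Lambda(\langle h\rangle)\subsetneqq\Lambda(G)$ to conclude $e_a(G)>e_a(\langle h\rangle)=0$. So the paper leverages the support property of Patterson measures (Proposition~\ref{support}) rather than any explicit orbit count.

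Your approach, by contrast, is entirely elementary once the ping--pong step is granted: you produce a rank-two free subgroup and read off exponential orbit growth directly. This has the advantage of being independent of the quasiconformal measure theory and of yielding an explicit numerical lower bound $e_a(G)\ge(\log 3)/(M\log a)$; the cost is that you must import the North--South dynamics and the disjoint-fixed-points fact from the convergence-group literature, whereas the paper's proof stays entirely within results already established in the text.
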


\begin{proof}
Let $h \in G$ be a hyperbolic element. Then, by the last part of the proof of Lemma \ref{essential}, we see that
$e_a(\langle h \rangle)=0$ and $\langle h \rangle$ is of divergence type.
As $\Lambda(G) \supsetneqq \Lambda(\langle h \rangle)$, Corollary \ref{inclusion} shows that
$e_a(G)>0$.
\end{proof}

\medskip
\noindent
{\it Note added in the revision.} 
Recently, we have found the following references that are closely
related to the results in this study. 
Das, Simmons, and Urba\'nski \cite[Theorem 1.4.1]{DSU} proved Theorem \ref{main1}
in a more general setting.
Arzhantseva and Cashen \cite{AC} proved a special case of Theorem \ref{i-lower}
for isometric actions not necessarily on the Gromov hyperbolic spaces.

\bigskip
\bigskip

\bigskip

\end{document}